\newtheorem{lema}{Lemma}[section]
\newtheorem{teor}[lema]{Theorem}
\newtheorem{cor}[lema]{Corollary}
\newtheorem{prop}[lema]{Proposition}
\theoremstyle{definition}
\newtheorem{defi}[lema]{Definition}
\newtheorem{exemplo}[lema]{Example}
\newtheorem{obs}[lema]{Remark}
\newcommand{\com}{\Delta}
\newcommand{\eps}{\varepsilon}
\newcommand{\gr}{\operatorname{gr}}
\newcommand{\cS}{\mathcal{S}}
\newcommand\co{\operatorname{co}}
\newcommand\cop{\operatorname{cop}}
\newcommand\id{\operatorname{id}}
\newcommand\Img{\operatorname{Img}}
\newcommand\Res{\operatorname{Res}}
\newcommand\Lie{\operatorname{Lie}}
\newcommand{\SL}{\operatorname{SL}}
\newcommand{\SO}{\operatorname{SO}}
\newcommand{\Z}{{\mathbb Z}}
\newcommand{\N}{{\mathbb N}}
\newcommand{\C}{{\mathbb C}}
\newcommand{\Q}{{\mathbb Q}}
\newcommand{\ttn}{{\mathtt{n}}}
\newcommand{\cO}{\mathcal{O}}
\newcommand{\Oc}{{\mathcal O}}
\newcommand{\ydga}{{}^{{\Gamma}}_{{\Gamma}}\mathcal{YD}}
\newcommand{\ydh}{{}^H_H\mathcal{YD}}
\newcommand\toba{{\mathfrak B }}
\newcommand\cD{\mathcal D }
\newcommand{\lieu}{\mathfrak{u}}
\newcommand{\lieg}{\mathfrak{g}}
\newcommand{\lieb}{\mathfrak{b}}
\newcommand{\liesl}{\mathfrak{sl}}
\newcommand{\lieso}{\mathfrak{so}}
\newcommand{\lieh}{\mathfrak{h}}
\newcommand{\liel}{\mathfrak{l}}
\newcommand{\mcar}{C}
\newcommand{\subg}{T_M}
\newcommand\Hom{\operatorname{Hom}}
\newcommand\Ker{\operatorname{Ker}}
\newcommand{\ad}{\operatorname{ad}_c}
\newcommand{\ord}{\operatorname{ord}}
\def\pf{\begin{proof}}
\def\epf{\end{proof}}
\def\ot{\otimes}
\begin{document}
\title[A geometric realization of liftings]
{A geometric realization of liftings of Cartan type}

\author[ D. Bagio, G. A. Garc\'ia, O. M\'arquez]
{D. Bagio, G. A. Garc\'ia, O. M\'arquez}

\thanks{2010 Mathematics Subject Classification: 16T05, 16T20, 17B37, 20G42\\
\textit{Keywords:} Pointed Hopf algebras; Quantum groups; Lifting Method.\\
D. Bagio was partially supported by Fundação de Amparo a 
Pesquisa e Inovação do Estado de Santa Catarina (FAPESC), Edital 21/2024.
G. A. Garc\'ia was partially supported by
the Program CAPES/PRINT (Brasil) and CONICET (Argentina).
}

\address{\noindent D. B. and O. M. : Departamento de Matem\'atica, 
	Universidade Federal de Santa Catarina,
	Santa Catarina, Brazil}
	\email{d.bagio@ufsc.br} 
	\email{oscar.marquez.sosa@ufsc.br}

\address{\noindent G. A. G. : Departamento de Matem\'atica, Facultad de Ciencias Exactas,
Universidad Nacional de La Plata. CMaLP, CONICET, (1900)
La Plata, Argentina.\\
Guangdong-Technion Israel Institute of Technology, No. 241, Daxue Road, 
Shantou, Guangdong Province, China.}
\email{ggarcia@mate.unlp.edu.ar,
gaston.garcia@gtiit.edu.cn}

\begin{abstract} 
We introduce a novel approach to compute liftings of bosonizations 
of Nichols algebras of diagonal braided vector spaces of Cartan type which replaces heavy computations with structural maps related to quantum groups.
This provides an answer to a question posed by Andruskiewitsch and Schneider in \cite{AS3},
who classified finite-dimensional complex pointed Hopf algebras over finite 
abelian groups whose order is coprime with 210.
As application and in order to give not-too-technical examples,
we recover with our method the liftings of type $A_{n}$
computed in \cite{AS2}, of type $B_2$ computed in \cite{BDR}, and of type $B_3$ computed in \cite{BGM} for Drinfeld-Jimbo type braidings. Moreover,
we present all liftings of type $B_{\theta}$ and $D_{\theta}$, for $\theta \geq 2$, giving in this way new explicit infinite families of liftings for
Drinfeld-Jimbo type braidings.
\end{abstract}

\maketitle

\section{Introduction}\label{sec:introduction}
The present work is a contribution to the classification problem of complex finite-dimensional pointed Hopf algebras. Hopf algebras are unital associative 
algebras equipped with a compatible
structure of coalgebra and a distinguished linear map called antipode.
They appear naturally in many areas 
of mathematics and theoretical physics, as they encode (classical and quantum) symmetries of geometric objects through their category of representations. 
Basic examples include group algebras, functions algebras over affine algebraic groups, 
universal enveloping algebras of Lie algebras and their quantum versions.

\subsection*{Background}
The classification problem of general complex finite-dimensional 
Hopf algebras is a hard question that has attracted the attention of several mathematicians 
with different backgrounds. 
Its difficulty may lie in the existence of few 
general structural results; progress has been made primarily for Hopf algebras of a fixed dimension (a problem posed by Kaplansky back in 1975), for Hopf algebras whose dimension 
factorizes as a product of few primes, e.g. 
$p$, $p^{2}$ or $pq$, or for Hopf algebras with additional structure, such as being semisimple as algebras, or pointed as coalgebras.
Despite these efforts, a significant amount of questions remain open; for example, the
least dimension for which the classification is still open is 24, 
and only Hopf algebras of dimension $p$ or $p^{2}$ are classified, 
see \cite{BGa} for details and references.
The study of semisimple Hopf algebras has contributed to the development 
of the theory
of fusion categories, and several results have been obtained by applying it to categories
of finite-dimensional representations of a Hopf algebra. 
The family of pointed Hopf algebras is perhaps the best understood in terms of structural results, due to 
the existence of a general technique for their study: the \emph{lifting method} developed by Andruskiewitsch and Schneider \cite{AS1}

A classical results in the theory is the celebrated theorem due
to Milnor, Moore, Cartier, Kostant and Gabriel, which states that any cocommutative
complex Hopf algebra is a semidirect product of the universal
enveloping algebra of a Lie algebra with a group algebra. Cocommutative Hopf 
algebras are particular examples of pointed Hopf algebras. These satisfy that 
all simple subcoalgebras are one-dimensional, or equivalently, that their coradical,
which is the sum of all simple subcoalgebras, 
is a group algebra. In particular, the dual of a finite-dimensional pointed Hopf algebra
is a basic algebra. Non-trivial and paradigmatic examples of pointed Hopf algebras include quantum universal enveloping algebras $U_{q}(\lieg)$, 
which are (multiparametric) 
deformations of the universal enveloping algebras $U(\lieg)$ of a semisimple 
Lie algebra $\lieg$, and finite-dimensional quotients of them known as 
small quantum groups or Frobenius-Lusztig kernels $u_{q}(\lieg)$, with $q$ a root of unity.

\subsection*{The lifting method}
The lifting method is a procedure to classify pointed Hopf algebras, or more generally,
Hopf algebras with the \emph{Chevalley Property}, that is, those whose 
coradical is a Hopf subalgebra.  Roughly speaking, 
the method proceeds as follows. Let $A$ be a complex pointed Hopf algebra 
and denote by $A_{0}$ its coradical. Since $A_{0}$ is a Hopf subalgebra, the 
coradical filtration is a Hopf algebra filtration and whence the associated graded 
coalgebra $\gr A$ is a Hopf algebra. The projection onto the zeroth component 
$\gr A \twoheadrightarrow A_{0}$ admits a Hopf algebra section, and by a result of
Majid and Radford, see \cite[Theorem 11.7.1]{Ra}, 
the Hopf algebra $\gr A$ is isomorphic to the 
\emph{biproduct} or \emph{bosonization} $R\# A_{0}$. As algebra, the bosonization
is given by the semidirect product, so this result resembles the classical one on 
cocommutative Hopf algebras. It turns out that $R$ is not a usual Hopf algebra but
a Hopf algebra in the braided category 
$_{A_{0}}^{A_{0}}\mathcal{YD}$ of Yetter-Drinfeld modules over $A_{0}$. Hence, 
assuming that the structure of the (cosemisimple) Hopf algebra given by the coradical $A_{0}$ is already known, the goal is to describe 
all possible braided Hopf algebras $R$ and all possible Hopf algebras $L$ such that
$\gr L$ is isomorphic to $R\# A_{0}$. These Hopf algebras $L$ are 
called the \emph{liftings} of $R\# A_{0}$. \smallbreak

The case where most results are known is that of pointed Hopf algebras, 
and specially, when the coradical $A_{0}$ is a group algebra $\C \Gamma$
of some abelian group $\Gamma$
and $A$ is finite-dimensional, see \cite{An3} for an up-to-date survey.
In this \emph{abelian} case, all braided finite-dimensional
Hopf algebras are Nichols algebras $\toba(V)$ over braided vector 
spaces $V \in\, _{\C\Gamma}^{\C\Gamma}\mathcal{YD}$ with diagonal braiding. Finite-dimensional
 Nichols algebras of this type were classified by Heckenberger \cite{H} and this 
 allowed Andruskiewitsch and Schneider to prove in \cite{AS3} the celebrated classification theorem on finite-dimensional complex pointed Hopf algebras over finite abelian groups
 whose order is not divisible by 210, see Theorem \ref{thm:AS}.
 This condition on the group order ensures that
 the Nichols algebras $\toba(V)$ appearing as the braided Hopf algebras $R$ in
 the bosonization are determined by finite Cartan matrices corresponding to complex
 simple Lie algebras. The theorem referred to above actually describes all possible 
 liftings $L$ of the bosonizations $\toba(V)\# \C\Gamma$ via recursive
 formulas: any such a Hopf algebra is isomorphic to a Hopf algebra 
 $u(\cD, \lambda, \mu)$ where $\cD$ is a Cartan datum related to the Cartan matrix
 and a principal realization over the group $\Gamma$, and $\lambda$, $\mu$
 are families of parameters giving the deformations, see Subsection \ref{subsec:pres-u}. 
 The main result of this paper provides a method to obtain all liftings
 without recursion and by using short exact sequences and commutative 
 diagrams involving several types of quantum groups.
 
 \smallbreak
 Let us describe the situation more technically and explicitly. 
 As we explained above, any finite-dimensional complex pointed Hopf algebra $A$
 satisfies that its associated graded Hopf algebra $\gr A$ is isomorphic to the bosonization
 $R\# A_{0}$ where the coradical $A_{0}=\C\Gamma$ is a group algebra of a finite group
 and $R$ is a Hopf algebra in the braided category 
$_{\C\Gamma}^{\C\Gamma}\mathcal{YD}$ of Yetter-Drinfeld modules. 
It turns out that $R$ is also a graded braided Hopf algebra with $R^{0}=\C$.
The braided vector space $V=R^{1}$ consisting of the elements of degree one is 
called the infinitesimal braiding, and the subalgebra $\toba(V)$ of $R$ generated by it,
the \emph{Nichols algebra} of $V$. Therefore, to classify all possible pointed Hopf 
algebras over a fixed group $\Gamma$ one has to 
\begin{enumerate}
\item[$\rm (a)$] \emph{(Nichols algebras)} determine all possible
braided vector spaces $V\in\, _{\C\Gamma}^{\C\Gamma}\mathcal{YD} $
such that $\dim \toba(V) $ is finite;
\item[$\rm (b)$] \emph{(Liftings)} find all Hopf pointed algebras $L$ such that 
$\gr L \simeq \toba(V)\# \C\Gamma$;
\item[$\rm (c)$] \emph{(Generation in degree one)} prove that any finite dimensional
braided Hopf algebra $R$ is actually isomorphic to a Nichols algebra
$\toba(V)$ for some $V\in \, _{\C\Gamma}^{\C\Gamma}\mathcal{YD} $.
\end{enumerate}

From now on, we assume that the group $\Gamma$ is finite abelian and 
its order is coprime to 210. 
In such a case, 
all braided vector spaces $V\in\, _{\C\Gamma}^{\C\Gamma}\mathcal{YD} $ are of
diagonal type  and $(a)$ follows by Heckenberger's work on arithmetic root 
systems. Moreover, one can describe 
all possible Nichols algebras explicitly by generators and relations using finite Cartan matrices.
Such braided vector spaces are said to be of \emph{Cartan type}, see Subsection
\ref{sec-datum} for more details. Furthermore, relations only involve powers of root 
vectors and quantum Serre relations, see \cite[Theorem 5.1]{AS3}.
Another main result of \emph{loc.cit.} is the proof of (c), which actually states that 
this family of Hopf algebras is generated by group-like and skew-primitive elements. 
Assuming that the Cartan matrix is indecomposable, 
to prove (b) one has to find all possible deformations of powers of root 
vectors relations and quantum Serre relations. It turns out that the latter are not deformed,
so the problem reduces to find explicit formulas for 
powers of root vectors; these are of the form 
$$
x_{\alpha}^{N}=r_{\alpha}(\mu),
$$
with $N$ a natural number (the order of a root of unity which is 
coprime to 210), positive roots $\alpha$ associated 
with the root system corresponding to the Cartan matrix and $\mu$ a family
of parameters satisfying certain conditions.
The root vectors
$x_{\alpha}$ are obtained via iterated braided
commutators of the linear generators $\{x_{i}\}_{i\in I}$ of $V$, and
$r_{\alpha}(\mu)$ are elements in the augmentation ideal of 
$\C[g_{i}^{N_{i}}\, |\, i\in I]\subset \C\Gamma$ 
that are constructed in a recursive way as
$$
r_{\alpha}(\mu) = \mu_{\alpha} (1 - g_{\alpha}^{N}) + \sum_{\beta,\gamma\neq 0, \beta+\gamma = \alpha}
t_{\beta,\gamma}^{\alpha} \mu_{\beta}r_{\gamma}(\mu),
$$
where $t_{\beta,\gamma}^{\alpha}\in \C$ are determined
by the coproduct. The family of pointed Hopf algebras obtained as liftings are 
denoted by $u(\cD,\mu)$, where $\cD$ is a so-called Cartan datum.

\smallbreak
In \emph{\cite{AS3}, the authors posed the problem of finding an explicit algorithm 
describing the elements $r_{\alpha}(\mu)$ for any indecomposable
Cartan matrix.}
Although the description may seem straightforward, it is actually quite difficult to write down the elements $r_{\alpha}(\mu)$ precisely. Up to now,
this has been carried out for these types of braidings
in \cite{AS2} for type $A_{n}$, $n\geq 1$, in \cite{BDR} for type $B_{2}$
and in \cite{BGM} for type $B_{3}$. 
All these results rely on heavy and \emph{ad-hoc} computations, which shows that alternative techniques are necessary to describe the remaining cases. \smallbreak

In another direction, Masuoka showed in \cite{Ma} that all liftings $L$ are cocycle deformations of the bosonization $\toba(V)\# \C\Gamma$.
This led to the idea of describing all liftings via cocycle deformations 
using Hopf-Galois objects \cite{AAGMV}. This approach applies
in more general settings, whenever all liftings $L$ can be obtained 
by cocycle deformations. Actually, 
it is conjectured that this is always the case, 
as confirmed by all known examples. Specifically, it
was proven for the family of complex pointed Hopf algebras 
over abelian groups whose braiding is not necessarily of Cartan type \cite{AGI}, and some families of 
finite-dimensional pointed Hopf algebras over non-abelian groups like
the dihedral ones, see for example \cite{GM}.  
In \cite{AAG} the technique using Hopf-Galois objects was applied in 
the case of 
diagonal braidings of Cartan type $A$, and in \cite{GIG} to 
Cartan type $G_{2}$. 
In the latter, the quantum Serre relations may also be deformed, and the corresponding computations become rather intricate.
Actually,
in \cite{AGI} it was shown that this method yields all possible liftings of Nichols algebras of diagonal type.
It is worth emphasizing two key features of the procedure: first, it is not necessary to determine explicitly the cocycles involved in the deformation; 
second, the outcome is independent of the choice of the group $\Gamma$
as far as one has a \emph{principal realization}.
Nevertheless, in order to find the Hopf-Galois objects corresponding to the cocycles, 
one must introduce a stratification of the defining relations of the Nichols
algebras, and then compute the associated sections, which might demand a lot 
of computations as shown in the $G_{2}$-case.

\subsection*{Main results}
In this paper, we introduce a novel approach that replaces 
heavy computations with the use of structural maps from quantum groups. 
The deforming relations are given by inclusions of finite abelian 
groups in the standard Borel subgroup of a 
simple Lie group corresponding to the Cartan matrix associated with the braiding.
The liftings are then obtained by means of exact sequences and are shown to be isomorphic
to finite-dimensional quotients of quantized function algebras, the
so-called \emph{quantum subgroups}. This gives a
\emph{geometric} interpretation of the deformation, for any complex 
finite-dimensional pointed Hopf algebra over such an abelian group is obtained 
as an extension of a small quantum group by a finite abelian subgroup of a Borel 
subgroup. \smallbreak

Let $V$ be a finite-dimensional
braided vector space of diagonal Cartan type associated with a finite, indecomposable symmetrizable 
Cartan matrix $C=(a_{ij})_{1\leq i,j\leq \theta}$ and 
denote the braiding by $c:V\ot V \to V\ot V$. 
Let $\{x_{i}\}_{1\leq i\leq \theta}$ be a basis of $V$ such that 
$c(x_{i}\ot x_{j})=q_{ij} x_{j}\ot x_{i}$ for all $1\leq i,j\leq \theta$. 
The braiding matrix $\mathbf{q}=(q_{ij})_{1\leq i,j\leq \theta}$ satisfies the Cartan condition which reads 
$q_{ij}q_{ji}=q_{ii}^{a_{ij}}$ for all $1\leq i,j\leq \theta$, 
and $q_{ii}=q$ is a root of unity of order $N$, where
$N$ is coprime with 210. 
The choice 
of a (\emph{minimal}) finite abelian group that affords a \emph{principal realization} of 
$V$ corresponds to a choice of a lattice $M$ between the root lattice $Q$
and the weight lattice $P$ associated with the root system corresponding 
to the Lie algebra $\lieg$ associated with $C$, see 
Subsection \ref{sec-datum} and Definition \ref{def-contain-group}:
Given any lattice $M$ of rank $\theta$
with $Q\subseteq M \subseteq P$, we say that
{\it an abelian group $\Gamma$  contains} $M$ if there exist
group homomorphisms $g: M\to \Gamma$, $\chi: M\to \hat{\Gamma}$ such that $g(\alpha_{i}) = g_{i}$ 
and $\chi(\alpha_{i}) = \chi_{i}$
for all $i\in I$. In particular, $\Gamma$ contains elements $g_{\lambda}=g(\lambda)$
for $\lambda \in M$, satisfying $g_{\lambda + \mu} = g_{\lambda}g_{\mu}$ and 
$g_{0} = 1$. Then any abelian group $\Gamma$ that is part of a non-trivial datum of 
finite Cartan type contains the root lattice $Q$, 
but $\Gamma$ does not necessarily contain $M$ for $M\neq Q$. 
For a finite abelian group $\Gamma$ containing a lattice $M$, we denote by 
$\cD_{M} = \cD(\Gamma, (g_i)_{i\in I},(\chi_i)_{i\in I}, (a_{ij})_{i,j\in I})$ the corresponding datum, to emphasize this dependence on 
$M$.
\subsubsection*{The Dinfeld-Jimbo case}
Let us assume first that the braiding matrix $\mathbf{q}=(q_{ij})_{1\leq i,j\leq \theta}$
is of Drinfeld-Jimbo type, that is, $q_{ij}=q^{d_{i}a_{ij}}$, for all $1\leq i,j \leq \theta$.
Given a datum $\cD$ of finite Cartan type for $\Gamma$ with indecomposable 
symmetrizable Cartan matrix, we construct a finite-dimensional 
pointed Hopf algebra $u(\cD,\mu)$,
depending on a family of parameters $\mu$, 
whose coradical is isomorphic to $\C \Gamma$. Since the Cartan matrix is indecomposable, in this case the family of parameters $\lambda$ does not appear.

 \smallbreak

Let $N$ be a positive odd integer coprime with $210$ and $q$ a root
of unity of order $N$. 
Given 
an indecomposable 
and symmetrizable finite Cartan matrix $C=(a_{ij})_{1\leq i,j\leq \theta}$
and any lattice $M$ of rank $\theta$ with $Q\subseteq M \subseteq P$, one defines
the quantized universal enveloping algebra
$U^{M}_{q}(\lieg)$, where $\lieg$ is the Lie algebra associated with $C$,
see Definition \ref{def:QUEA}.
The quantum Borel subalgebra $U^{M}_{q}(\lieb^{+})$ 
of $U^{M}_{q}(\lieg)$
is the subalgebra generated by 
the elements $K_{m}$, $E_{i}$
with $m \in M$, $i\in I$; which is actually a Hopf subalgebra.
It holds that $U^{M}_{q}(\lieb^{+})$ contains a central Hopf subalgebra
$Z_M^{\geq}$ generated by the elements 
$K_m^N$, $E_\alpha^N$, for all $m \in M$ and $\alpha \in Q^{+}$,
and the corresponding quotient Hopf algebra is isomorphic to the 
small quantum group $u_{q}^{M}(\lieb^{+})$, see Definition \ref{def:flkernels}.
In particular, $U^{M}_{q}(\lieb^{+})$ fits into a short exact sequence of Hopf algebras
$$
\xymatrix{Z^{\geq}_{M} \ar@{^{(}->}[r]^(0.4){\iota_{U}}& 
U^{M}_{q}(\lieb^{+})\ar@{->>}[r]^{\pi_{U}}& u^{M}_{q}(\lieb^{+}).}
$$ 

We prove in Propositions \ref{prop:ces-u} and \ref{prop:U-u} that,
given a family of root vector parameters
$\mu=(\mu_{\alpha})_{\alpha\in Q^{+}}$  in 
$\C$, the Hopf algebra $u(\cD,\mu)$ fits into a commutative diagram
of Hopf algebras whose rows are exact
$$
\xymatrix{
Z^{\geq}_{M} \ar@{^{(}->}[r]^(0.4){\iota_{U}} \ar@{->>}[d]_{p_{0}} & 
U^{M}_{q}(\lieb^{+})\ar@{->>}[r]^{\pi_{U}}\ar@{->>}[d]^{p}
& u^{M}_{q}(\lieb^{+}) \ar@{=}[d]^{\id}\\
\C \subg  \ar@{^{(}->}[r]^>>>>{\iota_{u}}& u(\cD_{M},\mu)\ar@{->>}[r]^{\pi_{u}}& u_{q}^{M}(\lieb^{+}),}
$$	
where $\subg$ be the subgroup of $\Gamma$ generated by $g(m)^{N}=:g_{m}^{N}$, for all $m\in M$. \smallbreak

Now, as it is well-known in the theory of quantum groups, there exists a Hopf 
algebra isomorphism $U^{M}_{q}(\lieb^{+}) \simeq \cO_{q}(B_{M}^{+})$, for all lattices 
$Q\subseteq M \subseteq P$, where $B_{M}^{+}$ is the standard Borel subgroup 
of the simple Lie group $G_{M}$ corresponding to $\lieg$ and $M$, and 
$\cO_{q}(B_{M}^{+})$ is the associated quantized function algebra, 
see Subsection \ref{subsec:Oq}. Under this isomorphism, the central Hopf subalgebra
$Z^{\geq}_{M}$ coincides with the image of the quantum Frobenius map, which
is isomorphic to the function algebra $\cO(B_{M}^{+})$, and the corresponding quotient 
Hopf algebra $\overline{\cO_{q}(B_{M}^{+})}$
is isomorphic to $\big(u_{q}^{M'}(\lieb^{+})\big)^{*}$, which is
the dual of the Frobenius-Lusztig kernel at the dual lattice $M'$,
see Proposition \ref{prop:sucOB} for more details. 
Using that 
$\big(u_{q}^{M'}(\lieb^{+})\big)^{*}\simeq 
 u_{q}^{M}(\lieb^{+})$ as
Hopf algebras, 
we obtain our first main result, Theorem \ref{teo:quantum-subgrup-DJ},
which yields a  commutative diagram of Hopf algebras 
whose the rows are exact sequences
$$
\xymatrix{\cO(B_{M}^{+}) \ar@{^{(}->}[r]^{\tilde{\iota}} \ar[d]_{\simeq} 
 \ar@/_2pc/[dd]_(0.7){k^{*}}
& 
 \cO_q(B_{M}^{+}) \ar@{->>}[r]^{\pi_{B^+}}   \ar@/_2pc/[dd]_(0.7){p_{\mu}}
 & \overline{\cO_{q}(B_{M}^{+})} \ar[d]_{\simeq} \\
 Z^{\geq}_{M} \ar@{^{(}->}[r]^(0.4){\iota_{U}} \ar@{->>}[d]_{p_{0}} & 
U^{M}_{q}(\lieb^{+})\ar@{->>}[r]^{\pi_{U}}\ar@{->>}[d]^{p}\ar[u]_{\simeq}^{\psi}
& u^{M}_{q}(\lieb^{+}) \ar@{=}[d]^{\id}\\
\C^{\widehat{\subg}}\ar@{^{(}->}[r]^>>>>>{\tilde{\iota}_u} & 
u(\cD_{M},\mu)  \ar@{->>}[r]^{{\pi}_u} & {u}_q^{M}(\mathfrak{b}^{+}).} 
$$
In particular, the Hopf algebra $u(\cD_{M},\mu)$ corresponds to a 
quantum subgroup of 
$\cO_q(B_{M}^{+})$. \smallbreak

One may describe the Hopf algebras 
$u(\cD_{M},\mu)$ by the data coming from the previous commutative diagram. Precisely,
the lifting of the power root vector relation 
 $x_{\alpha}^{N}=r_{\alpha}(\mu)$ can be computed 
 using 
the isomorphism 
$\psi: U_{q}^{M}(\lieb^{+})\to \cO_{q}(B^{+}_{M})$,
an algebraic group monomorphism $k: \widehat{\subg} \hookrightarrow B^{+}_{M}$ and the 
well-known formula that gives the expression of any positive root 
$\alpha$ in terms of the simple roots $\alpha_{i}$, with $i\in I$,
for each type of Lie algebra. Indeed,
for all $\alpha \in \Phi^{+}$, one has that
\begin{align*}
x_{\alpha}^{N}&=p(E_{\alpha}^{N}) = p_{\mu}(\psi(E_{\alpha}^{N}))
=\varphi^{-1} k^{*}(\psi(E_{\alpha}^{N})),
\end{align*}
where 
$\varphi^{-1}:\C^{\widehat{\subg}}\to \C T_{M}$ 
is a natural isomorphism of Hopf algebras.
Taking the monomorphism $k: \widehat{\subg} \hookrightarrow B^{+}_{M}$ 
to be the canonical inclusion in the diagonal matrices
gives the bosonization $\toba(V)\#\C\Gamma$. On the other hand, taking $k$
to be the canonical inclusion composed with the conjugation by a unipotent matrix gives
a lifting whose relations are the ones predicted by the classification theorem of 
Andruskiewitch and Schneider. We illustrate this with examples in type $A_{\theta}$, $B_{\theta}$
and $D_{\theta}$ in Section \ref{sec:PHa-qsubgroups}.

\smallbreak
As corollary, see Corollary \ref{cor:all-liftings-qsubgroups},
we get that any such a Hopf algebra 
$u(\cD_{M},\mu)$, and consequently any finite-dimensional complex pointed Hopf 
algebra over a group $\Gamma$ (containing $M$) 
of Drinfeld-Jimbo type and indecomposable symmetrizable finite
Cartan matrix
can be constructed as a quantum subgroup, that is,
$$
u(\cD_{M},\mu) \simeq \cO_{q}(B^{+}_{M})/(\Ker k^{*}),
$$
as Hopf algebras, for a certain monomorphism of algebraic groups $k$.\smallbreak

In the second main result of the paper, Theorem \ref{teo:qsubgroups-are-liftings},  
we prove that any inclusion of a finite abelian group in $B^{+}_{M}$ 
gives a lifting of $\toba(V)\# \C\Gamma$, for some abelian group $\Gamma$ 
that contains $M$ and some Yetter-Drinfeld module $V$ over $\C\Gamma$.

\subsubsection*{General Cartan case}
The results for diagonal braidings of general Cartan type 
are completely analogous and 
follow \emph{mutatis mutandis}. The role of the quantum universal enveloping 
algebra is now played by a multiparametric version of it, which is denoted
by $ U_{\bf q}^{M}(\lieb^{+}) $, and the role of the small quantum group by a 
bosonization $\lieu^{M}(V)^{\geq 0}:= \toba(V)\#\C\overline{\Gamma}$.
Here, $\overline{\Gamma}:= \Gamma / \subg$ where
$\subg$ is the subgroup of $\Gamma$ generated by
the elements $g_{m_i}^{N}$ such that  
$\chi_{m_i}^{N} =\varepsilon$.
Note that in the Drinfeld-Jimbo case, this subgroup 
coincides with the  subgroup $\subg$ considered previously. \smallbreak

To apply the techniques used for the Drinfeld-Jimbo case, we have to 
assume that $\chi_{m}^{N}=\eps$, for all 
$m\in M$.
Under this mild assumption, 
$\C \subg$ is central in 
$u(\cD_{M},\mu)$ 
and there exists a central exact sequence of Hopf algebras 
$$
\xymatrix{\C \subg  \ar@{^{(}->}[r]^(0.4){\iota_{u}}& 
u(\cD_{M},\mu)\ar@{->>}[r]^(0.45){\pi_{u}}& \lieu^{M}(V)^{\geq 0},}
$$
see Proposition \ref{prop:ces-u-Cartan}.
Besides, it holds that
$U_{\bf q}^{M}(\lieb^{+}) \simeq 
\widetilde{\toba}(V)\# \C \Z^{\theta}$ as Hopf algebras, 
where $\tilde{\toba}(V)$ is the distinguished pre-Nichols algebra
defined in \cite[Section 3]{An2}, and
there exists a Hopf algebra epimorphism 
$p: U_{\bf q}^{M}(\lieb^{+}) \twoheadrightarrow u(\cD_{M},\mu)$ making
the following diagram of short exact sequences exact commutative
$$
\xymatrix{
Z^{\geq}_{M} \ar@{^{(}->}[r]^(0.4){\iota_{U}} \ar@{->>}[d]_{p_{0}} & 
U_{\bf q}^{M}(\lieb^{+})\ar@{->>}[r]^{\pi_{U}}\ar@{->>}[d]^{p}
& \lieu^{M}(V)^{\geq 0} \ar@{=}[d]^{}\\
\C \subg  \ar@{^{(}->}[r]^{\iota_{u}}& u(\cD_{M},\mu)\ar@{->>}[r]^{\pi_{u}}& 
\lieu^{M}(V)^{\geq 0}.}
$$
see Proposition \ref{prop:Uq-uD} for details. Now, the key fact is that 
the multiparametric quantized universal enveloping algebra $U^{M}_{\bf q}(\lieb^{+})$
is isomorphic to a 2-cocycle deformation (by a 2-cocycle $\sigma$) of $U^{M}_{q} (\lieb^{+}) $.
This isomorphisms goes through
the Hopf algebra isomorphism 
$\psi: U_{q}^{M}(\lieb^{+}) \to \cO_{q}(B^{+}_{M})$ 
and it follows that $U^{M}_{\bf q}(\lieb^{+})\simeq 
\big(\, \cO_{q}(B^{+}_{M})\big)_{\tilde{\sigma}}$, 
where $\tilde{\sigma}$ is the $2$-cocycle
 induced by $\sigma$ 
  and $\psi$.
 The Hopf algebra $ \big(\, \cO_{q}(B^{+}_{M}) \big)_{\tilde{\sigma}}$ 
 is a multiparametric version $\cO_{\bf q}(B^{+}_{M}) $ 
 of the quantized algebra of functions on the positive Borel subgroup 
 $B^{+}_{M}$. 
 Moreover, it turns out that the  
 central Hopf subalgebra $Z^{\geq}_{M}$ is not deformed under the 
 $2$-cocycle and remains central. This leads to our third main result, 
 Theorem \ref{teo:quantum-subgroup-Cartan}, which gives the 
 commutative diagram of Hopf algebras with exact rows
$$
\xymatrix{
\cO(B^{+}_{M}) \ar@{^{(}->}[r] \ar[d]^{\simeq} 
 \ar@/_2pc/[dd]_(0.7){k^{*}}
&   
\cO_{\bf q}(B^{+}_{M}) \ar@{->>}[r] \ar[d]^{\simeq} \ar@/_2pc/[dd]_(0.7){p_{\mu}} & 
\overline{\cO_{\bf q}(B^{+}_{M})} 
\ar[d]^{\simeq}\\
Z^{\geq}_{M} \ar@{^{(}->}[r]^(0.4){\iota_{U}} \ar@{->>}[d]_{p_{0}} & 
U_{\bf q}^{M}(\lieb^{+})\ar@{->>}[r]^{\pi_{U}}\ar@{->>}[d]^{p}\ar[u]^{\psi}_{\simeq}
& \lieu^{M}(V)^{\geq 0} \ar@{=}[d]^{}\\
\C^{\widehat{\subg}}\ar@{^{(}->}[r] & u(\cD_{M},\mu)  \ar@{->>}[r] & 
\lieu^{M}(V)^{\geq 0}} 
$$
Then we prove the analogous results of
Theorem \ref{teo:qsubgroups-are-liftings} and Corollary 
\ref{cor:all-liftings-qsubgroups} in this case in Theorem 
\ref{teo:qsubgroups-are-liftings-Cartan}. 
As in the case of braidings of Drinfeld-Jimbo type, these 
theorems give a geometric interpretation of the liftings.
Again, the lifting of a power root vector
is given by a composition of Hopf algebra maps.
This allows one to interpret the pointed Hopf algebras
$u(\cD_{M},\mu)$ as quantum subgroups of the 
multiparametric quantum groups  $\cO_{\bf q}(B^{+}_{M})$, with 
${\bf q} = (q_{ij})_{i,j\in I}$.

\subsection*{Examples recovering previous results and new ones}
We end the paper with a section devoted to illustrate our results with two examples
in the Drinfeld-Jimbo case.
We recover in Theorem \ref{lifting-case-an} 
the liftings of type $A_{\theta}$
computed in \cite{AS2}, of type $B_2$ computed in \cite{BDR}, and of type $B_3$ computed in \cite{BGM} by choosing a particular 
monomorphism of algebraic groups $k: \widehat{\subg}\hookrightarrow B^{+}_{M}$,
which is given by a canonical inclusion composed with a conjugation by 
a certain unipotent matrix.
Furthermore, to show the power of this technique  
we compute all liftings of type $B_{\theta}$ and $D_{\theta}$, for all $\theta \geq 2$. 
As a byproduct, we give explicit formulas for the dual quantum 
Frobenius map in type $B_{\theta}$ and $D_{\theta}$ that seem to be 
missing in the literature, see Theorem \ref{inclusion-case-bn}.

\subsection*{Organization of the paper}
Let us outline the organization of the manuscript. 
In Section \ref{sec:prelim} we set up notation and recall some facts on 
Hopf algebras, Nichols algebras, Yetter-Drinfeld modules, quantized universal 
enveloping algebras, small quantum groups 
and quantized function algebras that will be used throughout the paper. We also 
recall, in Theorem \ref{thm:AS},
the classification theorem of complex finite-dimensional pointed Hopf 
algebras due to Andruskiewitsch and Schneider.
In Section \ref{sec:ces} we prove our main results,
showing how the family of pointed Hopf algebras $u(\cD,\mu)$
can be described as quantum subgroups.
Subsection \ref{subsec:DJ} is devoted to the case of diagonal braidings of 
Drinfeld-Jimbo type, whereas Subsection \ref{subsec:BraidingGenCartan} 
deals with braidings of general Cartan type. Finally, in Section \ref{sec:PHa-qsubgroups}, we illustrate our approach with two \emph{not-to-thechnical} examples of types
$A_{\theta}$, $B_{\theta}$ and $D_{\theta}$, recovering known results and obtaining new explicit liftings.

\section*{Acknowledments}
The authors want to thank Fabio Gavarini
for fruitful discussions on isomorphisms on quantum groups.

\section{Preliminaries}\label{sec:prelim}

\subsection{Notation and conventions}
 We work over the field of complex numbers $\C$.
 We denote by $\N$ the set of positive integers 
 and by $\N_{0} = \N \cup \{0\}$.
 
Let $A$ be an algebra and $\iota: B\hookrightarrow A$ a subalgebra of $A$.
Given a two-sided $J$ of $B$, we write $(J)$ for the two-sided 
ideal of $A$ generated by $\iota(J)$.
 
Let $z$ be an indeterminate and write $ \Z[z,z^{-1}] $  for the ring of Laurent polynomials 
with integral coefficients. For every  $ n \in \N_{0}$ we set
  $$ 
  \displaylines{
   {(0)}_z   =   1 ,  \quad\quad
    {(n)}_z   =   \frac{z^n -1}{z-1}   =   1 + z + \cdots + z^{n-1}   
     \qquad \qquad ( \in  \Z[z])  \cr 
   {(n)}_z! =   {(0)}_z {(1)}_z \cdots {(n)}_z   ,  \qquad
    {\binom{n}{k}}_{z}  =   \frac{{(n)}_z!}{\;{(k)}_z! {(n-k)}_z! }  \qquad  (\in \Z[z]) \cr
   {[0]}_z  =  1 ,\quad  
    {[n]}_z  =  \frac{z^n -z^{-n}}{z-z^{-1}}  =  z^{-(n-1)} + \cdots + z^{n-1}  
      \quad  ( \in \Z[z,z^{-1}] )  \cr
   {[n]}_z!  =   {[0]}_z {[1]}_z\cdots {[n]}_z ,  \qquad
    {\bigg[\, {n \atop k} \,\bigg]}_z  =   \frac{{[n]}_z!}{\;{[k]}_z! {[n-k]}_z!}  \qquad  
    ( \in \, \Z[z,z^{-1}])  } 
    $$
 \vskip3pt
%
%
\noindent
 With these definitions, one has the following identities: 
 \[
   \; \displaystyle{{(n)}_{z^2} = z^{n-1} {[n]}_z} ,  \qquad 
   \displaystyle{{(n)}_{z^2}! = z^{\frac{n(n-1)}{2}} {[n]}_z} ,  \qquad
   \displaystyle{{\binom{n}{k}}_{\!\!z^{2}} \!\! = z^{k(n-k)} 
   {\Big[{n \atop k} \,\Big]}_z} .
 \]
 Thinking of Laurent polynomials as functions on  
   $ \C^{\times}=\C-\{0\} $,  for any  $q \in \C^\times$ 
  we read every symbol above as the corresponding
  evaluation $z=q$ of the element in $ \C$.

We recall that the finite dual $A^{\circ}$ of an algebra $A$ is the 
vector subspace of $A^{\ast}$ whose elements 
are the functionals $f:A\to \mathbb{C}$ such that $\ker f$ 
contains an ideal of finite codimension.

\subsection{Hopf algebras}\label{subsec:Hopf-algebras}
 For the theory of 
 Hopf algebras we refer to \cite{Mo} and \cite{Ra}.
 The comultiplication is denoted  
 $ \com $, the counit  $\eps$ and the antipode
 $ S$. We use the Heyneman-Sweedler 
 notation for $\com$, i.e.  
 $\com(x) = x_{(1)} \otimes x_{(2)}$.
 Given a Hopf algebra $H$, the set of group-like elements is 
 denoted by  $ G(H)$, and by $H^+ := \Ker(\eps)$ the augmentation ideal.  
 For $g, h \in G(H)$,  the set of  $ (g,h) $--primitive elements is defined by
 $P_{g,h}(H):= \{x \in H \,\vert\, \com(x) = x \ot g + h \ot x\} $
and $P_{1,1}(H)$ is simply denoted by $P_{1}(H)$.
We write  $ H^{\text{op}} \, $,  resp.\  $ H^{\text{cop}} \, $,
for the Hopf algebra given by taking in  $ H $  the opposite product, resp.\ coproduct.
Let $ \pi : H \longrightarrow A$ be a Hopf algebra map.    
Then  $ H $  is a left and right  $ A $--comodule via
$(\pi \otimes \id) \com : H \longrightarrow A \otimes H$
and  $ (\id \otimes \pi) \com : H \longrightarrow H \otimes A$, 
respectively.
The space of left and right  
\textit{coinvariants\/}  is given, respectively, by
$$
   {}^{\co \pi}H =  \{h \in H \,|\, (\pi \otimes \id) \com(h) = 1 \otimes h\},\quad
   H^{\co \pi}   =  \{ h \in H \,|\, (\id \otimes \pi) \com(h) = h \otimes 1\}.
$$

A  Hopf algebra $H$ is {\it pointed} if the coradical $H_0$ of $H$, that is, 
the sum of all simple subcoalgebras, 
is the group algebra $\Bbbk G(H)$. 
Let $ H $  and  $ K $ be Hopf algebras with bijective antipode. A  linear  map  
$(\ ,\ ): H \otimes_\C K \longrightarrow \C \, $  is called a
\emph{Hopf pairing} between  $ H $  and  $ K$  if satisfies: for all  $h \in H$,  $k \in K$,
\begin{align*}
\big( h \, , \, k_1 \, k_2 \big) &=\big( h_{(1)} \, , \, k_1 \big)\big( h_{(2)} \, , \, k_2 \big) , & \big( h_1 \, h_2 \, , \, k \big)&= \,\big( h_1 \, , \, k_{(1)} \big)	\big( h_2 \, , \, k_{(2)} \big), &  	\big( h \, , 1 \big) &=\epsilon(h),\\[.2em]
\big( S(h) \, , k \big)&=\big( h \, , S(k) \big),    & \big( S^{- 1}(h) \, , k \big)&=\big( h \, , S^{-1}(k) \big), & \big( 1 \, , k \big)&=\epsilon(k).
\end{align*}
\smallbreak
Following \cite{AD}, a sequence of Hopf algebras maps 
$$
\xymatrix{B \ar@{^{(}->}[r]^{\iota}& A\ar@{->>}[r]^{\pi}& H}
$$
is called  {\it exact\/}  if  $ \iota $  is injective,
$ \pi $  is surjective, $\pi\circ \iota=\varepsilon$,  $ \Ker(\pi) = AB^+ \, $  and  
$B = {}^{\co \pi} A$. We say that the sequence is \textit{central} if the 
image of $B$ is central in $A$.

Let $ H$  be a Hopf algebra.
 A  normalized Hopf 2-cocycle  (see  \cite[Sec.\ 7.1]{Mo}) 
  is a map  $ \sigma $  in  $ \Hom_{\Bbbk}(H \otimes H, \C) $  which is convolution 
  invertible and satisfies for all $a, b, c \in H$ that
  $$ 
   \sigma(b_{(1)},c_{(1)}) \, \sigma(a,b_{(2)}c_{(2)}) \, = \, \sigma(a_{(1)},b_{(1)}) \, 
   \sigma(a_{(2)}b_{(2)},c),\quad     \, \sigma (a,1) = \eps(a) = \sigma(1,a).
   $$
We use a  $ 2 $--cocycle  $ \sigma $ to define a new Hopf algebra $ H_\sigma$ 
in the following way. As coalgebra, $H_\sigma=H$ and  the multiplication on $H_\sigma$ is  
 $  m_{\sigma} = \sigma * m * \sigma^{-1} : H \otimes H \to H$, that is,
  $$ 
   m_{\sigma}(a,b)  \, = \,  a \cdot_{\sigma} b  \, = \,
  \sigma(a_{(1)},b_{(1)}) \, a_{(2)} \, b_{(2)} \, \sigma^{-1}(a_{(3)},b_{(3)}),  \quad \text{for all} \;\, a, b \in H.  
  $$
The antipode map $S_{\sigma}:H_{\sigma}\to H_\sigma$ is given by 
$$
S_{\sigma}(a)=\sigma(a_{(1)},S(a_{(2)}))S(a_{(3)})
\sigma^{-1}(S(a_{(4)}),a_{(5)}),\quad\text{ for all }a\in H.
$$
The Hopf algebra $H_{\sigma}$  is  
called a $2$-cocycle deformation of $H$.

\subsection{Nichols algebra and liftings}\label{subsec:Nichols-algebras} 
A braided vector space $V$ is a pair $(V, c)$ where $V$ is a vector space 
and $c \in GL(V^{\ot 2})$ is a solution to the braid equation
\begin{align*}
	(c \ot \id) (\id \ot c) (c \ot \id) &= (\id \ot c) (c \ot \id)(\id \ot c).
\end{align*}

Let $ H$  be a Hopf algebra. A (left) Yetter-Drinfeld module over $H$ is a left 
$H$-module $(V,\cdot)$ and a left $H$-comodule $(V,\delta)$ 
with $\delta: V\to H\ot V$, $\delta(x) = x_{(-1)} \otimes x_{(0)}$ for all $x\in V$, such that
$$ 
\delta(h\cdot x) = h_{(1)}x_{(-1)}S(h_{(3)})\ot h_{(2)}\cdot x_{(0)},\qquad 
\text{ for all }h\in H, x\in V.
$$
Yetter-Drinfeld modules, together with morphisms of left $H$-modules and left $H$-comodules, form a braided rigid tensor category denoted by $\ydh$. Indeed, any $V\in \ydh$ 
is a braided vector space with braiding 
$c(x \otimes y) = x_{(-1)} \cdot y\otimes x_{(0)}$ for all
$x,y\in V$. For a group algebra $H=\C \Gamma$, we simply write
$\ydga$ for the category of Yetter-Drinfeld modules.

A Hopf algebra in a braided category is called a braided Hopf algebra for short. 
In any braided Hopf algebra $R$ with multiplication $m$ and braiding 
$c: R\ot R \to R\ot R$, the braided commutator of elements $x, y \in R$ 
is given by
$[x, y]_{c} = m(x\otimes y) -m\circ c(x \ot y)$.
If $x \in P_{1}(R)$ is a primitive element, then
\begin{equation}\label{eq:braided-adjoint-action}
	(\ad x )(y) = [x, y]_{c}
\end{equation}
denotes the \emph{braided adjoint action} of $x$ on $R$.

We are interested in the following class of braided vector spaces. A braided space
$(V, c)$ is of \emph{diagonal type} if there exist a basis
$(x_i)_{1\leq i\leq n}$ of $V$ and a matrix $ (q_{ij})\in \C^{n\times n}$
such that $q_{ij} \in \C^\times$ and 
\[
c(x_i\otimes x_j)=q_{ij}x_j\otimes x_i,\quad\text{ for all }\, 1\leq i,j\leq n.
\]

\smallbreak
Diagonal braided vector spaces can be realized as 
Yetter-Drinfeld modules over group algebras of abelian groups 
as follows. 
Let $\Gamma$ be an abelian group and let $\widehat \Gamma$ 
be the group of  characters.
Yetter-Drinfeld modules over $\C \Gamma$ 
are $\Gamma$-graded $\Gamma$-modules
$V = \oplus_{g\in \Gamma} V_g$ such that
 $h \cdot V_g=V_g$ for all $g,h\in \Gamma$. For example,
for $g\in \Gamma$ and $\chi \in \widehat\Gamma$, 
the one-dimensional 
vector space  
$\C_g^{\chi}$ is an object in $\ydga$ with one homogeneous component of
degree $g$ and  
the action given by the evaluation on 
$\chi$. 
A \emph{realization} of a braided vector space $V$ 
of diagonal type with matrix $(q_{ij})\in  \C^{n\times n}$
as an object in $\ydga$ 
is then given by a collection $(g_{1}, \chi_{1}), \dots, (g_{n}, \chi_{n})$ such that
$g_{i}\in \Gamma$, $\chi_{j}\in \widehat{\Gamma}$ and
$q_{ij} =\chi_j (g_i)$, for all $1\leq i, j \leq n$. The $\Gamma$-grading is
given by $\delta(x_{i})=g_{i}\ot x_{i}$ and the $\Gamma$-action
by $g\cdot x_{i} = \chi_{i}(g)\, x_{i}$ for all $1\leq i\leq n$.

\smallbreak  
Let $V \in \ydga$. The {\it Nichols algebra} of $V$ is the unique (up to isomorphism)
 graded connected Hopf algebra $\toba(V) = \oplus_{n\ge 0} \toba^n(V)$ in $\ydga$  
 such that $V \simeq \toba^1(V) = P_1(\toba(V))$ generates $\toba(V)$
 as algebra; we suggest \cite{a-leyva} for details on Nichols algebra. 
 Given $\toba(V)\in \ydga$, 
 the bosonization $H:=\toba(V)\#\C \Gamma$
is an ordinary pointed Hopf algebra with coradical $\C\Gamma$.
 If $\toba(V)$ is finite-dimensional and $\Gamma$ is finite then 
 $H$ is also finite-dimensional.
 
 \vspace{.1cm}
 
 Assume that $A$ is finite-dimensional pointed Hopf algebra with $G(A)=\Gamma$ abelian. 
 Then the graded algebra $\text{gr}\, A$ with respect to the coradical 
 filtration is again a pointed Hopf algebra and 
 it turns out that 
 $\text{gr}\, A \simeq B\# \C \Gamma$, where $B=\toba(V)$ is 
 the Nichols algebra of $V:=P_1(B)$. 
The Hopf algebra $A$ is called a {\it lifting of $\toba(V)\#\C\Gamma$}
and it is a $2$-cocycle deformation of $\text{gr}\, A$, see for example 
\cite{AGI}.

 \subsection{Lie algebras, root systems and Cartan matrices}
 \label{subsec:Lie-algebras}
 For the theory of Lie algebras 
 we follow \cite{Hu} and \cite{Bou}.
 Hereafter we fix  $ \theta \in \N $  and  $ \, I := \{1,\dots,\theta\} \, $.
 Let  $\mcar = {\big(a_{ij} \big)}_{i, j \in I} $  
 be a symmetrizable Cartan matrix of finite type; that is, there exists a
 unique diagonal matrix  $D = (d_i \, \delta_{ij} )_{i, j \in I} $  with positive integers entries such that  $ \, D \mcar \, $  is symmetric.  
 
 \smallbreak
 Let  $ \lieg $  be the finite-dimensional simple Lie
 algebra over  $ \C $  associated with  $ \mcar \, $. As usual,  
 $ \Phi $ denotes the (finite) root system of  $ \lieg $,  $ \Phi^+ $ a set of positive roots,
  $\Pi = \{\alpha_i \,\vert\, i\in I\} \, $  is the set of simple roots and
  $ Q = \bigoplus_{i \in I} \Z \alpha_i $  the root lattice.
  We denote by  $ P =\bigoplus_{i \in I} \Z\, \omega_i$
the weight lattice with basis  
$ \{ \omega_i \}_{i \in I}$  
dual to  $  \{\alpha_j\}_{j \in I} $ with respect to the pairing 
$ \, \omega_i(\alpha_j) = \delta_{ij}d_i$  for all  $  i, j \in I $, and
by $P^{+}= \bigoplus_{i \in I} \Z_{\geq 0}\, \omega_i$ 
the set of dominant integral weights 
$P^{+}$.
The lattice $ Q $  can be identified
with a sublattice of  $ P$;  
in particular, we have that
$  \alpha_i = \sum_{j \in I} a_{ji} \, \omega_j $,  
for all  $ i \in I$.
Also, every weight $\omega_{i}$,  $i\in I$, 
can be written in terms of the 
roots as $  \omega_i = \sum_{j \in I} a'_{ij} \, \alpha_j $,
where $\mcar^{-1}=(a'_{ij})_{i,j\in I}$ is the inverse matrix of $\mcar$;
here $a \mcar^{-1} \in \Z^{\theta\times \theta}$ with $a=|\det \mcar |= [P:Q]$.
For any lattice $Q\subseteq M \subseteq P$ of rank $\theta$ 
with basis $\{m_{1},\ldots, m_{\theta}\}$
we have that 
$  \alpha_i = \sum_{j \in I} a_{ji}^{M} \, m_j $
and $  m_j = \sum_{k \in I} \bar{a}_{kj}^{M} \, w_k $
for all  $ i,j \in I$. 
Here the matrices
$C_{M}= (a_{ji}^{M})_{i,j}, \bar{C}_{M} = (\bar{a}_{kj}^{M})_{kj}\in \Z^{\theta\times \theta} $
satisfy that $C = \bar{C}_{M}C_{M}$ with $[M:Q]=|\det C_{M}|$ and $[P:M]=|\det \bar{C}_{M}|$.
Since $a=[P:M][M:Q]$, we have that $|\det C_{M}|$ divides $a$. 
We set $M^{+}=M\cap P^{+}$.

\smallbreak
In this setup, we have a natural  non-degenerate $ \Z $--bilinear pairing  
$ ( \,\ ,\,\ ): P \times Q\to  \Z$ given by the evaluation (of weights onto roots), 
$ \, (\omega_i , \alpha_ j) = \omega_{i}(\alpha_{j}) = \delta_{ij}d_i$,  for all  $i, j \in I$. It 
satisfies that $(\alpha_{i},\alpha_{j}) = d_{i}a_{ij}$, for all $i,j\in I$.

This  $ \Z $--bilinear pairing can be extended to a  $ \Z $--bilinear pairing 
$ ( \,\ ,\,\ ): P \times P\to  \Q$ with values in $\Z[a^{-1}]$. 
We say that two lattices $Q\subseteq M$, $M'\subseteq P$ are \emph{dual 
to each other} if $M'=\{\beta\in P\, |\, (\alpha,\beta)\in \Z,\,\forall \alpha\in M\}$ and 
$M=\{\beta\in P\, |\, (\beta, \alpha')\in \Z,\,\forall\alpha'\in M'\}$ (both conditions being equivalent).
Moreover, given a lattice  $Q\subseteq M \subseteq P$ there exists a unique 
lattice $Q\subseteq M' \subseteq P$ dual to $M$ such that the 
 $ \Z $--bilinear pairing $ ( \,\ ,\,\ ): P \times P\to  \Q$ restricts to a 
 non-degenerate $ \Z $--bilinear pairing  
$ ( \,\ ,\,\ ): M \times M'\to  \Z$. 
For example, 
$Q$ and $P$ are dual to each other.


\subsection{Quantized universal enveloping 
algebras and small quantum groups}\label{subsec:QUEA-small}

We follow \cite{BG} for notation and convention on quantum groups.
Let $q\in \C$ be a primitive root of unity of odd 
order $N >1$  coprime with $3$ 
if the Cartan matrix $\mcar$ is of type $G_{2}$.  
In what follows, for all $i\in I$, we fix $q_{i} = q^{d_{i}}$, where $d_i$ is the $(i,i)$ 
entry of the diagonal matrix $D$ considered in the previous subsection.

\subsubsection{Quantized universal enveloping 
algebras}
We start recalling the definition of quantized universal enveloping algebra.
\begin{defi}\label{def:QUEA}
Let $M$ be a lattice between $Q$ and $P$ of rank $\theta$.
The \emph{quantized universal enveloping algebra} 
$U^{M}_{q}(\lieg)$ 
is the algebra generated by the elements
$\{K_{m}\}_{m \in M}$, 
$E_{1},\ldots, E_{\theta}$ and 
$F_{1},\ldots, F_{\theta}$, satisfying
the following relations: for all $m,m_{1}, m_{2} \in M$ and 
$i,j\in I$,

\begin{align*}
K_{0} = 1, &\qquad K_{m_{1}}K_{m_{2}} = K_{m_{1}+m_{2}}, \\
K_{m}E_{j}K_{-m}  = q^{(m,\alpha_{j})}E_{j}, &
\qquad K_{m}F_{j}K_{-m}
=   q^{-(m, \alpha_{j})}F_{j},\\
E_{i}F_{j} - F_{j}E_{i} & =
\delta_{ij}\frac{K_{\alpha_{i}}-K_{\alpha_{i}}^{-1}}{q_{i} -
q_{i}^{-1}},\\
\sum_{l=0}^{1-a_{ij}} (-1)^{l}&
{\left[ \begin{smallmatrix} 1-a_{ij}\\
l\end{smallmatrix} \right]_{q_{i}}} E_{i}^{1-a_{ij}-l}E_{j}E_{i}^{l}
= 0 \qquad (i\neq j),\\
\sum_{l=0}^{1-a_{ij}} (-1)^{l}& {\left[ \begin{smallmatrix} 1-a_{ij}\\
l \end{smallmatrix}\right]_{q_{i}}} F_{i}^{1-a_{ij}-l}F_{j}F_{i}^{l}
= 0 \qquad  (i\neq j).
\end{align*}
\end{defi}
Note that
$K_{-m} = K_{m}^{-1}$ for all 
$m \in M$.
It holds that $ U_q^{M}(\lieg) $  
 is a Hopf algebra with coproduct, 
 counit and antipode 
   given as follows: for all  $ \, i \in I \, $ and $m\in M$, 
\begin{align*}
  \com(E_i)  &  =   E_i \otimes 1 + K_{\alpha_{i}} \otimes E_i  ,  &
 \eps(E_i)   &  =  0,  &  S(E_i)  &  =  -K_{\alpha_{i}}^{-1} E_i  \\
  \com(F_i)  &  =   F_i \otimes K_{-\alpha_{i}} + 1 \otimes F_i,  &
 \eps(F_i)   &  =  0 ,  &  S(F_i) &  = - F_i K_{\alpha_{i}}  \\
  \com(K_{\pm m})  &  =  K_{\pm m} \otimes K_{\pm m}  ,  &
 \eps (K_{\pm m})  &  =  1,  &  S(K_{\pm m})  &  = K_{\mp m}.
\end{align*}

\begin{exemplo}\label{exemplo:M=Q-M=P}
If $M=Q$, we write $U^{Q}_{q}(\lieg)=U_{q}(\lieg)$ and $K_{i} = K_{\alpha_{i}}$
for all $i\in I$. In particular, the commuting relations 
between the $K_{i}$'s and  the $E_{j}$'s , $F_{j}$'s read 
$$
K_{i}E_{j}K_{i}^{-1} = q^{(\alpha_{i},\alpha_{j})}E_{j} =q^{d_{i}a_{ij}}E_{j},\qquad
K_{i}F_{j}K_{i}^{-1} = q^{-(\alpha_{i},\alpha_{j})}F_{j} = q^{-d_{i}a_{ij}}F_{j}
\quad\text{ for }i,j\in I.
$$
If $M=P$, we write $K_{\omega_{i}}$ for all $i\in I$ for the generators corresponding to
the fundamental weights
and the commuting relations 
read
$$
K_{\omega_{i}}E_{j}K_{\omega_{i}}^{-1} = q^{(\omega_{i},\alpha_{j})}E_{j} = q^{d_i\delta_{ij}}E_{j},\qquad
K_{\omega_{i}}F_{j}K_{\omega_{i}}^{-1} = q^{-(\omega_{i},\alpha_{j})}F_{j} = q^{-d_i\delta_{ij}}F_{j}
\quad\text{for }i,j\in I.
$$
\end{exemplo}

The quantum Borel subalgebras 
$U^{M}_{q}(\lieb^{+})$ and  $U^{M}_{q}(\lieb^{-})$
of $U^{M}_{q}(\lieg)$
are the subalgebras generated by 
the elements $K_{m}$, $E_{i}$
with $m \in M$, $i\in I$,
and  $K_{m}$, $F_{i}$
with $m \in M$, $i\in I$, repectively.
Actually, both subalgebras admit a similar presentation
with genetarors and relations. 
Notice that $U^{M}_{q}(\lieb^{+})$ and $U^{M}_{q}(\lieb^{-})$ 
are both Hopf subalgebras of $U^{M}_{q}(\lieg)$. \smallbreak

Let $Q\subseteq M, M'\subseteq P$ be dual lattices. 
Then, there exists a non-degenerate Hopf pairing  
$(\ ,\ ):\, U^{M}_{q}(\lieb^{-})^{\cop} \ot U_{q}^{M'}(\lieb^{+}) \to \C$
 given by: 
\begin{equation}\label{eq:pairingb+b-}
	(K_{m},K_{m'})  = q^{-(m,m')}, 
	\quad (K_{m},E_{j}) = 0,\quad
	(F_{i},E_{j}) = \frac{\delta_{ij}}{q_{i}^{-1}-q_{i}},\quad
	(F_{i}, K_{m}) = 0,
\end{equation}
for all $i,j\in I$ and $m\in M$, $m'\in M'$.

\smallbreak
The braid group $B_{W}$ associated with the Weyl group 
corresponding to $\lieg$ acts
on $U^{M}_{q}(\lieg)$, see \cite[I.6.7, III.6.2]{BG}. 
Using this action one construct root vectors
$E_{\beta_{j}}$, $F_{\beta_{j}}$ for $\beta_{j}\in Q^{+}$ 
and we get a PBW basis of $U^{M}_{q}(\lieg)$ given 
by the monomials $M_{r,m,k} = {\bf F}^{r}K_{m}{\bf E}^{k}$ with 
$r,k \in \N_{0}^{t}$, $t$ the amount of positive roots, 
and $m \in M$;
here ${\bf F}$ and ${\bf E}$ are given monomials in a fixed ordered  
product of  root vectors
$F_{\beta_{j}}$, $E_{\beta_{j}}$, respectively.
Moreover, $E_{\beta_{j}} \in U^{M}_{q}(\lieb^{+})$ and 
$F_{\beta_{j}}\in U^{M}_{q}(\lieb^{-})$ for all $\beta_{j}\in Q^{+}$.

\subsubsection{Small quantum groups}\label{subsec:smallqgr}

Let $Z_M$ be subalgebra of 
$U^{M}_{q}(\lieg)$ generated by the elements 
$K_m^N$, $E_\alpha^N$, $F_\alpha^N$ for all $m \in M$ and $\alpha \in Q^{+}$. In turns out that $Z_M$ 
is a central Hopf subalgebra of $U^{M}_{q}(\lieg)$; 
see for instance,  \cite[Theorem III.6.2]{BG}.  
Also, $Z_{M}$ is a polynomial ring in $\dim \lieg$ generators, 
with $\theta$ invertible generators and 
$U^{M}_{q}(\lieg)$ is a free $Z_{M}$-module of rank 
$N^{\dim \lieg}$.

\begin{defi}\label{def:flkernels}
The \textit{small quantum group} or 
\textit{Frobenius-Luzstig kernel} $u_{q}^{M}(\lieg)$ associated with the lattice $M$ is defined as the quotient
Hopf algebra
$$
u_{q}^{M}(\lieg) =U^{M}_{q}(\lieg)/\big(U^{M}_{q}(\lieg)Z_{M}^+\big).
$$ 
In the case $M=Q$, we simply write $u_{q}^{Q}(\lieg)= u_{q}(\lieg)$.
\end{defi}

The small quantum groups $u_{q}^{M}(\lieb^{\pm})$
associated with the Borel subalgebras $\lieb^{\pm}$ are
the subalgebras of $u_{q}^{M}(\lieg)$ generated by 
the elements $\overline{K_{m}} = k_{m}$, $\overline{E_{i}}=e_{i}$
 and 
$\overline{K_{m}} = k_{m}$, $\overline{F_{i}}=f_{i}$ 
for all $m\in M$ and $i\in I$, respectively. 
Note that $\overline{E_{\beta}}=e_{\beta}\in u_{q}^{M}(\lieb^{+})$ and 
$\overline{F_{\beta}}=f_{\beta}\in u_{q}^{M}(\lieb^{-})$ for all $\beta \in Q^{+}$.
Alternatively, one may define  $u_{q}^{M}(\lieb^{\pm})$ 
as quotients of the Borel quantum groups 
$U^{M}_{q}(\lieb^{\pm})$ by
$$
u_{q}^{M}(\lieb^{+}) = U^{M}_{q}(\lieb^{+}) / 
\big(U^{M}_{q}(\lieb^{+})(Z_{M}^{\geq})^+\big), \qquad
u_{q}^{M}(\lieb^{-}) = U^{M}_{q}(\lieb^{-}) / 
\big(U^{M}_{q}(\lieb^{-})(Z_{M}^{\leq})^+\big),
$$
where $Z^{\geq}_{M}$ (resp. $Z^{\leq}_{M}$) is the
subalgebra of 
$U^{M}_{q}(\lieb^{+})$ (resp. $U^{M}_{q}(\lieb^{-})$)  generated by the elements 
$K_m^N$, $E_\alpha^N$ (resp. $K_m^N$, $F_\alpha^N$)
 for all $m \in M$ and $\alpha \in Q^{+}$.
We denote the generators of $u_{q}(\lieb^{+})$  by  $\overline{K_m} = h_{m}$ and 
$\overline{K_{\alpha_{i}}} = h_{i}$, for all $m \in M$ and $\alpha_i \in \Pi$.

%
%

\begin{obs}\label{rmk:ses-U-u}
Let $Q\subseteq M \subseteq P$ be a lattice. As $Z^{\geq}_{M}$
is a central Hopf subalgebra of $U^{M}_{q}(\lieb^{+})$ and   
$U^{M}_{q}(\lieb^{+})$ is a free $Z^{\geq}_{M}$-module of rank 
$N^{\dim \lieb^{+}}$, it follows from  \cite[Prop. 3.4.3]{Mo} that 
\begin{equation}\label{eq:sec-U-u} 
\xymatrix{Z^{\geq}_{M} \ar@{^{(}->}[r]^(0.4){\iota_{U}}& 
U^{M}_{q}(\lieb^{+})\ar@{->>}[r]^{\pi_{U}}& u^{M}_{q}(\lieb^{+}).}
\end{equation}
is a central exact sequence of Hopf algebras.
\end{obs}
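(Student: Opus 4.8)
The plan is to exhibit the displayed sequence as an instance of the general machinery for faithfully flat central extensions of Hopf algebras, so that the conditions in the definition of exactness recalled after \cite{AD} become verifiable by a single appeal to \cite[Prop.~3.4.3]{Mo}. The two hypotheses feeding that proposition are: (i) that $Z^{\geq}_{M}$ be a central (hence normal) Hopf subalgebra of $U^{M}_{q}(\lieb^{+})$, and (ii) that $U^{M}_{q}(\lieb^{+})$ be faithfully flat --- here even free of finite rank --- as a $Z^{\geq}_{M}$-module. Once both are secured, the proposition simultaneously yields $\Ker \pi_{U}=U^{M}_{q}(\lieb^{+})(Z^{\geq}_{M})^{+}$ and $Z^{\geq}_{M}={}^{\co \pi_{U}}U^{M}_{q}(\lieb^{+})$, while injectivity of $\iota_{U}$, surjectivity of $\pi_{U}$, and the relation $\pi_{U}\circ \iota_{U}=\eps$ are automatic from the constructions; centrality of the image is precisely hypothesis (i), giving the word \emph{central} in the conclusion.

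For (i) I would recall from the theory of quantum groups at a root of unity that the $N$-th powers $K_{m}^{N}$ and $E_{\alpha}^{N}$, with $m\in M$ and $\alpha\in Q^{+}$, are central in $U^{M}_{q}(\lieb^{+})$: the relation $K_{m}E_{j}K_{-m}=q^{(m,\alpha_{j})}E_{j}$ gives $K_{m}^{N}E_{j}K_{-m}^{N}=q^{N(m,\alpha_{j})}E_{j}=E_{j}$ since $(m,\alpha_{j})\in\Z$ and $q$ has order $N$, and centrality of the $E_{\alpha}^{N}$ is part of the quantum Frobenius phenomenon, see \cite[Thm.~III.6.2]{BG}. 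That these generators span a Hopf subalgebra --- so that $U^{M}_{q}(\lieb^{+})(Z^{\geq}_{M})^{+}$ is a Hopf ideal and the quotient $u^{M}_{q}(\lieb^{+})$ is genuinely a Hopf algebra --- is likewise recorded in \emph{loc.\ cit.}

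For (ii) I would invoke the PBW basis of $U^{M}_{q}(\lieb^{+})$: its monomials ${\bf E}^{k}K_{m}$, with $k\in \N_{0}^{t}$ ($t=|\Phi^{+}|$) and $m\in M$, split over the subalgebra generated by the $N$-th powers, so that reducing each exponent modulo $N$ and $m$ modulo $NM$ produces a free $Z^{\geq}_{M}$-basis of cardinality $N^{t}\cdot N^{\theta}=N^{\dim \lieb^{+}}$. This is exactly the assertion that $U^{M}_{q}(\lieb^{+})$ is a free $Z^{\geq}_{M}$-module of rank $N^{\dim \lieb^{+}}$, and in particular it is faithfully flat. With (i) and (ii) in hand, \cite[Prop.~3.4.3]{Mo} applies and delivers the central exact sequence.

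I expect the genuine obstacle to be concentrated in the nontrivial inclusion ${}^{\co \pi_{U}}U^{M}_{q}(\lieb^{+})\subseteq Z^{\geq}_{M}$. The opposite inclusion is immediate: for $z\in Z^{\geq}_{M}$ one has $\com(z)\in Z^{\geq}_{M}\ot Z^{\geq}_{M}$ and $\pi_{U}$ kills $(Z^{\geq}_{M})^{+}$, whence $\pi_{U}(z_{(1)})=\varepsilon(z_{(1)})1$ and $(\pi_{U}\ot \id)\com(z)=\varepsilon(z_{(1)})1\ot z_{(2)}=1\ot z$, so $z$ is coinvariant. Showing that the coinvariants are \emph{no larger} than $Z^{\geq}_{M}$ is where faithful flatness (equivalently, the finite free PBW decomposition of (ii)) is indispensable, and this is precisely the step that \cite[Prop.~3.4.3]{Mo} performs for us; every remaining verification is formal.
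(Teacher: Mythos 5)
Your proposal is correct and follows exactly the route the paper takes: it verifies that $Z^{\geq}_{M}$ is a central Hopf subalgebra (the paper imports this from \cite[Thm.~III.6.2]{BG}, just as you do for the $E_{\alpha}^{N}$, with the $K_{m}^{N}$ case being the same one-line computation) and that $U^{M}_{q}(\lieb^{+})$ is free of rank $N^{\dim\lieb^{+}}$ over it via the PBW basis, and then applies \cite[Prop.~3.4.3]{Mo} to get the central exact sequence. The only difference is that you spell out the background facts which the paper records in \S\,2.5.2 and simply cites in the remark; the substance of the argument is identical.
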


\subsection{
Quantized function algebras at roots of unity}\label{subsec:Oq}

In this subsection we also follow the approach of \cite{BG}, using 
some key results from \cite{L2} and \cite{DL}.
Fix a connected, complex, simple affine 
algebraic group $G$, and let $\lieg$ be its
Lie algebra. Let $M$ be the lattice 
corresponding to the character group of a maximal torus of $G$,
with $Q \subseteq M \subseteq P$.
We write $G=G_{M}$ to stress this relation; in particular
$G_{P}=G_{sc}$ denotes the simply connected group and 
$G_{Q}=G_{ad}$
the adjoint group. 
Hereafter, we recall the definition of the quantized function algebra 
of $G_{M}$
and some results that will be used in the sequel. 
We also give as an example an explicit presentation for the 
case $G_{sc}=\SL_{\theta+1}(\C)$.

\smallbreak
Let $z$ be a formal variable and set $z_{i}=z^{d_{i}}$, for all $i\in I$.
Let $\mathcal{U}_{z}(\lieg)$ be the Lusztig integer form of 
$U_{z}(\lieg):=U_{z}^{Q}(\lieg)$
over the ring $R=\Q[z,z^{-1}]$, see \cite[III.7.2]{BG}.
Let $\mathcal{C}(\lieg,P)$ be the strictly full subcategory of 
$\mathcal{U}_{z}(\lieg)$-mod
whose objects are highest weight modules $V(\lambda)$ with $\lambda \in P$
such that $V(\lambda)$ is a free
$R$-module of finite rank and the operators
\[K_{\alpha_{i}},\qquad \qquad \left(\begin{smallmatrix}K_{\alpha_{i}};\,\, 0\\
	t\end{smallmatrix}\right) = 
\prod_{s=1}^{t}\left(\frac{K_{\alpha_{i}}
	z_{i}^{-s+1}-1}{z_{i}^{s}-1}\right)  \]
are diagonalizable with eigenvalues $z^{(\alpha_{i},\lambda)}$ and 
$\left(\begin{smallmatrix}k\\
t\end{smallmatrix}\right)_{z_{i}}$ 
respectively, for some $k \in
\N$ and for all $i\in I$; these are called also \emph{modules of type 1}.

Following \cite[Section 4.1]{DL}, we define $R_{z}(G_{sc})$
as the $R$-submodule of $\Hom_{R}(\mathcal{U}_{z}(\lieg),R)$ spanned by
the matrix coefficients $c^{V}_{i,j}$ of representations $V$ from
$\mathcal{C}(\lieg,P)$. These satisfy  
$$
c^{V}_{i,j}(x) = v^{i}(x\cdot v_{j}) 
,
$$
where
$\{v_{i}\}_{i\in I}$ is an $R$-basis of $V$, $\{v^{j}\}_{j\in I}$ 
is the dual basis of the
dual module and $x \in \mathcal{U}_{z}(\lieg)$. 
Since the subcategory $\mathcal{C}(\lieg,P)$ is
a tensor one, $R_{z}(G_{sc})$ is a Hopf algebra. In particular, the evaluation 
gives a non-degenerate Hopf pairing 
$\langle \,\ ,\,\ \rangle: R_{z}(G_{sc})\otimes  \mathcal{U}_{z}(\lieg) \to R$ 
and consequently $R_{z}(G_{sc})$ can be consider as a Hopf subalgebra
of $\mathcal{U}_{z}(\lieg)^{\circ}$.

For a lattice $Q\subseteq M \subseteq P$ of rank $\theta$, 
set $\mathcal{C}(\lieg,M)$ to 
be the subclass of $\mathcal{C}(\lieg,P)$
consisting of those modules which are direct sums of copies of 
highest weights modules $V(\lambda)$ for $\lambda \in M^{+}$.
Let $G_{M}$ be the connected simple affine algebraic group corresponding to 
the lattice $M$. 
We define the Hopf algebra $R_{z}(G_{M})$ as the $R$-submodule of 
$\mathcal{U}_{z}(\lieg)^{\circ}$ spanned by
the matrix coefficients $c^{V}_{i,j}$ of representations $V$ from
$\mathcal{C}(\lieg,M)$. Let $M'$ be the dual lattice of $M$ and 
$\mathcal{U}_{z}^{M'}(\lieg)$ be the Lusztig integer form of 
$U_{z}^{M'}(\lieg)$.
Then there exists a non-degenerate Hopf pairing 
$\langle \,\ ,\,\ \rangle: R_{z}(G_{M})\otimes  \mathcal{U}_{z}^{M'}(\lieg) \to R$,
see \cite[Section 4]{Gav2} for more details.

\smallbreak
Let $q\in \C$ be a primitive root of unity of odd 
order $N >1$  coprime with $3$ 
if the Cartan matrix $\mcar$ is of type $G_{2}$,
and let $p_{N}(z)$ 
be the minimal polynomial of $q$ over $\Q$. 
For dual lattices $Q\subseteq M$, $M' \subseteq P$, 
consider the quotient algebras 
 $ \mathcal{U}_{q}^{M'}(\lieg)_{\Q(q)}:= \mathcal{U}_{z}^{M'}(\lieg) / (p_{N}(z) \mathcal{U}_{z}^{M'}(\lieg))$
and $\Oc_{q}(G_{M})_{\Q(q)}:=R_{z}(G_{M}) / (p_{N}(z)R_{z}(G_{M}))$.
The Lusztig integral form of $\lieg$ over $M'$ at $q$ is 
$ \mathcal{U}_{q}^{M'}(\lieg):= \mathcal{U}_{q}^{M'}(\lieg)_{\Q(q)}\otimes_{\Q(q)} \C$ and the
{\it quantized function algebra of $G_{M}$ at $q$} is the Hopf algebra 
$\Oc_{q}(G_{M}):=\Oc_{q}(G_{M})_{\Q(q)}\otimes_{\Q(q)} \C$. 

In particular, there exists a non-degenerate Hopf pairing 
$\langle \,\ ,\,\ \rangle: \Oc_{q}(G_{M})\otimes  U_{q}^{M'}(\lieg) \to \C$
given by the evaluation.

\smallbreak
The next result summarizes some properties of $\Oc_{q}(G_{M})$, its 
proof follows \emph{mutatis mutandis} from \cite[Section III.7]{BG}, 
c.f. \cite[\S 4.4]{Gav2};
nevertheless we give a sketch for completeness.

\begin{teor}\label{OcentralenOe}
With the notation above, we have
\begin{enumerate}
\item[$(a)$] 
$\Oc_{q}(G_{M})$ contains a central Hopf subalgebra isomorphic to the
coordinate algebra $\Oc(G_{M})$ of $G_{M}$.
\item[$(b)$]  $\Oc_{q}(G_{M})$ is a projective
$\Oc(G_{M})$-module of rank $N^{\dim \lieg}$. 
\item[$(c)$]  The Hopf algebra quotient 
$\overline{\Oc_{q}(G_{M})}= \Oc_{q}(G_{M}) / [\Oc(G_{M})^{+}\Oc_{q}(G_{M})]$
has dimension $N^{\dim \lieg}$ and it 
is isomorphic to $\big(u_{q}^{M'}(\lieg)\big)^{*}$, the dual of
the Frobenius-Lusztig kernel of $\lieg$ at $q$.
\item[$(d)$] The quantized coordinate algebra $\Oc_{q}(G_{M})$ 
fits into the central exact sequence
\begin{equation}\label{extOOquq*}
\Oc(G_{M}) \hookrightarrow\Oc_{q}(G_{M}) \twoheadrightarrow 
\big(u_{q}^{M'}(\lieg)\big)^{*}.
\end{equation}
\end{enumerate}
\end{teor}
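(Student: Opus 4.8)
The plan is to deduce the statement from the simply-connected case, which is precisely the content of \cite[Section III.7]{BG} (building on \cite{L2} and \cite{DL}), and then to descend to an arbitrary lattice $M$ by restricting the matrix-coefficient description of Subsection \ref{subsec:Oq} and exploiting the non-degenerate Hopf pairing $\langle\,,\,\rangle\colon \Oc_q(G_M)\otimes U_q^{M'}(\lieg)\to\C$. The key structural observation is that $\Oc_q(G_M)$ is the span of the matrix coefficients of the modules in $\mathcal{C}(\lieg,M)\subseteq \mathcal{C}(\lieg,P)$, so that $\Oc_q(G_M)\subseteq \Oc_q(G_{sc})$ as Hopf algebras and, classically, $\Oc(G_M)\subseteq \Oc(G_{sc})$. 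This lets me import the simply-connected results and keep track only of the behaviour under the smaller lattice.

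For $(a)$, I would realize the central Hopf subalgebra as the image of the dual quantum Frobenius, that is, of the transpose of Lusztig's Frobenius homomorphism, which embeds $\Oc(G_M)$ into $\Oc_q(G_M)$; its image is central because it is spanned by matrix coefficients factoring through the cocommutative classical enveloping algebra, cf. \cite[III.7.10]{BG} and \cite[\S 4.4]{Gav2}. For $(b)$, the rank statement is transported from the module structure of $U_q^{M'}(\lieg)$ over its central Hopf subalgebra $Z_{M'}$, recalled in Remark \ref{rmk:ses-U-u} and \cite[III.6.2]{BG}: a De Concini--Lyubashenko type filtration on matrix coefficients shows that $\Oc_q(G_M)$ is a projective $\Oc(G_M)$-module, and since $\dim\lieg$ is independent of the lattice the rank equals $N^{\dim\lieg}$, cf. \cite[III.7.11]{BG}.

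The core of the argument is $(c)$. Here I would show that the non-degenerate pairing $\langle\,,\,\rangle$ descends to a non-degenerate Hopf pairing $\overline{\Oc_q(G_M)}\times u_q^{M'}(\lieg)\to\C$. On the right this uses $u_q^{M'}(\lieg)=U_q^{M'}(\lieg)/\big(U_q^{M'}(\lieg)Z_{M'}^{+}\big)$, while on the left one checks that the defining ideal $\Oc(G_M)^{+}\Oc_q(G_M)$ is exactly the annihilator of $U_q^{M'}(\lieg)Z_{M'}^{+}$ inside $\Oc_q(G_M)$, so that the induced pairing is well defined. Non-degeneracy of each factor together with the dimension count $\dim \overline{\Oc_q(G_M)}=N^{\dim\lieg}=\dim u_q^{M'}(\lieg)$ coming from $(b)$ forces the Hopf algebra isomorphism $\overline{\Oc_q(G_M)}\cong \big(u_q^{M'}(\lieg)\big)^{*}$. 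Finally, $(d)$ repackages $(a)$--$(c)$: injectivity and the computation $\Ker(\pi)=\Oc(G_M)^{+}\Oc_q(G_M)$ are immediate, centrality comes from $(a)$, and the coinvariant identity $\Oc(G_M)={}^{\co\pi}\Oc_q(G_M)$ follows from the faithful flatness provided by $(b)$, so the sequence \eqref{extOOquq*} is central exact in the sense of \cite{AD}.

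The step I expect to be the main obstacle is the compatibility of the general lattice $M$ with its dual $M'$ in $(c)$: one must verify that restricting from the simply-connected pairing $\Oc_q(G_{sc})\otimes U_q(\lieg)\to\C$ to $\Oc_q(G_M)\otimes U_q^{M'}(\lieg)\to\C$ keeps the descended pairing non-degenerate, equivalently that $\Oc(G_M)^{+}\Oc_q(G_M)$ is precisely the orthogonal of the central ideal $U_q^{M'}(\lieg)Z_{M'}^{+}$. This is where the duality $M\leftrightarrow M'$ of Subsection \ref{subsec:Lie-algebras} is essential, and where the argument genuinely goes beyond a verbatim copy of \cite{BG}, matching the treatment in \cite[\S 4.4]{Gav2}.
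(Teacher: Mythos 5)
Your parts (a), (b) and (d) follow essentially the same route as the paper's sketch: (a) via the dual of the quantum Frobenius map as in \cite[Theorem III.7.2]{BG} (the paper additionally records why $\operatorname{Im}(Fr^{\circ})\subseteq\Oc_{q}(G_{M})$, using the $\mathcal{U}_{z}^{M'}(\lieg)$-submodule generated by a highest weight vector of $V(N\lambda)$), (b) by transporting \cite[Section III.7]{BG} to the lattice $M$, and (d) from \cite[Prop. 3.4.3]{Mo}. The genuine problem is your mechanism for (c). You want the non-degenerate pairing $\langle\,,\,\rangle\colon \Oc_{q}(G_{M})\otimes U_{q}^{M'}(\lieg)\to\C$ to descend to a non-degenerate Hopf pairing $\overline{\Oc_{q}(G_{M})}\times u_{q}^{M'}(\lieg)\to\C$ between the two \emph{quotients}, justified by the claim that $\Oc(G_{M})^{+}\Oc_{q}(G_{M})$ equals the annihilator of $U_{q}^{M'}(\lieg)Z_{M'}^{+}$ inside $\Oc_{q}(G_{M})$. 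Neither assertion can hold. A pairing induces a well-defined pairing on quotients $A/I\otimes B/J$ only if $\langle I,B\rangle=0=\langle A,J\rangle$; since the original pairing is non-degenerate and $I=\Oc(G_{M})^{+}\Oc_{q}(G_{M})\neq 0$, this is impossible. Moreover, $U_{q}^{M'}(\lieg)Z_{M'}^{+}$ is a Hopf ideal, and under a Hopf pairing the annihilator of a Hopf ideal is a Hopf \emph{subalgebra} of the other factor (in particular it contains $1$, since $\langle 1,x\rangle=\eps(x)=0$ for $x$ in the ideal); it can therefore never coincide with the proper ideal $\Oc(G_{M})^{+}\Oc_{q}(G_{M})$. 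So the step you yourself single out as the main obstacle is set up backwards, and as written the proof of (c) fails.

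The repair --- and what \cite[III.7.10]{BG}, which the paper simply cites at this point, actually does --- is to pair a quotient on one side against a \emph{subalgebra} on the other. Realize the Frobenius--Lusztig kernel as the finite-dimensional Hopf subalgebra $\mathbf{u}\subseteq \mathcal{U}_{q}^{M'}(\lieg)$ of the Lusztig form, generated by the $E_{i},F_{i},K_{\alpha_{i}}$; then the orthogonal $\mathbf{u}^{\perp}\subseteq\Oc_{q}(G_{M})$ of this Hopf subalgebra \emph{is} a Hopf ideal, the key lemma is the equality $\mathbf{u}^{\perp}=\Oc(G_{M})^{+}\Oc_{q}(G_{M})$, and restriction of functionals then yields an injection $\overline{\Oc_{q}(G_{M})}\hookrightarrow \mathbf{u}^{*}$, which is an isomorphism by the dimension count coming from (b). Alternatively, keeping the paper's quotient realization $u_{q}^{M'}(\lieg)=U_{q}^{M'}(\lieg)/U_{q}^{M'}(\lieg)Z_{M'}^{+}$, the annihilator $\big(U_{q}^{M'}(\lieg)Z_{M'}^{+}\big)^{\perp}$ is a finite-dimensional Hopf subalgebra of $\Oc_{q}(G_{M})$ embedding into $\big(u_{q}^{M'}(\lieg)\big)^{*}$, and one must additionally prove that its composite with the projection $\Oc_{q}(G_{M})\twoheadrightarrow\overline{\Oc_{q}(G_{M})}$ is bijective --- an argument your proposal does not contain. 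With either correction the remainder of your outline (non-degeneracy plus dimension count, then faithful flatness for (d)) goes through.
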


\begin{proof} (Sketch)
The quantum Frobenius map is a Hopf algebra map 
$Fr: \mathcal{U}_{q}^{M'}(\lieg)_{\Q(q)} \twoheadrightarrow U(\lieg)_{\Q}$
defined on integer forms and for any lattice $Q\subseteq M' \subseteq P$,
see \cite[\S 3.4]{Gav2}. By extending scalars over  $\C$
one gets a Hopf algebra epimorphism 
$Fr: {U}_{q}^{M'}(\lieg) \twoheadrightarrow U(\lieg)$.
Since $\Oc(G_{M})$ is a Hopf subalgebra of $U(\lieg)^{\circ}$, 
dualizing this epimorphism one gets a monomorphism  
$Fr^{\circ}: \Oc(G_{M}) \to U_{q}^{M'}(\lieg)^{\circ}$. Now, one has to prove
that $\operatorname{Im} (Fr^{\circ}) \subseteq \Oc_{q}(G_{M})$ and that 
$\operatorname{Im} (Fr^{\circ})$ is central in $U_{q}^{M'}(\lieg)^{\circ}$. Both facts follow
from the same proof as in  \cite[Theorem III.7.2]{BG}. For example, for the former one has
to prove that each finite-dimensional highest weight $U(\lieg)$-module 
$V(\lambda)$ with $\lambda \in (M')^{+}$ occurs as the specialization 
of $z$ to $q$ of a $\mathcal{U}_{z}^{M'}(\lieg)$-module:
one takes
 the irreducible $U_{z}^{M'}(\lieg)$-module $V(N\lambda)$ with 
 highest weight vector $v\in V(N\lambda)$ and consider the 
 $\mathcal{U}_{z}^{M'}(\lieg)$-module given by $\mathcal{U}_{z}^{M'}(\lieg)v$.
Item (c) follows from \cite[III.7.10]{BG} whereas (d) is a consequence of \cite[ Prop. 3.4.3]{Mo}.
\end{proof}

Now we recall some results from \cite[Sec. 2.1.3]{AG} about Lie 
subalgebras of $\lieg$ 
and the relations with integer forms of quantized 
enveloping algebras and quantized function algebras. These 
were given for the dual pair of lattices $P$ and $Q$, but the results
hold for any dual pair $M$ and $M'$, again \emph{mutatis mutandis}.

\smallbreak
Let $Q\subseteq M$, $M'\subseteq P$ be a dual pair of lattices or rank $\theta$.
For a Lie subalgebra $\liel \subseteq \lieg$ containing the Cartan 
subalgebra $\lieh$, the canonical monomorphism
$\mathcal{U}_{q}^{M'}(\liel)_{\Q(q)} \hookrightarrow \mathcal{U}_{q}^{M'}(\lieg)_{\Q(q)}$ 
of integral forms induces by duality 
an epimorphism of Hopf algebras 
$\Res: \big(\mathcal{U}^{M'}_{q}(\lieg)_{\Q(q)}\big)^{\circ}\twoheadrightarrow 
 \big(\mathcal{U}^{M'}_{q}(\liel)_{\Q(q)}\big)^{\circ}$.
As $\Oc_{q}(G_{M})_{\Q(q)}\subseteq  
\big(\mathcal{U}^{M'}_{q}(\lieg)_{\Q(q)}\big)^{\circ}$, we define
$$
 \Oc_{q}(L_{M})_{\Q(q)} = \Res(\Oc_{q}(G_{M})_{\Q(q)}).
$$
Moreover, as $\Oc(G_{M})_{\Q(q)} \subseteq \Oc_{q}(G_{M})_{\Q(q)}$, 
it follows that $\Res (\Oc(G_{M})_{\Q(q)})$ is a
central Hopf subalgebra of $\Oc_{q}(L_{M})_{\Q(q)} $ 
and whence there exists an 
algebraic subgroup $L_{M}$ of $G_{M}$ such that 
$\Res (\Oc(G_{M})_{\Q(q)} ) =\Oc(L_{M})_{\Q(q)} $. Furthermore, following 
\cite[Prop. 2.7]{AG} one can prove that $L_{M}$ is connected and the
corresponding Lie subalgebra $\Lie(L_{M})$
of $\lieg$ is $\liel$.

\smallbreak

Since $\Oc(L_{M})_{\Q(q)} $ is a central Hopf subalgebra of 
$\Oc_{q}(L_{M})_{\Q(q)}$, the
quotient
$$
\overline{\Oc_{q}(L_{M})}_{\Q(q)}:=  \Oc_{q}(L_{M})_{\Q(q)}/
[\Oc(L_{M})_{\Q(q)}^{+}\Oc_{q}(L_{M})_{\Q(q)}]
$$ 
is a Hopf algebra, which is actually also finite-dimensional. 

\smallbreak
From now on we fix the subalgebra $\liel \subseteq \lieg$ to be a 
Borel subalgebra $\liel=\lieb^{+}$ corresponding to our choice of the root system.
By the discussion above, there is a Borel subgroup $B^{+}_{M}$ of $G_{M}$
such that $\Lie(B^{+}_{M})=\lieb^{+}$.

\begin{prop}\label{prop:sucOB} 
Set $\Oc(B^{+}_{M}) = \Oc(B^{+}_{M})_{\Q(q)}\otimes_{\Q(q)} \C $ and 
$\Oc_{q}(B^{+}_{M}) = \Oc_{q}(B^{+}_{M})_{\Q(q)}\otimes_{\Q(q)} \C$.
\begin{enumerate}
 \item [\rm (a)] The following sequence of Hopf algebras is exact 
\[
\Oc(B^{+}_{M}) \hookrightarrow{} \Oc_{q}(B^{+}_{M}) \twoheadrightarrow
\overline{\Oc_{q}(B^{+}_{M})}.
\] 

 \item [\rm (b)] There exists an epimorphism of  Hopf algebras 
 $\overline{\Res}: \big(u_{q}^{M'}(\lieg)\big)^{*} \to
\overline{\Oc_{q}(B^{+}_{M})}$ such that the following diagram is
commutative
\[ \xymatrix{\Oc(G_{M}) \ar@{^{(}->}[r]^{\iota} \ar@{->>}[d]_{\Res} &
\Oc_{q}(G_{M})\ar@{->>}[r]^{\pi} \ar@{->>}[d]^{\Res} & \big(u_{q}^{M'}(\lieg)\big)^{*} \ar@{->>}[d]^{\overline{\Res}}\\
\Oc(B^{+}_{M}) \ar@{^{(}->}[r]^{\iota_{B^{+}}}  & \Oc_{q}(B^{+}_{M}) \ar@{->>}[r]^{\pi_{B^{+}}}&
\overline{\Oc_{q}(B^{+}_{M})} .}\]

\item [\rm (c)] $\overline{\Oc_{q}(B^{+}_{M})}\simeq 
\big(u_{q}^{M'}(\lieb^{+})\big)^{*}\simeq 
 u_{q}^{M}(\lieb^{+})$ as
Hopf algebras. In particular, $ \Oc_{q}(B^{+}_{M})$ fits into the central exact sequence
\[
\xymatrix{
\Oc(B^{+}_{M}) \ar@{^{(}->}[r]^{\iota_{B^{+}}}  & \Oc_{q}(B^{+}_{M}) \ar@{->>}[r]^{\pi_{B^{+}}}&
u_{q}^{M}(\lieb^{+}) .}
\]
\end{enumerate}
\end{prop}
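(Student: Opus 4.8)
The plan is to obtain the three items from the group-level statements of Theorem~\ref{OcentralenOe} together with the restriction construction recalled just before the proposition, reducing everything to the general machinery for central Hopf subalgebras and to the non-degenerate pairings between quantized function and enveloping algebras.

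For (a), recall that by construction $\Oc(B^{+}_{M})_{\Q(q)}=\Res(\Oc(G_{M})_{\Q(q)})$ is a central Hopf subalgebra of $\Oc_{q}(B^{+}_{M})_{\Q(q)}=\Res(\Oc_{q}(G_{M})_{\Q(q)})$, and the same holds over $\C$ after the base change $-\otimes_{\Q(q)}\C$. Since $\overline{\Oc_{q}(B^{+}_{M})}$ is finite-dimensional, $\Oc_{q}(B^{+}_{M})$ is faithfully flat (in fact projective of finite rank) over $\Oc(B^{+}_{M})$, exactly as in Theorem~\ref{OcentralenOe}(b). First I would check the four defining conditions of an exact sequence: $\iota_{B^{+}}$ is injective and $\pi_{B^{+}}$ surjective by construction; $\pi_{B^{+}}\circ\iota_{B^{+}}=\eps$ and $\Ker\pi_{B^{+}}=\Oc_{q}(B^{+}_{M})\,\Oc(B^{+}_{M})^{+}$ hold by the definition of the quotient; and the coinvariant identity $\Oc(B^{+}_{M})={}^{\co\pi_{B^{+}}}\Oc_{q}(B^{+}_{M})$ follows from faithful flatness via \cite[Prop.~3.4.3]{Mo}, precisely as in the proof of Theorem~\ref{OcentralenOe}(d). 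Centrality is automatic since the image of $\Oc(B^{+}_{M})$ is central.

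For (b), the key observation is that the restriction epimorphism $\Res\colon\Oc_{q}(G_{M})\twoheadrightarrow\Oc_{q}(B^{+}_{M})$ carries $\Oc(G_{M})$ onto $\Oc(B^{+}_{M})$; hence it sends $\Oc(G_{M})^{+}$ into $\Oc(B^{+}_{M})^{+}$ and the ideal $\Oc(G_{M})^{+}\Oc_{q}(G_{M})=\Ker\pi$ into $\Oc(B^{+}_{M})^{+}\Oc_{q}(B^{+}_{M})=\Ker\pi_{B^{+}}$. Therefore $\pi_{B^{+}}\circ\Res$ factors through $\pi$ and induces a Hopf algebra map $\overline{\Res}\colon\big(u_{q}^{M'}(\lieg)\big)^{*}\to\overline{\Oc_{q}(B^{+}_{M})}$, which is surjective because both $\Res$ and $\pi_{B^{+}}$ are. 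The right square commutes by the very construction of $\overline{\Res}$, while the left square commutes because the left vertical map is just the corestriction of $\Res$ to $\Oc(G_{M})$, landing in $\Oc(B^{+}_{M})$.

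Part~(c) carries the substantive content. The first isomorphism $\overline{\Oc_{q}(B^{+}_{M})}\simeq\big(u_{q}^{M'}(\lieb^{+})\big)^{*}$ I would obtain \emph{mutatis mutandis} from Theorem~\ref{OcentralenOe}(c) and \cite[III.7.10]{BG}, now applied to the Borel pair $(B^{+}_{M},\lieb^{+})$ instead of $(G_{M},\lieg)$: the non-degenerate Hopf pairing $\Oc_{q}(B^{+}_{M})\otimes U_{q}^{M'}(\lieb^{+})\to\C$ descends to the finite-dimensional quotient and pairs $\overline{\Oc_{q}(B^{+}_{M})}$ with the Frobenius--Lusztig kernel $u_{q}^{M'}(\lieb^{+})$. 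For the second isomorphism $\big(u_{q}^{M'}(\lieb^{+})\big)^{*}\simeq u_{q}^{M}(\lieb^{+})$ I would descend the non-degenerate Hopf pairing \eqref{eq:pairingb+b-} between $U^{M}_{q}(\lieb^{-})^{\cop}$ and $U_{q}^{M'}(\lieb^{+})$ to the small quantum groups, getting $\big(u_{q}^{M'}(\lieb^{+})\big)^{*}\simeq u_{q}^{M}(\lieb^{-})^{\cop}$, and compose with the Hopf algebra isomorphism $u_{q}^{M}(\lieb^{-})^{\cop}\xrightarrow{\ \sim\ }u_{q}^{M}(\lieb^{+})$ given on generators by $f_{i}\mapsto e_{i}$ and $k_{m}\mapsto k_{-m}$ (which one checks respects products, coproducts and antipode directly). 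Combining these two identifications with the exact sequence of (a) yields the stated central exact sequence $\Oc(B^{+}_{M})\hookrightarrow\Oc_{q}(B^{+}_{M})\twoheadrightarrow u_{q}^{M}(\lieb^{+})$. I expect the main obstacle to lie in the first isomorphism of (c): one must verify that the induced pairing on the finite-dimensional quotient remains non-degenerate, equivalently that $\dim\overline{\Oc_{q}(B^{+}_{M})}=N^{\dim\lieb^{+}}=\dim u_{q}^{M'}(\lieb^{+})$, and it is precisely here that the duality between the lattices $M$ and $M'$, together with the dimension count transported from the $G_{M}$-case through the restriction map, must be used with care.
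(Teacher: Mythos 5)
Your items (a) and (b), and the second isomorphism in (c), are essentially the paper's own argument: the paper disposes of (a) and (b) by citing \cite[Prop.~2.8~(a),(b)]{AG} \emph{mutatis mutandis} (your direct verifications via faithful flatness, \cite[Prop.~3.4.3]{Mo}, and the factorization of $\pi_{B^{+}}\circ\Res$ through $\pi$ are exactly what those proofs amount to), and for $\big(u_{q}^{M'}(\lieb^{+})\big)^{*}\simeq u_{q}^{M}(\lieb^{-})^{\cop}\simeq u_{q}^{M}(\lieb^{+})$ the paper uses precisely your route: descend the pairing \eqref{eq:pairingb+b-} and then use the presentation-level isomorphism $u_{q}^{M}(\lieb^{-})^{\cop}\simeq u_{q}^{M}(\lieb^{+})$.

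The gap is in the first isomorphism of (c), $\overline{\Oc_{q}(B^{+}_{M})}\simeq\big(u_{q}^{M'}(\lieb^{+})\big)^{*}$. First, a pairing that is genuinely non-degenerate cannot ``descend'' to proper quotients on both sides: descent requires the ideals $\Ker\pi_{B^{+}}$ and $\Ker\big(U_{q}^{M'}(\lieb^{+})\twoheadrightarrow u_{q}^{M'}(\lieb^{+})\big)$ to lie in the respective radicals, so what makes your descent legitimate is that at a root of unity the evaluation pairing against the De Concini--Kac form is in fact degenerate (e.g.\ $\big(F_{j}^{N},E_{j}^{N}\big)$ carries a factor $(N)_{q_{j}^{2}}!=0$, and $\Oc(B^{+}_{M})^{+}$ kills the whole De Concini--Kac form because the Frobenius map restricts to the counit there). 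This imprecision is shared by the paper, so it is not the main problem. The real gap is your closing claim that non-degeneracy of the induced pairing on the finite-dimensional quotients is \emph{equivalent} to $\dim\overline{\Oc_{q}(B^{+}_{M})}=N^{\dim\lieb^{+}}=\dim u_{q}^{M'}(\lieb^{+})$. Equal dimensions are necessary for perfectness but nowhere near sufficient: a pairing between two spaces of the same finite dimension can perfectly well have a radical (its radical is the image of the radical of the big pairing, which a priori could be strictly larger than the ideals you quotiented by). So after the descent and the dimension count, your argument proves nothing yet.

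To close it you need one more genuine input, and this is exactly where the paper's proof diverges from yours: the paper produces a Hopf algebra \emph{epimorphism} between the two algebras --- obtained, following \cite[Prop.~2.8~(c)]{AG}, from the map $\overline{\Res}:\big(u_{q}^{M'}(\lieg)\big)^{*}\twoheadrightarrow\overline{\Oc_{q}(B^{+}_{M})}$ of part (b), i.e.\ by leveraging the already-established $G_{M}$-level isomorphism $\overline{\Oc_{q}(G_{M})}\simeq\big(u_{q}^{M'}(\lieg)\big)^{*}$ of Theorem~\ref{OcentralenOe}(c) --- and only then does the dimension count $N^{\dim\lieb^{+}}$ force this epimorphism to be an isomorphism. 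In your framework the analogous missing step is to prove that the descended pairing is non-degenerate on \emph{one} side (say, that $u_{q}^{M'}(\lieb^{+})\to\overline{\Oc_{q}(B^{+}_{M})}^{\,*}$ is injective, i.e.\ that the radical of the big pairing on the enveloping-algebra side is exactly the ideal generated by $K_{m'}^{N}-1$ and $E_{\alpha}^{N}$, not larger); that is a nontrivial structural fact about the Rosso pairing at roots of unity, not a consequence of the dimension equality. Either supply that one-sided non-degeneracy, or replace this step by the paper's epimorphism argument.
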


\begin{proof}
Items (a) and (b) follows from  \cite[Prop. 2.8 (a), (b)]{AG} 
\emph{mutatis mutandis}
for $\liel = \lieb^{+}$ and $L=B^{+}_{M}$, eventually applying the
exact funtor $\_\,\,\otimes_{\Q(q)} \mathbb{C}$. 

(c) Following \cite[\S III.7.3]{BG} one may prove that $\Oc_{q}(B_{M}^{+})$
is a finitely generated projective module over $\Oc(B_{M}^{+})$ of rank 
$N^{\dim \lieb^{+}}$, which implies that 
$\dim \overline{\Oc_{q}(B^{+}_{M})} = N^{\dim \lieb^{+}}$.
On the other hand, from the presentation we have that 
the subalgebra $u_{q}^{M'}(\lieb^{+})$
of $u_{q}^{M'}(\lieg)$ also has  dimension  $N^{\dim \lieb^{+}}$. Hence, to prove that they are 
isomorphic, it is enough to prove the existence of a Hopf algebra epimorphism
between them. This follows from a similar argument used in  
\cite[Prop. 2.8 (c)]{AG}. 

Finally, $\big(u_{q}^{M'}(\lieb^{+})\big)^{*}\simeq 
u_{q}^{M}(\lieb^{+})$ follows from the isomorphisms 
 $\big(u_{q}^{M'}(\lieb^{+})\big)^{*}\simeq 
u_{q}^{M}(\lieb^{-})^{\cop}$ and 
 $\big(u_{q}^{M}(\lieb^{-})\big)^{\cop}\simeq 
u_{q}^{M}(\lieb^{+})$.
The former is a consequence of
the existence of a non-degenerate Hopf pairing
$(\ ,\ ):\, U^{M}_{q}(\lieb^{-})^{\cop} \ot U_{q}^{M'}(\lieb^{+}) \to \C$ 
in \eqref{eq:pairingb+b-}, the latter
can be directly constructed from the presentation.	
\end{proof}

\subsubsection{
Quantized function algebras for the $A_{\theta}$-case.}\label{subsec:OqSLn}
Suppose that 
$G=G_{sc}=\SL_{\theta+1}(\C)$ and
$\lieg = \liesl_{\theta+1}$. In this case, the category $\mathcal{C}(\lieg,P)$
is generated by the standard representation.
This allows us to present $\Oc_{q}(\SL_{\theta+1})$ by generators and 
relations as a quotient of a bialgebra by the two-sided ideal
generated by the central element $\det_q -1$.
Here, $\det_q$ is the so-called \emph{quantum determinant}.

\smallbreak
The bialgebra $\cO_q(\text{M}_{\theta+1})$ is generated by 
the elements $z_{ij}$,  $1\leq i,j,\leq \theta+1$, satisfying the following \emph{$q$-matrix} relations
\begin{align}
\label{rel-gln-1}&z_{is}z_{js}=qz_{js}z_{is},& &z_{si}z_{sj}=qz_{sj}z_{si},& &i<j,&\\
\label{rel-gln-2}&z_{it}z_{js}=z_{js}z_{it},& &z_{is}z_{jt}-z_{jt}z_{is}=(q-q^{-1})z_{it}z_{js},& &i<j,\,\,s< t.&
\end{align} 
The comultiplication and the counit 
in $\cO_q(\text{M}_{\theta+1})$ are given by
\[
\Delta(z_{ij})=\sum_{t=1}^{\theta+1}z_{it}\otimes z_{tj},\qquad 
\eps(z_{ij})=\delta_{ij},\quad 
\text{for all }\, 1\leq i,j,\leq \theta+1.
\]
The quantum determinant $\det_{q}$ is the central group-like element given by 
$$
{\det}_q = \sum_{\sigma\in \mathbb{S}_{\theta+1}}
(-q)^{\ell(\sigma)}z_{1\sigma(1)}\cdots z_{\theta+1\sigma(\theta+1)}
\,\,\in \,\, \cO_q(\text{M}_{\theta+1}),
$$ 
where $\ell(\sigma)$ denotes the length of $\sigma$. The elements $z_{ij}= c^{V}_{ij}$ 
represent the matrix coefficients of the standard representation $V= \C^{\theta+1}$
of  $\,\,\mathcal{U}_{q}^{P}(\liesl_{\theta+1})$.
Since the quantum determinant is a central group-like element, 
one may consider the 
Hopf algebra $\cO_q(\text{M}_{\theta+1})/(\text{det}_q-1)$, which is isomorphic to
$\cO_q(\text{SL}_{\theta+1})$, 
the quantized function algebra of $\text{SL}_{\theta+1}(\C)$ at $q$. 

\bigbreak

Let $B^{+}$ the subgroup of $\SL_{\theta+1}(\C)$ of upper triangular matrices. 
We denoted by $\cO_q(B^{+})$ the Hopf algebra quotient
$\cO_q(\SL_{\theta+1})/\mathcal{I}$ of $\cO_q(\SL_{\theta+1})$ by the 
Hopf ideal $\mathcal{I}$ generated by 
the elements $\{z_{ij}\,:\,i> j\}$. 
In particular, $\cO_q(B^{+})$ is generated by $z_{ij}$ with $i\leq j$, 
satisfying the relations \eqref{rel-gln-1}, \eqref{rel-gln-2} and 
$z_{11}z_{22}\cdots z_{\theta+1,\theta+1}=1$.

\subsection{Finite-dimensional pointed Hopf algebras of Cartan type} \label{sec-datum}
In this subsection we recall the classification of finite-dimensional pointed complex 
Hopf algebras over an abelian group and whose infinitesimal braiding is a
braided vector space of diagonal Cartan type. 
They consist of multiparameter variations of small quantum groups $u_{q}^{M}(\lieb^{+})$.
We follow \cite{AS2} for the presentation.

Although there are certain restrictions on the finite abelian groups involved,
these are not completely 
determined and still on \emph{minimal assumptions}
there is some freeness which allows one to 
consider different lattices $M$ between the lattices $Q$ and $P$.

\subsubsection{Data of finite Cartan type}\label{subsec:data-Cartan-type}
Let $\Gamma$ be a finite abelian group.
A datum $\cD$ of \textit{finite Cartan type} for $\Gamma$ is a tuple
\[\cD=\cD(\Gamma,(g_i)_{i\in I},(\chi_i)_{i\in I}, (a_{ij})_{i,j\in I})\]
consisting of elements $g_i\in \Gamma$, $\chi_i\in \hat{\Gamma}$ 
and a Cartan matrix $(a_{ij})_{i,j\in I}$ of finite type satisfying the \emph{Cartan relation}: 
\begin{align}\label{Cartan-type-trenza}
q_{ij}q_{ji}=q_{ii}^{a_{ij}}, \qquad\text{ for all }i,j\in I,
\end{align}
where $q_{ij}=\chi_{j}(g_i)$ for all $i,j\in I$, 
and we assume that $q_{ii}\neq 1$, for all $i\in I$. 

\bigbreak
Since our focus is on finite-dimensional Hopf algebras,
we further assume that the order of $q_{ii}$ is odd, for all $i\in I$, 
and the order of $q_{ii}$ is prime to $3$, 
for all $i$ in a connected component of 
type $G_{2}$. From  
(\ref{Cartan-type-trenza}) one gets that 
$q_{ii}^{a_{ij}} = q_{jj}^{a_{ji}}$ for all $i,j\in I$. Then, 
 for each connected component $J$
of the Dynkin diagram 
there exists $j_{0} \in I$  such that $q_{ii} = q_{j_0 j_0 }^{e _i} $
for some  $e_i \in \N$,  for all  $i \in J $. 
In particular, the order $N_{i}$ 
of $q_{ii}$ is constant in each connected
component $J$; whence we set $N_{J} = N_{i}$,
for all $i \in J$. We also set 
$q_{j_{0}} = q_{j_{0}j_{0}}^{1/2}$ and  
$q_i := q_{j_0}^{\,e _i} $  for all  $ i \in J$.
In the case where the Dynkin diagram is connected,
we write $q:= q_{j_{0}}$ and denote the order $N_{J}$ simply by $N$. 

\smallbreak
Given a positive root 
$\alpha= \sum_{i=1}^{\theta} n_{i}\alpha_{i} \in Q^{+}$, 
$n_{i}\in \N_{0}$, we write 
\[
g_{\alpha}= \prod_{i=1}^{\theta} g_{i}^{n_{i}},\qquad\qquad \chi_{\alpha}=
 \prod_{i=1}^{\theta} \chi_{i}^{n_{i}}. 
\]
In other words, we have group homomorphisms $g: Q\to \Gamma$ and 
$\chi:Q\to \hat{\Gamma}$ given by 
$g(\alpha_{i}) = g_{i}$ and $\chi(\alpha_{i}) = \chi_{i}$, for all $i\in I$, respectively, and 
we write $g_{\lambda} = g(\lambda)$ and $\chi(\lambda) = \chi_{\lambda}$ for any 
$\lambda \in Q$.

\smallbreak
\begin{defi}\label{def-contain-group}
Given any lattice $M$ of rank $\theta$
with $Q\subseteq M \subseteq P$, we say that
{\it an abelian group $\Gamma$  contains} $M$ if there exist
group homomorphisms $g: M\to \Gamma$, $\chi: M\to \hat{\Gamma}$ such that $g(\alpha_{i}) = g_{i}$ 
and $\chi(\alpha_{i}) = \chi_{i}$
for all $i\in I$. In particular, $\Gamma$ contains elements $g_{\lambda}=g(\lambda)$, 
for $\lambda \in M$, satifying $g_{\lambda + \mu} = g_{\lambda}g_{\mu}$ and 
$g_{0} = 1$.	
\end{defi}

 Note that any abelian group $\Gamma$ that is part of a non-trivial datum of 
 finite Cartan type contains the root lattice $Q$, 
but not necessarily $\Gamma$ contains $M$ for $M\neq Q$.
For a finite abelian group $\Gamma$ containing a lattice $M$, we denote by $\cD_{M} = 
\cD(\Gamma, (g_i)_{i\in I},(\chi_i)_{i\in I}, (a_{ij})_{i,j\in I})$
to stress this property. 

As claimed in \cite{AS3},
the explicit classification of all data of finite Cartan type for a given
finite abelian group $\Gamma$ is a computational problem; at least it is bounded.

\smallbreak
Given a datum $\cD$ of finite Cartan type for $\Gamma$ with indecomposable 
Cartan matrix, we will recall in the next 
subsection how to construct a finite-dimensional 
pointed Hopf algebra $u(\cD,\mu)$,
depending on a family of parameters $\mu$, 
whose coradical is isomorphic to $\C \Gamma$. 

\smallbreak
A family of \emph{root vector parameters}
is a family of elements $\mu=(\mu_{\alpha})_{\alpha\in Q^{+}}$  in 
$\C$ such that for all 
$\alpha \in Q^{+}$, 
\begin{equation}\label{eq:cond-lifting-param}
\text{ if }\quad g_{\alpha}^{N}=1 \quad\text{ or }\quad\chi_{\alpha}^{N}\neq \varepsilon
\qquad\text{ then }\quad\mu_{\alpha}=0. 
\end{equation}

\vskip11pt

\subsubsection{Finite-dimensional pointed Hopf algebras $u(\cD,\mu)$}\label{subsec:pres-u}
In this subsection we follow \cite{AS3}.
Given a datum of finite Cartan type $\cD$ and a family
of parameters  $\mu$ satifying \eqref{eq:cond-lifting-param}, 
one defines the Hopf algebra 
$u(\cD,\mu)$ as the algebra generated 
by the elements $g$, $x_i$ with $g\in \Gamma$ and $1\leq i\leq \theta$, 
satisfying the following relations:

\begin{align}
gx_ig^{-1}&=\chi_i(g)x_i,& & \text{for all }\, i \text{ and } g\in\Gamma,& 
\nonumber \\
(\ad x_i)^{1-a_{ij}}(x_j)&=0,& &\text{for all }\, i\neq j,&
\nonumber \\
\label{eq-HA-4}	x_{\alpha}^{N}&=r_{\alpha}(\mu),& &\text{for all }\, \alpha\in Q^{+}.&
\end{align}
Here, $(\ad x_i)^{1-a_{ij}}(x_j)$
denotes the composition of the
braided adjoint action defined in \eqref{eq:braided-adjoint-action}, 
$x_{\alpha}$ are root vectors 
which are obtained via iterated braided
commutators of the elements $\{x_{i}\}_{i\in I}$, and
$r_{\alpha}(\mu)$ are elements in the augmentation ideal of 
$\C[g_{i}^{N_{i}}\, |\, i\in I]$ 
that are constructed in a recursive way as
\begin{equation}\label{eqn:relation-recursion}
r_{\alpha}(\mu) = \mu_{\alpha} (1 - g_{\alpha}^{N}) + \sum_{\beta,\gamma\neq 0, \beta+\gamma = \alpha}
t_{\beta,\gamma}^{\alpha} \mu_{\beta}r_{\gamma}(\mu),
\end{equation}
where $t_{\beta,\gamma}^{\alpha}\in \C$ are certain scalars determined
by the comultiplication. Actually, one may prove  by induction 
on the length of $\alpha$ that the elements  
$r_{\alpha}(\mu)$ lie in the augmentation ideal of 
$\C[g_{\alpha}^{N}\, |\, \chi_{\alpha}^{N}\neq \eps]$. 
The coalgebra structure is given by setting 
$g\in G(u(\cD,\mu))$ for all $g\in \Gamma$ and $x_{i} \in P_{1,g_{i}}(u(\cD,\mu))$, i.e.
$$
\com(x_i)=x_i\otimes 1+g_i\otimes x_i,\qquad \eps(x_{i})=0,\qquad
\com(g)=g\otimes g,\qquad \eps(g)=1.
$$

\begin{obs}\label{rmk:general-construc} Let $\cD$ be a datum of finite Cartan type such 
that the Cartan matrix (in the datum) is not necessarily indecomposable. 
In this case, there is a Hopf algebra $u(\cD,\lambda,\mu)$ which depends 
on the Cartan 
datum and on two families of parameters $\lambda$ and $\mu$. 
The family of scalars $\lambda$ corresponds to a relation that is absent when the 
Cartan matrix is indecomposable, which is precisely the case we are considering. 
For this reason, we simply write $u(\cD,\mu)$. 
	
\end{obs}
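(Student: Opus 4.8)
The plan is to recover the general construction of \cite{AS3} and to localize precisely where the second family of parameters $\lambda$ intervenes. Writing the index set as a disjoint union $I=\bigsqcup_{t}J_{t}$ of the connected components of the Dynkin diagram of $(a_{ij})$, the algebra $u(\cD,\lambda,\mu)$ is presented by the generators $g\in\Gamma$ and $x_{1},\dots,x_{\theta}$ subject to the same group relations $gx_{i}g^{-1}=\chi_{i}(g)x_{i}$, the quantum Serre relations $(\ad x_{i})^{1-a_{ij}}(x_{j})=0$ for $i\neq j$ in a common component, and the root--vector relations $x_{\alpha}^{N}=r_{\alpha}(\mu)$ as in Subsection~\ref{subsec:pres-u}, together with one new family of \emph{linking relations}
$$
(\ad x_{i})(x_{j})=\lambda_{ij}\,(1-g_{i}g_{j}),\qquad i\in J_{s},\ j\in J_{t},\ s\neq t,
$$
where $\lambda=(\lambda_{ij})$ is constrained by the \emph{linkability} condition that $\lambda_{ij}=0$ unless $g_{i}g_{j}\neq 1$ and $\chi_{i}\chi_{j}=\eps$. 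The coalgebra structure is declared exactly as for $u(\cD,\mu)$, with each $g$ group--like and each $x_{i}$ being $(1,g_{i})$--skew primitive.

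First I would check that the two--sided ideal generated by all these relations is a Hopf ideal, the only new input being the linking relation. For $i,j$ in distinct components one has $a_{ij}=0$, hence $q_{ij}q_{ji}=q_{ii}^{a_{ij}}=1$ by the Cartan condition; a direct computation of $\com$ then shows that $(\ad x_{i})(x_{j})$ is again $(1,g_{i}g_{j})$--skew primitive. Since $1-g_{i}g_{j}$ is $(1,g_{i}g_{j})$--skew primitive as well, the element $(\ad x_{i})(x_{j})-\lambda_{ij}(1-g_{i}g_{j})$ is skew primitive, so $\com$ carries it into $(\mathrm{ideal})\ot H+H\ot(\mathrm{ideal})$. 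Compatibility with the $\Gamma$--action forces the characters of both sides to agree, which is exactly the requirement $\chi_{i}\chi_{j}=\eps$, while $g_{i}g_{j}\neq1$ is needed to avoid a vacuous right--hand side; the verifications for $\eps$ and $S$ are analogous. Hence the quotient is a Hopf algebra.

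The reduction asserted in the statement then follows once one recalls that linkable vertices necessarily lie in distinct connected components: confronting the condition $\chi_{i}\chi_{j}=\eps$ with the Cartan condition $q_{ij}q_{ji}=q_{ii}^{a_{ij}}$ shows these to be incompatible when $i,j$ belong to a single finite--type component (cf.~\cite{AS3}). Therefore, if $(a_{ij})$ is indecomposable its Dynkin diagram is connected, there is a single component $J_{1}=I$ and no pair in distinct components, every linking relation is vacuous, the index set over which $\lambda$ ranges is empty, and $u(\cD,\lambda,\mu)$ coincides with $u(\cD,\mu)$; this justifies suppressing $\lambda$ from the notation throughout.

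I expect the substantive difficulty to lie not in the Hopf--ideal check but in the implicit assertion that $u(\cD,\lambda,\mu)$ is a genuine \emph{lifting}, i.e.\ that the quotient has the expected dimension $\dim\toba(V)\cdot|\Gamma|$, equivalently that it admits a PBW basis over $\C\Gamma$ of the same cardinality as $\toba(V)\#\C\Gamma$. This faithful--flatness statement is the technical heart of \cite{AS3}, proved there via the theory of Nichols algebras together with the fact that every such lifting is a cocycle deformation of the bosonization; I would invoke it rather than reprove it.
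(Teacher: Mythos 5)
Your proposal is correct and takes essentially the same route as the paper, which justifies this remark simply by recalling the construction of $u(\cD,\lambda,\mu)$ from \cite{AS3}: the parameters $\lambda$ index linking relations attached to pairs of vertices lying in \emph{distinct} connected components of the Dynkin diagram, so an indecomposable Cartan matrix leaves no such pairs and the family $\lambda$ is empty. One small caveat: your argument that linkability ($\chi_i\chi_j=\eps$, $g_ig_j\neq 1$) cannot occur inside a single component uses only the Cartan condition, which suffices for adjacent vertices but not for non-adjacent ones ($a_{ij}=0$), where one also needs the order hypotheses on the $q_{ii}$; this does not affect your conclusion, however, since in \cite{AS3} the linking datum is by definition indexed only by pairs in distinct components.
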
	

The family of Hopf algebras  $u(\cD,\lambda,\mu)$ is crucial for 
the classification 
problem of finite-dimensional complex pointed Hopf algebras
because of the following key result, which is one of the quite few structural results
in Hopf algebra theory. 

\begin{teor}\cite{AS3}\label{thm:AS}
Following the notation above, we have:
\begin{enumerate}
\item[\rm (a)] $u(\cD,\lambda,\mu)$ is a pointed 
Hopf algebra of dimension $\dim u(\cD,\lambda,\mu)=N^{|\Phi^+|} \ord \Gamma $ 
with $u(\cD,\lambda,\mu)_{0} = \C \Gamma$.
\item[\rm (b)] Let $A$ be a finite-dimensional pointed Hopf algebra  with abelian
group $\Gamma= G(A)$ and infinitesimal braiding matrix 
$(q_{ij})_{1\leq i,j\leq \theta}$. Assume that
\begin{enumerate}
	\item [$\circ$] $N=\ord q_{ii}$ is odd, for all $1\leq i\leq \theta$,
	\item [$\circ$] $N=\ord q_{ii}$ is prime to $3$ if $q_{il}q_{li}\in \{q^{-3}_{ii},q^{-3}_{ll}\}$ for some $l$,
	\item [$\circ$] $N=\ord q_{ii}>7$, for all $1\leq i\leq \theta$.
\end{enumerate}
Then $A \simeq u(\cD,\lambda, \mu)$ for some $\cD,\lambda, \mu$.
\end{enumerate}
\end{teor}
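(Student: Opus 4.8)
The plan is to treat the two assertions separately, since (a) is a structural/dimension statement about the explicitly presented algebras and (b) is the lifting method proper. For part (a), the first task is to verify that the two-sided ideal $J$ generated by the listed relations is a Hopf ideal, so that $u(\cD,\lambda,\mu)$ is genuinely a Hopf algebra. The group relations and the quantum Serre relations are standard, as the quantum Serre elements are skew-primitive in the free construction and hence generate a coideal. The delicate point is the root-vector relations $x_{\alpha}^{N}-r_{\alpha}(\mu)$: the recursion (\ref{eqn:relation-recursion}) for $r_{\alpha}(\mu)$ is engineered precisely so that, modulo the ideal generated by the lower-height relations, $x_{\alpha}^{N}-r_{\alpha}(\mu)$ is $(g_{\alpha}^{N},1)$-primitive. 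I would prove this by induction on the height of $\alpha$, computing $\com(x_{\alpha}^{N})$ via the quantum binomial formula (using that $q_{ii}$ has order $N$ together with the Cartan condition) and comparing it with $\com(r_{\alpha}(\mu))$.

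Next I would pin down the dimension. Filtering $u(\cD,\lambda,\mu)$ by assigning degree $0$ to $\Gamma$ and degree $1$ to each $x_{i}$, the deformation terms $r_{\alpha}(\mu)$ and the linking parameters $\lambda$ drop out in the associated graded (they have strictly lower filtration degree), so that $\gr u(\cD,\lambda,\mu)$ is a quotient of the bosonization $\toba(V)\#\C\Gamma$. This yields the upper bound $\dim u(\cD,\lambda,\mu)\le N^{|\Phi^{+}|}\ord\Gamma$. For the matching lower bound I would exhibit the ordered PBW monomials $x_{\beta_{1}}^{a_{1}}\cdots x_{\beta_{t}}^{a_{t}}g$ with $0\le a_{j}<N$ and $g\in\Gamma$ as a spanning set, and prove their linear independence, either by constructing a faithful module of dimension $N^{|\Phi^{+}|}\ord\Gamma$ or by a Diamond-Lemma confluence check on the rewriting system defined by the relations. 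Once the dimension is fixed, the filtration refines the coradical filtration and one reads off that $u(\cD,\lambda,\mu)$ is pointed with $u(\cD,\lambda,\mu)_{0}=\C\Gamma$.

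For part (b) I would run the lifting method. Given such $A$, I pass to $\gr A$ with respect to the coradical filtration; by Radford-Majid one has $\gr A\cong R\#\C\Gamma$ with $R\in\ydga$ a graded connected braided Hopf algebra and $R^{0}=\C$. The infinitesimal braiding $V=R^{1}$ is then of diagonal type with braiding matrix $(q_{ij})$. Under the three numerical hypotheses, Heckenberger's classification forces $V$ to be of finite Cartan type with $\toba(V)$ having the standard presentation, and the generation-in-degree-one theorem (item (c) of the introduction) gives $R=\toba(V)$; hence $\gr A\cong\toba(V)\#\C\Gamma$, so that $A$ is a lifting of a bosonization and the datum $\cD$ is determined.

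It remains to identify $A$ with some $u(\cD,\lambda,\mu)$. Choosing representatives $x_{i}\in A$ lifting a basis of $V$, one checks that the group relations hold automatically; that the linking and quantum Serre relations hold up to the parameters $\lambda$, with no linking when the Cartan matrix is connected; and that each $x_{\alpha}^{N}$ is forced, by analysis of the space of $(g_{\alpha}^{N},1)$-skew-primitives in the relevant term of the coradical filtration, to coincide with an element of the prescribed recursive form $r_{\alpha}(\mu)$ subject to (\ref{eq:cond-lifting-param}). This last step is the crux, and I expect it to be the principal obstacle: the numerical restrictions (order odd, prime to $3$ in $G_{2}$-type situations, and $>7$) are exactly what guarantee rigidity of the Serre relations and that the only admissible deformations of the root-vector relations are the scalars $\mu_{\alpha}$. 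Concretely, the hard part is the cohomological computation controlling the skew-primitive spaces of $\toba(V)$ in $\ydga$ degree by degree and ruling out spurious deformations; the remainder is the bookkeeping that assembles the universal morphism $u(\cD,\lambda,\mu)\to A$ and shows it is an isomorphism by the dimension count from part (a).
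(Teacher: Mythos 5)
You should first be aware that the paper does not prove Theorem \ref{thm:AS} at all: it is imported verbatim from \cite{AS3} as the classification theorem on which the whole paper is built, and the paper's own contribution begins downstream of it (making the recursive elements $r_{\alpha}(\mu)$ explicit via quantum subgroups). So your proposal can only be measured against the original Andruskiewitsch--Schneider proof, and at the level of architecture you have reproduced it faithfully: part (a) by checking the defining ideal is a Hopf ideal, bounding the dimension above through the associated graded algebra and the presentation of $\toba(V)$, and below by a PBW-type basis; part (b) by the lifting method, i.e. $\gr A \simeq R\#\C\Gamma$, Heckenberger's classification \cite{H} forcing finite Cartan type, generation in degree one, and then classifying the admissible deformations of the relations.

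As a proof, however, your text is a program rather than an argument, and every point you defer is precisely where the content of \cite{AS3} lies. First, the skew-primitivity of $x_{\alpha}^{N}-r_{\alpha}(\mu)$ modulo lower-height relations is not a quantum-binomial computation: for non-simple $\alpha$ the root vectors $x_{\alpha}$ are not skew-primitive, and computing $\com(x_{\alpha}^{N})$ rests on the De Concini--Procesi/Lusztig theory of $N$-th powers of root vectors spanning a central Hopf subalgebra of the larger quantum group; this theory is also what produces the scalars $t^{\alpha}_{\beta,\gamma}$ in \eqref{eqn:relation-recursion}, which your induction would have to construct rather than merely compare against. Second, the lower dimension bound in \cite{AS3} is not obtained by a Diamond-Lemma confluence check (which is essentially intractable here) but by realizing $u(\cD,\lambda,\mu)$ as a quotient of a larger Hopf algebra that is free over a central Hopf subalgebra --- conceptually the same mechanism this paper exploits in Propositions \ref{prop:ces-u} and \ref{prop:U-u}. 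Third, in part (b) you quote generation in degree one as an external input, but under the stated hypotheses it is itself one of the main theorems of \cite{AS3}, proved there, not a prerequisite. Finally, the step you correctly identify as the crux --- rigidity of the quantum Serre relations and the fact that $x_{\alpha}^{N}$ must equal an element of the recursive form $r_{\alpha}(\mu)$ subject to \eqref{eq:cond-lifting-param} --- is asserted, not proved; that analysis of skew-primitive spaces across the coradical filtration is the bulk of the original paper. In short: a correct skeleton matching the historical proof, but none of the load-bearing steps is carried out.
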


It turns out that the 
associated graded Hopf algebra $\gr u(\cD,\mu)$ of  $u(\cD,\mu)$ is isomorphic to 
a bosonization $\toba(V)\#\C \Gamma$ for some $V\in\ydga$. In other words, 
$u(\cD,\mu)$ is a lifting of $\toba(V)\#\C \Gamma$.
The Nichols algebra $\toba(V)$ is completely determined by the Cartan datum; as an 
algebra it is 
generated by the elements $\{x_{i}\}_{i\in I}$ satisfying the relations 
$(\ad x_i)^{1-a_{ij}}(x_j)=0$ for all $i\neq j$, and 
$x_{\alpha}^{N}=r_{\alpha}(\mu)$ for all $\alpha\in Q^{+}$.

\begin{obs}\label{rmk:relation-recursion}
The main obstruction to give an explicit presentation of
the families of pointed Hopf algebras $u(\cD,\mu)$ is the recursion 
formula \eqref{eqn:relation-recursion}: 
$x_{\alpha}^{N}=r_{\alpha}(\mu)$ for all $\alpha\in Q^{+}$. 
It contains implicitly the lifting data
for the corresponding Nichols algebra.
Despite the algorithmic nature of the explicit presentation 
of the elements 
$r_{\alpha}(\mu)$, few families are completely known. 
Among them one finds
\cite[Sec. 6]{AS2} and \cite{AAG} for type $A_{\theta}$, \cite{BDR} for type $B_{2}$, 
\cite{BGM} for type $B_3$ and
\cite{GIG} for type $G_{2}$. The main goal of this paper is to 
answer a question on \cite{AS3} and find an
explicit algorithm describing the elements $r_{\alpha}(\mu)$ for any 
connected Dynkin diagram by using quantum subgroups.  
As an example, we use our method to reobtain the explicit formula for type 
$A_{\theta}$, $B_{\theta}$ and $D_{\theta}$ 
in Section \ref{sec:PHa-qsubgroups}.
\end{obs}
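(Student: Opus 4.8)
The plan is to prove the statement by reducing the Borel picture to the group-level Theorem~\ref{OcentralenOe} via restriction along the inclusion of the Borel into $G_M$, and then to identify the cokernel Hopf algebra $\overline{\Oc_q(B^+_M)}$ through a dimension count combined with the non-degenerate pairing \eqref{eq:pairingb+b-}. Throughout I would work first over the ground ring $\Q(q)$ with the integral forms, keeping all maps $R$-linear, and only apply the exact base-change functor $-\otimes_{\Q(q)}\C$ at the very end, so that injectivity, surjectivity and exactness of the sequences are automatically preserved.

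For items (a) and (b): the canonical monomorphism of integral forms $\mathcal{U}^{M'}_q(\lieb^+)_{\Q(q)}\hookrightarrow\mathcal{U}^{M'}_q(\lieg)_{\Q(q)}$ dualizes to the restriction epimorphism $\Res$ on the finite duals, and by definition $\Oc_q(B^+_M)_{\Q(q)}=\Res\!\left(\Oc_q(G_M)_{\Q(q)}\right)$. I would first check that $\Res$ carries the central Hopf subalgebra $\Oc(G_M)_{\Q(q)}$ of Theorem~\ref{OcentralenOe} onto a central Hopf subalgebra isomorphic to $\Oc(B^+_M)_{\Q(q)}$ — this is exactly where the identification $\Lie(B^+_M)=\lieb^+$ enters — and that the quantum Frobenius diagram of the group case restricts compatibly, producing the right-hand map $\overline{\Res}$ and the commuting square. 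The whole argument parallels \cite[Prop.~2.8(a),(b)]{AG} with $\liel=\lieb^+$; the conditions $\pi_{B^+}\iota_{B^+}=\varepsilon$, $\Ker\pi_{B^+}=\Oc_q(B^+_M)\,\Oc(B^+_M)^+$ and the coinvariant equality then descend from the corresponding facts for $G_M$ by naturality of $\Res$. Base-changing to $\C$ yields (a) and the epimorphism in (b).

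For item (c): the numerical input I would establish, exactly as in \cite[III.7.3]{BG} from the triangular/PBW decomposition, is that $\Oc_q(B^+_M)$ is a finitely generated projective module over the central subalgebra $\Oc(B^+_M)$ of rank $N^{\dim\lieb^+}$, which forces $\dim\overline{\Oc_q(B^+_M)}=N^{\dim\lieb^+}$. On the other side, the presentation of the Frobenius--Lusztig kernel gives $\dim u^{M'}_q(\lieb^+)=N^{\dim\lieb^+}$, hence the same dimension for its dual. Since two finite-dimensional Hopf algebras of equal dimension are isomorphic as soon as there is a surjection between them, it then suffices to produce a Hopf epimorphism $\big(u^{M'}_q(\lieb^+)\big)^{*}\twoheadrightarrow\overline{\Oc_q(B^+_M)}$ on generators, as in \cite[Prop.~2.8(c)]{AG}. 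Finally I would identify $\big(u^{M'}_q(\lieb^+)\big)^{*}\simeq u^{M}_q(\lieb^+)$ by composing the duality $\big(u^{M'}_q(\lieb^+)\big)^{*}\simeq u^{M}_q(\lieb^-)^{\cop}$ coming from the non-degenerate pairing \eqref{eq:pairingb+b-} (checked to descend to the small quantum groups) with the isomorphism $u^{M}_q(\lieb^-)^{\cop}\simeq u^{M}_q(\lieb^+)$ read directly off the presentations, sending $f_i\mapsto e_i$ and $k_m\mapsto k_m$.

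The main obstacle I expect lies in the two structural transfers hidden in the phrase \emph{mutatis mutandis}: first, showing that $\Res$ genuinely lands inside $\Oc_q(B^+_M)$ with $\Res(\Oc(G_M))$ central and equal to $\Oc(B^+_M)$, and second, transporting projectivity together with the \emph{exact} rank $N^{\dim\lieb^+}$ from $G_M$ down to $B^+_M$. Once these facts are secured, exactness in (a), the commutativity in (b), and the dimension-counting isomorphism in (c) become essentially formal. A secondary delicate point is verifying that the Hopf pairing \eqref{eq:pairingb+b-} remains non-degenerate after passing to the Frobenius--Lusztig kernels, so that it really induces the duality $\big(u^{M'}_q(\lieb^+)\big)^{*}\simeq u^{M}_q(\lieb^-)^{\cop}$ used in the last step.
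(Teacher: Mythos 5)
Your proposal does not address the statement it was written for. The statement is Remark~\ref{rmk:relation-recursion}, which is expository: it identifies the recursion formula \eqref{eqn:relation-recursion} as the obstruction to presenting $u(\cD,\mu)$ explicitly, surveys the families already known (types $A_{\theta}$, $B_{2}$, $B_{3}$, $G_{2}$), and announces the paper's goal. The paper attaches no proof to it; its mathematical content is discharged later, by the structural results (Theorems~\ref{teo:quantum-subgrup-DJ}, \ref{teo:qsubgroups-are-liftings}, \ref{teo:quantum-subgroup-Cartan}, with the computation recipe $x_{\alpha}^{N}=\varphi^{-1}k^{*}(\psi(E_{\alpha}^{N}))$ of Remark~\ref{rmk:computing-liftings}) and by the explicit formulas of Section~\ref{sec:PHa-qsubgroups} (Theorems~\ref{lifting-case-an}, \ref{lifting-case-bn}, \ref{lifting-case-dn}). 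What you actually wrote is a blind proof of Proposition~\ref{prop:sucOB} --- the exact sequence $\Oc(B^{+}_{M})\hookrightarrow\Oc_{q}(B^{+}_{M})\twoheadrightarrow\overline{\Oc_{q}(B^{+}_{M})}$, the commuting square with $\overline{\Res}$, and the identification $\overline{\Oc_{q}(B^{+}_{M})}\simeq\big(u_{q}^{M'}(\lieb^{+})\big)^{*}\simeq u_{q}^{M}(\lieb^{+})$ --- which is an ingredient of the program the remark announces, not the remark itself. That mismatch is the genuine gap: nothing in your argument produces, or even mentions, an algorithm for the elements $r_{\alpha}(\mu)$.

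For what it is worth, read as a proof of Proposition~\ref{prop:sucOB} your argument tracks the paper's route almost verbatim: items (a) and (b) by transporting \cite[Prop.~2.8(a),(b)]{AG} with $\liel=\lieb^{+}$ and applying the exact base change $-\otimes_{\Q(q)}\C$ at the end; item (c) by the projectivity-of-rank-$N^{\dim\lieb^{+}}$ count as in \cite[\S III.7.3]{BG}, the observation that a surjection between finite-dimensional Hopf algebras of equal dimension is an isomorphism, and the duality chain $\big(u_{q}^{M'}(\lieb^{+})\big)^{*}\simeq u_{q}^{M}(\lieb^{-})^{\cop}\simeq u_{q}^{M}(\lieb^{+})$ via the pairing \eqref{eq:pairingb+b-} and the presentations. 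The delicate points you flag (that $\Res(\Oc(G_{M}))$ is central and equals $\Oc(B^{+}_{M})$, and that the pairing descends non-degenerately to the Frobenius--Lusztig kernels) are exactly where the paper leans on \cite{AG} and on the presentations, so as a reconstruction of that proposition your plan is sound. But to prove the statement you were given, you would instead have to carry out the lifting computation itself --- compose $\psi$, $k^{*}$ and $\varphi^{-1}$ on the central elements $E_{\alpha}^{N}\in Z^{\geq}_{M}$ and verify that this reproduces and extends the formulas of \cite{AS2}, \cite{BDR}, \cite{BGM} --- which is the content of Section~\ref{sec:PHa-qsubgroups}, not of Proposition~\ref{prop:sucOB}.
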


\subsubsection{Drinfeld-Jimbo braiding}
Let $q\in \C^{\times}$ be a primitive root of unity of odd order 
$N$, and coprime with $3$ if the Cartan matrix $\mcar$ has 
a component of type $G_{2}$.  
A diagonal braiding of Cartan type such that 
\begin{align*}
q_{ij}=q^{(\alpha_i,\alpha_j)}=q^{d_ia_{ij}}=q^{d_ja_{ji}},\quad\text{for all } i,j\in I,
\end{align*}
 is said to be of \textit{Drinfeld-Jimbo type}.

\section{Pointed Hopf algebras as quantum subgroups}\label{sec:ces}

From now on, $\mcar=(a_{ij})_{i,j\in I}$ denotes an indecomposable 
symmetrizable Cartan matrix of finite type,  
$q\in \C^{\times}$ is a primitive root of unity of odd order $N$, which 
we assume bigger than $3$ if $A$ is of type $G_{2}$
and $q_{ii}$ is a power of $q$ with $\ord q_{ii} = N$, for all $i\in I$.

\smallbreak
Let $M$ be a lattice of rank $\theta$ 
with $Q\subseteq M \subseteq P$ and $\Z$-basis 
$\{m_{1},\ldots, m_{\theta}\}$, $\ell_{1},\ldots, 
\ell_{\theta}\in \N$
 and $\Gamma = \Z^{\theta}/(\ell_{1}N\Z\times \cdots \times \ell_{\theta} N\Z)$. 
Denote by 
$g:M\to \Gamma$ the canonical group epimorphism 
sending the $\Z$-basis of $M$ to the canonical generators of $\Gamma$ and write
$g_{m} = g(m)$, for all $m\in M$. Observe that
$N | \ord (g_{m_{i}})$ for all $i\in I$. Also, any element of 
$\Gamma$ can be written as $g_{m}$ for some $m\in M$.
In particular, $\Gamma$ contains $M$.

\smallbreak
\subsection{Braidings of Drinfeld-Jimbo type}\label{subsec:DJ}
In this subsection we assume that 
the braiding ${\bf q}=(q_{ij})_{i,j\in I}$ is of Drinfeld-Jimbo type.
Set $q_i=q^{d_i}$; then $q_{ii} = q_{i}^{2}$, for all $i\in I$.

\subsubsection*{The Cartan datum for any lattice $M$}\,
\smallbreak
	
For $j\in I$, let $\chi_{j}: \Gamma \to \C^{\times}$ be
the character defined by $\chi_{j}(g_{m_{i}}) = q^{(m_{i},\alpha_{j})} $
for all $i\in I$.
Note that $\chi_{j}(g_{\alpha_{i}}) = q^{(\alpha_{i},\alpha_{j})} = q^{d_{i}a_{ij}}$ for all $i,j\in I$. Thus, taking
 $g_{i} = g_{\alpha_{i}}$ for all $i\in I$, we have that
$\cD_{M} = \cD(\Gamma, (g_{i})_{i\in I}, (\chi_{i})_{i\in I}, (a_{ij})_{i,j\in I})$ is a
datum of finite Cartan type. 

\subsubsection*{Pointed Hopf algebras for any lattice $M$}\,

\smallbreak
Fix a family $\mu = (\mu_{\alpha})_{\alpha \in \Phi^{+}}$
of root vector parameters. The algebra  $u(\cD_{M},\mu)$ is given explicitly by:
\begin{align}
	\label{eq:delulmu1}
	g_{m}x_jg^{-1}_{m}&=q^{(m,\alpha_{j})} x_j,& & \text{for all }\,
	m\in M\text{ and } g\in\Gamma,& \\
	\label{eq:delulmu2}	\ad(x_i)^{1-a_{ij}}(x_j)&=0,& &\text{for all }\, i\neq j,&\\
	\label{eq:delulmu3}	x_{\alpha}^{N}&=r_{\alpha}(\mu),& &\text{for all }\, \alpha\in \Phi^{+},&
\end{align}
where  $r_{\alpha}(\mu)$ is given by \eqref{eqn:relation-recursion}.

\subsubsection*{The algebra  $u(\cD_{M},\mu)$ as a central extension}\,

\smallbreak	
The following proposition asserts that one may construct the Hopf algebras 
$u(\cD_{M},\mu)$ as central extensions of small quantum groups by
finite-dimensional group algebras. 
In order to state it, 
we recall from \S \ref{subsec:smallqgr} 
that for the generators of $u_{q}^{M}(\lieb^{+})$ we write
$\overline{K}_{m}: = k_{m}$ with $m \in M$, $\overline{K}_{\alpha_{i}} = k_{i}$, and
$\overline{E}_{i}: = e_{i}$ for all $i \in I$.  

\begin{prop}\label{prop:ces-u}
Let $M$, $\Gamma$, $\cD_{M}$ and
$\mu = (\mu_{\alpha})_{\alpha \in Q^{+}}$ be as above. 
Let $\subg$ be the subgroup of 
$\Gamma$ generated by $g_{m_{i}}^{N}$ for all $i\in I$.
\begin{enumerate}
\item[\rm (a)] There exists a Hopf algebra
epimorphism $\pi_u: u(\cD_{M},\mu) 
\twoheadrightarrow u_{q}^{M}(\lieb^{+})$ 
defined by $\pi_u(g_{m_{i}}) = k_{m_{i}}$ and 
$\pi_u(x_{i})= e_{i}$, for all $i\in I$. 
\vspace{.1cm}
\item[\rm (b)] $u(\cD_{M},\mu)^{\co\pi_u} =\C \subg$.
\vspace{.1cm}
\item[\rm (c)] There exists a central exact sequence of Hopf algebras 
\begin{equation}\label{eq:ces-u}
\xymatrix{\C \subg  \ar@{^{(}->}[r]^>>>>{\iota_{u}}& 
u(\cD_{M},\mu)\ar@{->>}[r]^{\pi_{u}}& u_{q}^{M}(\lieb^{+}).}
\end{equation}
In particular, $\dim u(\cD_{M},\mu) = |\subg|\dim u_{q}^{M}(\lieb^{+})$.\end{enumerate}
\end{prop}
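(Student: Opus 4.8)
To prove (a), the plan is to define $\pi_u$ on the generators by $g_{m_i}\mapsto k_{m_i}$ and $x_i\mapsto e_i$ and to check that it respects the defining relations \eqref{eq:delulmu1}--\eqref{eq:delulmu3} of $u(\cD_{M},\mu)$. The commutation relation \eqref{eq:delulmu1} is sent to $k_m e_j k_m^{-1}=q^{(m,\alpha_j)}e_j$, which is the image in $u_q^{M}(\lieb^{+})$ of the relation $K_mE_jK_{-m}=q^{(m,\alpha_j)}E_j$ valid in $U_q^{M}(\lieb^{+})$. The quantum Serre relation \eqref{eq:delulmu2}, being a braided commutator polynomial in the $x_i$ (the grouplikes being absorbed into the scalars $q_{ij}$), is mapped to $\ad(e_i)^{1-a_{ij}}(e_j)=0$, which again descends from $U_q^{M}(\lieb^{+})$. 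For the root-vector relation \eqref{eq:delulmu3} one has $\pi_u(x_\alpha)^N=e_\alpha^N=0$; on the other hand $r_\alpha(\mu)$ lies, by \eqref{eqn:relation-recursion}, in the augmentation ideal of $\C[g_\alpha^N\mid \chi_\alpha^N\neq\eps]$, and since every $g_\alpha^N$ is a product of the generators $g_{m_i}^N$ of $\subg$ with $\pi_u(g_{m_i}^N)=k_{m_i}^N=1$, we obtain $\pi_u(r_\alpha(\mu))=0$. Hence both sides of \eqref{eq:delulmu3} have the same image, so $\pi_u$ is a well-defined algebra map; it is clearly compatible with the coproduct on generators ($g_{m_i}$ and $x_i$ being group-like and skew-primitive, with matching images $k_{m_i}$ and $e_i$), so $\pi_u$ is a Hopf algebra map, and it is surjective because the $k_{m_i}$ and $e_i$ generate $u_q^{M}(\lieb^{+})$.

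For (b) and (c), the plan is to realise $\pi_u$ as a quotient by a central group algebra. First, $\C\subg$ is a Hopf subalgebra of $\C\Gamma\subseteq u(\cD_{M},\mu)$, and it is central: each generator $g_{m_i}^N$ commutes with $\Gamma$ and satisfies $g_{m_i}^N x_j (g_{m_i}^N)^{-1}=q^{N(m_i,\alpha_j)}x_j=x_j$ because $q^N=1$. As $u(\cD_{M},\mu)$ is a finite-dimensional Hopf algebra it is free over the Hopf subalgebra $\C\subg$ by the Nichols--Zoeller theorem (see \cite{Mo}); since $\C\subg$ is a central, hence normal, Hopf subalgebra, \cite[Prop. 3.4.3]{Mo} --- applied exactly as in Remark \ref{rmk:ses-U-u} --- yields a central exact sequence
\[
\xymatrix{\C\subg \ar@{^{(}->}[r] & u(\cD_{M},\mu)\ar@{->>}[r]^(0.45){\bar\pi} & \overline{A}}
\]
where $\overline{A}=u(\cD_{M},\mu)/\big(u(\cD_{M},\mu)(\C\subg)^{+}\big)$; in particular $u(\cD_{M},\mu)^{\co\bar\pi}=\C\subg$ and $\dim\overline{A}=\dim u(\cD_{M},\mu)/|\subg|$. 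Because $\pi_u$ annihilates $(\C\subg)^{+}$, it factors through an epimorphism $\bar\pi_u:\overline{A}\twoheadrightarrow u_q^{M}(\lieb^{+})$.

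It then remains to identify $\overline{A}$ with $u_q^{M}(\lieb^{+})$, and here the key is a dimension count. By Theorem \ref{thm:AS}, $\dim u(\cD_{M},\mu)=N^{|\Phi^{+}|}|\Gamma|$; from $\Gamma=\prod_i \Z/\ell_i N\Z$ one gets $|\Gamma|=N^{\theta}\prod_i\ell_i$ and $|\subg|=\prod_i\ell_i$, so $\dim\overline{A}=N^{|\Phi^{+}|+\theta}=N^{\dim\lieb^{+}}=\dim u_q^{M}(\lieb^{+})$, the last equality as recalled in Remark \ref{rmk:ses-U-u}. A surjection between finite-dimensional spaces of equal dimension is an isomorphism, so $\bar\pi_u$ is a Hopf algebra isomorphism and $\pi_u$ coincides, up to this isomorphism, with the canonical quotient $\bar\pi$. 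This gives $u(\cD_{M},\mu)^{\co\pi_u}=\C\subg$, proving (b), and transports the central exact sequence above into \eqref{eq:ces-u}, proving (c); the formula $\dim u(\cD_{M},\mu)=|\subg|\dim u_q^{M}(\lieb^{+})$ is then read off from the freeness. The one point requiring genuine care --- the main obstacle --- is the well-definedness of $\pi_u$ on the root-vector relations, namely that the recursively defined $r_\alpha(\mu)$ lands in $(\C\subg)^{+}$ and is therefore killed; once this interplay between \eqref{eqn:relation-recursion} and the central group algebra $\C\subg$ is settled, the remaining steps are the standard exact-sequence formalism of \cite[Prop. 3.4.3]{Mo} together with the dimension bookkeeping.
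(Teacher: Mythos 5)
Your proof is correct and follows essentially the same route as the paper's: the same generator-by-generator check of relations \eqref{eq:delulmu1}--\eqref{eq:delulmu3} for part (a) (with the same key observation that $r_\alpha(\mu)$ lies in the augmentation ideal of $\C[g_{\alpha}^{N}]$ and is killed since $\pi_u(g_{m_i}^{N})=1$), and for (b)--(c) the same combination of centrality of $\C\subg$, freeness, identification of the quotient $u(\cD_{M},\mu)/u(\cD_{M},\mu)(\C\subg)^{+}$ with $u_q^{M}(\lieb^{+})$ by a dimension count, and \cite[Prop. 3.4.3]{Mo}. The only differences are cosmetic: you make explicit what the paper leaves implicit, namely the appeal to Nichols--Zoeller for freeness and to Theorem \ref{thm:AS} for the dimension bookkeeping.
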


\begin{proof}
$(a)$ 
Observe that $k_{m}^{N}= \overline{K}_{m}^{N} =  1$ and  
$\pi_{u}(g_{m}) = k_{m}$ for all $m\in M$; in particular,
$\pi_{u}(g_{i}) = k_{i}$ for all $i\in I$.
Let us see that $\pi_{u}$ is a 
well-defined algebra map. For $g_{m}\in \Gamma$ 
with $m\in M$, we have 
$\pi_{u}(g_{m}x_{j}g_{m}^{-1})= k_{m}e_{j}k_{m}^{-1} =
q^{(m,\alpha_{j})} e_{j} = \pi_{u}(q^{(m,\alpha_{j})} x_{j})$ 
and \eqref{eq:delulmu1} is satisfied.
As $\pi_{u}(x_{i}) = e_{i}$, it follows that $\pi_{u}$ 
sends relation \eqref{eq:delulmu2} to the quantum Serre relation 
in $u_{q}^{M}(\lieb^{+})$, hence \eqref{eq:delulmu2} is also satisfied. 
Finally, using the recursively formula \eqref{eqn:relation-recursion}
we obtain that  
$\pi_{u} (r_{\alpha}(\mu))=0=\pi_{u}(x_{\alpha})^{N}$
for all $\alpha \in Q^{+}$, since $r_{\alpha}(\mu)$ are elements in the augmentation ideal 
	of $\C[g_{\alpha_{i}}^{N}\, |\, i\in I]$ and
	 $\pi_{u}(g_{\alpha_{i}}^{N}) = \eps(g_{\alpha_{i}}^{N})=1$, for all $i\in I$.
	
	We prove $(b)$ and $(c)$ together. Since $\pi_{u}(g_{m_{i}}^{N}) = 1$ for all $i\in I$, 
	we have that $\C \subg\subseteq 
	u(\cD_{M},\mu)^{\co\pi}$. Let us see the reverse inclusion. 
	As $q$ is an $N$-th root of unity, $\C \subg$ is a 
	central Hopf subalgebra of $u(\cD_{M},\mu)$. 
	Moreover, 
	from the presentation of both $u(\cD_{M},\mu)$ and 
	$u_{q}^{M}(\lieb^{+})$ and since the algebras $u(\cD_{M},\mu) / u(\cD_{M},\mu)\C \subg^{+}$ 
	and $u_{q}^{M}(\lieb^{+})$ have the same dimension, 
	it follows that $u(\cD_{M},\mu) / u(\cD_{M},\mu)\C \subg^{+} \simeq u_{q}^{M}(\lieb^{+})$.
	Since $u(\cD_{M},\mu)$ is free over $\C \subg$, it is faithfully flat and by \cite[Prop. 3.4.3]{Mo}
	we have that $u(\cD_{M},\mu)^{\co\pi} = \C \subg$ and $u(\cD_{M},\mu)$ fits into the central 
	exact sequence 
$\xymatrix{
\C \subg  \ar@{^{(}->}[r]^(0.4){\iota_{u}}& 
u(\cD_{M},\mu)\ar@{->>}[r]^{\pi_{u}}& u_{q}^{M}(\lieb^{+}).
}$ 
\end{proof}

The next proposition gives a relation between the exact 
sequences \eqref{eq:sec-U-u} and \eqref{eq:ces-u}.
In particular, the algebras $u(\cD_{M},\mu)$ 
fit into a commutative diagram with exact rows.

\begin{prop}\label{prop:U-u}
Let $M$, $\Gamma$, $T$, $\cD_{M}$ and
$\mu = (\mu_{\alpha})_{\alpha \in Q^{+}}$ be as above.
\begin{enumerate}
		\item[\rm (a)] There exists a Hopf algebra
		epimorphism $p: U^{M}_{q}(\lieb^{+})  \twoheadrightarrow u(\cD_{M},\mu)$ 
		defined by $p(K_{m_{i}}) = g_{m_{i}}$ and $p(E_{i})= x_{i}$, for all $i\in I$.
		\item[\rm (b)] Write $p_{0}$ for the restriction 
		of $p$ to the subalgebra $Z^{\geq}_{M}$. Then the 
		image of $p_{0}$ is $\C \subg$ and the following is a commutative diagram with exact rows
\begin{align}\label{eq:diag-U-u-teo}
\begin{aligned}
\xymatrix{
Z^{\geq}_{M} \ar@{^{(}->}[r]^(0.4){\iota_{U}} \ar@{->>}[d]_{p_{0}} & 
U^{M}_{q}(\lieb^{+})\ar@{->>}[r]^{\pi_{U}}\ar@{->>}[d]^{p}
& u^{M}_{q}(\lieb^{+}) \ar@{=}[d]^{\id}\\
\C \subg  \ar@{^{(}->}[r]^>>>>{\iota_{u}}& 
u(\cD_{M},\mu)\ar@{->>}[r]^{\pi_{u}}& u_{q}^{M}(\lieb^{+}).}
\end{aligned} 
\end{align}
\end{enumerate}
\end{prop}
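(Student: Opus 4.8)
The plan for part (a) is the standard universal-property argument. Since $g\colon M\to\Gamma$ is a group homomorphism, setting $p(K_m)=g_m$ for all $m\in M$ is consistent with the toral relations $K_0=1$ and $K_{m_1}K_{m_2}=K_{m_1+m_2}$, and it extends the prescribed values $p(K_{m_i})=g_{m_i}$. The commuting relation $K_mE_jK_{-m}=q^{(m,\alpha_j)}E_j$ is carried verbatim onto \eqref{eq:delulmu1}. The only relation whose verification is not purely formal is the quantum Serre relation: using that $x_i$ is $(1,g_i)$-primitive with $g_i\cdot x_j=q_{ij}x_j$ and that $q_i=q^{d_i}$, $q_{ii}=q_i^2$, the iterated braided commutator $(\ad x_i)^{1-a_{ij}}(x_j)$ expands — by the well-known identification valid in the Drinfeld--Jimbo case — into a nonzero scalar multiple of the quantum Serre polynomial $\sum_l(-1)^l\left[\begin{smallmatrix}1-a_{ij}\\ l\end{smallmatrix}\right]_{q_i}E_i^{1-a_{ij}-l}E_jE_i^l$. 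Hence the Serre relation of $U^M_q(\lieb^+)$ is sent to \eqref{eq:delulmu2}, and $p$ is a well-defined algebra map.

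Next I would check that $p$ is a coalgebra map, which is immediate on generators: writing $\alpha_i=\sum_{j\in I}a_{ji}^{M}m_j$ gives $p(K_{\alpha_i})=g_{\alpha_i}=g_i$, so $\com(E_i)=E_i\ot1+K_{\alpha_i}\ot E_i$ is sent to $x_i\ot1+g_i\ot x_i=\com(x_i)$, while $\com(K_m)=K_m\ot K_m$ maps to $g_m\ot g_m$ and the counits agree. A bialgebra map between Hopf algebras is automatically a Hopf algebra map. Surjectivity holds because the $g_{m_i}=p(K_{m_i})$ generate $\Gamma$ (every element of $\Gamma$ is of the form $g_m$) and together with the $x_i=p(E_i)$ they generate $u(\cD_M,\mu)$. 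This settles (a).

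For part (b) I would first establish that the right-hand square commutes, i.e.\ $\pi_u\circ p=\pi_U$. Both are algebra maps, so it suffices to compare them on generators: $\pi_u(p(K_m))=\pi_u(g_m)=k_m=\pi_U(K_m)$ and $\pi_u(p(E_i))=\pi_u(x_i)=e_i=\pi_U(E_i)$. To obtain $\operatorname{Im} p_0\subseteq\C\subg$ — which is what makes $p_0$ well defined and the left square commute — I would run a short diagram chase rather than compute $p(E_\alpha^N)$ by hand. By exactness of the top row \eqref{eq:sec-U-u} (Remark \ref{rmk:ses-U-u}) one has $Z^{\geq}_M={}^{\co\pi_U}U^M_q(\lieb^+)$, and by exactness of the bottom row \eqref{eq:ces-u} (Proposition \ref{prop:ces-u}) one has $\C\subg={}^{\co\pi_u}u(\cD_M,\mu)$. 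For $z\in Z^{\geq}_M$, using that $p$ is a coalgebra map and $\pi_u p=\pi_U$,
\[
(\pi_u\ot\id)\com(p(z))=(\pi_U\ot p)\com(z)=(\id\ot p)\big((\pi_U\ot\id)\com(z)\big)=(\id\ot p)(1\ot z)=1\ot p(z),
\]
so $p(z)\in{}^{\co\pi_u}u(\cD_M,\mu)=\C\subg$. The reverse inclusion is clear since $g_{m_i}^N=p_0(K_{m_i}^N)$ lie in the image and generate $\subg$; hence $\operatorname{Im} p_0=\C\subg$.

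Finally, the two squares commute (the right one by the computation above, the left one by the very definition of $p_0$ as the corestriction of $p$), and both rows are exact by Remark \ref{rmk:ses-U-u} and Proposition \ref{prop:ces-u} respectively, so no exactness needs to be reproved. I expect the only step drawing on nontrivial external input to be the identification, in (a), of the braided-adjoint relation $(\ad x_i)^{1-a_{ij}}(x_j)=0$ with the quantum Serre relation of $U^M_q(\lieb^+)$; the remainder reduces to routine checks on generators together with the coinvariant diagram chase above.
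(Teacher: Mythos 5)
Your proposal is correct, but part (b) follows a genuinely different route from the paper. For (a) you and the paper agree: the paper simply declares it ``straightforward to check,'' and your verification (toral and commuting relations verbatim, the identification of $(\ad x_i)^{1-a_{ij}}(x_j)$ with the quantum Serre polynomial in the Drinfeld--Jimbo case, coalgebra compatibility on generators, surjectivity) is exactly the standard argument being alluded to. For (b), however, the paper's entire proof is the direct evaluation $p_0(K_m^N)=g_m^N$ and $p_0(E_\alpha^N)=x_\alpha^N=r_\alpha(\mu)\in\C[g_{\alpha_i}^N\,|\,i\in I]\subset\C\subg$, i.e.\ it uses the defining relation \eqref{eq:delulmu3} of $u(\cD_M,\mu)$ together with the fact that $r_\alpha(\mu)$ lies in the augmentation ideal of $\C[g_{\alpha_i}^N\,|\,i\in I]$ to see where the root-vector generators of $Z^{\geq}_M$ land. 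You instead avoid touching the $E_\alpha^N$ altogether and run a coinvariants chase: from $\pi_u\circ p=\pi_U$ (checked on generators) and the exactness of both rows, $p$ maps ${}^{\co\pi_U}U^M_q(\lieb^+)=Z^{\geq}_M$ into ${}^{\co\pi_u}u(\cD_M,\mu)=\C\subg$, with surjectivity onto $\C\subg$ from the toral generators $K_{m_i}^N$. Both arguments are valid; the trade-off is that your chase is more structural and would apply verbatim in any situation with two exact rows and a compatible epimorphism (it needs no knowledge of the generators $E_\alpha^N$ of $Z^{\geq}_M$ beyond the toral ones), whereas the paper's computation, though it presupposes knowing that $r_\alpha(\mu)\in\C\subg$, produces the explicit identity $p_0(E_\alpha^N)=r_\alpha(\mu)$ that is precisely what gets exploited later (Remark \ref{rmk:computing-liftings} and the formula $x_\alpha^N=\varphi^{-1}k^*(\psi(E_\alpha^N))$), so the paper's choice is not just brevity but sets up the computational backbone of Section \ref{sec:PHa-qsubgroups}.
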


\begin{proof}
It is straightforward to check (a). For the item (b), just note that $p_0(K_{m}^N)=g_m^N$ 
and $p_0(E_{{\alpha}}^N)=x_{\alpha}^N
=r_{\alpha}(\mu)\in\C[g_{\alpha_{i}}^{N}\, |\, i\in I]\subset \C \subg$.
\end{proof}

\begin{obs}\label{rmk:zeta+}

By \cite[\S 1.3]{Gav},
we have that $U^{M}_{q}(\lieb^{+}) 
\simeq \cO_{q}(B_{M}^{+})$ for all lattices 
$Q\subseteq M \subseteq P$.
For instance, when $G=SL_{\theta +1}(\C)$ (i.e. $M=P$), 
we have from  \cite[\S 2.4]{Gav} that the previous isomorphism 
is given explicitly by
$$
\psi: U_{q}^{P}(\lieb^{+}) \to \cO_{q}(B_P^{+}),
\qquad K_{\omega_{i}}^{\pm 1}\mapsto (z_{1,1}\ldots z_{i,i})^{\pm 1},
\quad E_{j} \mapsto - (q-q^{-1})^{-1}z_{j,j+1}z^{-1}_{j+1,j+1}
$$
for all $1\leq i,j\leq \theta$. 
The inverse $\psi^{-1}: \cO_{q}(B_P^{+})\to U_{q}^{P}(\lieb^{+})$ is given by 
\[
\psi^{-1}(z_{i,i})=K_{\omega_i}K_{\omega_{i-1}}^{-1},\quad \psi^{-1}(z_{i,i+1})=
-(q-q^{-1})^{-1}E_iK_{\omega_i}^{-1}K_{\omega_{i+1}},
\]
where by convention we assume that $K_{\omega_0}=K_{\omega_{\theta+1}}=1$.
\end{obs}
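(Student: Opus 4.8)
The plan is to separate the general isomorphism $U^{M}_{q}(\lieb^{+}) \simeq \cO_{q}(B_{M}^{+})$ from the explicit formulas in the $\SL_{\theta+1}$ case, since the former is a duality statement while the latter can be checked by hand against the $q$-matrix relations. For the general claim I would work inside the restricted dual $\big(U^{M'}_{q}(\lieb^{+})\big)^{\circ}$. Recall from Subsection \ref{subsec:Oq} that $\cO_{q}(B_{M}^{+}) = \Res\big(\cO_{q}(G_{M})\big)$ sits there as the Hopf algebra of matrix coefficients of type-$1$ modules, and that \eqref{eq:pairingb+b-} provides a non-degenerate Hopf pairing $U^{M}_{q}(\lieb^{-})^{\cop} \otimes U^{M'}_{q}(\lieb^{+}) \to \C$. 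Composing with the isomorphism $U^{M}_{q}(\lieb^{-})^{\cop} \simeq U^{M}_{q}(\lieb^{+})$ obtained from the presentation exactly as in the proof of Proposition \ref{prop:sucOB}(c), this pairing yields a Hopf algebra morphism $U^{M}_{q}(\lieb^{+}) \to \big(U^{M'}_{q}(\lieb^{+})\big)^{\circ}$, injective by non-degeneracy. It then remains to show this image equals the matrix-coefficient subalgebra $\cO_{q}(B_{M}^{+})$, which I would do on generators: the images of the $K_{m}$ and $E_{i}$ are matrix coefficients of the fundamental type-$1$ modules, and conversely these matrix coefficients (which generate $\cO_{q}(B_{M}^{+})$) lie in the image, giving both inclusions.

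For the explicit case $M = P$ and $G = \SL_{\theta+1}(\C)$ (so every $d_{i}=1$), I would verify directly that $\psi$ respects the defining relations of $U^{P}_{q}(\lieb^{+})$. First, each $\psi(K_{\omega_{i}}) = z_{1,1}\cdots z_{i,i}$ is invertible, since the relation $z_{1,1}\cdots z_{\theta+1,\theta+1}=1$ holds in $\cO_{q}(B^{+})$, and the diagonal generators $z_{k,k}$ pairwise commute: this follows from \eqref{rel-gln-2} together with $z_{j,i}=0$ for $j>i$. Next, tracking the commutation of each $z_{k,k}$ with $z_{j,j+1}$ via \eqref{rel-gln-1}--\eqref{rel-gln-2} gives $z_{j,j}\,z_{j,j+1}\,z_{j,j}^{-1}=q\,z_{j,j+1}$, $z_{j+1,j+1}\,z_{j,j+1}\,z_{j+1,j+1}^{-1}=q^{-1}z_{j,j+1}$, and commutation for $k\neq j,j+1$; summing the exponents over $k\leq i$ yields $\psi(K_{\omega_{i}})\psi(E_{j})\psi(K_{\omega_{i}})^{-1}=q^{\delta_{ij}}\psi(E_{j})$, as required since $(\omega_{i},\alpha_{j})=\delta_{ij}$. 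The quantum Serre relations reduce, in type $A$, to commutativity of $\psi(E_{i})$ and $\psi(E_{j})$ for $|i-j|\geq 2$ (disjoint rows and columns) and to the cubic relation for $|i-j|=1$, both deduced from the $q$-matrix relations on the superdiagonal generators. Coalgebra compatibility is immediate: from $\Delta(z_{ij})=\sum_{t}z_{it}\otimes z_{tj}$ and $\psi(K_{\alpha_{j}})=z_{j,j}z_{j+1,j+1}^{-1}$ one computes $\Delta(\psi(E_{j}))=\psi(K_{\alpha_{j}})\otimes\psi(E_{j})+\psi(E_{j})\otimes 1$, matching $\Delta(E_{j})$.

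To finish, I would check that $\psi^{-1}$, defined on the generators $z_{i,i}$ and $z_{i,i+1}$ of $\cO_{q}(B^{+})$ by the stated formulas and extended as an algebra map, respects the $q$-matrix relations \eqref{rel-gln-1}--\eqref{rel-gln-2} and the determinant relation, and that $\psi\circ\psi^{-1}$ and $\psi^{-1}\circ\psi$ fix the respective generators; since a bialgebra isomorphism between Hopf algebras automatically intertwines the antipodes, this promotes $\psi$ to a Hopf algebra isomorphism. The main obstacle is twofold. In the explicit case the only genuinely delicate computation is the cubic quantum Serre relation for $|i-j|=1$, where the overlapping index forces one to invoke the noncommutativity term $(q-q^{-1})z_{it}z_{js}$ of \eqref{rel-gln-2} and to track the normalization $-(q-q^{-1})^{-1}$ carefully. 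In the general case the hard part is identifying the image of the pairing map with the matrix-coefficient algebra for an \emph{arbitrary} intermediate lattice $M$ (not only $M=P$ or $M=Q$); this is precisely where Gavarini's analysis of type-$1$ modules over intermediate lattices enters, and where I would rely on \cite[\S 1.3]{Gav}.
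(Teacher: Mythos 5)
The central step of your general-lattice argument fails at a root of unity, and it fails for a structural reason. You propose to get an injection $U^{M}_{q}(\lieb^{+})\simeq U^{M}_{q}(\lieb^{-})^{\cop}\to \big(U^{M'}_{q}(\lieb^{+})\big)^{\circ}$ from the pairing \eqref{eq:pairingb+b-}, "injective by non-degeneracy". But with $q$ of order $N$ that pairing is degenerate: for dual lattices one has $(m,m')\in\Z$, so $(K_{m}^{N}-1,K_{m'})=q^{-N(m,m')}-1=0$ and $(K_{m}^{N}-1,E_{j})=0$, so $K_{m}^{N}-1$ lies in the radical; likewise $F_{\alpha}^{N}$ (and $E_{\alpha}^{N}$ on the other side) lies in the radical, since the radical of the pairing restricted to the nilpotent parts is exactly the ideal cutting out the Nichols algebra from the (pre-Nichols) algebra defined by the Serre relations, i.e.\ the ideal generated by the root-vector $N$-th powers. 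Hence the kernel of your map contains $(Z^{\leq}_{M})^{+}U^{M}_{q}(\lieb^{-})$, so the map factors through the finite-dimensional small quantum Borel $u^{M}_{q}(\lieb^{-})^{\cop}$. Its image is therefore finite-dimensional and can never equal the infinite-dimensional algebra $\cO_{q}(B^{+}_{M})$; no identification of generators can repair this. What this pairing honestly produces is precisely Proposition \ref{prop:sucOB}(c), namely $\big(u_{q}^{M'}(\lieb^{+})\big)^{*}\simeq u_{q}^{M}(\lieb^{+})$. The duality underlying the actual isomorphism $U^{M}_{q}(\lieb^{+})\simeq\cO_{q}(B^{+}_{M})$ pairs the De Concini--Kac-type algebra $U^{M}_{q}$ against the \emph{Lusztig integral form with divided powers} (note that $\cO_{q}(B^{+}_{M})$ is by definition built from matrix coefficients of $\mathcal{U}_{z}^{M'}(\lieg)$-modules, not of $U^{M'}_{q}$-modules); there the pairing is perfect because $(F_{i}^{n},E_{i}^{(n)})$ carries no $[n]_{q}!$ factor. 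That DK-versus-Lusztig duality is the content of Gavarini's theorem, so your plan does not give an alternative proof of it — and indeed your last paragraph concedes the image identification to \cite[\S 1.3]{Gav}, which is all the paper itself does (the statement is a remark whose only justification is that citation).

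Two secondary points. First, your direct verification of the explicit type-$A$ formulas is sound in outline and is a legitimate self-contained check of what the paper quotes from \cite[\S 2.4]{Gav}: the commutation computations you describe are correct (diagonal entries commute because $z_{ji}=0$ kills the $(q-q^{-1})$-term, and the exponent count gives $q^{\delta_{ij}}$), and the coalgebra compatibility is as you say. Had you carried the inverse check to the end you would even have caught a normalization slip in the statement: from $\psi(E_{i})=-(q-q^{-1})^{-1}z_{i,i+1}z_{i+1,i+1}^{-1}$ one gets $\psi^{-1}(z_{i,i+1})=-(q-q^{-1})\,E_{i}K_{\omega_{i}}^{-1}K_{\omega_{i+1}}$, with $(q-q^{-1})$ to the power $+1$, not $-1$. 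Second, your inverse is only defined on $z_{i,i}$ and $z_{i,i+1}$, so you need the (true, but unproved in your sketch) fact that these generate $\cO_{q}(B^{+})$ — e.g.\ $z_{13}=(q-q^{-1})^{-1}(z_{12}z_{23}-z_{23}z_{12})z_{22}^{-1}$ and induction — and the cubic Serre relation for $|i-j|=1$ is only gestured at; it is cleaner to prove $\psi$ bijective directly (surjectivity from the generation statement, injectivity from a PBW comparison) than to check well-definedness of $\psi^{-1}$ against a presentation in which the superdiagonal entries are not the given generators.
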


\subsubsection*{The algebra  $u(\cD_{M},\mu)$ as a quantum subgroup}\,

\smallbreak
Using the previous remark and Propositions \ref{prop:sucOB} and  
\ref{prop:U-u}, we prove our first main result.

\begin{teor}\label{teo:quantum-subgrup-DJ} Let $M$, $\subg$ and 
$\mu = (\mu_{\alpha})_{\alpha \in Q^{+}}$ be as above.
There exists a commutative diagram of Hopf algebras 
whose the rows are exact sequences
	
\begin{align}\label{diag:theorem-liftings}
  \begin{aligned}
	\xymatrix{\cO(B_{M}^{+}) \ar@{^{(}->}[r]^{\tilde{\iota}} \ar@{->>}[d]_{\tilde{p}_0} & 
	  \cO_q(B_{M}^{+}) \ar@{->>}[r]^{\pi_{B^+}} \ar@{->>}[d]^{\tilde{p}} & \overline{\cO_{q}(B_{M}^{+})} \ar[d]^{\phi}\\
		\C^{\widehat{\subg}}\ar@{^{(}->}[r]^>>>>>{\tilde{\iota}_u} & 
		u(\cD_{M},\mu)  \ar@{->>}[r]^{{\pi}_u} & {u}_q^{M}(\mathfrak{b}^{+}).} 
  \end{aligned} 
\end{align}In particular, the Hopf algebra $u(\cD_{M},\mu)$ is a quantum subgroup of 
	$\cO_q(B_{M}^{+})$. 
\end{teor}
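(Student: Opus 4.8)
The plan is to glue the two exact sequences already at hand---the central extension of Proposition \ref{prop:U-u} and the geometric one of Proposition \ref{prop:sucOB}---along the Hopf algebra isomorphism $\psi\colon U^{M}_{q}(\lieb^{+})\xrightarrow{\ \sim\ }\cO_q(B_{M}^{+})$ recalled in Remark \ref{rmk:zeta+}, which is available for every lattice $Q\subseteq M\subseteq P$. First I would transport the lower commutative square of \eqref{eq:diag-U-u-teo} across $\psi$ by setting $\tilde p:=p\circ\psi^{-1}\colon \cO_q(B_{M}^{+})\twoheadrightarrow u(\cD_{M},\mu)$, which is a Hopf algebra epimorphism since both $p$ and $\psi^{-1}$ are. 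The right-hand vertical map $\phi$ is taken to be the isomorphism $\overline{\cO_q(B_{M}^{+})}\simeq u_q^{M}(\lieb^{+})$ provided by Proposition \ref{prop:sucOB}(c).

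The crux is to show that $\psi$ identifies the two top rows compatibly, that is, that $\psi$ restricts to an isomorphism $Z^{\geq}_{M}\xrightarrow{\ \sim\ }\cO(B_{M}^{+})$ and intertwines $\pi_U$ with $\pi_{B^{+}}$ up to $\phi$. For the first assertion I would invoke that, under $\psi$, the central Hopf subalgebra $Z^{\geq}_{M}$ generated by the $K_m^N$ and $E_\alpha^N$ coincides with the image of the quantum Frobenius map, which by Theorem \ref{OcentralenOe}(a) and Proposition \ref{prop:sucOB} is exactly the coordinate algebra $\cO(B_{M}^{+})$ sitting centrally inside $\cO_q(B_{M}^{+})$; this can be verified on generators using the explicit formulas for $\psi$ in Remark \ref{rmk:zeta+}. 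Granting this, $\psi$ carries the exact sequence \eqref{eq:sec-U-u} onto the exact sequence of Proposition \ref{prop:sucOB}(c), so the map it induces on cokernels is precisely $\phi$ and the upper square of \eqref{diag:theorem-liftings} commutes by construction.

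It then remains to produce the left-hand vertical $\tilde p_0$ and the bottom row. I would define $\tilde p_0$ as the restriction of $\tilde p$ to $\cO(B_{M}^{+})=\psi(Z^{\geq}_{M})$; by Proposition \ref{prop:U-u}(b) its image is $\C\subg$, which I identify with the function algebra $\C^{\widehat{\subg}}$ via the Fourier--Pontryagin isomorphism $\varphi\colon \C\subg\xrightarrow{\ \sim\ }\C^{\widehat{\subg}}$ of finite abelian groups (a Hopf algebra isomorphism, the natural map already used in the Introduction). The bottom exact sequence $\C^{\widehat{\subg}}\hookrightarrow u(\cD_{M},\mu)\twoheadrightarrow u_q^{M}(\lieb^{+})$ is then the central extension \eqref{eq:ces-u} of Proposition \ref{prop:ces-u} read through $\varphi$, while the top one is Proposition \ref{prop:sucOB}(c). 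Commutativity of the left square is the restriction of that of the right, and the resulting diagram exhibits $u(\cD_{M},\mu)$ as the quotient of $\cO_q(B_{M}^{+})$ by the Hopf ideal $\Ker\tilde p$, that is, as a quantum subgroup.

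The main obstacle I anticipate is precisely the identification $\psi(Z^{\geq}_{M})=\cO(B_{M}^{+})$ together with the compatibility of $\psi$ with the two quotient maps $\pi_U$ and $\pi_{B^{+}}$: one must check that Gavarini's isomorphism sends the central generators $K_m^N,E_\alpha^N$ to the classical matrix coefficients generating the image of the quantum Frobenius, and not merely to some abstract copy of $\cO(B_{M}^{+})$. Once this compatibility is secured, everything else reduces to a formal diagram chase and to the two exactness statements quoted above.
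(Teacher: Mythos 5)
Your overall strategy coincides with the paper's proof: transport the diagram of Proposition \ref{prop:U-u} across Gavarini's isomorphism $\psi$, set $\tilde{p}=p\circ\psi^{-1}$ and $\tilde{p}_0=\varphi\circ p_0\circ\psi^{-1}$ with $\varphi:\C\subg\to\C^{\widehat{\subg}}$ the Pontryagin-type isomorphism, take $\phi$ from Proposition \ref{prop:sucOB}(c), and conclude by a diagram chase. The formal part is fine, but there is a genuine gap in the step you yourself single out as the crux, namely the identification $\psi^{-1}(\cO(B^{+}_{M}))=Z^{\geq}_{M}$. You propose to verify it ``on generators using the explicit formulas for $\psi$ in Remark \ref{rmk:zeta+}''. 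Those explicit formulas exist in the paper only for $G=\SL_{\theta+1}$ (that is, $M=P$, type $A_{\theta}$), and even there only for the group-likes $K_{\omega_i}$ and the simple-root vectors $E_j$; they say nothing about the images of the powers $E_{\alpha}^{N}$ for non-simple $\alpha\in Q^{+}$, which are among the generators of $Z^{\geq}_{M}$, and for a general lattice $M$ and general Cartan type Remark \ref{rmk:zeta+} only quotes the abstract existence of $\psi$. So your verification route does not go through as stated.

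What the paper does instead, and what you would need to supply, is a duality argument: $\cO(B^{+}_{M})\subseteq\cO_q(B^{+}_{M})$ is the image of the transpose of the quantum Frobenius map $\mathrm{Fr}:U_{q}^{M'}(\lieb^{+})\twoheadrightarrow U(\lieb^{+})$, hence consists of the elements of $\Img\mathrm{Fr}^{t}$ annihilating $\Ker\mathrm{Fr}$ (generated by the $K_{m'}-1$ and the $E_{j}$) with respect to the Hopf pairing; computing this annihilator via the explicit pairing \eqref{eq:pairingb+b-} inside $U_{q}^{M}(\lieb^{-})^{\cop}$ gives exactly the subalgebra generated by $K_{m}^{\pm N}$ and $F_{\alpha}^{N}$, which corresponds to $Z^{\geq}_{M}$ under the isomorphism $U_{q}^{M}(\lieb^{-})^{\cop}\simeq U_{q}^{M}(\lieb^{+})$. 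This works uniformly in $M$ and in the type, with no recourse to coordinates. A second, minor, slip: commutativity of the left square of \eqref{diag:theorem-liftings} is not ``the restriction of that of the right''; it follows from the left square of \eqref{eq:diag-U-u-teo} (namely $\iota_u\circ p_0=p\circ\iota_U$) conjugated by $\psi$ and $\varphi$, whereas the right square follows from $\pi_u\circ p=\pi_U$ together with the identity $\pi_U\circ\psi^{-1}=\phi\circ\pi_{B^{+}}$ coming from Proposition \ref{prop:sucOB}.
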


\begin{proof}
Denote by $\psi:U_{q}^{M}(\lieb^{+})\to \cO_{q}(B^{+}_{M})$
the Hopf algebra isomorphism mentioned in Remark \ref{rmk:zeta+}. 
It holds that $\psi^{-1}(\cO(B^{+}_{M}))=Z^{\geq}_{M}$. Indeed,
from the proof of Proposition \ref{OcentralenOe}, it follows that
the Hopf algebra $\cO(B^{+}_{M})$  
is the image of the dual of the quantum Frobenius map 
 $\text{Fr}: {U}^{M'}_{q}(\lieb^{+}) 
\twoheadrightarrow U(\lieb^{+})$.
Hence, it consists 
of the elements in $\Img \text{Fr}^{t}$ that annihilate the kernel 
(with respect to the Hopf pairing between ${U}^{M'}_{q}(\lieb^{+})$ and
$\cO_{q}(B^{+}_{M})$),
the latter being generated by the elements $K_{m'}-1$ and $E_{j}$
for 
$m\in M'$ and $j\in I$. 
On the other hand, we have the non-degenerate pairing 
$(\ ,\ ):\, U^{M}_{q}(\lieb^{-})^{\cop} \ot U_{q}^{M'}(\lieb^{+}) \to \C$
from \eqref{eq:pairingb+b-}  given by: 	%
	$$
	(K_{m},K_{m'})  = q^{-(m,m')}, 
	\qquad (K_{m},E_{j}) = 0,\qquad
	(F_{i},E_{j}) = -\frac{\delta_{ij}}{q_{i}-q_{i}^{-1}},\qquad
	(F_{i}, K_{m}) = 0,
	$$
	%
for all $i,j\in I$, $m\in M$ and $m'\in M'$.
The subalgebra of $U_{q}^{M}(\lieb^{-})^{\cop}$ that annihilates the
kernel of the Frobenius map through this pairing is the one 
generated by the elements 
$K_{m}^{\pm N}$, $F_\alpha^N$ for all $m\in M$
and $\alpha \in Q^{+}$. Through the isomorphism 
$U_{q}^{M}(\lieb^{-})^{\cop}\simeq U_{q}^{M}(\lieb^{+})$, 
the latter correspond to
the elements $K_{m}^{\mp N}$, $E_\alpha^N$.
This is exactly 
the subalgebra $Z^{\geq}_{M}$ of 
$U_{q}^{M}(\lieb^{+})$, see \S \ref{subsec:smallqgr}. 
Hence, we have a commutative diagram whose vertical
rows are isomorphisms
$$
\xymatrix{\cO(B_{M}^{+}) \ar@{^{(}->}[r]^{\tilde{\iota}} \ar@{->}[d]_{\psi^{-1}} & 
	  \cO_q(B_{M}^{+}) \ar@{->>}[r]^{\pi_{B^+}} \ar@{->}[d]^{\psi^{-1}} & \overline{\cO_{q}(B_{M}^{+})} \ar[d]^{\phi}\\
		Z_{M}^{\geq }\ar@{^{(}->}[r]^>>>>>{{\iota}_U} & 
		U^{M}_{q}(\lieb^{+}) \ar@{->>}[r]^{{\pi}_U} & {u}_q^{M}(\mathfrak{b}^{+}).} 
$$

\smallbreak
Now, to finish the proof, we use the diagram in Proposition \ref{prop:U-u}
and show that the new diagram is commutative.
As $\subg$ is an abelian finite group, 
there exists a Hopf algebra isomorphism 
$\varphi: \C\subg \to \C^{\widehat{\subg}}$.
Consider $\tilde{\iota}=\psi\circ\iota_U\circ\psi^{-1}$, 
$\tilde{p}=p\circ\psi^{-1}$, $\tilde{p}_0=\varphi\circ p_0\circ\psi^{-1}$
and $\tilde{\iota}_u=\iota_{u}\circ\varphi^{-1}$. 
Then, from \eqref{eq:diag-U-u-teo} it follows that
\[
\tilde{\iota}_u\circ\tilde{p}_0=\iota_u\circ p_0\circ\psi^{-1}
=p\circ\iota_{U}\circ\psi^{-1}=\tilde{p}\circ \tilde{\iota}.
\]
Let $\phi:\overline{\cO_{q}(B^{+}_{M})}\to \mathfrak{u}_q^{M}(\mathfrak{b}^{+})$ 
the Hopf algebra isomorphism given in Proposition \ref{prop:sucOB} (c). 
From \eqref{eq:diag-U-u-teo} follows that 
\[\pi_u\circ \tilde{p}=\pi_u\circ p\circ\psi^{-1}=\pi_U\circ \psi^{-1}.\]
Since $\pi_U\circ \psi^{-1}=\phi\circ \pi_{B^{+}}$ the result is proved.
\end{proof}

\begin{obs}\label{rmk:liftings-s.e.s}
	By the previous theorem, one may describe the Hopf algebras 
	$u(\cD_{M},\mu)$ by the data coming from the short exact 
	sequence 
	$\xymatrix{ \C^{\widehat{\subg}}\ar@{^{(}->}[r]^>>>>>{\tilde{i}_u} &
	 u(\cD_{M},\mu)  \ar@{->>}[r]^{{\pi}_u} & 
	 \mathfrak{u}_q^{M}(\mathfrak{b}^{+}).}$ 
	Indeed, relations \eqref{eq:delulmu1} and
	\eqref{eq:delulmu2} are satisfied by both the generators $g_m,\,x_i$ of
	$u(\cD_{M},\mu)$ and the generators $k_{m},\,e_i$ of 
	$\mathfrak{u}_q^{M}(\mathfrak{b}^{+})$. Then, the relation
	\eqref{eq:delulmu3} actually encodes the information
	of the previous short exact sequence. 
	\end{obs}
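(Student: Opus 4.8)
The plan is to re-read the defining presentation \eqref{eq:delulmu1}--\eqref{eq:delulmu3} of $u(\cD_{M},\mu)$ through the central exact sequence \eqref{eq:ces-u} of Proposition \ref{prop:ces-u} (equivalently, the bottom row of \eqref{diag:theorem-liftings}, after transporting the kernel along the isomorphism $\varphi\colon\C\subg\to\C^{\widehat{\subg}}$), and to sort the relations into those inherited from the quotient $u_{q}^{M}(\lieb^{+})$ and the single family that specifies the extension. The precise assertion to be justified is that, once the quotient $u_{q}^{M}(\lieb^{+})$ and the central kernel $\C\subg\cong\C^{\widehat{\subg}}$ are fixed, the Hopf algebra $u(\cD_{M},\mu)$ is recovered from the relation \eqref{eq:delulmu3} alone.

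First I would treat \eqref{eq:delulmu1} and \eqref{eq:delulmu2}. By Proposition \ref{prop:ces-u}(a) the epimorphism $\pi_u$ sends $g_{m}\mapsto k_{m}$ and $x_i\mapsto e_i$, carrying \eqref{eq:delulmu1} and \eqref{eq:delulmu2} precisely to the commutation relations $k_m e_j k_m^{-1}=q^{(m,\alpha_j)}e_j$ and to the quantum Serre relations that define $u_{q}^{M}(\lieb^{+})$. The point to record is that the elements $g_m x_j g_m^{-1}-q^{(m,\alpha_j)}x_j$ and $\ad(x_i)^{1-a_{ij}}(x_j)$ are \emph{already zero} in $u(\cD_{M},\mu)$, with no term from the kernel $\C\subg$ entering. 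Hence these relations carry no deformation: they have literally the same form for the generators $g_m,x_i$ and for their images $k_m,e_i$, and are determined by the quotient alone, contributing nothing to the extension datum.

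Next I would isolate the role of \eqref{eq:delulmu3}. Applying $\pi_u$ yields $e_\alpha^N=\pi_u(x_\alpha^N)=\pi_u(r_\alpha(\mu))$; since $r_\alpha(\mu)$ lies in the augmentation ideal of $\C[g_{\alpha_i}^{N}\mid i\in I]$ and $\pi_u(g_{\alpha_i}^{N})=\eps(g_{\alpha_i}^{N})=1$, the right-hand side vanishes, recovering the defining relation $e_\alpha^N=0$ of $u_{q}^{M}(\lieb^{+})$. Thus $x_\alpha^N\in\Ker\pi_u$, and more precisely $x_\alpha^N=r_\alpha(\mu)\in(\C\subg)^{+}\subseteq\C\subg$, the kernel of the sequence. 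This exhibits \eqref{eq:delulmu3} as exactly a \emph{lift} of the vanishing relation $e_\alpha^N=0$ into the central kernel $\C\subg$: the corresponding relation of the small quantum group is deformed, with correction term $r_\alpha(\mu)$ prescribed by the recursion \eqref{eqn:relation-recursion}, i.e.\ by the root-vector parameters $\mu$ (transported to $\C^{\widehat{\subg}}$ through $\varphi$). In other words, the extension is carried entirely by the family $(r_\alpha(\mu))_{\alpha\in\Phi^{+}}$.

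Finally, to take the word \emph{describe} seriously I would close the loop in the converse direction: from $u_{q}^{M}(\lieb^{+})$, the central group algebra $\C\subg$, and the chosen lifts $r_\alpha(\mu)\in(\C\subg)^{+}$, the relations \eqref{eq:delulmu1}--\eqref{eq:delulmu3} reconstruct a presentation, while the coalgebra structure is forced as well (the $g_m$ are group-likes lifting the $k_m$, and each $x_i\in P_{1,g_i}$ lifts the skew-primitive $e_i$). The main obstacle, and the one step needing genuine argument rather than bookkeeping, is that \eqref{eq:delulmu3} is the \emph{only} extension datum: one must rule out a deformation of \eqref{eq:delulmu1} or \eqref{eq:delulmu2} into the kernel, so that the extension is pinned down by the single family $r_\alpha(\mu)$. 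I expect to settle this by combining the non-deformability of the commutation and quantum Serre relations (which is part of the classification behind Theorem \ref{thm:AS}) with the cleft, faithfully flat structure of the central exact sequence, followed by a dimension count: the algebra presented by \eqref{eq:delulmu1}--\eqref{eq:delulmu3} surjects onto $u_{q}^{M}(\lieb^{+})$ with kernel inside $\C\subg$, so its dimension is at most $|\subg|\dim u_{q}^{M}(\lieb^{+})$, which by Proposition \ref{prop:ces-u}(c) equals $\dim u(\cD_{M},\mu)$; equality then shows no relation is missing, confirming that the sequence together with \eqref{eq:delulmu3} determines $u(\cD_{M},\mu)$.
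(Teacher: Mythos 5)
Your proposal is correct and takes essentially the same route as the paper: the remark rests exactly on Proposition \ref{prop:ces-u}, namely that $\pi_u$ carries \eqref{eq:delulmu1} and \eqref{eq:delulmu2} identically onto the defining relations of $u_q^{M}(\lieb^{+})$ with no contribution from the kernel, while $x_\alpha^N=r_\alpha(\mu)\in(\C\subg)^{+}$ is the unique family of relations landing in the central kernel, so it alone carries the extension datum. Your closing dimension-count reconstruction is not an extra idea beyond the paper but a restatement of the argument already used in the proof of Proposition \ref{prop:ces-u}(c) (freeness of $u(\cD_{M},\mu)$ over $\C\subg$ together with \cite[Prop. 3.4.3]{Mo}), with the only caveat that $\Ker\pi_u$ is the ideal generated by $(\C\subg)^{+}$ rather than a subspace of $\C\subg$, which your appeal to the faithfully flat structure implicitly repairs.
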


\begin{cor}\label{cor:all-liftings-qsubgroups}
	Let $\Gamma = \big(\Z/\ell_{1}N\Z\big)\times\cdots \times  \big(\Z/\ell_{\theta}N\Z\big)$ be a finite abelian group
	containing $M$
	with generators $g_{m_{1}},\cdots, g_{m_{\theta}}$ and 
	$\ord g_{m_{i}} = \ell_{i}N$,
	$\mu= (\mu_{\alpha})_{\alpha\in Q^{+}}$ a family of root vector 
	parameters, $\cD_{M}$ 
	the finite Cartan datum for $\Gamma$ as defined in 
	$\S$\ref{subsec:data-Cartan-type}  
	and $u(\cD_{M},\mu)$ the corresponding pointed Hopf algebra. 
	Then there are a finite abelian group $\subg$ and a monomorphism 
	of algebraic groups  $k: \widehat{\subg} \hookrightarrow B^{+}_{M}$  
	such that, as Hopf algebras,
	$$
	u(\cD_{M},\mu) \simeq \cO_{q}(B^{+}_{M})/(\Ker k^{*}).
	$$
\end{cor}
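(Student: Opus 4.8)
The plan is to read everything off the commutative diagram \eqref{diag:theorem-liftings} produced in Theorem \ref{teo:quantum-subgrup-DJ}. The left vertical arrow there is a Hopf algebra epimorphism $\tilde{p}_{0}:\cO(B_{M}^{+})\twoheadrightarrow \C^{\widehat{\subg}}$, where $\C^{\widehat{\subg}}$ is the coordinate algebra of the finite (hence $0$-dimensional) abelian algebraic group $\widehat{\subg}$. By the anti-equivalence between affine algebraic groups and commutative Hopf algebras, $\tilde{p}_{0}$ is of the form $k^{*}$ for a unique morphism of algebraic groups $k:\widehat{\subg}\to B_{M}^{+}$; since $\tilde{p}_{0}$ is surjective, $k$ is a closed immersion, and in particular a monomorphism. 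This produces the group $\subg$ and the map $k$ in the statement, so it only remains to identify $\Ker\tilde{p}$ with the ideal $(\Ker k^{*})$.

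First I would check the easy inclusion $(\Ker k^{*})\subseteq \Ker \tilde{p}$. For $x\in \Ker k^{*}=\Ker \tilde{p}_{0}$, commutativity of the left-hand square of \eqref{diag:theorem-liftings} gives $\tilde{p}(\tilde{\iota}(x))=\tilde{\iota}_{u}(\tilde{p}_{0}(x))=0$, so $\tilde{\iota}(\Ker k^{*})\subseteq \Ker\tilde{p}$; as $\Ker\tilde{p}$ is a two-sided ideal, the generated ideal $(\Ker k^{*})$ lies inside it. Hence $\tilde{p}$ factors through a surjection $\bar{p}:\cO_{q}(B_{M}^{+})/(\Ker k^{*})\twoheadrightarrow u(\cD_{M},\mu)$, and the whole content reduces to proving that $\bar{p}$ is injective.

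I would obtain injectivity from a dimension count, since both sides are finite-dimensional. Because $\cO(B_{M}^{+})$ is central in $\cO_{q}(B_{M}^{+})$ and, by Proposition \ref{prop:sucOB}, $\cO_{q}(B_{M}^{+})$ is a projective $\cO(B_{M}^{+})$-module of rank $N^{\dim\lieb^{+}}$, the quotient by the ideal generated by $\tilde{\iota}(\Ker k^{*})$ is the base change
\[
\cO_{q}(B_{M}^{+})/(\Ker k^{*})\;\simeq\;\cO_{q}(B_{M}^{+})\otimes_{\cO(B_{M}^{+})}\C^{\widehat{\subg}},
\]
which is projective of rank $N^{\dim\lieb^{+}}$ over the split algebra $\C^{\widehat{\subg}}\simeq \C^{\,|\subg|}$, and therefore has $\C$-dimension $N^{\dim\lieb^{+}}\cdot|\subg|$. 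On the other hand, Proposition \ref{prop:ces-u} gives $\dim u(\cD_{M},\mu)=|\subg|\,\dim u_{q}^{M}(\lieb^{+})$, and $\dim u_{q}^{M}(\lieb^{+})=N^{\dim\lieb^{+}}$ by Proposition \ref{prop:sucOB}(c). Thus both algebras have dimension $N^{\dim\lieb^{+}}\cdot|\subg|$, so the surjection $\bar{p}$ is an isomorphism, yielding
\[
u(\cD_{M},\mu)\;\simeq\;\cO_{q}(B_{M}^{+})/(\Ker k^{*}).
\]

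The step I expect to be the main obstacle is the middle one: justifying that passing to the quotient by the centrally generated ideal is exactly the base change along $\cO(B_{M}^{+})\to\C^{\widehat{\subg}}$, and that the projective rank $N^{\dim\lieb^{+}}$ is preserved, so that the dimension multiplies as claimed. This rests on the centrality of $\cO(B_{M}^{+})$ together with the flatness (projectivity) of $\cO_{q}(B_{M}^{+})$ over it; once the rank survives the base change to the finite algebra $\C^{\widehat{\subg}}$, the two dimension formulas match and the argument closes.
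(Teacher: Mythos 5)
Your proposal is correct and takes essentially the same route as the paper: the paper likewise extracts $k$ from the left vertical arrow $\tilde{p}_{0}$ of diagram \eqref{diag:theorem-liftings}, obtains the surjection $\cO_{q}(B^{+}_{M})/(\Ker k^{*})\twoheadrightarrow u(\cD_{M},\mu)$ (phrased there via the pushout universal property rather than your explicit check that $\tilde{p}$ kills the generated ideal), and concludes with the same dimension count. The only difference is that your flat base-change argument for $\dim \cO_{q}(B^{+}_{M})/(\Ker k^{*})=|\subg|\,N^{\dim\lieb^{+}}$ supplies a justification of an equality the paper merely asserts, so it is a refinement of the same proof rather than a different one.
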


\begin{proof}
	By the previous theorem, we know that the pointed
	Hopf algebra $u(D_{M},\mu)$ fits into the commutative diagramm 
	\eqref{diag:theorem-liftings}  and we have a projection 
	$\tilde{p}_0:\cO(B^{+}_{M}) \twoheadrightarrow \C^{\widehat{\subg}}$, 
	where $T_{M}$ is a finite abelian group. 
	Consider the corresponding monomorphism of algebraic groups   
	$k: \widehat{\subg}\hookrightarrow{B^{+}_{M}}$ 
	and the Hopf algebra quotient $\cO_{q}(B^{+}_{M})/(\Ker \tilde{p}_0)$.
	Since the latter is given by a pushout,
	there exists a unique 
	Hopf algebra map $\rho: \cO_{q}(B^{+}_{M})/(\Ker \tilde{p}_0) 
	\to u(D_{M},\mu)$
	such that the following diagram commutes:
	$$
	\xymatrix{\cO(B^{+}_{M})
		\ar@{^{(}->}[r]^{\tilde{\iota}} \ar@{->>}[d]_{\tilde{p}_0} &
		\cO_{q}(B^{+}_{M}) \ar@{->>}[d]^{p_{J}} \ar@/^/[ddr]^{p_{\mu}}\\
		\C^{\widehat{\subg}}\ar@{^{(}->}[r] \ar@/_/[drr]^{\tilde{\iota}_u}& \cO_{q}(B^{+}_{M})/(\Ker \tilde{p}_0)
		\ar@{-->}[dr]|-{\exists ! \rho}\\
		& & u(\cD_{M},\mu)}
	$$
	Since the maps $p_{J}$ and $p_{\mu}$ are surjective, $\rho$ is also surjective.
	Moreover, as 
	\begin{align*}
		\dim u(\cD_{M},\mu) &= 
		|\subg| \dim u_{q}^{M}(\lieb^{+}) 
		= \dim \cO_{q}(B^{+}_{M})/(\Ker \tilde{p}_0),
	\end{align*}
	it follows that $\rho$ is indeed an isomorphism.
\end{proof}

Now we prove our second 
main result which is a converse of the previous corollary.
That is, any inclusion of a finite abelian group in $B^{+}_{M}$ 
gives a lifting of $\toba(V)\# \C\Gamma$, for some abelian group $\Gamma$ 
and some Yetter-Drinfeld module $V$ over $\C\Gamma$.
Keep the notation of \S \ref{subsec:Oq}.

\begin{teor}\label{teo:qsubgroups-are-liftings}
	Let $\subg$
	be a finite abelian group 
	and $k: \widehat{\subg}\hookrightarrow B^{+}_{M}$ 
	be a monomorphism of algebraic groups.
	Then there exist a finite abelian group $\Gamma$  and 
	a diagonal braided vector space  $V$ of Drindeld-Jimbo type 
	such that the Hopf algebra 
	$A=\cO_{q}(B^{+}_{M})/\big(\tilde{\iota}(\Ker k^{*})\big)$ 
	is a lifting of $\toba(V)\#\C \Gamma $. 
	Moreover $\Gamma$ contains $M$, it is isomorphic to 
	$\big(\Z/\ell_{1}N\Z\big)\times\cdots \times  
	\big(\Z/\ell_{\theta}N\Z\big)$
	for some $\ell_{1},\ldots, \ell_{\theta} \in \N$, and the Cartan matrix
	associated with $V$ corresponds to that of $B^{+}_{M}$.
\end{teor}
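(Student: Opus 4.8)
The plan is to run the construction of Theorem \ref{teo:quantum-subgrup-DJ} in reverse: from the monomorphism $k$ I will produce a Cartan datum and a family of root-vector parameters, assemble the corresponding Hopf algebra $u(\cD_{M},\mu)$, and identify it with $A$. First I would record the Hopf-theoretic input. Since $k$ is a closed immersion of algebraic groups, its comorphism $k^{*}\colon \cO(B^{+}_{M})\twoheadrightarrow \cO(\widehat{\subg})$ is a surjection of Hopf algebras, and as $\widehat{\subg}$ is a finite abelian group there is a Hopf algebra isomorphism $\cO(\widehat{\subg})=\C^{\widehat{\subg}}\simeq \C\subg$. Because $\cO(B^{+}_{M})$ is central in $\cO_q(B^{+}_{M})$ by Proposition \ref{prop:sucOB}, the ideal $\big(\tilde{\iota}(\Ker k^{*})\big)$ is a Hopf ideal and $A$ is the pushout of the central exact sequence of Proposition \ref{prop:sucOB}(c) along $k^{*}$. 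This yields a central exact sequence
\[
\xymatrix{\C^{\widehat{\subg}}\ar@{^{(}->}[r]& A\ar@{->>}[r]& u_q^{M}(\lieb^{+}),}
\]
which is precisely the bottom row of the diagram \eqref{diag:theorem-liftings}.

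Next I would extract the Cartan datum. The characters are forced by the Drinfeld-Jimbo/Cartan structure via $\chi_j(g_{m_i})=q^{(m_i,\alpha_j)}$, while the finite abelian group $\Gamma$ and the embedding of $\subg$ are recovered from the composition of $k$ with the projection $B^{+}_{M}\twoheadrightarrow \mathbb{T}_{M}$ onto the maximal torus: dualizing this homomorphism determines how $\subg$ sits inside $\Gamma$, and one takes $\Gamma\simeq \big(\Z/\ell_1 N\Z\big)\times\cdots\times\big(\Z/\ell_\theta N\Z\big)$ with $\ord g_{m_i}=\ell_i N$ as in \S\ref{subsec:DJ}, so that $\Gamma$ contains $M$. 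The Cartan matrix is that of $G_M$ because $u_q^{M}(\lieb^{+})$ satisfies the quantum Serre relations of type $\mcar$, and the braiding $q_{ij}=\chi_j(g_i)=q^{d_ia_{ij}}$ is of Drinfeld-Jimbo type; this gives the braided vector space $V=\bigoplus_i \C x_i$ with $c(x_i\ot x_j)=q_{ij}x_j\ot x_i$, whose Cartan matrix is that of $B^{+}_{M}$.

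Then I would read off the parameters and identify $A$. Using the isomorphism $\psi\colon U^{M}_q(\lieb^{+})\to \cO_q(B^{+}_{M})$ of Remark \ref{rmk:zeta+}, together with $\psi^{-1}(\cO(B^{+}_{M}))=Z^{\geq}_{M}$ and the natural isomorphism $\varphi^{-1}\colon \C^{\widehat{\subg}}\to\C\subg$, each central element $E_\alpha^{N}$ maps into $\C\subg$ as $\varphi^{-1}k^{*}\psi(E_\alpha^{N})$; equating this with the class of $x_\alpha^{N}$ recovers the root-vector relations $x_\alpha^{N}=r_\alpha(\mu)$ and hence a family $\mu=(\mu_\alpha)_{\alpha\in Q^{+}}$. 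After checking that $\mu$ satisfies the admissibility condition \eqref{eq:cond-lifting-param}, so that $u(\cD_M,\mu)$ is defined, I compare presentations: relations \eqref{eq:delulmu1}--\eqref{eq:delulmu3} hold in $A$, giving a surjection $u(\cD_M,\mu)\twoheadrightarrow A$, and a dimension count using $\dim u(\cD_M,\mu)=|\subg|\dim u_q^{M}(\lieb^{+})$ from Proposition \ref{prop:ces-u}(c) against $\dim A=|\subg|\dim u_q^{M}(\lieb^{+})$ forces it to be an isomorphism. Since $\gr u(\cD_M,\mu)\simeq \toba(V)\#\C\Gamma$, the Hopf algebra $A$ is a lifting of $\toba(V)\#\C\Gamma$.

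The main obstacle I anticipate lies in the second and third steps combined: controlling the coradical of $A$, that is, showing that the central extension of the pointed Hopf algebra $u_q^{M}(\lieb^{+})$ by the finite group algebra $\C\subg$ creates no spurious group-likes and has coradical exactly $\C\Gamma$, and that the resulting extension $1\to\subg\to\Gamma\to\Gamma/\subg\to 1$ can be realized in the stated product-of-cyclic form. Equally delicate is matching the unipotent part of $k$ with an \emph{admissible} family $\mu$ satisfying \eqref{eq:cond-lifting-param}. Once these points are settled, the isomorphism $A\simeq u(\cD_M,\mu)$ and the lifting property follow formally from the earlier results.
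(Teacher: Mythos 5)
Your opening step coincides with the paper's: the comorphism $k^{*}$, the pushout along the central inclusion of Proposition \ref{prop:sucOB}, and the resulting central exact sequence $\C^{\widehat{\subg}}\hookrightarrow A\twoheadrightarrow u_{q}^{M}(\lieb^{+})$ are exactly how the paper begins (via \cite[Prop.~2.10]{AG}). But from there you take a different route that contains a genuine gap. You propose to reconstruct a datum $\cD_{M}$ and a family $\mu$ from $k$ and then identify $A\simeq u(\cD_{M},\mu)$. The crux is your sentence ``equating $\varphi^{-1}k^{*}\psi(E_{\alpha}^{N})$ with the class of $x_{\alpha}^{N}$ recovers the root-vector relations $x_{\alpha}^{N}=r_{\alpha}(\mu)$ and hence a family $\mu$.'' For an \emph{arbitrary} monomorphism $k$, the elements $\varphi^{-1}k^{*}\psi(E_{\alpha}^{N})$ are just elements of the group algebra $\C\subg$; there is no a priori reason they have the very constrained recursive form \eqref{eqn:relation-recursion} for scalars $\mu_{\alpha}$ satisfying \eqref{eq:cond-lifting-param}. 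Establishing this is essentially the hard content of the whole subject (in the paper it is only verified for carefully chosen unipotent conjugations in Section \ref{sec:PHa-qsubgroups}, by long computations), and you cannot shortcut it by invoking Theorem \ref{thm:AS} either, since applying that classification requires already knowing the coradical and the infinitesimal braiding of $A$ --- which is precisely what is at stake. You explicitly flag this matching, together with the control of the coradical, as ``delicate'' obstacles, but the proposal never resolves them; the closing claim that the isomorphism then ``follows formally'' does not.

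The paper's proof avoids parameter-matching entirely and proves only what the theorem asserts, namely $\gr A\simeq\toba(V)\#\C\Gamma$, by structural arguments that are absent from your proposal: (i) $A$ is a quotient of $\cO_{q}(B^{+}_{M})\simeq U_{q}^{M}(\lieb^{+})$, which is pointed with coradical $\C\Z^{\theta}$, so $A$ is automatically pointed with coradical $\C\Gamma=p_{J}(\C\Z^{\theta})$ --- this disposes of your ``spurious group-likes'' worry in one line; (ii) since $\Gamma$ is finite abelian, $\gr A\simeq\toba(W)\#\C\Gamma$ for some Nichols algebra $\toba(W)$ of diagonal type, by Angiono's generation-in-degree-one theorem \cite{An}, which your argument never invokes; (iii) the projection $\bar{\pi}\colon A\twoheadrightarrow u_{q}^{M}(\lieb^{+})$ respects coradical filtrations \cite[Cor.~4.2.2]{Ra}, hence induces a surjection $\toba(W)\to\toba(V)$, and the dimension count $|\Gamma|\dim\toba(W)=\dim A=|\subg|\dim u_{q}^{M}(\lieb^{+})$ forces $\toba(W)\simeq\toba(V)$. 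If you want to salvage your strategy, you should replace the reconstruction of $\mu$ by this graded argument; as written, the proposal proves a stronger statement ($A\simeq u(\cD_{M},\mu)$ with admissible $\mu$) whose key step is missing.
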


\begin{proof}
	The monomorphism $k: \widehat{\subg}\hookrightarrow B^{+}_{M}$
	corresponds to 
	an epimorphism of Hopf algebras 
	$k^{*}: \cO(B^{+}_{M})\twoheadrightarrow \C^{\widehat{\subg}} \simeq \C \subg $
	with kernel $J = \Ker k^{*}$. 
	Using \cite[Prop. 2.10]{AG}, one sees that 
	$(J) = \cO_{q}(B^{+}_{M})J$ is a 
	Hopf ideal of $\cO_{q}(B^{+}_{M})$ and the Hopf algebra 
	$\cO_{q}(B^{+}_{M})/(J)$ is
	a pushout that fits into the following commutative 
	diagram with exact rows
	\[\xymatrix{
		\cO(B^{+}_{M}) \ar@{^{(}->}[r]^{\iota} \ar@{->>}[d]_{k^{*}} &  
		\cO_q(B^{+}_{M}) \ar@{->>}[r]^{\pi} \ar@{->>}[d]^{p_{J}} & \overline{\cO_{q}(B^{+}_{M})} 
		\ar@{=}[d]\\
		\C^{\widehat{\subg}} \ar@{^{(}->}[r] & \cO_{q}(B^{+}_{M})/(J)  \ar@{->>}[r] & 
		\overline{\cO_{q}(B^{+}_{M})},} 
	\]
here $p_{J}$ is the canonical projection. 
In particular, $\cO_{q}(B^{+}_{M})/(J)$ is finite-dimensional. 
Also, since $\cO_{q}(B^{+}_{M})\simeq U_{q}^{M}(\lieb^{+})$ 
and the latter is a 
pointed Hopf algebra with coradical isomorphic to $\C \Z^{\theta}$,
$H:=\cO_{q}(B^{+}_{M})/(J)$ is a pointed Hopf algebra whose coradical is 
$H_0=\C\Gamma$, where $\Gamma = p_{J}(\Z^{\theta}) $ contains $M$. 
Moreover, since the previous diagram is commutative and the group of
grouplike elements of 
$\overline{\cO_{q}(B^{+}_{M})} \simeq u_{q}^{M}(\lieb^{+})$
is isomorphic to $(\Z/N\Z\big)^{\theta}$, the abelian group $\Gamma$
fits into the short exact sequence of groups
\[
1\to \subg \hookrightarrow \Gamma \twoheadrightarrow (\Z/N\Z\big)^{\theta}
\to 1.
\]
Denote by $g_{m_{i}}$ the image by $p_J$ of the generators 
$K_{m_{i}} \in U_{q}^{M}(\lieb^{+})$
for all $i\in I$.
Then $\Gamma$ is generated by 
$g_{m_{1}},\ldots, g_{m_{\theta}}$ and $\subg$ is generated by
$g_{m_{1}}^{N},\ldots, g_{m_{\theta}}^{N}$. 
If $\ell_{i} = \ord g_{m_{i}}^{N} $, 
then $\Gamma \simeq \big(\Z/\ell_{1}N\Z\big)\times\cdots \times  \big(\Z/\ell_{\theta}N\Z\big)$
and $\subg\simeq  \big(\Z/\ell_{1}\Z\big)\times\cdots \times  \big(\Z/\ell_{\theta}\Z\big)$.

\smallbreak
Besides, as $H$ is pointed, 
the associated graded Hopf algebra with 
respect to the coradical filtration $\gr H$ satisfies that 
$\gr H \simeq R \# \C\Gamma$. Moreover, since $\Gamma$ is a finite abelian
group, $R$ is a Nichols algebra $\toba(W)$ 
of diagonal type with $W=R^{1}=P_{1}(R)$,
see \cite{An}. 

\smallbreak

On the other hand, $\overline{\cO_{q}(B^{+})_{M}} \simeq  
u_{q}^{M}(\lieb^{+}) $
and the latter is also a pointed Hopf algebra with 
$\gr u_{q}^{M}(\lieb^{+}) \simeq \toba(V) \# \C \big(\Z/N\Z\big)^{\theta}$.  Here
$V$ is the diagonal braided vector space of Cartan type with associated Cartan 
matrix $\mcar$ as in $\S$\,\ref{sec-datum}. Since 
$\dim H = |\Gamma| \dim \toba(W) =  |\subg| \dim u_{q}^{M}(\lieb^{+}) = 
|\subg|\frac{|\Gamma|}{|\subg|}\dim \toba(V)$,
it follows that $\dim \toba(W)=\dim \toba(V)$. 	
Denote by $\bar{\pi}: H\twoheadrightarrow u_{q}^{M}(\lieb^{+})$ the Hopf algebra epimorphism,
and by $(H_{n})_{n\geq 0}$ and 
$(u_{n})_{n\geq 0}$ the terms of the coradical filtration
of $H$ and $u_{q}^{M}(\lieb^{+})$, respectively.
Using that $H$ and $u_{q}^{M}(\lieb^{+})$
are pointed, by \cite[Cor. 4.2.2]{Ra} we have that 
$\bar{\pi}(H_{n})\subseteq u_{n}$ for all $n\geq 0$. In particular, $\bar{\pi}$ induces a Hopf algebra map
$\hat{\pi}: \gr H \to \gr u_{q}^{M}(\lieb^{+})$
which is surjective because $\bar{\pi}$ is so; see \cite[Cor. 4.2.2]{Ra}. 
This implies that there is a 
surjective braided Hopf algebra map $\toba(W) \to \toba(V)$, and consequently 
$\toba(W)\simeq \toba(V)$. 
Hence, $\gr H \simeq \toba(V) \# \C\Gamma$ and 
$H=\cO_{q}(B^{+}_{M})/(J)$ is a lifting of $\toba(V)$
\end{proof}


As a direct consequence of the theorems above, it follows that all Hopf
algebras $u(\cD_{M}, \mu)$ can be described as quantum subgroups 
$\cO_{q}(B^{+}_{M})/(\Ker k^{*})$ depending on the 
monomorphism $k: \widehat{\subg} \to B^{+}$ defined using the 
family of root vector parameters $\mu=(\mu_{\alpha})_{\alpha\in Q^{+}}$.

\begin{obs}\label{rmk:computing-liftings}
	Following the commutative diagrams \eqref{eq:diag-U-u-teo} and \eqref{diag:theorem-liftings}, 
	we find a way to compute 
	the lifting of the power root vector relation \eqref{eq-HA-4}. 
	This is based in the isomorphism 
	$\psi: U_{q}^{M}(\lieb^{+})\to \cO_{q}(B^{+}_{M})$ (see the proof of 
	Theorem \eqref{teo:quantum-subgrup-DJ}),
	the monomorphism $k: \widehat{\subg} \hookrightarrow B^{+}_{M}$ and the 
	well-known formula that gives the expression of any positive root 
	$\alpha$ in terms of the simple roots $\alpha_{i}$, with $i\in I$,
	for each type of Lie algebra. Precisely, by Proposition \ref{prop:U-u}, 
	for all $\alpha \in \Phi^{+}$, we have that
\begin{align*}
	x_{\alpha}^{N}&=p(E_{\alpha}^{N}) = p_{\mu}(\psi(E_{\alpha}^{N}))
	=\varphi^{-1} k^{*}(\psi(E_{\alpha}^{N})),
\end{align*}
	because $E_{\alpha}^{N} \in Z^{\geq}_{M}$. Here, 
	$\varphi^{-1}k^{*}(\psi(E_{\alpha}^{N})) \in \C T$ and 
	$\varphi^{-1}:\C^{\widehat{\subg}}\to \C T$ is the isomorphism considered in 
	Theorem \ref{teo:quantum-subgrup-DJ}. 
\end{obs}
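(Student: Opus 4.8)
The plan is to read off the three asserted equalities one at a time from commutative squares that have already been established, keeping careful track of where each element lives and using throughout the identification of $\C\subg$ with its image $\iota_u(\C\subg)$ inside $u(\cD_M,\mu)$.

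For the leftmost equality $x_\alpha^N = p(E_\alpha^N)$ I would simply invoke Proposition~\ref{prop:U-u}. There the epimorphism $p:U^M_q(\lieb^+)\twoheadrightarrow u(\cD_M,\mu)$ sends $E_i\mapsto x_i$ and $K_{m_i}\mapsto g_{m_i}$, and being an algebra map it carries the quantum root vector $E_\alpha$ (an iterated $q$-commutator of the $E_i$) to the braided root vector $x_\alpha$; this is precisely the identity $p_0(E_\alpha^N)=x_\alpha^N$ recorded in the proof of Proposition~\ref{prop:U-u}(b), valid because $E_\alpha^N\in Z^{\geq}_M$ and $p_0$ is the restriction of $p$ to $Z^{\geq}_M$, so that $p(E_\alpha^N)=p_0(E_\alpha^N)=x_\alpha^N$.

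Next, for the middle equality $p(E_\alpha^N)=p_\mu(\psi(E_\alpha^N))$, the point is that by its very construction the map $p_\mu:\cO_q(B_M^+)\to u(\cD_M,\mu)$ of Theorem~\ref{teo:quantum-subgrup-DJ} is the downward composite through the central column of the three-row diagram, so that the relevant square commutes as $p_\mu\circ\psi=p$, where $\psi:U^M_q(\lieb^+)\to\cO_q(B_M^+)$ is the isomorphism of Remark~\ref{rmk:zeta+}. Since $\psi$ is invertible, evaluating $p=p_\mu\circ\psi$ at $E_\alpha^N$ gives the claim at once. For the rightmost equality the key structural input is that $E_\alpha^N\in Z^{\geq}_M$ together with the fact, established inside the proof of Theorem~\ref{teo:quantum-subgrup-DJ}, that $\psi$ restricts to an isomorphism $Z^{\geq}_M\xrightarrow{\ \sim\ }\cO(B_M^+)$; hence $\psi(E_\alpha^N)$ lies in $\cO(B_M^+)$. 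On this subalgebra I would apply commutativity of the left square of the diagram of Theorem~\ref{teo:quantum-subgrup-DJ}, namely $p_\mu\circ\tilde\iota=\tilde\iota_u\circ k^*$, and then substitute $\tilde\iota_u=\iota_u\circ\varphi^{-1}$ to obtain $p_\mu|_{\cO(B_M^+)}=\iota_u\circ\varphi^{-1}\circ k^*$. Evaluating at $\psi(E_\alpha^N)$ and suppressing the embedding $\iota_u$ yields $p_\mu(\psi(E_\alpha^N))=\varphi^{-1}k^*(\psi(E_\alpha^N))$, which in particular lies in $\C\subg$, as asserted.

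The part requiring the most care is bookkeeping rather than anything conceptual: one must verify that the curved arrows $p_\mu$ and $k^*$ of the displayed diagram coincide with the composites $p\circ\psi^{-1}$ and $\varphi\circ p_0\circ\psi^{-1}$ $(=\tilde p_0)$ appearing in the proof of Theorem~\ref{teo:quantum-subgrup-DJ}, and that the power root vector $E_\alpha^N$ genuinely lands in the central subalgebra $Z^{\geq}_M$ on which $\psi$ and the vertical maps are mutually compatible. Once these identifications are pinned down, all three equalities follow formally from commutativity already in hand.
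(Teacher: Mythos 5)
Your proposal is correct and takes essentially the same approach as the paper: the remark is precisely the diagram chase through \eqref{eq:diag-U-u-teo} and \eqref{diag:theorem-liftings}, and your identifications $p_{\mu}=\tilde{p}=p\circ\psi^{-1}$ and $k^{*}=\tilde{p}_0=\varphi\circ p_{0}\circ\psi^{-1}$, together with $\psi(Z^{\geq}_{M})=\cO(B^{+}_{M})$ and $p_{0}(E_{\alpha}^{N})=x_{\alpha}^{N}$ from the proof of Proposition \ref{prop:U-u}(b), are exactly the bookkeeping the paper leaves implicit. No gap.
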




\subsection{Braidings of general Cartan type}\label{subsec:BraidingGenCartan}

In this subsection we assume that 
the braiding 
$\mathbf{q}=(q_{ij})_{i,j\in I}$ is of Cartan type for certain Cartan matrix
$C=(a_{ij})_{i,j\in I}$. 
Recall that we write $a= |\det(\mcar)| =[P:Q]$. For 
any lattice $Q\subseteq M\subseteq P$ of rank $\theta$
with basis $\{m_{1},\ldots, m_{\theta}\}$ we have that 
$  \alpha_i = \sum_{j \in I} a_{ji}^{M} \, m_j $
for all  $ i,j \in I$, where 
$C_{M}= (a_{ji}^{M})_{i,j} \in \Z^{\theta\times \theta} $
and $a_{M}:=|\det C_{M}|=[M:Q]$.

For each $(i,j)\in I\times I$, we choose 
$q_{ij}^{1/a^{2}}\in\C$ an $a^2$-th root of $q_{ij}$, that is, 
$\big(q_{ij}^{1/a^{2}}\big)^{a^2}=q_{ij}$. 
Given $m\in (1/a^2)\Z$, we set $q_{ij}^m:=\big(q_{ij}^{1/a^2}\big)^{a^2m}$;
in particular, we have that $q_{ij}^{1/a}=\big(q_{ij}^{1/a^{2}}\big)^{a}$
and $q_{ij}^{1/a_{M}}=\big(q_{ij}^{1/a}\big)^{[P:M]}$.
We assume further that
\[
q_{ij}^{1/a}q_{ji}^{1/a} = q_{ii}^{a_{ij}/a},  \qquad\text{ for all } i,j\in I.
\]

\bigbreak
\subsubsection{The multiparameter for any lattice $M$}
Write $C_{M}^{-1}=(\tilde{a}_{ij})_{i,j\in I}$ for the inverse matrix of $C_{M}$.
Then $a_{M}C_{M}^{-1}\in \Z^{\theta\times \theta}$ and we define:
\begin{equation}\label{eq:cond-Cartan-weights}
	q_{ij}^{M} =  \prod_{k\in I} q_{kj}^{\tilde{a}_{ki}}.
\end{equation}
Observe that $q_{kj}^{\tilde{a}_{ki}}$ is well-defined because
$\tilde{a}_{ki}\in (1/a_{M})\Z \subseteq (1/a^2)\Z$.
Recall also from \S \ref{subsec:Lie-algebras} that we may write 
$  m_j = \sum_{k \in I} \bar{a}_{kj}^{M} \, w_k $
for all  $ j \in I$; in particular 

\begin{align}
 \bar{a}_{kj}^{M} = \frac{(\alpha_{k},m_{j})}{d_{k}} \qquad
 \text{ for all } k,j \in I.	
\end{align}

The matrices
$C_{M}= (a_{ji}^{M})_{i,j\in I}$ and $ \bar{C}_{M} = (\bar{a}_{kj}^{M})_{k,j\in I}\in \Z^{\theta\times \theta} $
satisfy that $C = \bar{C}_{M}C_{M}$.

\begin{obs}\label{obs-canonical-multi}
	In case $q_{ij} = \check{q}_{ij} = q^{d_{i}a_{ij}}$ 
	is the \emph{canonical} multiparameter, one has that
	$q_{ij}^{M} = q^{(\alpha_{j},m_{i})}$. 
	Indeed, since $q_{kj}= q_{jk}$ for all $j,k\in I$, we have that
	\begin{align*}   
	q_{ij}^{M} & = 
	\prod_{k\in I} q_{kj}^{\tilde{a}_{ki}} =
	\prod_{k\in I} q_{jk}^{\tilde{a}_{ki}} = 
	\prod_{k\in I} q^{d_{j}a_{jk}\tilde{a}_{ki}} = 
	q^{d_{j}\sum_{k\in I}a_{jk}a'_{ki} }=
	q^{d_{j}(C\, C_{M}^{-1})_{ji}} = q^{d_{j}(\bar{C}_{M})_{ji}}\\
	& = q^{d_{j}\frac{(\alpha_{j},m_{i})}{d_{j}}} =
	q^{(\alpha_{j},m_{i})}.	 
	\end{align*}
For example, for $M=P$ we have $q_{ij}^{P} = 
q^{\omega_{i}(\alpha_{j})}=q^{d_{i}\delta_{ij}}$ while for $M=Q$ 
we have $q_{ij}^{Q} = 
q^{(\alpha_{j},\alpha_{i})}=q^{d_{j}a_{ji}}=q^{d_{i}a_{ij}}$.
\end{obs}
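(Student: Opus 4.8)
The plan is to establish the identity $q_{ij}^{M} = q^{(\alpha_{j},m_{i})}$ by a direct computation, starting from the definition $q_{ij}^{M} = \prod_{k\in I} q_{kj}^{\tilde{a}_{ki}}$ in \eqref{eq:cond-Cartan-weights}, and feeding in two structural inputs: the symmetry of the canonical braiding and the factorization $C = \bar{C}_{M}C_{M}$ of the Cartan matrix through the lattice $M$. The whole argument is a chain of equalities, so the work is in assembling these inputs in the right order rather than in any conceptual step.

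First I would use that in the Drinfeld-Jimbo case $q_{kj} = q^{d_{k}a_{kj}} = q^{(\alpha_{k},\alpha_{j})}$, and that the form is symmetric, i.e.\ $d_{k}a_{kj} = d_{j}a_{jk}$, so that $q_{kj} = q_{jk} = q^{d_{j}a_{jk}}$. Replacing $q_{kj}$ by $q^{d_{j}a_{jk}}$ in the defining product is the key trick: it pulls the common factor $d_{j}$ out of every term and collapses the product into a single exponential $q^{d_{j}\sum_{k}a_{jk}\tilde{a}_{ki}}$, where $\tilde{a}_{ki} = (C_{M}^{-1})_{ki}$. The heart of the computation is then to recognize the exponent sum as a matrix product: $\sum_{k}a_{jk}\tilde{a}_{ki} = (C\,C_{M}^{-1})_{ji}$. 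Invoking $C = \bar{C}_{M}C_{M}$, hence $C\,C_{M}^{-1} = \bar{C}_{M}$, yields $q_{ij}^{M} = q^{d_{j}(\bar{C}_{M})_{ji}} = q^{d_{j}\,\bar{a}_{ji}^{M}}$. Finally I would substitute the explicit formula $\bar{a}_{ji}^{M} = (\alpha_{j},m_{i})/d_{j}$ recorded in \S\ref{subsec:Lie-algebras}, so the factor $d_{j}$ cancels and we arrive at $q_{ij}^{M} = q^{(\alpha_{j},m_{i})}$. The two displayed special cases then follow by evaluating the pairing: for $M=P$ one gets $(\alpha_{j},\omega_{i}) = d_{i}\delta_{ij}$, and for $M=Q$ one gets $(\alpha_{j},\alpha_{i}) = d_{i}a_{ij} = d_{j}a_{ji}$.

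I expect the only genuine obstacle to be bookkeeping with the index conventions. One must be careful about which index labels the row and which the column in $C_{M} = (a_{ji}^{M})_{i,j}$, in $\bar{C}_{M} = (\bar{a}_{kj}^{M})_{k,j}$, and in $C_{M}^{-1}$, and make sure the contracted index $k$ sits on the correct side so that the product $C\,C_{M}^{-1}$ (and not a transpose) is what appears; an analogous check applies to reading off $(\bar{C}_{M})_{ji} = \bar{a}_{ji}^{M}$. Once these conventions are fixed consistently — and after confirming that each $q_{kj}^{\tilde{a}_{ki}}$ is well defined since $\tilde{a}_{ki}\in(1/a_{M})\Z$, as already noted after \eqref{eq:cond-Cartan-weights} — the identity drops out immediately, with no analytic or deeper structural input required.
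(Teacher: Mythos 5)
Your proposal is correct and follows essentially the same chain of equalities as the paper's own argument: symmetrize $q_{kj}=q_{jk}=q^{d_ja_{jk}}$, collapse the product into $q^{d_j(C\,C_M^{-1})_{ji}}$, apply $C=\bar{C}_M C_M$, and cancel $d_j$ via $\bar{a}^M_{ji}=(\alpha_j,m_i)/d_j$. Your added cautions about index conventions and the well-definedness of the fractional exponents are sensible but do not change the route.
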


\smallbreak
\subsubsection{The Cartan datum for any lattice $M$}

\smallbreak	
Let $\Gamma = \big(\Z/\ell_{1}N\Z\big)\times\cdots \times
\big(\Z/\ell_{\theta}N\Z\big)$ be a finite abelian group
with generators $g_{m_{i}}\in \Z/\ell_{i}N\Z$, for all $i\in I$. 
Consider the group epimorphism $g:M\to \Gamma$ defined by 
$g(m_i)=g_{m_i}$ for all $i\in I$. 
For $j\in I$, define $g_{j}\in\Gamma$ and $\chi_{j}\in \widehat\Gamma$ by 
\begin{align}\label{eq-datum}
\chi_{j}(g_{m_{i}}) = q_{ij}^{M}, & &g_{j} = g(\alpha_{j}) = g_{\alpha_{j}}.	
\end{align}
Note that $\Gamma$ is a group containing $M$ 
(see Definition \ref{def-contain-group}) and
$\chi_{j}(g_{i}) = q_{ij}$ for all $i,j \in I$. Indeed,
\begin{align*}
	\chi_{j}(g_{i}) & = \chi_{j}\big(g(\alpha_{i})\big) = \chi_{j}\big(g\big(\sum_{k\in I} a^{M}_{ki}m_{k}\big)\big)
	= \chi_{j}\big(\prod_{k\in I} g_{m_{k}}^{a^{M}_{ki}}\big) 
	=  \prod_{k\in I} \chi_{j}(g_{m_{k}})^{a^{M}_{ki}}\\
	&=\prod_{k\in I} (q_{kj}^{M})^{a^{M}_{ki}}=
	\prod_{k,\ell \in I} q_{\ell j}^{\tilde{a}_{\ell k}a^{M}_{ki}}
	 = \prod_{\ell \in I} q_{\ell j}^{\sum_{k\in I}
	 \tilde{a}_{\ell k}a^{M}_{ki}} = \prod_{\ell \in I} q_{\ell j}^{\delta_{\ell\, i}} = q_{ij}.
\end{align*}
\bigbreak
Consider now the tuple
\begin{align}\label{eq-data-cartan-P}
	\cD_{M}=\cD(\Gamma,(g_i)_{i\in I},(\chi_i)_{i\in I}, (a_{ij})_{i,j\in I}),
\end{align}
where $g_i$ and $\chi_i$ are given by \eqref{eq-datum}. 
By the previous calculation, $\cD_{M}$ is a finite Cartan datum 
for $\Gamma$.
For the sequel  in this subsection, we define the elements
$\chi_{k}^{\tilde{a}_{ki}}\in \widehat{\Gamma}$  by: 
$$
\chi_{k}^{\tilde{a}_{ki}}(g_{\omega_{j}})=
(q_{jk}^M)^{\tilde{a}_{ki}},\qquad \text{ for all }i,j,k\in I.
$$

\begin{lema}\label{lema:equationsqijP} 
Let $\chi_{m_{i}}: = \prod_{k}\chi_{k}^{\tilde{a}_{ki}}\in 
\widehat{\Gamma}$. 
The following formula holds:	
\begin{eqnarray}\label{eq:Npowerchar}
\chi_{j}(g_{m_{i}})\chi_{m_{i}}(g_{j}) & = 
q_{jj}^{\frac{(\alpha_{j},m_{i})}{d_{j}}} \qquad i,j\in I.
\end{eqnarray}
\end{lema}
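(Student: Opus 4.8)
The plan is to compute the two factors $\chi_{j}(g_{m_{i}})$ and $\chi_{m_{i}}(g_{j})$ separately as explicit monomials in the $q_{k\ell}$, and then to collapse their product by means of the Cartan relation together with the identity $C=\bar{C}_{M}C_{M}$. All the fractional powers below are legitimate because $\tilde{a}_{ki}\in(1/a_{M})\Z\subseteq(1/a^{2})\Z$ and we have fixed $a^{2}$-th roots of the $q_{k\ell}$.

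First I would record the two evaluations. From \eqref{eq-datum} and \eqref{eq:cond-Cartan-weights} one has at once
$$\chi_{j}(g_{m_{i}})=q_{ij}^{M}=\prod_{k\in I}q_{kj}^{\tilde{a}_{ki}}.$$
For the second factor, note that $m_{i}=\sum_{k\in I}\tilde{a}_{ki}\alpha_{k}$ (this is just $C_{M}^{-1}C_{M}=\id$ read off from $\alpha_{i}=\sum_{j}a_{ji}^{M}m_{j}$), so that $\chi_{m_{i}}=\prod_{k}\chi_{k}^{\tilde{a}_{ki}}$ is the character attached to $m_{i}$. Evaluating at $g_{j}=g_{\alpha_{j}}$ and using $\chi_{k}(g_{j})=q_{jk}$ (established when checking $\chi_{j}(g_{i})=q_{ij}$) together with the multiplicativity of the powers $\chi_{k}^{\tilde{a}_{ki}}$, I would obtain
$$\chi_{m_{i}}(g_{j})=\prod_{k\in I}\chi_{k}(g_{j})^{\tilde{a}_{ki}}=\prod_{k\in I}q_{jk}^{\tilde{a}_{ki}}.$$

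Multiplying the two expressions then gives
$$\chi_{j}(g_{m_{i}})\,\chi_{m_{i}}(g_{j})=\prod_{k\in I}(q_{kj}q_{jk})^{\tilde{a}_{ki}}=\prod_{k\in I}\big(q_{jj}^{a_{jk}}\big)^{\tilde{a}_{ki}}=q_{jj}^{\sum_{k}a_{jk}\tilde{a}_{ki}},$$
where I invoke the Cartan relation in its root-compatible form $q_{jk}^{1/a}q_{kj}^{1/a}=q_{jj}^{a_{jk}/a}$, raised to the integer power $a\tilde{a}_{ki}\in[P:M]\Z$, in order to move the exponent $\tilde{a}_{ki}$ across the product. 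To finish, I would identify the exponent: since $C=\bar{C}_{M}C_{M}$ one has $\sum_{k}a_{jk}\tilde{a}_{ki}=(C\,C_{M}^{-1})_{ji}=(\bar{C}_{M})_{ji}=\bar{a}_{ji}^{M}$, and $\bar{a}_{ji}^{M}=(\alpha_{j},m_{i})/d_{j}$; this yields exactly $q_{jj}^{(\alpha_{j},m_{i})/d_{j}}$, as claimed.

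The only delicate point — and the main obstacle — is the evaluation of $\chi_{m_{i}}(g_{j})$: one must move cleanly between the three bases $\{\alpha_{i}\}$, $\{m_{i}\}$ and $\{\omega_{i}\}$ and verify that the fractional characters $\chi_{k}^{\tilde{a}_{ki}}$ indeed behave multiplicatively when evaluated on the genuine group element $g_{\alpha_{j}}\in\Gamma$, which is precisely what the choice of $a^{2}$-th roots secures. Once this is in place, the remainder is bilinear bookkeeping controlled by the two matrix identities $C_{M}^{-1}C_{M}=\id$ and $C\,C_{M}^{-1}=\bar{C}_{M}$.
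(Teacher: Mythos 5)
Your proof is correct and takes essentially the same route as the paper: both arguments evaluate $\chi_{j}(g_{m_{i}})=\prod_{k}q_{kj}^{\tilde{a}_{ki}}$ and $\chi_{m_{i}}(g_{j})=\prod_{k}q_{jk}^{\tilde{a}_{ki}}$, collapse the product via the Cartan relation to $q_{jj}^{\sum_{k}a_{jk}\tilde{a}_{ki}}$, and identify the exponent as $(\bar{C}_{M})_{ji}=(\alpha_{j},m_{i})/d_{j}$ using $C=\bar{C}_{M}C_{M}$. Your added care about the fractional-power conventions (invoking the assumed relation $q_{ij}^{1/a}q_{ji}^{1/a}=q_{ii}^{a_{ij}/a}$ raised to the integer power $a\tilde{a}_{ki}$) only makes explicit what the paper's computation uses implicitly.
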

	
\begin{proof}
Let $i,j\in I$. Then
\begin{align*}
\chi_{j}(g_{m_{i}})\chi_{m_{i}}(g_{j}) &= 
q_{ij}^{M}\prod_{k}\chi_{k}^{\tilde{a}_{ki}}(g_{j})=
\left(\prod_{k\in I} q_{kj}^{\tilde{a}_{ki}}\right)
\left(\prod_{k\in I}q_{jk}^{ \tilde{a}_{ki}}\right)
=  \prod_{k\in I} (q_{kj}q_{jk})^{\tilde{a}_{ki}} \\
&=  \prod_{k\in I} (q_{jj}^{a_{jk}})^{\tilde{a}_{ki}} = 
q_{jj}^{\sum_{k\in I}a_{jk} \tilde{a}_{ki}} =   q_{jj}^{(\bar{C}_{M})_{ji}}
= q_{jj}^{\bar{a}^{M}_{ji}}
 = q_{jj}^{\frac{(\alpha_{j},m_{i})}{d_{j}}}.
		\end{align*}
\end{proof}

\subsubsection{The algebra  $u(\cD_{M},\mu)$ as a central extension}
Consider the pointed Hopf algebra $u(\cD_{M},\mu)$ associated with the finite
Cartan datum $\cD_{M}$ given in \eqref{eq-data-cartan-P}. 
Let $\subg$ be the subgroup of $\Gamma$ given by: 
\begin{align}\label{sub-t}
\subg = \langle g_{m_i}^{N} \in \Gamma\, |\, 
\chi_{m_i}^{N} =\varepsilon \rangle.
\end{align}
Note that in the Drinfeld-Jimbo case, the previous subgroup of $\Gamma$ actually coincides with the 
subgroup $\subg$ of $\Gamma$ generated by $g_{m_i}^N$ which was considered in Proposition \ref{prop:ces-u}. 
\smallbreak

Let  $\overline{\Gamma}:= \Gamma / \subg$ and denote by 
$\bar{g}_{m_{i}}$ the class of $g_{m_i}$. 
Let $V$ the braided vector space with basis $\{x_i\}_{i\in I}$ 
and braiding $c(x_i\otimes x_j)=q_{ij}x_j\otimes x_i$ , and  
consider the pointed Hopf algebra  
\begin{align}\label{ha-u+}
	\lieu^{M}(V)^{\geq 0}:= \toba(V)\#\C\overline{\Gamma}
\end{align}
obtained by the bosonization of 
the Nichols algebra $\toba(V)$ with the group algebra 
$\C\overline{\Gamma}$. 

\begin{prop}\label{prop:ces-u-Cartan}
Let $\mu = (\mu_{\alpha})_{\alpha \in Q^{+}}$ be a family of 
root vector parameters. The group algebra $\C \subg$ is central in 
$u(\cD_{M},\mu)$ 
and there exists an exact sequence of Hopf algebras 
\begin{equation}\label{eq:ces-u-Cartan}
\xymatrix{\C \subg  \ar@{^{(}->}[r]^(0.4){\iota_{u}}& 
u(\cD_{M},\mu)\ar@{->>}[r]^(0.45){\pi_{u}}& \lieu^{M}(V)^{\geq 0}}.
\end{equation}

\end{prop}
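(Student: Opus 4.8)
The plan is to follow verbatim the argument of Proposition~\ref{prop:ces-u} from the Drinfeld--Jimbo case, replacing the elementary character evaluations used there by the identity recorded in Lemma~\ref{lema:equationsqijP}. Throughout I work under the standing assumption of this subsection that $\chi_{m}^{N}=\eps$ for all $m\in M$; this forces $\chi_{m_{i}}^{N}=\eps$ for every $i\in I$, so that $\subg=\langle g_{m_{i}}^{N}\mid i\in I\rangle$ by \eqref{sub-t} and, since $Q\subseteq M$, one has $g_{\beta}^{N}=\prod_{k}\bigl(g_{m_{k}}^{N}\bigr)^{c_{k}}\in\subg$ for every $\beta=\sum_{k}c_{k}m_{k}\in Q^{+}$.

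First I would establish centrality. As $\Gamma$ is abelian, each $g_{m_{i}}^{N}$ commutes with all group-likes, so it suffices to check that it commutes with every $x_{j}$; from the conjugation relation $g_{m_{i}}x_{j}g_{m_{i}}^{-1}=\chi_{j}(g_{m_{i}})x_{j}$ this reduces to showing $\chi_{j}(g_{m_{i}})^{N}=1$. Raising the identity of Lemma~\ref{lema:equationsqijP} to the $N$-th power yields $\chi_{j}(g_{m_{i}})^{N}\,\chi_{m_{i}}(g_{j})^{N}=q_{jj}^{N(\alpha_{j},m_{i})/d_{j}}$; the right-hand side equals $1$ because $q_{jj}$ has order $N$ and $(\alpha_{j},m_{i})/d_{j}=\bar{a}_{ji}^{M}\in\Z$, while $\chi_{m_{i}}(g_{j})^{N}=1$ by the standing assumption. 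Hence $\chi_{j}(g_{m_{i}})^{N}=1$ for all $j$, and $\C\subg$ is a central Hopf subalgebra of $u(\cD_{M},\mu)$.

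Next I would construct the projection. The same computation (with the roles of $i$ and $j$ exchanged) gives $\chi_{i}(g_{m_{j}}^{N})=1$, so each $\chi_{i}$ vanishes on $\subg$ and descends to a character of $\overline{\Gamma}$; thus $V$ is a genuine Yetter--Drinfeld module over $\C\overline{\Gamma}$ and $\lieu^{M}(V)^{\geq0}=\toba(V)\#\C\overline{\Gamma}$ is well defined. I then set $\pi_{u}(g_{m_{i}})=\bar g_{m_{i}}$ and $\pi_{u}(x_{i})=x_{i}$ and verify the defining relations of $u(\cD_{M},\mu)$: the conjugation relation becomes the action of $\overline{\Gamma}$ on $V$, the quantum Serre relations hold in $\toba(V)$, and for the power root vectors \eqref{eq-HA-4} the target satisfies $x_{\alpha}^{N}=0$ while $\pi_{u}(r_{\alpha}(\mu))=0$: by \eqref{eqn:relation-recursion} together with \eqref{eq:cond-lifting-param}, $r_{\alpha}(\mu)$ lies in the augmentation ideal of $\C[g_{\beta}^{N}\mid\beta\in Q^{+}]$, every $g_{\beta}^{N}\in\subg$ maps to $\bar g_{\beta}^{N}=1$, and $r_{\alpha}(\mu)$ has zero counit. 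Since $\pi_{u}$ is a surjection sending group-likes to group-likes and skew-primitives to skew-primitives, it is an epimorphism of Hopf algebras.

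Finally I would prove exactness exactly as in Proposition~\ref{prop:ces-u}. Comparing presentations, $u(\cD_{M},\mu)/u(\cD_{M},\mu)\,\C\subg^{+}$ has precisely the generators and relations of $\lieu^{M}(V)^{\geq0}$, and the dimension formula of Theorem~\ref{thm:AS}(a) gives $\dime u(\cD_{M},\mu)=N^{|\Phi^{+}|}|\Gamma|=|\subg|\,N^{|\Phi^{+}|}|\overline{\Gamma}|=|\subg|\,\dime\lieu^{M}(V)^{\geq0}$, so the induced surjection is an isomorphism. As $u(\cD_{M},\mu)$ is free over the central Hopf subalgebra $\C\subg$ through its PBW-type basis, it is faithfully flat, and \cite[Prop.~3.4.3]{Mo} then yields $u(\cD_{M},\mu)^{\co\pi_{u}}=\C\subg$ together with the central exact sequence \eqref{eq:ces-u-Cartan}. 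I expect the main obstacle to be the well-definedness of $\pi_{u}$, namely the vanishing $\pi_{u}(r_{\alpha}(\mu))=0$: this is exactly where the assumption $\chi_{m}^{N}=\eps$ is indispensable, guaranteeing $g_{\beta}^{N}\in\subg$ for every $\beta$ with $\mu_{\beta}\neq0$, and it is also the only point at which the multiparametric nature of the braiding, encoded in Lemma~\ref{lema:equationsqijP}, genuinely enters.
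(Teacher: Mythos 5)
Your proposal follows the paper's strategy essentially step for step (centrality via Lemma \ref{lema:equationsqijP} raised to the $N$-th power, identification of the quotient using the vanishing of $r_{\alpha}(\mu)$, a dimension count, and faithful flatness plus \cite[Prop.~3.4.3]{Mo}; the only cosmetic difference is that you build the map $u(\cD_{M},\mu)\to\lieu^{M}(V)^{\geq 0}$ directly, while the paper builds an epimorphism from $\lieu^{M}(V)^{\geq 0}$ onto $u(\cD_{M},\mu)/\C\subg^{+}u(\cD_{M},\mu)$). However, it rests on a misreading of the hypotheses: the assumption $\chi_{m}^{N}=\eps$ for all $m\in M$ is \emph{not} in force for Proposition \ref{prop:ces-u-Cartan}. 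In the paper, \eqref{assumptionChi} is introduced only in the later subsubsection (``From now on we assume\dots''), i.e.\ after this proposition; the proposition itself is stated for an arbitrary braiding of Cartan type, with $\subg$ the possibly proper subgroup $\langle g_{m_{i}}^{N}\mid \chi_{m_{i}}^{N}=\eps\rangle$ of \eqref{sub-t}. Indeed, the remark following \eqref{sub-t}, that in the Drinfeld--Jimbo case this subgroup coincides with the full group $\langle g_{m_{i}}^{N}\mid i\in I\rangle$, only makes sense because in general the two may differ. Under your added assumption the statement you prove is strictly weaker than the one claimed.

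Concretely, two of your three uses of the assumption are harmless and easily removed: for centrality, and for the descent of the characters $\chi_{j}$ to $\overline{\Gamma}$, you only need $\chi_{m_{i}}^{N}=\eps$ for the \emph{generators} $g_{m_{i}}^{N}$ of $\subg$, and this holds by the very definition \eqref{sub-t} (this is how the paper argues: for $g_{m}^{N}\in\subg$ one has $\chi_{m}^{N}=\eps$, whence $\chi_{j}(g_{m})^{N}=\big(\chi_{m}^{N}(g_{j})\big)^{-1}=1$ by \eqref{eq:Npowerchar}). The third use is the genuine gap: to get $\pi_{u}(r_{\alpha}(\mu))=0$ you need every group-like $g_{\beta}^{N}$ occurring in $r_{\alpha}(\mu)$ to lie in $\subg$. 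Under \eqref{assumptionChi} this is trivial, since then $\subg$ contains $g_{\beta}^{N}$ for every $\beta\in Q^{+}$; without it, one must first use \eqref{eq:cond-lifting-param} to see that only $\beta$ with $\chi_{\beta}^{N}=\eps$ and $g_{\beta}^{N}\neq 1$ contribute, so that $r_{\alpha}(\mu)$ lies in the augmentation ideal of $\C[g_{\beta}^{N}\mid \chi_{\beta}^{N}=\eps]$, and then show that these particular $g_{\beta}^{N}$ belong to $\subg$. That membership is exactly what the paper invokes when it writes $r_{\alpha}(\mu)\in\C\subg^{+}$, and it is the only point where the general Cartan case has content beyond the special case you treat; as written, your proposal does not address it.
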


\pf Let $g_{m}^{N} \in T_{M}$ 
with $m = \sum_{i=1}^{\theta} n_{i} m_{i} \in M^{+}$. 
Since the braiding is of Cartan type, we have
\begin{align*}
g_{m}^{N}\, x_{j} \, g_{m}^{-N}&= 
\chi_{j}(g_{m})^{N} x_{j}=\chi_{j}\big(g(m)\big)^{N} x_{j} = 
\chi_{j}\big(g(\sum_{i=1}^{\theta} n_{i} m_{i})\big)^{N} x_{j} =
\chi_{j}\big(\prod_{i=1}^{\theta}g(m_{i})^{n_{i}}\big)^{N} x_{j}  \\
& = \prod_{i=1}^{\theta}\chi_{j}(g_{m_{i}})^{N\, n_{i}}\, x_{j} 
\overset{(*)}{=} \prod_{i=1}^{\theta}\chi_{m_{i}}(g_{j})^{-N\, n_{i}}\, 
\big(q_{jj}^{\frac{(\alpha_{j},m_{i})}{d_{j}}}\big)^{N\, n_{i}}\, x_{j}
 = \big(\prod_{i=1}^{\theta}\chi^{n_{i}}_{m_{i}}(g_{j})\big)^{-N} x_{j}\\
		&= \big(\chi_{m}^{N}(g_{j})\big)^{-1}x_{j}=x_{j},
	\end{align*}
for all $j\in I$; where the equality $(*)$ follows from \eqref{eq:Npowerchar}.
This implies that $\subg$ is central in $u(\cD_{M},\mu) $ and consequently 
one has a central exact sequence of Hopf algebras
$$
\C \subg\hookrightarrow u(\cD_{M},\mu) \twoheadrightarrow 
u(\cD_{M},\mu) /\C \subg^{+} u(\cD_{M},\mu). 
$$
To finish the proof we show that 
$ \lieu^{M}(V)^{\geq 0}$ is isomorphic to the Hopf algebra quotient 
$u(\cD_{M},\mu) /\C \subg^{+} u(\cD_{M},\mu)$. 
Let $p_{\subg}: u(\cD_{M},\mu) \twoheadrightarrow 
u(\cD_{M},\mu) /\C \subg^{+} u(\cD_{M},\mu)$ 
the canonical epimorphism.
Since $x_{\alpha}^{N} = r_{\alpha}(\mu) \in \C\subg^{+}$, it follows that 
$p_{\subg}(x_{\alpha}^{N}) = 0$ for all $\alpha \in Q^{+} $.
Then there exists an epimorphism of Hopf algebras 
$\lieu^{M}(V)^{\geq 0} \to u(\cD_{M},\mu) /\C \subg^{+} u(\cD_{M},\mu)$, 
which is an isomorphism because both algebras have the same dimension 
$\frac{|\Gamma|}{|\subg|} \dim \toba(V) $.
	\epf

In order to obtain a similar result as 
Theorem \ref{teo:quantum-subgrup-DJ} in the general case, we
 need to consider multiparametric versions of the quantized 
enveloping algebra associated with the Cartan matrix $\mcar$ and having a \emph{torus} containing the lattice $M$, 
and their corresponding Borel subalgebras.

\begin{defi}  \cite[Def. 7]{HPR}. \label{def:multiqgroup_ang}
The {\it multiparameter quantized enveloping algebra
$ U_{\bf q}^{M}(\lieg) $} is  the unital associative  $\C $--algebra 
generated by elements  
$ \, E_i \, , \, F_i \, , \, K_{m_{i}}^{\pm 1} \, , \, L^{\pm 1}_{m_{i}} \, $, 
$ \, i \in I \, $, satisfying the following relations: for all $i,j\in I$,

\begin{minipage}[t]{0.5\textwidth}
\begin{align*}
K_{m_{i}}^{\pm 1} L^{\pm 1}_{m_{j}}  & = 
L^{\pm 1}_{m_{j}} K^{\pm 1}_{m_{i}}\\[.3em]
K_{m_{i}}^{\pm 1} K^{\pm 1}_{m_{j}}  &=  
K^{\pm 1}_{m_{j}} K^{\pm 1}_{m_{i}} \\[.3em]
K_{m_{i}} \, E_j \, K_{m_{i}}^{-1}  &= q_{ij}^{M} \, E_j\\[.3em]
K_{m_{i}} \, F_j \, K_{m_{i}}^{-1}  &=   (q_{ij}^{M})^{-1} \, F_j
\end{align*}
\end{minipage}
\begin{minipage}[t]{0.4\textwidth}
\begin{align*}
K_{m_{i}}^{\pm 1} K^{\mp 1}_{m_{i}}  & = 1   =  
 L^{\pm 1}_{m_{i}} L^{\mp 1}_{m_{i}},  \\[.3em]
L_{m_{i}}^{\pm 1} L_{m_{j}}^{\pm 1}  & = \,  
L_{m_{j}}^{\pm 1} L_{m_{i}}^{\pm 1},  \\[.3em]
L_{m_{i}} \, E_j \, L_{m_{i}}^{-1}  &= \,  
(q_{ji}^{M})^{-1} \, E_j , \\[.3em]
L_{m_{i}} \, F_j \, L_{m_{i}}^{-1}   &= \,  q_{ji}^{M} \, F_j, 
\end{align*}
\end{minipage}
\begin{align*}
[E_i , F_j]  &= \,  \delta_{i,j} \, q_{ii} \, \frac{\, K_{\alpha_{i}} - L_{\alpha_{i}} \,}{\, q_{ii} - 1 \,}, \\
\sum_{k=0}^{1-a_{ij}} (-1)^k \,
{\bigg( {1-a_{ij} \atop k} \bigg)}_{\!\!q_{ii}} &q_{ii}^{{k \choose 2}} \, q_{ij}^k \,
E_i ^{\,1-a_{ij}-k} E_j \, E_i^{\,k}  \; = \;  0,   \;\; \qquad \, i \neq j, \,  \\
\sum_{k=0}^{1-a_{ij}} (-1)^k \,
{\bigg( {1-a_{ij} \atop k} \bigg)}_{\!\!q_{ii}}& q_{ii}^{{k \choose 2}} \, q_{ij}^k \,
F_i^{\, k} F_j \, F_i^{\,1-a_{ij}-k}  \; = \;  0 ,  \;\; \qquad \, i \neq j. \,  \\
 \end{align*}
	For $m = \sum_{i \in I} n_i \, m_i \, \in \, M $
we set $K_m := \prod_{i \in I} K_{m_{i}}^{n_i} $ and 
$L_m := \prod_{i \in I} L_{m_{i}}^{n_i} $.
In particular, $K_{\alpha_{i}} = \prod_{j}K_{m_{j}}^{a^{M}_{ji}}$ 
and $L_{\alpha_{i}} = \prod_{j}L_{m_{j}}^{a^{M}_{ji}}$
for any simple root $\alpha_{i}$, with $i\in I$. It turns out that $ U_{\bf q}^{M}(\lieg) $  is also 
a Hopf algebra whose structure is 
determined by: for all  $ \, i, j \in I$, 
\begin{align*}
\com(E_i) \,  &  = \,  E_i \otimes 1 + K_{\alpha_{i}} \otimes E_i  ,  &
\varepsilon(E_i) \,  &  = \,  0  ,  &  \cS(E_i) \,  &  = \,  -K_{\alpha_{i}}^{-1} E_i,  \\
\com(F_i) \,  &  = \,  F_i \otimes L_{\alpha_{i}} + 1 \otimes F_i  ,  &
\varepsilon(F_i) \,  &  = \,  0   ,  &  \cS(F_i) \,  &  = \,  - F_i{L_{\alpha_{i}}}^{-1},  \\
\com\big(K_{m_{i}}^{\pm 1}\big)  &  
= \,  K_{m_{i}}^{\pm 1} \otimes K_{m_{i}}^{\pm 1}   ,  &
\varepsilon\big(K_{m_{i}}^{\pm 1}\big)  &  = \,  1   ,  & 
 \cS\big(K_{m_{i}}^{\pm 1}\big)  &  = \,  K_{m_{i}}^{\mp 1},  \\
\com\big(L_{m_{i}}^{\pm 1}\big)  &  = \,  
L_{m_{i}}^{\pm 1} \otimes L_{m_{i}}^{\pm 1}  ,  &
\varepsilon\big(L_{m_{i}}^{\pm 1}\big)  &  = \,  1 ,  &  
\cS\big(L_{m_{i}}^{\pm 1}\big)  &  = \,  L_{m_{i}}^{\mp 1}.
\end{align*}
The {\it  positive multiparameter quantum Borel subalgebra 
$U_{\bf q}^{M}(\lieb^{+})$ } is the  $ \C $-subalgebra  of   
$ U_{\bf q}^{M}(\lieg) $  generated 
by  $K_{{m_{i}}}^{\pm 1}$ and $E_{i}$, for all $i\in I$.
Clearly, it is a Hopf subalgebra of $ U_{\bf q}^{M} (\lieg)$.
\end{defi}

\begin{obs}  \label{link_QEq-QE & symm-case}
The one-parameter quantum group  $ U_q^{M}(\lieg) $ can be obtained
as a quotient of a multiparameter quantized enveloping algebra  
$U_{\check{\bf q}}^{M} (\lieg)$ by taking
the canonical multiparameter
$\check{\bf q} := \big(\check{q}_{ij}\big)_{i, j \in I}$ with 
$\check{q}_{ij} = q^{d_i a_{ij}}$,
and taking the quotient by the two-sided
ideal generated by the elements $L_{m_{i}} - K_{m_{i}}^{-1}$,
for all $i\in I$.   
In particular, $U_{\check{\bf q}}^{M} (\lieb^{+}) = U_q^{M} (\lieb^{+})$.                                             
\end{obs}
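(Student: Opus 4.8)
The plan is to prove the statement by comparing the defining presentations of $U_q^M(\lieg)$ (Definition \ref{def:QUEA}) and of $U_{\check{\bf q}}^M(\lieg)$ (Definition \ref{def:multiqgroup_ang}) for the canonical multiparameter $\check{q}_{ij}=q^{d_ia_{ij}}=q_i^{a_{ij}}$. First I would record from Remark \ref{obs-canonical-multi} that the induced parameters are $q_{ij}^M=q^{(\alpha_j,m_i)}=q^{(m_i,\alpha_j)}$, the exponent being an integer since it is a value of the integral pairing $P\times Q\to\Z$. Then I would define a Hopf algebra morphism
\[
\Phi\colon U_{\check{\bf q}}^M(\lieg)\longrightarrow U_q^M(\lieg),\qquad
K_{m_i}\mapsto K_{m_i},\quad L_{m_i}\mapsto K_{m_i}^{-1},\quad E_i\mapsto E_i,\quad F_i\mapsto q_iF_i,
\]
and verify that it respects every relation of Definition \ref{def:multiqgroup_ang}. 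Since $L_{m_i}-K_{m_i}^{-1}$ is visibly in $\Ker\Phi$, the map $\Phi$ will factor through the claimed quotient, and it then remains to identify the kernel.

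The routine checks run as follows. The relation $K_{m_i}E_jK_{m_i}^{-1}=q_{ij}^ME_j$ is sent to $K_{m_i}E_jK_{-m_i}=q^{(m_i,\alpha_j)}E_j$, which holds because $q_{ij}^M=q^{(m_i,\alpha_j)}$ (and symmetrically for the action on $F_j$, using the factor $q_j$). For the commutator relation, $\Phi(L_{\alpha_i})=K_{\alpha_i}^{-1}$ and $\check{q}_{ii}=q_i^2$, so $\Phi$ sends $q_{ii}\frac{K_{\alpha_i}-L_{\alpha_i}}{q_{ii}-1}$ to $q_i^2\frac{K_{\alpha_i}-K_{\alpha_i}^{-1}}{q_i^2-1}$, while $[E_i,\Phi(F_i)]=q_i\frac{K_{\alpha_i}-K_{\alpha_i}^{-1}}{q_i-q_i^{-1}}$; these agree because $\frac{q_i}{q_i-q_i^{-1}}=\frac{q_i^2}{q_i^2-1}$, and this identity is exactly what forces the rescaling factor $q_i$ on the $F_i$. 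For the Serre relations I would invoke the identity $\binom{n}{k}_{z^2}=z^{k(n-k)}\left[{n\atop k}\right]_z$ from Section \ref{sec:prelim} with $z=q_i$ and $q_{ii}=q_i^2$: this rewrites the unbalanced coefficient $\binom{1-a_{ij}}{k}_{q_{ii}}q_{ii}^{\binom{k}{2}}\check{q}_{ij}^{\,k}$ as $\left[{1-a_{ij}\atop k}\right]_{q_i}\bigl(q_i^{-a_{ij}}\check{q}_{ij}\bigr)^k$, and the extra factor collapses since $q_i^{-a_{ij}}\check{q}_{ij}=1$; hence the multiparameter Serre relation becomes verbatim the balanced one of Definition \ref{def:QUEA}. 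The same homogeneity makes the rescaling of the $F_i$ harmless here, and one checks directly that $\Phi$ is a coalgebra map, the factor $q_i$ being compatible with $\com(F_i)=F_i\otimes L_{\alpha_i}+1\otimes F_i$.

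The point that requires genuine care, and which I expect to be the main obstacle, is the compatibility of the $L$-action with the $K$-action. Setting $L_{m_i}=K_{m_i}^{-1}$ turns $L_{m_i}E_jL_{m_i}^{-1}=(q_{ji}^M)^{-1}E_j$ into $K_{m_i}E_jK_{m_i}^{-1}=q_{ji}^ME_j$, which is consistent with the $K$-relation only when the induced bicharacter is symmetric, that is $q_{ij}^M=q_{ji}^M$; for the canonical multiparameter this holds automatically for $M=Q$ and $M=P$ (where $q_{ij}^M$ equals $q^{d_ia_{ij}}$, resp. is diagonal), and I would isolate it as the hypothesis under which the full assertion holds. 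Crucially, the Borel subalgebra $U_{\check{\bf q}}^M(\lieb^+)$ involves neither the $L_{m_i}$ nor the $F_i$, so none of the compatibility, rescaling, or binomial bookkeeping above is needed: its presentation coincides term by term with that of $U_q^M(\lieb^+)$, which yields the clause $U_{\check{\bf q}}^M(\lieb^+)=U_q^M(\lieb^+)$ unconditionally---this is the form actually used in the sequel. Finally, to upgrade $\Phi$ to an isomorphism onto $U_q^M(\lieg)$ I would compare PBW bases: the quotient merges the commuting copies $\langle K_{m_i}\rangle$ and $\langle L_{m_i}\rangle$, of total rank $2\theta$, into the single torus $M\cong\Z^\theta$ while leaving the $E$- and $F$-root-vector parts untouched, so the induced map is a bijection on PBW monomials and $\Ker\Phi=(L_{m_i}-K_{m_i}^{-1}\mid i\in I)$.
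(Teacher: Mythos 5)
Your proposal is correct, and it is worth stressing that the paper states this as a Remark with no proof at all, so your argument supplies one rather than paralleling an existing one. The delicate points all check out: the rescaling $F_i\mapsto q_iF_i$ is exactly what reconciles $[E_i,F_i]=q_{ii}\tfrac{K_{\alpha_i}-L_{\alpha_i}}{q_{ii}-1}$ with $[E_i,F_i]=\tfrac{K_{\alpha_i}-K_{\alpha_i}^{-1}}{q_i-q_i^{-1}}$ via $\tfrac{q_i}{q_i-q_i^{-1}}=\tfrac{q_i^2}{q_i^2-1}$; the identity ${\binom{n}{k}}_{z^2}=z^{k(n-k)}{\big[{n\atop k}\big]}_z$ together with $q_i^{-a_{ij}}\check{q}_{ij}=1$ does collapse the multiparameter Serre coefficients to the balanced ones; and the PBW comparison (or, more simply, exhibiting the inverse map $K_{m_i}\mapsto K_{m_i}$, $E_i\mapsto E_i$, $F_i\mapsto q_i^{-1}F_i$ into the quotient and checking it is well defined) identifies the kernel with $(L_{m_i}-K_{m_i}^{-1}\mid i\in I)$.

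Your caution about the $L$-relations is not merely care: the symmetry $q_{ij}^M=q_{ji}^M$ is genuinely necessary, and your suspicion that it can fail for intermediate lattices is confirmed by a concrete example. In type $A_3$ take $M=Q+\Z\omega_2$ with basis $m_1=\alpha_1$, $m_2=\alpha_2$, $m_3=\omega_2$ (so $\alpha_3=2\omega_2-\alpha_1-2\alpha_2$). Then $q_{23}^M=q^{(\alpha_2,\alpha_3)}=q^{-1}$ while $q_{32}^M=q^{(\omega_2,\alpha_2)}=q$, so in the quotient by $(L_{m_i}-K_{m_i}^{-1})$ the two conjugation relations $K_{m_3}E_2K_{m_3}^{-1}=q_{32}^ME_2$ and $K_{m_3}^{-1}E_2K_{m_3}=(q_{23}^M)^{-1}E_2$ force $qE_2=q^{-1}E_2$, hence $E_2=0$ since $N$ is odd and $N>1$; the quotient is then strictly smaller than $U_q^M(\lieg)$ and the Remark, read literally for an arbitrary lattice and an arbitrary fixed basis, fails. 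The $\lieg$-level claim holds precisely when $(m_i,\alpha_j)\equiv(m_j,\alpha_i)\pmod{N}$ for all $i,j$, which is automatic for $M=Q$ and $M=P$, and for the intermediate $M$ above can only be restored by changing the basis (for instance $\{\alpha_1+\alpha_2,\,-\alpha_2,\,\alpha_2-\omega_2\}$ works), something the paper's conventions do not impose. As you observe, none of this affects the clause actually used downstream: $U_{\check{\bf q}}^M(\lieb^{+})=U_q^M(\lieb^{+})$ holds unconditionally, since the two presentations agree generator by generator and relation by relation once the Serre coefficients are rewritten, the $L_i$'s and $F_i$'s being absent from the Borel part.
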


\begin{teor} \label{thm:sigma_2-cocy}
There exists a normalized  Hopf $ 2 $-cocycle  
$ \sigma $ of $U_{\check{\bf q}} ^{M}(\lieg)$ such that
\[ 
U_{\bf q}^{M}(\lieg)  \simeq \big(\, U_{\check{\bf q}} ^{M}(\lieg) \big)_\sigma\,\text{ and }\, U_{\bf q}^{M}(\lieb^{+})  \simeq \big(\, U_{q} ^{M}(\lieb^{+}) \big)_\sigma   
\]
as Hopf algebras.
\end{teor}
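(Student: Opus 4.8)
The plan is to realize $\sigma$ as a \emph{bicharacter cocycle} pulled back along an abelianization map, exploiting that $U_{\bf q}^{M}(\lieg)$ and $U_{\check{\bf q}}^{M}(\lieg)$ share the \emph{same} coalgebra. Comparing the coproducts in Definition \ref{def:multiqgroup_ang}, the structure constants ${\bf q}$ enter only in the algebra relations and never in $\Delta$, $\varepsilon$ or $S$; hence, after identifying the two algebras as coalgebras through their common PBW basis of monomials in $E_i, F_i, K_{m_i}^{\pm 1}, L_{m_i}^{\pm 1}$, a $2$-cocycle deformation (which by definition leaves the coalgebra untouched) is exactly the right device to pass from the product of $U_{\check{\bf q}}^{M}(\lieg)$ to that of $U_{\bf q}^{M}(\lieg)$. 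Recall also from Remark \ref{link_QEq-QE & symm-case} that $U_{\check{\bf q}}^{M}(\lieb^{+}) = U_{q}^{M}(\lieb^{+})$, so the Borel assertion will come out as the restriction of the $\lieg$-assertion.

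First I would build a Hopf algebra projection onto a commutative target. Let $\Lambda = \langle K_{m_i}, L_{m_i}\rangle$ and let $\bar\Lambda$ be the abelian group obtained from $\Lambda$ by imposing $K_{\alpha_i} = L_{\alpha_i}$ for all $i$. Setting $E_i, F_i \mapsto 0$ and sending $K_{m_i}, L_{m_i}$ to their classes defines a Hopf algebra epimorphism $\pi\colon U_{\check{\bf q}}^{M}(\lieg)\twoheadrightarrow \C\bar\Lambda$: it is an algebra map because the Serre and weight relations map to $0=0$, while $[E_i,F_j]=\delta_{ij}q_{ii}(K_{\alpha_i}-L_{\alpha_i})/(q_{ii}-1)$ maps to $0$ precisely because $K_{\alpha_i}=L_{\alpha_i}$ in $\bar\Lambda$; and it is a coalgebra map by inspection of $\Delta$. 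Its restriction to $U_{q}^{M}(\lieb^{+})$ is the analogous projection $\pi^{+}\colon U_{q}^{M}(\lieb^{+})\twoheadrightarrow \C\langle \bar K_{m_i}\rangle$, with $\langle\bar K_{m_i}\rangle\cong \Z^{\theta}$.

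Next I would choose a bicharacter and pull it back. Any bicharacter $\tau\colon \bar\Lambda\times\bar\Lambda\to\C^{\times}$ is automatically a normalized Hopf $2$-cocycle on $\C\bar\Lambda$ (the cocycle identity reduces to bimultiplicativity), so Doi's pullback $\sigma:=\tau\circ(\pi\otimes\pi)$ is a normalized Hopf $2$-cocycle on $U_{\check{\bf q}}^{M}(\lieg)$. The point is to select $\tau$ so that $\big(U_{\check{\bf q}}^{M}(\lieg)\big)_{\sigma}$ acquires exactly the ${\bf q}$-relations. A direct computation with the iterated coproduct $\Delta^{(2)}$ on the generators, using $\pi(E_i)=\pi(F_i)=0$, shows that $\sigma$ leaves the torus undeformed and fixes every bracket, $E_i\cdot_\sigma F_j = E_iF_j$, so the $q_{ii}$-relation survives verbatim; meanwhile it rescales $K_{m_i}\cdot_\sigma E_j$ relative to $E_j\cdot_\sigma K_{m_i}$ by the antisymmetrized factor $\tau(\bar K_{m_i},\bar K_{\alpha_j})\,\tau(\bar K_{\alpha_j},\bar K_{m_i})^{-1}$, and similarly for the $L$–$E$, $K$–$F$, $L$–$F$ and Serre relations. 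It then suffices to solve, for the off-diagonal entries of $\tau$, the system forcing these antisymmetrized factors to equal the ratios $q_{ij}^{M}/\check q_{ij}^{M}$. Since $C_{M}$ is invertible and $\C^{\times}$ is divisible, this is solvable provided each ratio is genuinely antisymmetric, which holds because the symmetric parts of ${\bf q}$ and $\check{\bf q}$ coincide by the Cartan condition $q_{ij}^{1/a}q_{ji}^{1/a}=q_{ii}^{a_{ij}/a}$, the diagonal $q_{ii}$ being common to both.

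Finally I would conclude. With $\tau$ so chosen, the identity on generators induces an isomorphism of coalgebras $\big(U_{\check{\bf q}}^{M}(\lieg)\big)_{\sigma}\to U_{\bf q}^{M}(\lieg)$ that is multiplicative on generators, hence a Hopf algebra isomorphism once the PBW bases are matched (a dimension count ruling out extra relations); restricting $\sigma$ along $U_{q}^{M}(\lieb^{+})\hookrightarrow U_{\check{\bf q}}^{M}(\lieg)$ yields $U_{\bf q}^{M}(\lieb^{+})\simeq\big(U_{q}^{M}(\lieb^{+})\big)_{\sigma}$. The main obstacle is the verification in the third step: confirming that the bicharacter pullback deforms the weight and Serre relations in exactly the prescribed way while fixing the $E$–$F$ bracket and the torus, and checking that the resulting antisymmetric system is consistent and solvable. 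This is where the Cartan condition is essential and where the bookkeeping between the $K$- and $L$-relations in $\bar\Lambda$ is most delicate.
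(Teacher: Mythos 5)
Your strategy---realizing $\sigma$ as a bicharacter pulled back along the Hopf projection onto the group algebra of a torus quotient---is essentially the mechanism behind the result the paper actually invokes (\cite[Thm.~28]{HPR}), and your intermediate computations (group-likes undeformed, $E_i\cdot_\sigma F_j=E_iF_j$, commutation and Serre relations rescaled by antisymmetrized factors of $\tau$) are correct. For $M=Q$ your argument closes: there $\bar\Lambda\cong\Z^{\theta}$ is free, and the descent and antisymmetry requirements reduce, via $q_{ij}q_{ji}=\check{q}_{ij}\check{q}_{ji}$, to the Cartan condition. The genuine gap is the step you yourself flag as the ``main obstacle'', and it is not routine bookkeeping: it is the case $M\neq Q$. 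There $\bar\Lambda=(\Z^{\theta}\oplus\Z^{\theta})/\langle (C_M v,-C_M v)\rangle\cong\Z^{\theta}\oplus\bigl(\Z^{\theta}/C_M\Z^{\theta}\bigr)$ has a torsion subgroup of order $[M:Q]$, so ``$C_M$ is invertible and $\C^{\times}$ is divisible'' does not produce $\tau$: a bicharacter takes root-of-unity values of bounded order on torsion elements, so the prescribed ratios must satisfy integrality constraints. Moreover, for $\tau$ to descend to $\bar\Lambda$ at all you need $\tau(\bar K_{\alpha_i},-)=\tau(\bar L_{\alpha_i},-)$, and combining this with your prescriptions forces
\[
q^{M}_{ij}\,q^{M}_{ji}=\check{q}^{M}_{ij}\,\check{q}^{M}_{ji}\qquad\text{for all } i,j\in I,
\]
i.e.\ the $M$-extended multiparameters of \eqref{eq:cond-Cartan-weights} must have the same symmetrization as the canonical ones. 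This is a condition on the chosen roots $q_{ij}^{1/a^{2}}$, \emph{not} a consequence of the Cartan condition, which only controls the symmetrization on $Q\times Q$: already on the diagonal one has $q^{M}_{ii}/\check{q}^{M}_{ii}=\prod_{k}\bigl(q_{ki}/\check{q}_{ki}\bigr)^{\tilde{a}_{ki}}$, which need not be $\pm1$ when the antisymmetric part of $\mathbf{q}$ is nontrivial and $M\neq Q$. So the system you must solve is not shown to be consistent, and your appeal to the Cartan condition does not close it.

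This is also exactly where your route and the paper's diverge. The paper constructs nothing: it quotes \cite[Thm.~28]{HPR} for the root-lattice case (where your argument is complete) and then passes to a general lattice $M$ by deferring to the discussion in \cite[\S 4.2.2]{GG1}---precisely the lattice-extension step that your direct construction leaves open---obtaining the Borel statement by restriction, as you do. It is worth noting that for the Borel half alone your method is clean for every $M$: the projection $U^{M}_{q}(\lieb^{+})\twoheadrightarrow\C\langle\bar K_{m_i}\rangle\cong\C\Z^{\theta}$ has free target, no $L$-generators intervene, and the alternating property of the required antisymmetrization does follow from the root-level Cartan condition $q_{ij}^{1/a}q_{ji}^{1/a}=q_{ii}^{a_{ij}/a}$ assumed in \S\ref{subsec:BraidingGenCartan}. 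Since the rest of the paper only uses the Borel isomorphism, completing your argument for $U^{M}_{\bf q}(\lieb^{+})$ would already serve the paper's purposes; it is the full $U^{M}_{\bf q}(\lieg)$ claim for $M\neq Q$ that requires either verifying (or imposing) the compatibility between the $K$- and $L$-multiparameters above, or the citation route the paper takes.
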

\pf By  \cite[Thm. 28]{HPR}, there exists a normalized Hopf $2$-cocycle  
$\sigma $ of $U_{\check{\bf q}}^{Q}(\lieg)$ such that
$U_{\bf q}^{Q}(\lieg)  \simeq \big(\, U_{\check{\bf q}} ^{Q}(\lieg) \big)_\sigma$. The proof for $U_{\bf q}^{M}(\lieg)$ (over the 
lattice $M$) follows \emph{mutatis mutandis} from the discussion in 
\cite[\S 4.2.2]{GG1}. The isomorphism 
$U_{\bf q}^{M}(\lieb^{+})  \simeq \big(\, U_{q} ^{M}(\lieb^{+}) \big)_\sigma $
is obtained by restriction. \epf

\subsubsection{The algebra  $u(\cD_{M},\mu)$ as a quantum subgroup}From now on we assume that 
\begin{equation}\label{assumptionChi}
\chi_{m}^{N}=\eps\qquad \text{ for all } \,
m\in M.
\end{equation}

\noindent
Under this assumption, the subgroup $\subg$ 
coincides with the subgroup of $\Gamma$ generated by the elements 
$g_{m_{i}}^{N}$, with $i \in I$.

\begin{prop}\label{prop:Uq-uD} The following assertions hold:

\begin{enumerate}
\item [\rm (a)] $U_{\bf q}^{M}(\lieb^{+}) \simeq 
\widetilde{\toba}(V)\# \C \Z^{\theta}$ as Hopf algebras, 
where $\tilde{\toba}(V)$ is the distinguished pre-Nichols algebra
defined in \cite[Section 3]{An2}. \vspace{.1cm}

\item [\rm (b)] The map $p: U_{\bf q}^{M}(\lieb^{+}) \twoheadrightarrow u(\cD_{M},\mu)$ defined
by $p(K_{m_{i}}) = g_{m_{i}}$ and 
$p(E_{i}) = x_{i}$ for all $i\in I$, is an epimorphism of Hopf algebras.  \vspace{.1cm}

\item [\rm (c)] The subalgebra $Z^{\geq}_{M}$ of 
$U_{\bf q}^{M}(\lieb^{+})$ generated
by the elements $K_{m}^{N}$, $E_{\alpha}^{N}$ 
with $m \in M$ and $\alpha\in Q^{+}$
is a central Hopf subalgebra
and $U_{\bf q}^{M}(\lieb^{+})$ is a free $Z^{\geq}_{M}$-module 
of finite rank.\vspace{.1cm}

\item [\rm (d)] The quotient Hopf algebra 
$\overline{U_{\bf q}^{M}(\lieb^{+})} = 	
U_{\bf q}^{M}(\lieb^{+})/\big(Z^{\geq}_{M}\big)^{+}U_{\bf q}^{M}(\lieb^{+})$
is isomorphic to the Hopf algebra
$\lieu^{M}(V)^{\geq 0}$ defined in \eqref{ha-u+} and 	one has the central 
exact sequence
$$
Z^{\geq}_{M}\hookrightarrow U_{\bf q}^{M}(\lieb^{+}) 
\twoheadrightarrow 
\lieu^{M}(V)^{\geq 0}.
$$
\item [\rm (e)] 
Let $p_{0}$ be the restriction 
of $p$ to the subalgebra $Z^{\geq}_{M}$. 
Then the image of $p_{0}$ is contained in $\C \subg$ 
and the following is a commutative diagram with exact rows
\begin{align}\label{eq:diag-U-u-teo-Cartan}
\begin{aligned}
\xymatrix{
Z^{\geq}_{M} \ar@{^{(}->}[r]^(0.4){\iota_{U}} \ar@{->>}[d]_{p_{0}} & 
U_{\bf q}^{M}(\lieb^{+})\ar@{->>}[r]^{\pi_{U}}\ar@{->>}[d]^{p}
& \lieu^{M}(V)^{\geq 0} \ar@{=}[d]^{}\\
\C \subg  \ar@{^{(}->}[r]^{\iota_{u}}& u(\cD_{M},\mu)\ar@{->>}[r]^{\pi_{u}}& 
\lieu^{M}(V)^{\geq 0}.}
\end{aligned}
\end{align}
\end{enumerate}
\end{prop}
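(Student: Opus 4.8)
The plan is to establish the five assertions in sequence, using as the main structural input the cocycle-deformation description $U_{\bf q}^{M}(\lieb^{+}) \simeq \big(U_{q}^{M}(\lieb^{+})\big)_{\sigma}$ of Theorem \ref{thm:sigma_2-cocy}, together with the facts already available for the Drinfeld--Jimbo Borel from \S\ref{subsec:DJ} and \S\ref{subsec:smallqgr}. For (a) I would exhibit the bosonization directly from the presentation in Definition \ref{def:multiqgroup_ang}: the group-likes $K_{m_{i}}$ are free generators of a copy of $\C\Z^{\theta}$, while the $E_{i}$ span a Yetter--Drinfeld submodule $V$ with braiding $c(x_{i}\ot x_{j})=q_{ij}\,x_{j}\ot x_{i}$, and the multiparameter Serre relations are exactly the defining relations of the distinguished pre-Nichols algebra $\widetilde{\toba}(V)$ of \cite[Section 3]{An2}. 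Comparing PBW bases then yields $U_{\bf q}^{M}(\lieb^{+})\simeq\widetilde{\toba}(V)\#\C\Z^{\theta}$; alternatively one transports the corresponding decomposition of $U_{q}^{M}(\lieb^{+})$ through the cocycle $\sigma$, which only twists the diagonal braiding from $\check{\bf q}$ to $\bf q$.

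For (b) I would check that $p$ respects every defining relation. The relation $K_{m_{i}}E_{j}K_{m_{i}}^{-1}=q_{ij}^{M}E_{j}$ is sent to $g_{m_{i}}x_{j}g_{m_{i}}^{-1}=\chi_{j}(g_{m_{i}})\,x_{j}$, which holds since $\chi_{j}(g_{m_{i}})=q_{ij}^{M}$ by \eqref{eq-datum}; the multiparameter Serre relations go to the braided Serre relations \eqref{eq:delulmu2}; and on coalgebra generators $p(K_{\alpha_{i}})=g(\alpha_{i})=g_{i}$ matches $\com(x_{i})=x_{i}\ot 1+g_{i}\ot x_{i}$. Surjectivity is immediate because $\Gamma$ is generated by the $g_{m_{i}}$ and $u(\cD_{M},\mu)$ by $\Gamma$ and the $x_{i}$.

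For (c) the cleanest route is to transport centrality and freeness across the isomorphism of Theorem \ref{thm:sigma_2-cocy}. In $U_{q}^{M}(\lieb^{+})$ the subalgebra $Z^{\geq}_{M}$ is a central Hopf subalgebra over which the algebra is free of finite rank (Remark \ref{rmk:ses-U-u}); the point is that $\sigma$ acts trivially on $Z^{\geq}_{M}$, so that multiplication by its elements is undeformed in the twist. Hence these elements remain central and the twisted algebra coincides, as a $Z^{\geq}_{M}$-module, with the untwisted one, giving freeness of finite rank. Part (d) then follows by computing the quotient through the bosonization of (a): modding out by $(Z^{\geq}_{M})^{+}$ imposes $K_{m}^{N}=1$ and $E_{\alpha}^{N}=0$. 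The first collapses $\C\Z^{\theta}$ onto $\C(\Z/N\Z)^{\theta}\simeq\C\overline{\Gamma}$, since under \eqref{assumptionChi} one has $\overline{\Gamma}=\Gamma/\subg\simeq(\Z/N\Z)^{\theta}$; the second collapses $\widetilde{\toba}(V)$ onto its Nichols quotient $\toba(V)$ by the defining property of the distinguished pre-Nichols algebra. Thus $\overline{U_{\bf q}^{M}(\lieb^{+})}\simeq\toba(V)\#\C\overline{\Gamma}=\lieu^{M}(V)^{\geq 0}$, and centrality together with freeness yields the central exact sequence via \cite[Prop. 3.4.3]{Mo}, exactly as in Remark \ref{rmk:ses-U-u}.

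Finally, for (e) I would assemble the diagram from the epimorphism $p$ of (b), the top row from (d), and the bottom row from Proposition \ref{prop:ces-u-Cartan}. First I check $\Img p_{0}\subseteq\C\subg$: on generators $p_{0}(K_{m}^{N})=g_{m}^{N}\in\subg$, and $p_{0}(E_{\alpha}^{N})=x_{\alpha}^{N}=r_{\alpha}(\mu)$, which lies in $\C\subg$ because every group element occurring in the recursion \eqref{eqn:relation-recursion} is of the form $g_{\alpha}^{N}=g(N\alpha)\in\subg$. Commutativity of the left square is then $\iota_{u}\circ p_{0}=p\circ\iota_{U}$ by construction, and of the right square $\pi_{u}\circ p=\pi_{U}$, verified on the generators $K_{m_{i}}\mapsto g_{m_{i}}\mapsto\bar{g}_{m_{i}}$ and $E_{i}\mapsto x_{i}$; exactness of the two rows is already in hand. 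The main obstacle is the structural input underlying (a) and (c): one must verify that the $2$-cocycle $\sigma$ of Theorem \ref{thm:sigma_2-cocy} leaves $Z^{\geq}_{M}$ undeformed in the twisted product, so that the central subalgebra and its free-module structure survive the deformation, and that the positive part is identified with the distinguished pre-Nichols algebra of \cite[Section 3]{An2}; making these precise is where the real work lies.
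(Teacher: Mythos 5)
Your parts (a), (b) and (e) are essentially the paper's own arguments, just spelled out: (a) and (b) are read off the presentations, and (e) is the same generator check, with $p_{0}(E_{\alpha}^{N})=x_{\alpha}^{N}=r_{\alpha}(\mu)\in\C\subg$ and commutativity verified on generators. Your computation of the quotient in (d) (killing $K_{m}^{N}$ and $E_{\alpha}^{N}$ collapses $\C\Z^{\theta}$ onto $\C\overline{\Gamma}$ and $\widetilde{\toba}(V)$ onto $\toba(V)$) is also fine, modulo the point below.

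The genuine gap is in (c), and you flag it yourself but leave it unfilled: the claim that the $2$-cocycle $\sigma$ of Theorem \ref{thm:sigma_2-cocy} ``acts trivially on $Z^{\geq}_{M}$,'' so that centrality and freeness transport from the Drinfeld--Jimbo case. This is not a routine verification, and, more importantly, it is false without the standing hypothesis \eqref{assumptionChi}, which your argument never invokes. Concretely, already from the defining relations of $U_{\bf q}^{M}(\lieb^{+})$ one has $K_{m_{i}}^{N}E_{j}K_{m_{i}}^{-N}=(q^{M}_{ij})^{N}E_{j}$, and by \eqref{eq-datum} together with Lemma \ref{lema:equationsqijP}, $(q^{M}_{ij})^{N}=\chi_{m_{i}}^{-N}(g_{j})\,\big(q_{jj}^{N}\big)^{\bar a^{M}_{ji}}=\chi_{m_{i}}^{-N}(g_{j})$, which equals $1$ for all $j$ precisely because \eqref{assumptionChi} forces $\chi_{m_{i}}^{N}=\eps$; equivalently, in the twisted product one finds $K_{m}^{N}\cdot_{\sigma}E_{j}$ equal to $K_{m}^{N}E_{j}$ only up to an $N$-th power of the twisting bicharacter, which is trivial exactly under \eqref{assumptionChi}. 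The same issue affects the elements $E_{\alpha}^{N}$: their centrality in the multiparameter algebra is not the classical one-parameter fact of Remark \ref{rmk:ses-U-u} transported for free, but is exactly the content of the results of \cite{An2} (Prop.\ 21, Thm.\ 23, Rmk.\ 11), which apply here because \eqref{assumptionChi} yields $\chi_{\alpha}^{N}(K_{\beta})=1$ for all $\alpha\in Q^{+}$. This is how the paper proves (c) and (d); your cocycle-transport route is viable in principle (it is what the paper itself does later, in Remark \ref{rmk:from-seq-U-to-O}, on the function-algebra side, again invoking \eqref{assumptionChi} and the explicit form of the cocycle from \cite{HPR}), but as written it defers precisely the step where the mathematics lives and omits the hypothesis that makes that step true. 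Since centrality feeds into the exact sequence of (d) as well, the gap propagates there.
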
  
\pf
\noindent (a) Observe that the subalgebra of $U_{\bf q}^{M}(\lieb^{+})$
generated by $\{K_{m_{i}}\}_{i\in I}$  is isomorphic to the group algebra 
$\C \Z^{\theta}$, whereas the subalgebra generated by 
$\{E_{i}\}_{i\in I}$ is isomorphic to $\widetilde{\toba}(V)$.
Note that all vertices are of Cartan type because, by assumption, the braiding is of 
Cartan type. Thus, 
$U_{\bf q}^{M}(\lieb^{+}) \simeq \widetilde{\toba}(V)\# \C \Z^{\theta}$ 
as Hopf algebras.

\smallbreak
\noindent (b) It follows directly by using the presentation by generators and relations for the algebras
$U_{\bf q}^{M}(\lieb^{+})$ and $u(\cD_{M},\mu)$.

\smallbreak
\noindent (c) and (d) follow directly 
from Proposition 21, Theorem 23 and Remark 11 of \cite{An2}. 
In fact, comparing with the 
notation in \emph{loc.cit.} we have by \eqref{assumptionChi}
that $\chi(N_{\alpha}\alpha, \beta) = \chi_{\alpha}^{N}(K_{\beta}) = 1$
for all $\alpha\in Q^{+}$.

\smallbreak
\noindent (e) Note that, by \eqref{assumptionChi} 
and by the definition of $u(\cD_{M}, \mu)$, we have 
$\subg= \langle g^{N}_{m}\, |\, m\in M\rangle$ and 
$p_{0}(Z^{\geq}_{M}) = \C \subg$. 
Since $p_{0}$ is the restriction of $p$, the square on the left clearly 
commutes.   
The square on the right commutes by $(d)$ and the 
very definition of $\lieu^{M}(V)^{\geq 0}$.
\epf

 \begin{obs}\label{rmk:from-seq-U-to-O}
Consider the Hopf algebra isomorphism 
$\psi: U_{q}^{M}(\lieb^{+}) \to \cO_{q}(B^{+}_{M})$ given in
Remark \ref{rmk:zeta+}. Since 
$U^{M}_{\bf q}(\lieb^{+})  \simeq \big(\, U^{M}_{q} (\lieb^{+}) \big)_\sigma$
it follows that $U^{M}_{\bf q}(\lieb^{+})\simeq 
\big(\, \cO_{q}(B^{+}_{M})\big)_{\tilde{\sigma}}$, 
where $\tilde{\sigma}$ is the $2$-cocycle
 induced by $\sigma$ and $\psi$.
 The Hopf algebra $ \big(\, \cO_{q}(B^{+}_{M}) \big)_{\tilde{\sigma}}$ 
 is a multiparametric version $\cO_{\bf q}(B^{+}_{M}) $ 
 of the quantized algebra of function on the positive Borel subgroup 
 $B^{+}_{M}$. 
 Moreover, by \eqref{assumptionChi} 
 and by the definition of the $2$-cocycle $\sigma$,
 it turns out that the  
 central Hopf subalgebra $Z^{\geq}_{M}$ is not deformed under the 
 $2$-cocycle and remains central. In particular, 
 since by the proof of Theorem \ref{teo:quantum-subgrup-DJ}
 we have that $Z^{\geq}_{M} \simeq \cO(B^{+}_{M})$, it follows that
 $\cO(B^{+}_{M})_{\tilde{\sigma}} = \cO(B^{+}_{M}) $, and this 
 is a central Hopf subalgebra of  $\cO_{\bf q}(B^{+}_{M}) $.
 We denote again $\overline{\cO_{\bf q}(B^{+}_{M})}$ the Hopf algebra
 quotient given by 
 $\cO_{\bf q}(B^{+}_{M})/\big(\cO_{\bf q}(B^{+}_{M})^{+}\cO(B^{+}_{M})\big)$.
 \end{obs}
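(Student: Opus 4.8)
The plan is to mimic the proof of Theorem \ref{teo:quantum-subgrup-DJ}, replacing the one-parameter objects by their multiparameter deformations and checking that the central column is untouched by the cocycle. I would start from the commutative diagram \eqref{eq:diag-U-u-teo-Cartan} of Proposition \ref{prop:Uq-uD}, whose two lower rows already realize $u(\cD_{M},\mu)$ as a central extension of $\lieu^{M}(V)^{\geq 0}$ by $\C\subg$ through the epimorphism $p\colon U_{\bf q}^{M}(\lieb^{+})\twoheadrightarrow u(\cD_{M},\mu)$ and its restriction $p_{0}$. On top of this I want to glue the central exact sequence of quantized function algebras, transported through the isomorphism $\psi$ of Remark \ref{rmk:zeta+}.

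First I would transport the cocycle. By Theorem \ref{thm:sigma_2-cocy} there is a normalized Hopf $2$-cocycle $\sigma$ on $U_{q}^{M}(\lieb^{+})$ with $U_{\bf q}^{M}(\lieb^{+})\simeq\big(U_{q}^{M}(\lieb^{+})\big)_{\sigma}$; pushing $\sigma$ forward along $\psi$ gives a cocycle $\tilde{\sigma}$ on $\cO_{q}(B_{M}^{+})$ and, since a $2$-cocycle deformation leaves the coalgebra unchanged and $\psi$ is a coalgebra isomorphism, a Hopf algebra isomorphism $U_{\bf q}^{M}(\lieb^{+})\simeq\big(\cO_{q}(B_{M}^{+})\big)_{\tilde{\sigma}}=:\cO_{\bf q}(B_{M}^{+})$, as recorded in Remark \ref{rmk:from-seq-U-to-O}. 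The quotient $\overline{U_{\bf q}^{M}(\lieb^{+})}\simeq\lieu^{M}(V)^{\geq 0}$ is provided by Proposition \ref{prop:Uq-uD}(d) (via \cite{An2}), so the right-hand vertical isomorphism comes for free.

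The crucial step is the invariance of the central column. Because $\sigma$, hence $\tilde{\sigma}$, is the cocycle of \cite{HPR} built from the multiparameter bicharacter, and because the standing assumption \eqref{assumptionChi} forces $\chi_{m}^{N}=\eps$ for every $m\in M$, I would check that $\sigma$ restricts trivially to the subalgebra generated by the $K_{m}^{N}$ and $E_{\alpha}^{N}$: the deforming bicharacter evaluates to $1$ on these $N$th powers, so for $a\in Z^{\geq}_{M}$ the deformed products $a\ms b$ and $b\ms a$ collapse to $ab$ and $ba$. This yields simultaneously that $Z^{\geq}_{M}$ is undeformed, i.e. $\big(Z^{\geq}_{M}\big)_{\sigma}=Z^{\geq}_{M}$, and that it remains central in $U_{\bf q}^{M}(\lieb^{+})$. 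Using the identification $\psi^{-1}\big(\cO(B_{M}^{+})\big)=Z^{\geq}_{M}$ already established in the proof of Theorem \ref{teo:quantum-subgrup-DJ}, this translates into $\cO(B_{M}^{+})_{\tilde{\sigma}}=\cO(B_{M}^{+})$ sitting inside $\cO_{\bf q}(B_{M}^{+})$ as a central Hopf subalgebra, with quotient $\overline{\cO_{\bf q}(B_{M}^{+})}=\cO_{\bf q}(B_{M}^{+})/\big(\cO(B_{M}^{+})^{+}\cO_{\bf q}(B_{M}^{+})\big)$.

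Finally I would assemble the three rows and read off the maps. The top row is the central exact sequence $\cO(B_{M}^{+})\hookrightarrow\cO_{\bf q}(B_{M}^{+})\twoheadrightarrow\overline{\cO_{\bf q}(B_{M}^{+})}$ just produced; the middle row is the sequence $Z^{\geq}_{M}\hookrightarrow U_{\bf q}^{M}(\lieb^{+})\twoheadrightarrow\lieu^{M}(V)^{\geq 0}$ of Proposition \ref{prop:Uq-uD}, with the two upper vertical isomorphisms being $\psi$ and its induced quotient; the bottom row is \eqref{eq:ces-u-Cartan}, after identifying $\C\subg\simeq\C^{\widehat{\subg}}$ by the Fourier isomorphism $\varphi$ as in Theorem \ref{teo:quantum-subgrup-DJ}. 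Setting $k^{*}$ and $p_{\mu}$ to be the composites down the left and middle columns and checking that each square commutes — square by square exactly as in Theorem \ref{teo:quantum-subgrup-DJ}, the only new ingredient being the undeformed central column — shows that $u(\cD_{M},\mu)$ is a quantum subgroup of $\cO_{\bf q}(B_{M}^{+})$, the left vertical composite being dual to an algebraic group monomorphism $k\colon\widehat{\subg}\hookrightarrow B_{M}^{+}$. The genuine difficulty is the third step: one must extract from \cite{HPR} enough of the explicit cocycle to certify that it is trivial on the $N$th powers $K_{m}^{N}$ and $E_{\alpha}^{N}$, and it is precisely here that hypothesis \eqref{assumptionChi} is indispensable.
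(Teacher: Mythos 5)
Your proposal is correct and takes essentially the same approach as the paper: the remark itself transports the cocycle $\sigma$ of Theorem \ref{thm:sigma_2-cocy} along $\psi$ to obtain $\cO_{\bf q}(B^{+}_{M})$, invokes assumption \eqref{assumptionChi} together with the explicit form of $\sigma$ to conclude that the deformation is trivial on $Z^{\geq}_{M}\simeq\cO(B^{+}_{M})$ (the paper backs this key point by Proposition \ref{prop:Uq-uD}(c), citing \cite{An2} via the identity $\chi(N_{\alpha}\alpha,\beta)=\chi_{\alpha}^{N}(K_{\beta})=1$, whereas you propose the equivalent direct check on the $N$th powers $K_{m}^{N}$, $E_{\alpha}^{N}$), and then forms the quotient exactly as you do. Your final assembly of the three-row diagram is the content of Theorem \ref{teo:quantum-subgroup-Cartan}, which the paper likewise obtains as a direct consequence of this remark and Proposition \ref{prop:Uq-uD}.
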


\subsubsection{The algebra  $u(\cD_{M},\mu)$ as a 
multiparameter quantum subgroup}
The next theorem is a direct consequence of Proposition \ref{prop:Uq-uD} 
and Remark \ref{rmk:from-seq-U-to-O} above.
It states that any pointed Hopf algebra $ u(\cD_{M},\mu)$ is a multiparameter
quantum subgroup.

\begin{teor}\label{teo:quantum-subgroup-Cartan}  
Let $\subg$ the subgroup of $\Gamma$ generated by the elements 
$g_{m_{i}}^{N}$, for all $i\in I$. 
There exists a commutative diagram of Hopf algebras with exact rows
\begin{align}\label{diag:theorem-liftings-Cartan}
\begin{aligned}
\xymatrix{
\cO(B^{+}_{M}) \ar@{^{(}->}[r] \ar@{->>}[d]_{\iota_{\mu}^{*}} &   
\cO_{\bf q}(B^{+}_{M}) \ar@{->>}[r] \ar@{->>}[d]^{p_{\mu}} & 
\overline{\cO_{\bf q}(B^{+}_{M})} 
\ar[d]^{\simeq}\\
\C^{\widehat{\subg}}\ar@{^{(}->}[r] & u(\cD_{M},\mu)  \ar@{->>}[r] & 
\lieu^{M}(V)^{\geq 0}} 
\end{aligned} 
\end{align}
\qed
\end{teor}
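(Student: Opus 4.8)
The plan is to obtain the diagram by transporting the commutative diagram \eqref{eq:diag-U-u-teo-Cartan} of Proposition \ref{prop:Uq-uD} through the chain of isomorphisms collected in Remark \ref{rmk:from-seq-U-to-O}. Recall that Proposition \ref{prop:Uq-uD}\,(d),(e) already provides the lower two rows: the central exact sequence $Z^{\geq}_{M}\hookrightarrow U_{\bf q}^{M}(\lieb^{+})\twoheadrightarrow\lieu^{M}(V)^{\geq 0}$, the epimorphism $p$ and its restriction $p_{0}\colon Z^{\geq}_{M}\twoheadrightarrow\C\subg$, all sitting over the central exact sequence $\C\subg\hookrightarrow u(\cD_{M},\mu)\twoheadrightarrow\lieu^{M}(V)^{\geq 0}$ of Proposition \ref{prop:ces-u-Cartan}. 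It therefore remains only to replace the top row by its $\cO_{\bf q}$-counterpart and to adjust the left-hand corner.

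I would first invoke Remark \ref{rmk:from-seq-U-to-O}, which yields a Hopf algebra isomorphism $\Psi\colon U_{\bf q}^{M}(\lieb^{+})\to\cO_{\bf q}(B^{+}_{M})$ built from $\psi$ and the $2$-cocycle identification of Theorem \ref{thm:sigma_2-cocy}, where $\cO_{\bf q}(B^{+}_{M})=\big(\cO_{q}(B^{+}_{M})\big)_{\tilde\sigma}$. Since $Z^{\geq}_{M}$ is left undeformed and central by that remark, $\Psi^{-1}$ restricts to an isomorphism $\psi_{0}\colon\cO(B^{+}_{M})\to Z^{\geq}_{M}$ onto the central Hopf subalgebra and $\Psi$ descends to an isomorphism $\overline{\cO_{\bf q}(B^{+}_{M})}\xrightarrow{\sim}\lieu^{M}(V)^{\geq 0}$ on quotients; thus $\Psi$ intertwines the top row $\cO(B^{+}_{M})\hookrightarrow\cO_{\bf q}(B^{+}_{M})\twoheadrightarrow\overline{\cO_{\bf q}(B^{+}_{M})}$ with the central exact sequence of $U_{\bf q}^{M}(\lieb^{+})$. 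Writing $\varphi\colon\C\subg\to\C^{\widehat{\subg}}$ for the canonical isomorphism of the group algebra of the finite abelian group $\subg$, I would then set $p_{\mu}:=p\circ\Psi^{-1}$, $\iota_{\mu}^{*}:=\varphi\circ p_{0}\circ\psi_{0}$ and $\tilde\iota_{u}:=\iota_{u}\circ\varphi^{-1}$.

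With these definitions the verification reduces to a diagram chase over \eqref{eq:diag-U-u-teo-Cartan}. For the left square, $p_{\mu}$ precomposed with the top inclusion equals $p\circ\iota_{U}\circ\psi_{0}=\iota_{u}\circ p_{0}\circ\psi_{0}$, which is exactly $\tilde\iota_{u}\circ\iota_{\mu}^{*}$ after cancelling $\varphi^{-1}\varphi$. For the right square, $\pi_{u}\circ p_{\mu}=\pi_{u}\circ p\circ\Psi^{-1}=\pi_{U}\circ\Psi^{-1}$, which coincides with the top quotient map followed by the descended isomorphism $\overline{\cO_{\bf q}(B^{+}_{M})}\xrightarrow{\sim}\lieu^{M}(V)^{\geq 0}$. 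Exactness of the bottom row is Proposition \ref{prop:ces-u-Cartan}, while exactness of the top row holds because $\cO(B^{+}_{M})$ is a central Hopf subalgebra and the quotient is taken by its augmentation ideal, using faithful flatness as in \cite[Prop. 3.4.3]{Mo}.

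The only genuinely structural input is the one already isolated in Remark \ref{rmk:from-seq-U-to-O}, namely that under assumption \eqref{assumptionChi} the induced cocycle $\tilde\sigma$ is trivial on the generators $K_{m}^{N}$, $E_{\alpha}^{N}$ of $Z^{\geq}_{M}$, so that $\cO(B^{+}_{M})_{\tilde\sigma}=\cO(B^{+}_{M})$ remains a central Hopf subalgebra of $\cO_{\bf q}(B^{+}_{M})$. I expect this to be the delicate point; once it is granted, the theorem is pure transport of structure and the remaining steps are routine.
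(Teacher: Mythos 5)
Your proposal is correct and follows essentially the same route as the paper: the paper states that Theorem \ref{teo:quantum-subgroup-Cartan} is a direct consequence of Proposition \ref{prop:Uq-uD} and Remark \ref{rmk:from-seq-U-to-O}, and your argument simply makes that transport of structure explicit, mirroring the diagram chase used for Theorem \ref{teo:quantum-subgrup-DJ}. The delicate point you isolate — that under assumption \eqref{assumptionChi} the cocycle $\tilde{\sigma}$ leaves $Z^{\geq}_{M}\simeq\cO(B^{+}_{M})$ undeformed and central — is exactly the content of Remark \ref{rmk:from-seq-U-to-O} that the paper relies on.
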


The proof of the next theorem follows {\it mutatis mutandis} from the proofs
of Theorem \ref{teo:qsubgroups-are-liftings} and Corollary 
\ref{cor:all-liftings-qsubgroups}, as they
rely on general structural properties.

\begin{teor}\label{teo:qsubgroups-are-liftings-Cartan} 
The following assertions hold:
\begin{enumerate}
\item [\rm (a)]	
Let $\subg$
be a finite abelian group 
and $k: \widehat{\subg}\hookrightarrow B^{+}_{M}$ 
be a monomorphism of algebraic groups.
Then there exist a finite abelian group $\Gamma$ and 
a diagonal braided vector space $V$  
of Cartan type such that the Hopf algebra 
$\cO_{\bf q}(B^{+}_{M})/\big(\iota(\Ker k^{*})\big)$ 
is a lifting of $\toba(V)\#\C \Gamma $. 
Moreover $\Gamma$ contains $M$, 
it is isomorphic to 
$\big(\Z/\ell_{1}N\Z\big)\times\cdots \times  \big(\Z/\ell_{\theta}N\Z\big)$
for some $\ell_{1},\ldots, \ell_{\theta} \in \N$, 
and the Cartan matrix associated with $V$ corresponds to that of $B^{+}_{M}$.

\vspace{.1cm}
		
\item [\rm (b)]	
Let $\Gamma = 
\big(\Z/\ell_{1}N\Z\big)\times\cdots \times  \big(\Z/\ell_{\theta}N\Z\big)$ be a 
finite abelian group containing $M$
with generators $g_{m_{1}},\cdots, g_{m_{\theta}}$ and 
$\ord g_{m_{i}} = \ell_{i}N$,
$\mu= (\mu_{\alpha})_{\alpha\in Q^{+}}$ 
a family of root vector parameters, $\cD_{M}$ 
the finite Cartan datum for $\Gamma$ as defined in 
$\S$\ref{subsec:data-Cartan-type}  and $u(\cD_{M},\mu)$ 
the corresponding pointed Hopf algebra. 
Then there are a finite abelian group $\subg$ and an algebraic group monomorphism $k: \widehat{\subg} \hookrightarrow B^{+}_{M}$  
such that 
$$
u(\cD_{M},\mu) \simeq \cO_{\bf q}(B^{+}_{M})/(\Ker k^{*}),
$$
as Hopf algebras.	\qed	
\end{enumerate}

\end{teor}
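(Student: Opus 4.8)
The plan is to transport, almost verbatim, the arguments of Theorem \ref{teo:qsubgroups-are-liftings} and Corollary \ref{cor:all-liftings-qsubgroups} to the multiparameter setting. The two structural inputs that will make the transfer possible are Remark \ref{rmk:from-seq-U-to-O}, which guarantees that $\cO_{\bf q}(B^{+}_{M})$ is a $2$-cocycle deformation of $\cO_{q}(B^{+}_{M})$ whose undeformed central Hopf subalgebra is $\cO(B^{+}_{M})\simeq Z^{\geq}_{M}$, and Theorem \ref{teo:quantum-subgroup-Cartan}, which provides the commutative diagram \eqref{diag:theorem-liftings-Cartan} playing the role of \eqref{diag:theorem-liftings}. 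The crucial point I would stress is that a $2$-cocycle deformation leaves the underlying coalgebra unchanged, so that pointedness, the coradical, the group of grouplike elements, the coradical filtration and all dimensions coincide with those in the Drinfeld-Jimbo case, while by Remark \ref{rmk:from-seq-U-to-O} the subalgebra $\cO(B^{+}_{M})$ stays central. Granting this, each step below is literally the corresponding step of the proofs cited.

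For $(a)$, I would first dualize the monomorphism $k:\widehat{\subg}\hookrightarrow B^{+}_{M}$ to an epimorphism $k^{*}:\cO(B^{+}_{M})\twoheadrightarrow \C^{\widehat{\subg}}\simeq \C\subg$ with Hopf ideal $J=\Ker k^{*}$. Using the centrality of $\cO(B^{+}_{M})$ in $\cO_{\bf q}(B^{+}_{M})$ and \cite[Prop. 2.10]{AG}, I get that $(J)=\cO_{\bf q}(B^{+}_{M})J$ is a Hopf ideal and that $H:=\cO_{\bf q}(B^{+}_{M})/(J)$ is a finite-dimensional pushout fitting into a commutative diagram with exact rows and bottom-right corner $\overline{\cO_{\bf q}(B^{+}_{M})}\simeq \lieu^{M}(V)^{\geq 0}$. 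By Proposition \ref{prop:Uq-uD}$(a)$ the algebra $\cO_{\bf q}(B^{+}_{M})\simeq U_{\bf q}^{M}(\lieb^{+})\simeq \widetilde{\toba}(V)\#\C\Z^{\theta}$ is pointed with coradical $\C\Z^{\theta}$, so $H$ is pointed with coradical $\C\Gamma$, $\Gamma=p_{J}(\Z^{\theta})$ containing $M$. Since $G(\lieu^{M}(V)^{\geq 0})=\overline{\Gamma}\simeq(\Z/N\Z)^{\theta}$, I would read off the short exact sequence $1\to\subg\to\Gamma\to(\Z/N\Z)^{\theta}\to 1$ and, setting $g_{m_{i}}=p_{J}(K_{m_{i}})$ and $\ell_{i}=\ord g_{m_{i}}^{N}$, conclude $\Gamma\simeq\prod_{i}\Z/\ell_{i}N\Z$ and $\subg\simeq\prod_{i}\Z/\ell_{i}\Z$. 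Pointedness gives $\gr H\simeq\toba(W)\#\C\Gamma$ with $\toba(W)$ of diagonal type (\cite{An}); a dimension count against $\lieu^{M}(V)^{\geq 0}$ yields $\dim\toba(W)=\dim\toba(V)$, and the surjection $\bar{\pi}:H\twoheadrightarrow\lieu^{M}(V)^{\geq 0}$, which respects coradical filtrations by \cite[Cor. 4.2.2]{Ra}, induces $\toba(W)\twoheadrightarrow\toba(V)$ and hence $\toba(W)\simeq\toba(V)$. Thus $\gr H\simeq\toba(V)\#\C\Gamma$ and $H$ is a lifting with the asserted Cartan matrix.

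For $(b)$, I would invoke Theorem \ref{teo:quantum-subgroup-Cartan} to place $u(\cD_{M},\mu)$ in diagram \eqref{diag:theorem-liftings-Cartan}, extracting the projection $\iota_{\mu}^{*}:\cO(B^{+}_{M})\twoheadrightarrow\C^{\widehat{\subg}}$ and the associated monomorphism $k:\widehat{\subg}\hookrightarrow B^{+}_{M}$. Since $\cO_{\bf q}(B^{+}_{M})/(\Ker\iota_{\mu}^{*})$ is the pushout of $\cO(B^{+}_{M})\hookrightarrow\cO_{\bf q}(B^{+}_{M})$ along $\iota_{\mu}^{*}$, its universal property produces a unique surjective Hopf algebra map $\rho:\cO_{\bf q}(B^{+}_{M})/(\Ker\iota_{\mu}^{*})\to u(\cD_{M},\mu)$. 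Finally I would compare dimensions: by Proposition \ref{prop:ces-u-Cartan}, $\dim u(\cD_{M},\mu)=|\subg|\dim\lieu^{M}(V)^{\geq 0}=\dim\cO_{\bf q}(B^{+}_{M})/(\Ker\iota_{\mu}^{*})$, so $\rho$ is an isomorphism.

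The hard part will not be any single computation but checking that none of the structural ingredients of the one-parameter proofs are broken by the deformation: precisely, that $\cO(B^{+}_{M})$ is still a central Hopf subalgebra of $\cO_{\bf q}(B^{+}_{M})$ and that $\cO_{\bf q}(B^{+}_{M})$ is still pointed, finite over $\cO(B^{+}_{M})$, with finite-dimensional quotient $\lieu^{M}(V)^{\geq 0}$. These are exactly what Remark \ref{rmk:from-seq-U-to-O} and Proposition \ref{prop:Uq-uD} supply --- the first because the relevant central subalgebra is undeformed by the $2$-cocycle, the second because coalgebra data are invariant under $2$-cocycle deformation --- so once I confirm them, the pushout, faithful-flatness and associated-graded arguments go through verbatim.
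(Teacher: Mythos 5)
Your proposal is correct and follows exactly the route the paper takes: the paper's own proof is the one-line remark that the statement follows \emph{mutatis mutandis} from Theorem \ref{teo:qsubgroups-are-liftings} and Corollary \ref{cor:all-liftings-qsubgroups}, and your argument is precisely that transfer, with the right structural inputs made explicit (Remark \ref{rmk:from-seq-U-to-O} for the undeformed central copy of $\cO(B^{+}_{M})$, Theorem \ref{teo:quantum-subgroup-Cartan} in place of \eqref{diag:theorem-liftings}, and Propositions \ref{prop:Uq-uD} and \ref{prop:ces-u-Cartan} for pointedness and the dimension counts). In fact you supply more detail than the paper does, correctly isolating the two points that must survive the $2$-cocycle deformation --- invariance of the coalgebra structure and centrality of $\cO(B^{+}_{M})$.
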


\begin{obs}\label{rmk:computing-liftings-Cartan}
As in the case of braidings of Drinfeld-Jimbo type, the 
results above give a geometric interpretation of the liftings;  see Remark \ref{rmk:computing-liftings}.
The lifting of the power root vector relation \eqref{eq-HA-4}
is based on the isomorphism 
$\psi: U_{q}^{M}(\lieb^{+})\to \cO_{q}(B^{+}_{M})$, 
the monomorphism $k: \widehat{\subg} \hookrightarrow B^{+}_{M}$
and the fact that 
any braiding of Cartan type is related to a braiding of Drinfeld-Jimbo type
by a $2$-cocycle. This allows one to interpret the pointed Hopf algebras
$u(\cD_{M},\mu)$ as quantum subgroups of the 
multiparametric quantum groups  $\cO_{\bf q}(B^{+}_{M})$, with 
${\bf q} = (q_{ij})_{i,j\in I}$.
Explicitly,
\begin{align} \label{for-deformation}
x_{\alpha}^{N}&=p(E_{\alpha}^{N}) = p_{\mu}(\psi(E_{\alpha}^{N}))
=\varphi^{-1} k^{*}(\psi(E_{\alpha}^{N})),\quad \text{for all }  \alpha \in Q^{+}.
\end{align}
We recall that
$\varphi^{-1}k^{*}(\psi(E_{\alpha}^{N})) \in \C \subg$ and $\varphi^{-1}:\C^{\widehat{\subg}}\to \C \subg$ is the isomorphism considered in Theorem \ref{teo:quantum-subgrup-DJ}. 
	
\end{obs}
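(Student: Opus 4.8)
The substance of the remark is the chain of equalities \eqref{for-deformation}, valid for every $\alpha\in Q^{+}$, together with the assertion that the common value lies in $\C\subg$; the surrounding sentences are interpretation already licensed by Theorem \ref{teo:qsubgroups-are-liftings-Cartan}. The plan is to obtain \eqref{for-deformation} by evaluating the maps appearing in the commutative diagram \eqref{diag:theorem-liftings-Cartan} on the single central element $E_{\alpha}^{N}\in Z^{\geq}_{M}$, in complete parallel with the Drinfeld--Jimbo computation of Remark \ref{rmk:computing-liftings}. Accordingly I would verify the three equalities separately. Throughout I write $\psi_{\sigma}\colon U_{\bf q}^{M}(\lieb^{+})\xrightarrow{\sim}\cO_{\bf q}(B^{+}_{M})$ for the $2$-cocycle deformation of $\psi$ produced in Remark \ref{rmk:from-seq-U-to-O}, so that $p_{\mu}=p\circ\psi_{\sigma}^{-1}$, equivalently $p=p_{\mu}\circ\psi_{\sigma}$.

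For the first equality $x_{\alpha}^{N}=p(E_{\alpha}^{N})$, I would use the identification $U_{\bf q}^{M}(\lieb^{+})\simeq\widetilde{\toba}(V)\#\C\Z^{\theta}$ of Proposition \ref{prop:Uq-uD}(a). Under it the positive root vector $E_{\alpha}$ is the iterated braided commutator of the $E_{i}$ inside the distinguished pre-Nichols algebra $\widetilde{\toba}(V)$, built by the same bracketing recipe that defines $x_{\alpha}$ inside $\toba(V)\subseteq u(\cD_{M},\mu)$. Since $p$ is a Hopf algebra map with $p(E_{i})=x_{i}$ by Proposition \ref{prop:Uq-uD}(b), and braided commutators are preserved under maps of braided Hopf algebras, one deduces $p(E_{\alpha})=x_{\alpha}$ and hence $p(E_{\alpha}^{N})=x_{\alpha}^{N}$.

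The remaining two equalities form a diagram chase resting on the fact, recorded in Remark \ref{rmk:from-seq-U-to-O}, that the cocycle leaves $Z^{\geq}_{M}$ undeformed; thus $\psi_{\sigma}$ and $\psi$ agree on $Z^{\geq}_{M}$ and $\psi(E_{\alpha}^{N})\in\cO(B^{+}_{M})$. This gives $p(E_{\alpha}^{N})=p_{\mu}(\psi_{\sigma}(E_{\alpha}^{N}))=p_{\mu}(\psi(E_{\alpha}^{N}))$, the second equality. For the third, I would restrict the middle vertical map $p_{\mu}$ to the central subalgebra $\cO(B^{+}_{M})$ and invoke the commutativity of the left square of \eqref{diag:theorem-liftings-Cartan}, namely $p_{\mu}\circ\iota=\tilde{\iota}_{u}\circ\iota_{\mu}^{*}$ with $\tilde{\iota}_{u}=\iota_{u}\circ\varphi^{-1}$ exactly as in Theorem \ref{teo:quantum-subgrup-DJ}. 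Since the monomorphism $k\colon\widehat{\subg}\hookrightarrow B^{+}_{M}$ is by construction dual to the projection $\iota_{\mu}^{*}$, i.e. $k^{*}=\iota_{\mu}^{*}$, and suppressing the inclusion $\iota_{u}\colon\C\subg\hookrightarrow u(\cD_{M},\mu)$, one obtains $p_{\mu}(\psi(E_{\alpha}^{N}))=\varphi^{-1}k^{*}(\psi(E_{\alpha}^{N}))\in\C\subg$, which is the last equality and also pins down the membership claim.

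The main obstacle is the first equality: one must reconcile the two competing notions of root vector, namely the Lusztig/braid-group vectors $E_{\alpha}$ that produce the PBW basis and the central subalgebra $Z^{\geq}_{M}$ of $U_{\bf q}^{M}(\lieb^{+})$, against the iterated braided commutators defining the $x_{\alpha}$. The cleanest route is to work entirely inside $\widetilde{\toba}(V)$, where both descriptions are available and are known to coincide up to a nonzero scalar; one then has to verify that this scalar is compatible with the $N$-th power normalization, or simply absorb it, so that $p(E_{\alpha}^{N})=x_{\alpha}^{N}$ holds on the nose rather than up to a unit. Once this bookkeeping is settled, the remaining equalities are formal consequences of the diagrams \eqref{eq:diag-U-u-teo-Cartan} and \eqref{diag:theorem-liftings-Cartan} together with the undeformedness of $Z^{\geq}_{M}$.
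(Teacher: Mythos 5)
Your proposal is correct and takes essentially the same approach as the paper: the remark is justified there exactly by evaluating the commutative diagrams \eqref{eq:diag-U-u-teo-Cartan} and \eqref{diag:theorem-liftings-Cartan} on the element $E_{\alpha}^{N}\in Z^{\geq}_{M}$, using that the $2$-cocycle leaves $Z^{\geq}_{M}$ undeformed (Remark \ref{rmk:from-seq-U-to-O}) so that the one-parameter $\psi$ may be used on the central subalgebra, in parallel with Remark \ref{rmk:computing-liftings}. The ``obstacle'' you flag --- reconciling the Lusztig/braid-group root vectors $E_{\alpha}$ with the iterated braided commutators defining $x_{\alpha}$ --- is a fair point of care, but the paper's conventions are set up so that $p(E_{\alpha})=x_{\alpha}$ holds on the nose (it is asserted without further scalar bookkeeping in Proposition \ref{prop:U-u}(b) and Proposition \ref{prop:Uq-uD}(b)).
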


\section{Liftings for the cases $A_{\theta}$, $B_{\theta}$  and $D_{\theta}$
for Drinfeld-Jimbo type braidings}
\label{sec:PHa-qsubgroups}

Our goal in this section is
to recover the liftings of type $A_{\theta}$
computed in \cite{AS2}, of type $B_2$ computed in \cite{BDR}, and of type $B_3$ computed in \cite{BGM} for diagonal braidings of Drinfeld-Jimbo type, 
using our method.  
Furthermore, we calculate the liftings of type 
$B_{\theta}$ and $D_{\theta}$, for $\theta \geq 2$, giving in this way 
new explicit infinite families of examples. 

 \smallbreak
Let  $\mcar = {\big(a_{ij} \big)}_{i, j \in I} $  
 be a symmetrizable Cartan matrix of finite type,
 $ \lieg $  the finite-dimensional simple Lie
 algebra associated with  $ \mcar \, $, 
$ Q = \bigoplus_{i \in I} \Z \alpha_i $  the root lattice
and  $ P =\bigoplus_{i \in I} \Z\, \omega_i$
the weight lattice.  

\smallbreak

Recall that $q\in \C^{\times}$ is a primitive root of unity of odd order 
$N$. We assume that $\gcd(3,N)=1$ and $N>7$. Let 
$\Gamma = \Z^{\theta}/(n_{1}\Z\times \cdots \times n_{\theta}\Z)$ 
be the finite abelian group
with $\theta$ generators 
$g_{m_{1}},\ldots, g_{m_{\theta}}$ of order
$\ord g_{m_{i}} = n_{i}$ such that $N|n_{i}$ for all $i\in I$.
Consider also a lattice $Q\subseteq M\subseteq P$ of rank $\theta$
generated by elements $m_{1},\ldots, m_{\theta}$, and 
denote by 
$g:M\to \Gamma$ the canonical group epimorphism 
$m_{i}\mapsto g_{m_{i}}$, for all $i\in I$. \smallbreak

For $j\in I$, let $g_{j}=g(\alpha_{j})$ and
$\chi_{j}: \Gamma \to \C^{\times}$ be
the character defined by $\chi_{j}(g_{m_{i}}) = 
q^{(m_{i},\alpha_{j})}$
for all $i\in I$.
Then  
$\cD_{M} = \cD(\Gamma, (g_{i})_{i\in I}, (\chi_{i})_{i\in I}, (a_{ij})_{i,j\in I})$ is a
datum of finite Cartan type for $\Gamma$,
and ${\bf q}=(q^{d_{i}a_{ij}})_{i,j\in I}$ is a braiding of 
Drinfeld-Jimbo type.\smallbreak

In order to describe the algebra $u(\cD_{M},\mu)$ by 
using the commutative diagram \eqref{diag:theorem-liftings}, 
we need to explicitly provide the maps involved in the diagram below:
\begin{align}\label{squarem}
	\begin{aligned}
		\xymatrix{
			\cO(B_M^{+}) \ar@{^{(}->}[r]^{\iota} \ar@{->>}[d]_{\iota_{\mu}^{*}} &   \cO_q(B_M^{+}) \ar@{->>}[r]^{\pi} \ar@{->>}[d]^{p_{\mu}} & \overline{\cO_{q}(B_M^{+})} \ar[d]^{\phi}\\
			\C^{\widehat{\subg}}\ar@{^{(}->}[r]_{\bar{\iota}} & u(\cD_{M},\mu)  \ar@{->>}[r]_{\bar{\pi}} & u_q^{M}(\mathfrak{b}^{+}).
		} 
	\end{aligned} 
\end{align}

\subsection*{The maps $\bar{\iota}$ and $\bar{\pi}$}
The map $\varphi:\C \subg\to \C^{\widehat{\subg}}$
given by $\varphi(x)(f)=f(x)$ for all $x\in T_{M}$ and $f\in \widehat{\subg}$ is a Hopf algebra isomorphism. Besides, the coradical  of $ u(\cD_{M},\mu) $ is the group algebra $\C\Gamma$. Thus, the map $\bar{\iota}$ is obtained by the composition of $\varphi^{-1}$ and the natural inclusion 
of $\C \subg$ in $\C\Gamma$. Also, the Hopf algebra epimorphism $\bar{\pi}$ is exactly the map $\pi_u$ defined in Proposition \ref{prop:ces-u} (a).

\subsection*{The maps $\phi$ and $p_{\mu}$}
By Definition \ref{def:flkernels}, 
the small quantum group $u^{M}_q(\mathfrak{b}^{+})$ 
is the algebra generated by $h_{m},e_i$ 
with $m\in M$, $1\leq i\leq \theta$, satisfying the relations: for all $m\in M$,
\begin{align}
	\label{rel-uqb1}	h_{m}e_jh^{-1}_{m}&=
	q^{(m,\alpha_j)}e_j,& &\ad(e_i)^{1-a_{ij}}(e_j)=0,\,\,\,\,i\neq j,& \\
	\label{rel-uqb2}	h_{m}^{N}&=1,& &\qquad e_i^{N}=0.&
\end{align}

As in Proposition \ref{prop:U-u} (a), there 
exists a Hopf algebra epimorphism 
$p:U_{q}^{M}(\lieb^{+})\to u(\cD_{M},\mu)$ given by 
\[
p(E_i)=x_i,\qquad p(K_{m})=g_{m},\quad \text{ for all } m\in M.
\]
Taking $\psi:U_{q}^{M}(\lieb^{+})\to \cO_{q}(B^{+}_{M})$
to be the Hopf algebra isomorphism from 
Remark \ref{rmk:zeta+}, one gets the 
Hopf algebra epimorphism 
$p_{\mu}:\cO_q(B_M^{+})\to u(\cD_{M},\mu)$ 
by composing $p$ with $\psi^{-1}$. 
The exact definition of the latter map highly depends on a 
presentation of $\cO_q(B_M^{+})$ by generators and relations,
which is in general given by the FRT-construction.
This will be clear in the examples below.

\subsection*{The map $\iota_{\mu}^{*}$}
For the classical cases of Cartan type, 
we use the presentations $\cO_{q}(B^{+}_{M})$
given by the FRT-construction following \cite[Chapter 9]{KS}.
Let $\ttn\in \N$ be such that $\ttn=\theta + 1$ if $G$ is of type 
$A_{\theta}$, $\ttn=2\theta + 1$ if $G$ is of type $B_{\theta}$ and 
$\ttn=2\theta$ if $G$ is of type $D_{\theta}$.

Moreover, in case $A_{\theta}$ we take $M=P$, so the 
classical coordinate algebra $\cO(\SL_{\ttn})$ 
is presented by 
$$
\cO(\SL_{\ttn}) = \C[X_{ij}\, |\, 
\det X = 1,\, 1\leq i,j \leq \ttn], 
$$
where $\det X = \sum_{\sigma\in \mathbb{S}_{\ttn}}
(-1)^{\ell(\sigma)}X_{1\sigma(1)}\cdots X_{\theta+1\sigma(\ttn)}$.

\smallbreak
On the other hand, for the cases $B_{\theta}$ and $D_{\theta}$
we take $M=Q$, so the coordinate algebras $\cO(\operatorname{SO}_{\ttn})$ 
are presented by 
\begin{align}\label{alg-so-relations}
\cO(\operatorname{SO}_{\ttn})= 
\C\big[X_{ij}\, |\, 
\sum_{k=1}^{\ttn}X_{kj}X_{k'i'}=\delta_{ij}, \det X =1,1\leq i,j \leq \ttn\big].
\end{align}

We define the map $\iota_{\mu}^{*}$ by using a natural inclusion of
the group $ \widehat{\subg}$ into the diagonal split torus of
$ B^{+}_{M}$ and the conjugation by a 
fixed unipotent matrix. A particular choice of this unipotent matrix 
gives the specific presentation of the liftings we are looking for.

\smallbreak
Let  
$ \widehat{\subg}\hookrightarrow B^{+}_{M}$ be a 
group monomorphism such that the image of each 
$\gamma$ in $\widehat{T_{M}}$ is a diagonal matrix $P_{\gamma}$ 
for all $\gamma\in\widehat{\subg}$.
For a fixed unipotent matrix $Q_{r}$ in $B_M^{+}$ of the form  
\begin{align}\label{unipotent}
	Q_{r}=\begin{pmatrix}
		1 & r_{12} & r_{13}& \ldots & r_{1\, \ttn}\\
		0& 1 & r_{23}& \ldots & r_{2\, \ttn}\\
		\vdots &  & \vdots &\ddots &\vdots\\
		0 &0 & 0 & \ldots & r_{\ttn-1\,  \ttn}\\
		0 &0 & 0 & \ldots & 1\\
	\end{pmatrix}
\end{align}
we define the map 
$\iota_{r}:\widehat{\subg}\to B^{+}$ given by 
$\iota_{r}(\gamma)=Q_{r}^{-1} P_\gamma Q_{r}$ for all
 $\gamma\in \widehat{\subg}$. By its very definition, 
 $\iota_{r}$ is a group monomorphism. 
For $P_\gamma = \text{diag}(t_1, \cdots, t_{\ttn})$ one has the following recursive formula for the entries of $\iota_{r}(\gamma)$
\begin{equation}\label{eq:formula-i-mu}
	\left(Q_{r}^{-1}P_\gamma Q_{r}\right)_{i\, j}=
	\begin{cases}
		t_i  & \text{ if } j=i,\\
		r_{ij}t_i -\sum\limits_{k=i+1}^{j} r_{i k}\left(Q_{r}^{-1}P_\gamma Q_{r}\right)_{k\, j}  & \text{ if } i< j\le  \ttn. 
	\end{cases} 
\end{equation}
Thus, the Hopf algebra 
epimorphism $\iota_{r}^{*}: \cO(B_M^{+}) \to \C^{\widehat{\subg}}$ 
reads
\begin{align}
	\label{for-forever}
\iota_{r}^*(X_{ij})=
r_{ij}\iota_{r}^*(X_{ii})-r_{ij}\iota_{r}^*(X_{jj}) - 
\textstyle\sum\limits_{k=i+1}^{j-1} r_{i k}\iota_{r}^*(X_{kj}),
\end{align}
where $X_{ij}$, with $1\leq i\leq j\leq \ttn$, 
are the generators of $\cO(B^{+}_{M})$. 
Since the entries $r_{ij}$ of the matrix $Q_r$ will correspond to
the family of root vectors parameters $\mu$, we will write eventually 
$\iota_{\mu}^*$ instead of $\iota_{r}^*$.

\subsection{$A_{\theta}$-type}
In this subsection we consider $G= G_{sc}=
\operatorname{SL}_{\theta+1}(\C)$ and 
$\lieg= \liesl_{\theta+1}(\C)$, so $M=P$ is the weight lattice. 
In this case, we set $B^{+}:=B^{+}_{M}$ as the 
subgroup of upper triangular matrices of $\SL_{\theta+1}(\C)$
and $\lieb^{+}$ the corresponding Lie algebra. 

\smallbreak

From \S \ref{subsec:OqSLn} we know that 
the Hopf algebra $\cO_q(\SL_{\theta+1})$ is generated by 
the elements $z_{ij}$ satisfying the following relations:
\begin{align}
	\label{rel-sln-1}&z_{is}z_{js}=qz_{js}z_{is},& &z_{si}z_{sj}=qz_{sj}z_{si},& &i<j,&\\[.2em]
	\label{rel-sln-2}&z_{it}z_{js}=z_{js}z_{it},& &z_{is}z_{jt}-z_{jt}z_{is}=(q-q^{-1})z_{it}z_{js},& &i<j,\,\,s< t.&\\[.2em]
	\label{rel-sln-3}& & &
	\sum_{\sigma\in \mathbb{S}_{\theta+1}}
	(-q)^{\ell(\sigma)}z_{1\sigma(1)}\cdots z_{\theta+1\sigma(\theta+1)} = 1.   && & 
\end{align} 
The set $\{z^{N}_{ij}\,:\, \,\,1\leq i,j\leq \theta+1\}$ generates 
a central Hopf subalgebra of $\cO_q(\SL_{\theta + 1})$ 
isomorphic 
to the coordinate algebra 
$\cO(\SL_{\theta+1})$. 
The inclusion
\begin{align}\label{map:inclusion-Hopf}
&	\cO(\SL_{\theta+1}) \hookrightarrow \cO_{q}(\SL_{\theta+1}),\qquad X_{ij}\mapsto z_{ij}^{N},&
\end{align}
is a Hopf algebra map which is the dual map of the quantum Frobenius map;
for more details we refer to \cite{PW}. The quantum function algebra $\cO_q(B^{+})$ corresponding to the 
standard Borel subgroup $B^{+}$ of $\SL_{\theta+1}(\C)$
is the quotient
$\cO_q(\SL_{\theta+1})/\mathcal{I}$ of $\cO_q(\SL_{\theta+1})$ by the Hopf ideal $\mathcal{I}$ generated by 
the elements $\{z_{ij}\,:\,i> j\}$.  
In particular, $\cO_q(B^{+})$ is the Hopf algebra generated by (the image of) $z_{ij}$ with $i\leq j$, 
satisfying the relations \eqref{rel-sln-1}, \eqref{rel-sln-2} and 
$z_{11}z_{22}\cdots z_{\theta+1,\theta+1}=1$ 
instead of \eqref{rel-sln-3}.
The corresponding Hopf ideal of 
$\cO(\SL_{\theta+1})$ that gives the quotient 
$\cO(B^{+})$ is generated by 
the elements $\{X_{ij}\,:\,i> j\}$.
\smallbreak

The map $\iota: \cO(B^{+})\hookrightarrow \cO_{q}(B^{+})$
is defined by $X_{ij}\mapsto z_{ij}^{N}$ for all $i\leq j$. Also,  $\overline{\cO_{q}(B^{+})} = 
\cO_{q}(B^{+})/\big( \iota(\cO(B^{+}))^{+}\cO_{q}(B^{+})\big) $ 
is the algebra 
generated by the elements $z_{ij}$, with $i\leq j$, satisfying the 
relations \eqref{rel-sln-1}, \eqref{rel-sln-2} and the following ones  
\begin{align} \label{rel-quotient}
	&z_{11}\cdots z_{\theta+1,\theta+1} = 1,&&	
	z_{ij}^{N} = \delta_{ij},\quad \text{for all } i\leq j.&
\end{align}

The map $\pi:\cO_q(B_M^{+})\to \overline{\cO_{q}(B_M^{+})} $ 
is the canonical epimorphism.
Observe that the elements $z_{ii}$ are (invertible) 
group-like elements both in $\cO_{q}(B^{+})$ and
$\overline{\cO_{q}(B^{+})}$.

Restricting the isomorphism 
$\psi:U_{q}^{P}(\lieb^{+}) \to \cO_{q}(B^{+})$ to the commutative
subalgebra $Z^{\geq}:=Z^{\geq}_{P}$ gives the isomorphism
$$
\psi: Z^{\geq} \to \cO(B^{+}),
\qquad K_{\omega_{i}}^{\pm N}\mapsto (X_{1,1}\ldots X_{i,i})^{\pm 1},
\quad E_{j}^N \mapsto - (q-q^{-1})^{-N}X_{j,j+1}X^{-1}_{j+1,j+1}.
$$
\smallbreak
Let $\Pi=\{\alpha_{1}, \ldots, \alpha_\theta\}$ be the simple  roots of $A_\theta$ and  $\{\omega_1, \ldots, \omega_\theta\}$ the corresponding   fundamental weights. 
We follow \cite{Bou} and consider a realization of the root system in the 
canonical  orthonormal basis $\{\mathbf{e}_i\}$ of $\mathbb{R}^{\theta+1}$.
In this case, the positive roots are given by 
$\alpha_{ij}:=\mathbf{e}_i - \mathbf{e}_j$ for all $1\leq i<j\leq \theta+1$. 
The simple roots are $\alpha_{i}=\alpha_{i\,i+1}= \mathbf{e}_i - \mathbf{e}_{i+1}$
for all $i\in I$, and the fundamental weights are
\begin{equation}\label{eq:cij}
	\omega_i =  \mathbf{e}_1+\cdots + \mathbf{e}_i 
	-\tfrac{i}{\theta+1}\left(\mathbf{e}_1+\cdots+\mathbf{e}_{\theta+1}\right)
	=\sum^{\theta+1}_{j=1} c_{ij} \mathbf{e}_j,\quad i\in I.  
\end{equation}
The group $T:=T_{M}=T_{P}$ is the subgroup of $\Gamma$
generated by the elements $g_{\omega_{i}}^{N}$ for all $i\in I$.
For each $i\in I$, let $\ell_i:=n_i/N$ and let $\xi_i$ be a primitive $\ell_i$-th root of the unity.
Then we may describe the character group $\widehat{T}$ of $T$
as the finite abelian 
group generated by the characters $\gamma_{j}$ such that 
$\gamma_{j}(g_{\omega_{i}}^{N}) = \xi_{j}^{\sum_{l=1}^{i}c_{jl}} $ for all $i,j\in I$. Since $g_{\alpha_i}=g^{-1}_{\omega_{i-1}}g^2_{\omega_i}g^{-1}_{\omega_{i+1}}$, it follows that 
\begin{align}\label{eq-gamma}
	\gamma_j(g^N_{\alpha_i})=\xi_{j}^{c_{ji}-c_{j(i+1)}}=\xi_j^{\delta_{i,j}}
	\quad \forall\ i,j\in I.	
\end{align}

For each $i\in I$, consider the diagonal matrix 
$P_i:= \text{diag}(\xi_i^{c_{i1}},\ldots,\xi_i^{c_{i\theta+1}})$.
Notice that $P_i\in B^{+}$ because $\sum_{j=1}^{\theta+1}c_{ij}=0$. 
Now, consider the following group monomorphism 
\begin{align}\label{group-morphism}
&\widehat{T}\hookrightarrow B^+,& &\gamma\mapsto P_{\gamma}=P_1^{s_1}\cdots P_\theta^{s_\theta},\qquad\text{ for all } \,\gamma=\gamma_{1}^{s_1}\ldots\gamma_{\theta}^{s_{\theta}}\in \widehat{T}.&	
\end{align}

Given a family
 $\mu=(\mu_{\alpha})_{\alpha\in Q^{+}}=(\mu_{ij})_{1\leq i<j\leq \theta+1}$ of root vector parameters,
we take the matrix $Q_{r}$ of \eqref{unipotent} with the following entries \[r_{ij}=(-1)^{i-j-1}(1-q^{-2})^{N}\mu_{ij}.\]

\begin{obs}\label{rmk:iota-transpose}
The composition of $\psi: Z^{\geq} \to \cO(B^{+})$ and $\iota_{\mu}^{*}$
gives a Hopf algebra epimorphism 
$\iota_{\mu}^{*}\psi: Z^{\geq} \to \C^{\widehat{T}}$
which satisfies 
$\iota_{\mu}^{*}\psi(K_{\alpha_i}^{N})(\gamma) =
 t_{i}t_{i+1}^{-1}=\xi_1^{s_1 \delta_{1,i}} \cdots 
\xi_\theta^{s_\theta \delta_{\theta,i}}$
for all $\gamma=\gamma_{1}^{s_{1}}\cdots \gamma_{\theta}^{s_{\theta}} 
\in \widehat{T}$, see \eqref{eq:formula-i-mu}. 
Also, by \eqref{eq-gamma} we have $\varphi(g^N_{\alpha_i})(\gamma)=\xi_1^{s_1 \delta_{1,i}} \cdots 
	\xi_\theta^{s_\theta \delta_{\theta,i}}$. Hence,
$$
g_i^N=g_{\alpha_i}^N=\varphi^{-1}\iota_{\mu}^{*}\psi(K_{\alpha_i}^{N}), 
\quad \text{for all } i\in I.
$$
\end{obs}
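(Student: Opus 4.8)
The plan is to establish the two displayed identities in the remark by a direct computation and then to conclude the final equality from the fact that an element of $\C^{\widehat{T}}$ is determined by its values on $\widehat{T}$. First I would note that $\iota_{\mu}^{*}\psi$ is an epimorphism of Hopf algebras: the map $\psi\colon Z^{\geq}\to\cO(B^{+})$ is the Hopf algebra isomorphism obtained by restriction just before the statement, and $\iota_{\mu}^{*}\colon\cO(B^{+})\to\C^{\widehat{T}}$ is the Hopf algebra epimorphism dual to the group monomorphism $\iota_{\mu}\colon\widehat{T}\hookrightarrow B^{+}$ of \eqref{group-morphism}, so their composite is again an epimorphism.

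The heart of the argument is the evaluation of $\iota_{\mu}^{*}\psi(K_{\alpha_i}^{N})$. Since $\alpha_i=-\omega_{i-1}+2\omega_i-\omega_{i+1}$, with the convention $K_{\omega_0}=K_{\omega_{\theta+1}}=1$, one has $K_{\alpha_i}^{N}=K_{\omega_{i-1}}^{-N}K_{\omega_i}^{2N}K_{\omega_{i+1}}^{-N}$. Substituting $\psi(K_{\omega_j}^{N})=X_{11}\cdots X_{jj}$ and telescoping the resulting product of (commuting, group-like) diagonal generators — using the relation $X_{11}\cdots X_{\theta+1,\theta+1}=1$ in $\cO(B^{+})$ to handle the top index $i=\theta$ — collapses it to $\psi(K_{\alpha_i}^{N})=X_{ii}X_{i+1,i+1}^{-1}$. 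Applying $\iota_{\mu}^{*}$ and evaluating at a character $\gamma$ with $P_\gamma=\text{diag}(t_1,\dots,t_{\theta+1})$, I would use that the recursion \eqref{eq:formula-i-mu} leaves the diagonal entries of $Q_r^{-1}P_\gamma Q_r$ unchanged, i.e. $\iota_{\mu}^{*}(X_{jj})(\gamma)=t_j$; this yields $\iota_{\mu}^{*}\psi(K_{\alpha_i}^{N})(\gamma)=t_i\,t_{i+1}^{-1}$.

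It then remains to identify $t_i\,t_{i+1}^{-1}$ with $\xi_i^{s_i}$. Expanding $P_\gamma=P_1^{s_1}\cdots P_\theta^{s_\theta}$ with $P_j=\text{diag}(\xi_j^{c_{j1}},\dots,\xi_j^{c_{j,\theta+1}})$ gives $t_i=\prod_{j=1}^{\theta}\xi_j^{s_j c_{ji}}$, hence $t_i\,t_{i+1}^{-1}=\prod_{j=1}^{\theta}\xi_j^{s_j(c_{ji}-c_{j,i+1})}$. From the explicit coefficients in \eqref{eq:cij} the fractional parts with denominator $\theta+1$ cancel and one gets $c_{ji}-c_{j,i+1}=\delta_{ij}$ — precisely the identity already recorded in \eqref{eq-gamma} — so $t_i\,t_{i+1}^{-1}=\xi_i^{s_i}=\xi_1^{s_1\delta_{1,i}}\cdots\xi_\theta^{s_\theta\delta_{\theta,i}}$, which is the first displayed formula.

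Finally, using $\varphi(x)(f)=f(x)$ together with \eqref{eq-gamma} I would compute $\varphi(g_{\alpha_i}^{N})(\gamma)=\gamma(g_{\alpha_i}^{N})=\prod_{j=1}^{\theta}\gamma_j(g_{\alpha_i}^{N})^{s_j}=\prod_{j=1}^{\theta}\xi_j^{s_j\delta_{ij}}=\xi_i^{s_i}$ for every $\gamma=\gamma_1^{s_1}\cdots\gamma_\theta^{s_\theta}\in\widehat{T}$. Thus $\iota_{\mu}^{*}\psi(K_{\alpha_i}^{N})$ and $\varphi(g_{\alpha_i}^{N})$ take the same value at each $\gamma\in\widehat{T}$, so they are equal in $\C^{\widehat{T}}$; applying $\varphi^{-1}$ gives $g_i^{N}=g_{\alpha_i}^{N}=\varphi^{-1}\iota_{\mu}^{*}\psi(K_{\alpha_i}^{N})$. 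I expect the only delicate point to be the telescoping step, in particular the bookkeeping of the non-integral coefficients $c_{jl}$ and the boundary index $i=\theta$, so that the product indeed reduces to the single factor $\xi_i^{s_i}$; the remaining manipulations are routine.
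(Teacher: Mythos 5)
Your proposal is correct and follows essentially the same route the paper intends: computing $\psi(K_{\alpha_i}^{N})=X_{ii}X_{i+1,i+1}^{-1}$ from $\alpha_i=-\omega_{i-1}+2\omega_i-\omega_{i+1}$ (with the determinant relation handling $i=\theta$), reading off diagonal entries via \eqref{eq:formula-i-mu}, and matching against \eqref{eq-gamma} through $\varphi$. The details you flag as delicate (the cancellation $c_{ji}-c_{j,i+1}=\delta_{ij}$ and the boundary case) are handled correctly, so nothing is missing.
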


Following \cite[pg. 47]{AS2}, for each $1\leq i<j\leq \theta+1$, we write
\begin{align*}
	E_{i,j}&=\begin{cases}
		E_i,& \text{if } j=i+1,\\
		E_{i,j-1}E_{j-1}-q^{-1}E_{j-1}E_{i,j-1}, & \text{if } j-i\geq 2.
	\end{cases}
\end{align*}
Also, for each $1\leq i<j\leq \theta+1$, we set 
$x_{(ij)}:=p_{\mu}(E_{i,j})$ and $ g_{(ij)}:=g_ig_{i+1}\ldots g_{j-1}$.

\bigbreak
In the next theorem we recover using our method 
the formula for the liftings of 
braided vector spaces of type $A_{\theta}$ in the Drinfeld-Jimbo type
from \cite{AS2}.

\begin{teor}\label{lifting-case-an} 
Let $\mu=(\mu_{\alpha})_{\alpha\in \Phi^{+}}= 
(\mu_{ij})_{1\leq i < j \leq \theta+1}$
be a family of root vector parameters. 
Following Remark \ref{rmk:computing-liftings}, 
relation \eqref{eq-HA-4} is given by 	
\[
x_{(ij)}^{N} = \mu_{ij}(1-g_{(i j)}^{N})\,\, 
+\sum\limits_{k=i+1}^{j}  (1-q^{-2})^N \mu_{i k} \,\,x_{(k j)}^{N}.
\]
\end{teor}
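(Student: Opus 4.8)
The plan is to evaluate the composite
$x_{(ij)}^{N}=p(E_{i,j}^{N})=\varphi^{-1}\iota_{\mu}^{*}\bigl(\psi(E_{i,j}^{N})\bigr)$
supplied by Remark~\ref{rmk:computing-liftings}, factor by factor. Since $p(E_{i,j})=x_{(ij)}$ and $E_{i,j}^{N}\in Z^{\geq}$, everything reduces to first finding the image $\psi(E_{i,j})$ of the root vector in $\cO_q(B^{+})$, then raising it to the $N$-th power inside the central subalgebra $\cO(B^{+})\simeq Z^{\geq}$, and finally pushing the result through $\iota_{\mu}^{*}$ and $\varphi^{-1}$ by means of formula~\eqref{for-forever}.

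For the first step I would prove, by induction on the height $h=j-i$, the closed formula
$$
\psi(E_{i,j})=(-1)^{\,j-i}\,q^{-(j-i-1)}\,(q-q^{-1})^{-1}\,z_{ij}\,z_{jj}^{-1},
$$
the base case $h=1$ being the value $\psi(E_i)=-(q-q^{-1})^{-1}z_{i,i+1}z_{i+1,i+1}^{-1}$ from Remark~\ref{rmk:zeta+}. In the inductive step I expand $E_{i,j}=E_{i,j-1}E_{j-1}-q^{-1}E_{j-1}E_{i,j-1}$, apply $\psi$, and straighten the resulting expression using the relations \eqref{rel-sln-1}--\eqref{rel-sln-2}: the row relation $z_{j-1,j-1}^{-1}z_{j-1,j}=q^{-1}z_{j-1,j}z_{j-1,j-1}^{-1}$, the vanishing of the strictly lower-triangular coefficients in $\cO_q(B^{+})$ (which makes $z_{jj}^{-1}$ commute with $z_{i,j-1}$ and with $z_{j-1,j-1}^{-1}$), and the straightening relation $z_{i,j-1}z_{j-1,j}-z_{j-1,j}z_{i,j-1}=(q-q^{-1})z_{ij}z_{j-1,j-1}$. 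After cancelling the factors $z_{j-1,j-1}^{\pm1}$ the braided commutator collapses to a scalar multiple of $z_{ij}z_{jj}^{-1}$, yielding the recursion $c_{h}=-q^{-1}c_{h-1}$ for the coefficient and hence the stated form. I expect this to be the main obstacle, being the only genuinely computational step and the one where the commutation of generators through the inverse group-likes must be tracked with care.

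For the second step, from $z_{ij}z_{jj}=q\,z_{jj}z_{ij}$ one gets $(z_{ij}z_{jj}^{-1})^{N}=q^{N(N-1)/2}z_{ij}^{N}z_{jj}^{-N}$, and since $N$ is odd with $q^{N}=1$ one has $q^{N(N-1)/2}=1$ and $q^{-(j-i-1)N}=1$, while $(-1)^{(j-i)N}=(-1)^{j-i}$. Recalling that the restriction $\psi\colon Z^{\geq}\xrightarrow{\sim}\cO(B^{+})$ identifies $z_{ij}^{N}$ with the generator $X_{ij}$, this gives
$$
\psi(E_{i,j}^{N})=(-1)^{\,j-i}(q-q^{-1})^{-N}\,X_{ij}X_{jj}^{-1}\ \in\ \cO(B^{+}).
$$

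For the last step I apply the algebra maps $\iota_{\mu}^{*}$ and $\varphi^{-1}$ and invoke \eqref{for-forever} to obtain
$$
\varphi^{-1}\iota_{\mu}^{*}(X_{ij}X_{jj}^{-1})=r_{ij}\,\varphi^{-1}\iota_{\mu}^{*}(X_{ii}X_{jj}^{-1})-r_{ij}-\sum_{k=i+1}^{j-1}r_{ik}\,\varphi^{-1}\iota_{\mu}^{*}(X_{kj}X_{jj}^{-1}).
$$
Here $X_{ii}X_{jj}^{-1}=\psi(K_{\alpha_{ij}}^{N})$ with $\alpha_{ij}=\alpha_i+\cdots+\alpha_{j-1}$, so Remark~\ref{rmk:iota-transpose} gives $\varphi^{-1}\iota_{\mu}^{*}(X_{ii}X_{jj}^{-1})=g_{(ij)}^{N}$, whereas the second step applied to the pair $(k,j)$ together with Remark~\ref{rmk:computing-liftings} gives $\varphi^{-1}\iota_{\mu}^{*}(X_{kj}X_{jj}^{-1})=(-1)^{\,j-k}(q-q^{-1})^{N}x_{(kj)}^{N}$. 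Substituting $r_{ij}=(-1)^{\,j-i+1}(1-q^{-2})^{N}\mu_{ij}$ and the identity $1-q^{-2}=q^{-1}(q-q^{-1})$, whence $(1-q^{-2})^{N}=(q-q^{-1})^{N}$, all powers of $q$ and of $(q-q^{-1})$ cancel and the accumulated signs collapse: the first two terms produce $\mu_{ij}(1-g_{(ij)}^{N})$ and the sum produces $\sum_{k}(1-q^{-2})^{N}\mu_{ik}x_{(kj)}^{N}$, which is the claimed relation (with the summand $k=j$ read as zero, i.e. $x_{(jj)}:=0$). The only delicate point here is the sign bookkeeping, where the product $(-1)^{j-i}(-1)^{k-i+1}(-1)^{j-k}$ together with the minus sign in front of the sum must be checked to be $+1$.
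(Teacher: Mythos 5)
Your proposal is correct and takes essentially the same route as the paper's own proof: the same inductive formula $\psi(E_{i,j})=(-1)^{j-i}q^{i-j+1}(q-q^{-1})^{-1}z_{ij}z_{jj}^{-1}$, the same passage to $N$-th powers inside the central subalgebra (using $q^{N}=1$, $N$ odd, and $(1-q^{-2})^{N}=(q-q^{-1})^{N}$), and the same expansion via \eqref{for-forever} together with Remark \ref{rmk:iota-transpose} and the identification of $X_{kj}X_{jj}^{-1}$ with a scalar multiple of $\psi(E_{k,j}^{N})$ to close the recursion. The only cosmetic difference is that the paper evaluates pointwise at characters $\gamma\in\widehat{T}$ (working with the entries $t_i$ of $Q_{\mu}^{-1}P_{\gamma}Q_{\mu}$) while you manipulate the elements $X_{ij}$ of $\cO(B^{+})$ directly, and your sign bookkeeping $(-1)^{j-i}(-1)^{k-i+1}(-1)^{j-k}=-1$ indeed cancels against the minus sign in front of the sum, exactly as claimed.
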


\begin{proof}
Let $\psi:U_q^{P}(\lieb^+)\to \cO_{q}(B^+)$ be the Hopf algebra isomorphism given in Remark \ref{rmk:zeta+}. By induction, we obtain that
\[
\psi(E_{i,j})=(-1)^{j-i}q^{i-j+1}(q-q^{-1})^{-1}z_{ij}z^{-1}_{jj} 
\qquad 1\leq i < j \leq \theta+1.
\]
Thus, $\psi(E_{i,j}^N)=\lambda_{i,j}z_{ij}^Nz^{-N}_{jj}$, where $\lambda_{i,j}:=(-1)^{j-i}(1-q^{-2})^{-N}\in \C$. Hence, for all 
$\gamma\in \widehat{T}$ we have
\begin{align*}
\iota_{\mu}^{*}(\psi(E_{i,j}^{N}))(\gamma) &= 
\lambda_{i,j}(Q_{\mu}^{-1}P_{\gamma}Q_{\mu})_{ij} 
(Q_{\mu}^{-1}P_{\gamma}Q_{\mu})^{-1}_{jj}\\
&\overset{\mathclap{\eqref{for-forever}}}{=} 
\lambda_{i,j}\Big(r_{ij}(t_i-t_j)-\sum_{k=i+1}^{j-1}r_{ik}(Q_{\mu}^{-1}P_{\gamma}Q_{\mu})_{kj}\Big)t^{-1}_{j}\\
&= \lambda_{i,j}r_{ij}(t_it^{-1}_{j}-1)-\sum_{k=i+1}^{j-1}\lambda_{i,j}r_{ik}(Q_{\mu}^{-1}P_{\gamma}Q_{\mu})_{kj} (Q_{\mu}^{-1}P_{\gamma}Q_{\mu})^{-1}_{jj}\\
		&= \lambda_{i-1,j}r_{ij}(1-t_it^{-1}_{j})-\sum_{k=i+1}^{j-1} (-1)^{k-i}r_{ik}\lambda_{k,j}(Q_{\mu}^{-1}P_{\gamma}Q_{\mu})_{kj} (Q_{\mu}^{-1}P_{\gamma}Q_{\mu})^{-1}_{jj}\\
		&= \lambda_{i-1,j}r_{ij}(1-t_it^{-1}_{j})-\sum_{k=i+1}^{j-1} (-1)^{k-i}r_{ik}\iota_{\mu}^{*}(\psi(E_{k,j}^{N}))(\gamma) \\
		&= \mu_{ij}(1-t_it_{i+1}^{-1}t_{i+1}t_{i+2}^{-1}\ldots t_{j-1}t^{-1}_{j})+(1-q^{-2})^{N}\sum_{k=i+1}^{j-1}\mu_{ik}\iota_{\mu}^{*}(\psi(E_{k,j}^{N}))(\gamma) \\
		&= \mu_{ij}(1-\iota_{\mu}^{*}\psi(K_{\alpha_i}^{N}\ldots K_{\alpha_{j-1}}^{N})(\gamma))+(1-q^{-2})^{N}\sum_{k=i+1}^{j-1}\mu_{ik}\iota_{\mu}^{*}(\psi(E_{k,j}^{N}))(\gamma) 
	\end{align*}
	Then, using Remark \ref{rmk:iota-transpose}, we conclude that 
	\begin{align*}
		x^N_{(ij)}=\varphi^{-1}\iota_{\mu}^{*}(\psi(E_{i,j}^{N}))&= \mu_{ij}(1-g_i^N\ldots g^N_{j-1})+(1-q^{-2})^{N}\sum_{k=i+1}^{j-1}\mu_{ik}x^N_{(kj)}\\
		&=\mu_{ij}(1-g_{(i j)}^{N})\,\, +\sum\limits_{k=i+1}^{j}  (1-q^{-2})^N \mu_{i k} \,\,x_{(k j)}^{N}
	\end{align*}
as desired.
\end{proof}

\subsection{Explicit Hopf inclusion for the quantum Borel in $\SO_q(\ttn)$}\label{subsec:case Bn}
We start this subsection recalling the definition of $\mathcal{O}_q(\SO_\ttn)$;
we follow \cite[9.3]{KS}.  Consider  
$G= G_{ad}= \operatorname{\SO}_{\ttn}(\C)$ and 
$\lieg= \lieso_{\ttn}(\C)$, so $M=Q$ is the root lattice. 
For each $1\leq k\leq \ttn$, let  $k':=\ttn+1-k$ and 
\[
\rho_i=\ttn/2-i\,\,\text{ if $i<i'$ },\qquad \rho_{i'}=-\rho_i\,\,\text{ if $i\leq i'$. } 
\]
Let $J=(c_{ij}) $ be the matrix $\ttn\times \ttn$ with entries 
$c_{ij}:=\delta_{i,j'}q^{-\rho_i}$. We pick the R-matrix given by $R=\sum_{a,b,i,j} R_{a b}^{ij} E_{i a}\otimes E_{j b}$ where the coefficient  $R_{a b}^{ij}$ is defined as follows
\[
R_{a b}^{ij} = q^{\delta_{i\, j}-\delta_{i\, j'}}  \delta_{i\, a} \delta_{j\, b}  + 
\mathsf{u}(i-a)(q-q^{-1})(\delta_{j\, a}\delta_{i\, b}-\epsilon c_{j\, i}c_{a \, b})
\] 
and $\mathsf{u}(x)=0$ if $ x<1$ and $\mathsf{u}(x)=1$ if $ x\geq 1$.

In the free associative algebra generated by 
$\{z_{ij}\,:\,1\leq i,j\leq \ttn\}$, for $1\leq a,b,i,j\leq \ttn$, 
consider the element 
\begin{align}
	\label{FRT}
	\mathrm{FRT} (i,b,j,a)=\sum_{k,l=1}^{\ttn}R_{a\, b}^{l\, k} z_{i\, k}z_{j\, l} -R_{k\, l}^{j\, i} z_{k a}z_{l\, b}.
\end{align} 
The quantum function algebra is defined by
\[
\mathcal{O}_q(\SO_\ttn)=\mathbb{C}\langle z_{ij}\rangle/\langle \mathrm{FRT} (i,b,j,a),\, \mathbf{z}J\mathbf{z}^tJ^{-1}-\id,\, J\mathbf{z}^tJ^{-1}\mathbf{z}-\id \rangle, \qquad \mathbf{z}=(z_{ij}). 
\]
The coalgebra structure and the antipode map are given respectively by:
\begin{align}\label{coalg-antipode}
&\Delta(z_{ij})=\sum_{k=1}^{j}z_{ik}\otimes z_{kj},& &\varepsilon(z_{i,j})=\delta_{i,j},& &S(z_{ij})=q^{\rho_j-\rho_i}z_{j'\,i'}.&	
\end{align}
Similarly as in case $A_{\theta}$, the quantum function algebra $\cO_q(B^{+})$ corresponding to the 
standard Borel subgroup $B^{+}$ of $\SO_\ttn(\mathbb{C})$
is the quotient
$\cO_q(\SO_{\ttn})/\mathcal{I}$ of $\cO_q(\SO_\ttn)$ by the Hopf ideal $\mathcal{I}$ generated by 
the elements $\{z_{ij}\,:\,i> j\}$.  

The function algebra 
$\cO(\operatorname{SO}_{\ell})$
is a commutative Hopf algebra with comultiplication given by 
$\Delta(X_{ij})=\sum_{k=1}^{\ttn}X_{ik}\otimes X_{kj}$.
As in $A_{\theta}$-case, the set $\{z^{N}_{ij}\,:\, \,\,1\leq i,j\leq \ttn\}$ generates 
a central Hopf subalgebra of $\cO_q(\operatorname{SO}_{\ttn} )$ isomorphic
to $\cO(\operatorname{SO}_{\ttn})$. However, for $\ttn$ odd, 
the inclusion map given in \eqref{map:inclusion-Hopf} is not a 
coalgebra map as we will see in the next result.

Given $i,j\in \N$ and $i\leq j$, we fix $\mathbb{I}_{i,j}=\{i,\ldots,j\}$ and  
$\tilde{\mathbb{I}}_{i,j}=\{i+1,\ldots,j-1\}$. Set
\[
\ttn_0=(\ttn+1)/2,\qquad\qquad c^k_{ij} =\begin{cases}
	\tfrac{2}{(1+q)^N},\ &\text{if } k=\ttn_0 \text{ and } k\in \tilde{\mathbb{I}}_{i,j},\\
	1,&\text{otherwise. }
\end{cases} 
\] 
Notice that if $\ttn$ is even, then  $c^k_{ij}=1$ for all $k=i,\ldots,j$. 
In particular, this happens in case $D_{\theta}$ since $\ttn=2\theta$.

\begin{teor}\label{teorema: coproducto potencia N}
	The comultiplication $\Delta: \mathcal{O}_q(B^+)\to  \mathcal{O}_q(B^+)\otimes  \mathcal{O}_q(B^+)$ satisfies 
\begin{align}
	\label{formula-coproduto}
	\Delta(z_{ij}^N)=\sum_{k=i}^j c^k_{ij} z_{ik}^N \otimes z_{kj}^N, \quad\quad \text{for all }\, 1\leq i\leq j\leq \ttn.
\end{align}
\end{teor}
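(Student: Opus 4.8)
The plan is to expand $\Delta(z_{ij}^N)$ directly and exploit that at a primitive $N$-th root of unity almost every term drops out. From the coproduct formula \eqref{coalg-antipode}, together with the fact that $z_{ik}=0$ for $i>k$ in $\mathcal{O}_q(B^+)$, one has $\Delta(z_{ij})=\sum_{k=i}^{j} z_{ik}\otimes z_{kj}$. Since $\Delta$ is an algebra homomorphism, $\Delta(z_{ij}^N)=\big(\sum_{k=i}^{j} a_k\big)^N$ where $a_k:=z_{ik}\otimes z_{kj}\in\mathcal{O}_q(B^+)^{\otimes 2}$. Thus everything reduces to understanding this non-commutative $N$-th power. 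Note the base case $j=i$ is immediate, since $z_{ii}$ is group-like and $\tilde{\mathbb{I}}_{i,i}=\emptyset$.

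First I would record the commutation relations among the $a_k$. The two tensor factors of $a_k$ and $a_l$ involve only same-row entries $z_{ik},z_{il}$ and same-column entries $z_{kj},z_{lj}$; extracting these from the FRT relations \eqref{FRT} (observing that the off-diagonal $-\epsilon\, c_{ji}c_{ab}$ part of the $R$-matrix vanishes on upper-triangular entries away from the self-dual index), I would show that for $i\le k<l\le j$ with $k,l\neq \ttn_0$ the elementary tensors $q$-commute, $a_k a_l=q^{2}\,a_l a_k$ (more generally by a power of $q$ of order $N$), the group-like diagonal entries $z_{ii},z_{jj}$ making the boundary terms $a_i,a_j$ behave the same way. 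Fixing an ordering, I would then apply the quantum multinomial theorem to write $\big(\sum_k a_k\big)^N=\sum_{\sum n_k=N}\binom{N}{(n_k)}_{q^{2}}\prod_k a_k^{n_k}$.

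The root-of-unity input enters next: since $N$ is odd, $q^{2}$ is again a primitive $N$-th root of unity, so the Gaussian multinomial coefficient $\binom{N}{(n_k)}_{q^{2}}$ vanishes unless some $n_k=N$ and all others vanish. Hence the only surviving monomials are the $a_k^N=z_{ik}^N\otimes z_{kj}^N$, each with coefficient $1$. This already establishes \eqref{formula-coproduto} whenever the indices $k\in\{i,\dots,j\}$ commute uniformly; in particular it settles the even case ($D_\theta$, $\ttn=2\theta$), where $\ttn_0\notin\Z$ and $c^k_{ij}=1$ for all $k$.

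The main obstacle is the self-dual index $\ttn_0=\ttn_0{}'$ in the odd case ($B_\theta$, $\ttn=2\theta+1$), which is precisely the origin of the correction $c^{\ttn_0}_{ij}=\tfrac{2}{(1+q)^N}$. When $i<\ttn_0<j$ the summand $a_{\ttn_0}=z_{i,\ttn_0}\otimes z_{\ttn_0,j}$ is genuinely off-diagonal, and because $q^{\delta_{ij}-\delta_{ij'}}=q^{0}=1$ at $\ttn_0$ the term $-\epsilon\, c_{\ttn_0\,\ttn_0}c_{ab}$ survives, so the relations governing $z_{i,\ttn_0}$ and $z_{\ttn_0,j}$ deviate from the generic $q^{2}$-commutation. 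The plan here is to isolate the middle index: I would derive, from \eqref{FRT} together with the orthogonality relations $\mathbf{z}J\mathbf{z}^{t}J^{-1}=\id$ specialized at the self-dual weight, the precise quadratic relation satisfied by the middle entries, and then track how it rescales the coefficient of $z_{i,\ttn_0}^N\otimes z_{\ttn_0,j}^N$ in the $N$-th power, the factor $(1+q)^N$ (with the $2$ reflecting the degeneration $q^{0}=1$ at $\ttn_0$) emerging from this bookkeeping. Since $c^{\ttn_0}_{ij}$ only intervenes for $\ttn_0\in\tilde{\mathbb{I}}_{i,j}$, the cases $\ttn_0\in\{i,j\}$ — where one factor of $a_{\ttn_0}$ is group-like — would be folded into the generic argument. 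This middle-index analysis is the genuinely new content of the theorem and the step requiring the most careful computation.
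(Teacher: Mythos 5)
Your overall strategy --- expand $\Delta(z_{ij}^N)=\big(\sum_{k=i}^{j} z_{ik}\otimes z_{kj}\big)^N$ and kill every mixed term via Gaussian--multinomial vanishing at the root of unity --- is indeed the engine of the paper's proof, but your central commutation claim is false, and it fails exactly where the real work of the theorem lies. You assert that the tensors $a_k=z_{ik}\otimes z_{kj}$ pairwise $q^2$-commute whenever $k,l\neq\ttn_0$. The correct statement, the paper's \eqref{qconmutacion tensor}, requires in addition that the two indices are not \emph{mutually dual}, $l\neq k'$. For a dual pair the FRT relations do not give a plain $q$-commutation: by \eqref{misma fila hat} and \eqref{misma columna hat} one has
\begin{align*}
z_{it'} z_{it}&=q^{-2}z_{it}z_{it'}-(q - q^{-1})\sum_{k=i}^{t-1} q^{k-t-1}z_{ik}z_{ik'},
\end{align*}
with genuine lower-order correction terms. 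Such pairs $t\neq t'$ lie inside $[i,j]$ whenever $i<i'\le j$, in type $D$ just as in type $B$, so your quantum multinomial argument breaks down there; in particular your claim that the generic argument ``settles the even case'' is wrong --- type $D_\theta$ needs the hard case too, since the obstruction is not the self-dual index $\ttn_0$ alone but all dual pairs.

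What the paper does in that regime (its Case 3) is a different decomposition that your sketch is missing: it writes $\Delta(z_{ij})=\mathtt{a}_1+\mathtt{a}_2+\mathtt{a}_3$, where the middle tensor $\mathtt{a}_0=z_{i\,\ttn_0}\otimes z_{\ttn_0\,j}$ (taken to be $0$ when $\ttn$ is even) is \emph{split} between $\mathtt{a}_1$ and $\mathtt{a}_2$ with weights $\beta_1=1/(1+q)$ and $\beta_2=q/(1+q)$, and then proves the Claim $\mathtt{a}_1\mathtt{a}_2=q^2\mathtt{a}_2\mathtt{a}_1$ by a long computation using, beyond the FRT relations, the identities \eqref{misma fila hasta ell0} and \eqref{misma columna hasta ell0} obtained by combining FRT relations with the antipode. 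Inside each block the summands do $q^2$-commute, so the $N$-th power splits, and the coefficient $c^{\ttn_0}_{ij}$ appears as $\beta_1^N+\beta_2^N=\tfrac{1+q^N}{(1+q)^N}=\tfrac{2}{(1+q)^N}$ using $q^N=1$ --- a precise mechanism your ``bookkeeping'' never identifies. Finally, the boundary case $i=\ttn_0<j$ does not ``fold into the generic argument'' either: row $\ttn_0$ obeys \eqref{fila-mitad} rather than a $q$-commutation, and the paper needs a separate recursive argument based on \cite[Corollary 2.3]{BGM} (splitting $(x+y)^N$ when the $q$-bracket of $x$ and $y$ $q^2$-commutes with both) to handle it. These are the missing ideas; as written, your argument only covers the paper's easy Case 1, $i<j<j'$.
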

 
 \begin{proof}
 Computing the relation $\mathrm{FRT} (i,t,i,s)$ given in \eqref{FRT} for 
 $i\le  t<s$, we obtain the following identities:
\begin{align}
\label{misma fila no hat}      z_{is} z_{it}&=q^{-1}z_{it}z_{is}, & &\text{if } i\le  t<s,\  s\neq t',\ i\neq \ttn_0,\\
\label{misma fila hat} z_{it'} z_{it}&=q^{-2}z_{it}z_{it'}-(q - q^{-1})\sum_{k=i}^{t-1} q^{k-t-1}z_{ik}z_{ik'},& &\text{if } i\le  t<t',\ i\neq \ttn_0,\\
\label{fila-mitad}    z_{\ttn_0s} z_{\ttn_0t}&=z_{\ttn_0s} z_{\ttn_0t} - (q - q^{-1})\sum_{k=\ttn_0+1}^{t} q^{\ttn_0-k+1/2}z_{k's}z_{kt}& &\text{if } \ttn_0\le  t<s.
\end{align}
Similarly, computing  $\mathrm{FRT} (s,j,t,j)$ for $t<s\le j$, it follows:
\begin{align}
\label{misma columna no hat}	z_{sj} z_{tj}&=q^{-1} z_{tj}z_{sj} & &\text{for }  t<s\le j,\  s\neq t',\ j\neq \ttn_0,\\
\label{misma columna hat} z_{t'j} z_{tj}&=q^{-2}z_{tj}z_{t'j}-(q - q^{-1})\sum_{r=j'}^{t-1} q^{r-t-1 }z_{rj}z_{r'j }& &\text{if }   t<t'\le j,\\
\label{columna-mitad} z_{\ttn_0j} z_{tj}&=q^{-1} z_{tj} z_{\ttn_0j},  & &\text{for }  t<\ttn_0 < j.
\end{align}
Moreover,  denoting $F_{i,k}= \mathrm{FRT} (i, k', i, k)$, we have that
	\begin{align}
	0=F_{i,i'}   + q^2 F_{i,i} +   (q^2-1) \sum_{i<k\le \ttn /2} q^{k - i} F_{i,k}.
	\end{align}
Applying the antipode map in the previous identity, we obtain the following relations:
	\begin{align}\label{misma fila hasta ell0}
	0&=\sum_{i\le l< \ttn_0} q^{l}z_{il}z_{il'} + \frac{q^{\ttn_0+1/2}}{1+q}z_{i\ttn_0}z_{i\ttn_0},& &\text{if }  i<i',\\\label{misma columna hasta ell0}
		0&=\sum_{j'\le r<\ttn_0} q^{r}z_{rj}z_{r'j } +\frac{q^{\ttn_0+1/2}}{1+q} z_{\ttn_0j}z_{\ttn_0j},& &\text{if }    j'<j,
	\end{align}
where the last term on the right-hand side of equations 
\eqref{misma fila hasta ell0} and \eqref{misma columna hasta ell0} 
is defined as zero if $\ttn$ is even.
		Thus, for any  $i\le t< s\le j$ such that $t\neq s'$ and $\ttn_0\notin\{i,j\}$, it follows from \eqref{misma fila no hat} and \eqref{misma columna no hat} that
\begin{align}\label{qconmutacion tensor}
	(z_{i,s} \otimes   z_{s,j})  (z_{i,t}\otimes z_{t,j})=q^{-2} (z_{i,t}\otimes z_{t,j}) (z_{i,s}\otimes z_{s,j})
\end{align}

To complete the proof, we will proceed by cases. 

\vspace{.2cm}

{\noindent \it{Case 1: $i<j<j'$}. } 
By \eqref{qconmutacion tensor}, all summands in the coproduct formula 
\eqref{coalg-antipode} are elements that $q^{-2}$-commute. Thus
\[
\Delta(z_{ij}^N)=\sum_{k=i}^j  z_{ik}^N\otimes z_{kj}^N.
\]

{\noindent \it{Case 2: $i=\ttn_0<j$}. } 
In this case
	\[
	\Delta (z_{\ttn_0 j})= \sum_{k=\ttn_0}^j z_{\ttn_0  k}\otimes z_{kj}.
	\]
For $\ttn_0\le t\le j$, set $\mathtt{b}_{t}:=z_{\ttn_0 t}\otimes z_{tj}$ and  
$\mathtt{b}_{t,j}:=\sum_{k=t}^j z_{\ttn_0 k}\otimes z_{kj}$. 
From \eqref{fila-mitad} and \eqref{misma columna no hat} follows 
\[
\mathtt{c}_t :=\mathtt{b}_{t}\mathtt{b}_{t+1,j}-q \mathtt{b}_{t+1,j}\mathtt{b}_{t}= -(q-q^{-1})q^{\ttn_0+3/2}   \sum_{s=t+1}^j\sum_{k=\ttn_0+1}^t q^{-k} z_{k's}z_{kt}\ot z_{sj}z_{tj}.  
\]
It can be checked  that
\begin{align*}
	\mathtt{b}_{t}\mathtt{c}_t &=q^2\mathtt{c}_{t}\mathtt{b}_t,&  \mathtt{c}_t\mathtt{b}_{t+1,j}&=q^ 2 \mathtt{b}_{t+1,j}\mathtt{c}_t
\end{align*}
Hence, by  \cite[Corollary 2.3]{BGM}, for all $\ttn_0\le t\le j$ we have
\[
(\mathtt{b}_{t,j})^N=(\mathtt{b}_{t}+\mathtt{b}_{t+1,j})^N=\mathtt{b}_{t}^N+\mathtt{b}_{t+1,j}^N.
\]
Applying the previous relation recursively for $t=\ttn_0,\ldots j-1$ we get
\begin{align*}	
	\Delta (z^N_{\ttn_0j}) &= (\mathtt{b}_{\ttn_0}+\cdots +\mathtt{b}_{j})^N= (\mathtt{b}_{\ttn_0}+\mathtt{b}_{\ttn_0+1, j})^N\\
	&=\mathtt{b}_{\ttn_0}^N+\mathtt{b}_{\ttn_0+1, j}^N=\mathtt{b}_{\ttn_0}^N+(\mathtt{b}_{\ttn_0+1}+\mathtt{b}_{\ttn_0+2, j})^N\\
	&=\mathtt{b}_{\ttn_0}^N+\mathtt{b}_{\ttn_0+1}^N+\mathtt{b}_{\ttn_0+2, j}^N=\mathtt{b}_{\ttn_0}^N+\mathtt{b}_{\ttn_0+1}^N+(\mathtt{b}_{\ttn_0+2}+\mathtt{b}_{\ttn_0+3, j})^N\\
	&=\cdots =\mathtt{b}_{\ttn_0}^ N+\cdots +\mathtt{b}_{j}^N\\
	&=\sum_{k=\ttn_0}^j z_{\ttn_0\,  k}^N\otimes z_{kj}^N.
\end{align*}

{\noindent \it{Case 3: $i<i'\le j$}. } 
For this case, we set $\beta_1={1}/({1+q})$ and $\beta_2={q}/({1+q})$.  
Let $\mathtt{a}_0=0$ if $\ttn$ is even, and 
$\mathtt{a}_0=z_{i\, \ttn_0} \otimes z_{\ttn_0\, j}$ if $\ttn$ is odd. 
Consider 
\begin{align*}
\mathtt{a}_1 &= \beta_1 \mathtt{a}_0 + \sum_{i\le k< \ttn_0}z_{i k}\otimes z_{kj},&  \mathtt{a}_2 &= \beta_2 \mathtt{a}_0 + \sum_{\ttn_0<k\le i'} z_{i k}\otimes z_{kj},&
\mathtt{a}_3 &= \sum_{k=i'+1}^j z_{i k}\otimes z_{kj}.
\end{align*}
Then $\Delta(z_{ij}) = \mathtt{a}_1+\mathtt{a}_2+ \mathtt{a}_3$ and, 
by \eqref{qconmutacion tensor}, we have that  
$\mathtt{a}_s \mathtt{a}_3 = q^2\,  \mathtt{a}_3 \mathtt{a}_s$, for 
$s=1,2$. 

\vspace{.3cm}

{\noindent {\bf Claim.} 
$\mathtt{a}_1 \mathtt{a}_2 = q^2\,  \mathtt{a}_2 \mathtt{a}_1$.}

\vspace{.3cm}

\noindent This can be  checked by a long straightforward computation, 
we outline the main ideas in what follows. 
Fix  $\tilde{\mathtt{a}}_1 =\mathtt{a}_1 -\beta_{1}\mathtt{a}_0$ and  
$\tilde{\mathtt{a}}_2 =\mathtt{a}_2 -\beta_{2}\mathtt{a}_0$. 
Then
\begin{align*}
	q^{-2}\tilde{\mathtt{a}}_1 \tilde{\mathtt{a}}_2 - \,  \tilde{\mathtt{a}}_2 \tilde{\mathtt{a}}_1
	&= \sum_{i\le t < \ttn_0 < s\le j} (q^{-2} z_{i t} z_{i s} \otimes  z_{t j}z_{sj} -  z_{i s} z_{i t}\otimes z_{sj} z_{t j})\\
	&= \sum_{i\le t< \ttn_0}(q^{-2} z_{i t} z_{i t'} \otimes  z_{t j}z_{t'j} -  z_{i t'} z_{i t}\otimes z_{t'j} z_{t j})  +\\
	&\quad \sum_{\substack{i\le t \le \ttn_0\le s\le j\\
			t\neq s'}} (q^{-2} z_{i t} z_{i s} \otimes  z_{t j}z_{sj} -  z_{i s} z_{i t}\otimes z_{sj} z_{t j})\\
	&\overset{\eqref{qconmutacion tensor}}{=}\sum_{i\le t< \ttn_0}(q^{-2} z_{i t} z_{i t'} \otimes  z_{t j}z_{t'j} -  z_{i t'} z_{i t}\otimes z_{t'j} z_{t j} ).
\end{align*}
Recalling that $\lambda=q-q^{-1}$, we observe that
\begin{align*}
&\sum_{i\le t< \ttn_0} (q^{-2} z_{i t} z_{i t'} \otimes  z_{t j}z_{t'j} -  z_{i t'} z_{i t}\otimes z_{t'j} z_{t j}) = \\[.2em]
&\overset{\eqref{misma fila hat}}{=}\sum_{i\le t< \ttn_0}	(q^{-2} z_{i t} z_{i t'} \otimes  z_{t j}z_{t'j} -  q^{-2}z_{it}z_{it'}\otimes  z_{t'j} z_{t j})
+\lambda \sum_{i\le t< \ttn_0}\sum_{i\le s< t}	   q^{s-t-1}z_{is}z_{is'}\otimes  z_{t'j} z_{tj}\\
&=\sum_{i\le t< \ttn_0}	  z_{it}z_{it'}\otimes  (q^{-2} z_{t j} z_{t'j} -  q^{-2}z_{t'j} z_{t j})
+\lambda\sum_{i\le s< \ttn_0}	  \left( \sum_{s+1\le t< \ttn_0} q^{s-t- 1} \right) z_{is}z_{is'}\otimes  z_{t'j} z_{tj}\\
&=\sum_{i\le t< \ttn_0}	 z_{it}z_{it'}\otimes  \left( q^{-2} z_{t j} z_{t'j} - q^{-2}z_{t'j} z_{t j}
+\lambda\sum_{t+1\le a< \ttn_0} q^{t-a-1} z_{a'j} z_{a j}  \right).
\end{align*}
Using \eqref{misma columna hat}  for the terms $z_{t'j} z_{t j}$ and $z_{a'j} z_{a j}$, the  parenthesis in the previous line can be simplified to:
\begin{align*}
q^{-2} z_{tj}z_{t'j} -q^{-2}z_{t'j} z_{t j} +\lambda \sum_{t+1\le a< \ttn_0} q^{t-a-1} z_{a'j} z_{a j}=\lambda q^{t-\ttn+\tfrac{1}{2}((-1)^\ttn -3)} \sum_{j'\le r< \ttn_0}  q^{r} z_{r j}z_{r'j }.
\end{align*}
Hence
\begin{align}\label{parenteis-cuenta-larga}
q^{-2}\tilde{\mathtt{a}}_1 \tilde{\mathtt{a}}_2 - \,  \tilde{\mathtt{a}}_2 \tilde{\mathtt{a}}_1&=  \lambda q^{-\ttn+\tfrac{1}{2}((-1)^\ttn -3)}  \sum_{i\le t< \ttn_0}	   q^{t}  \,  z_{it}z_{it'}\otimes \sum_{j'\le r< \ttn_0}  q^{r} z_{r j}z_{r'j }
\end{align}
If $\ttn$ is even, the claim follows from \eqref{misma columna hasta ell0}. 
On the other hand, if $\ttn$ is odd,  from \eqref{misma fila hasta ell0} 
and \eqref{misma columna hasta ell0}, we conclude that 
\begin{align*}
q^{-2}\tilde{\mathtt{a}}_1 \tilde{\mathtt{a}}_2 - \,  \tilde{\mathtt{a}}_2 \tilde{\mathtt{a}}_1= 
 \tfrac{(q - q^{-1})}{(1+q)^2}          z_{i\ttn_0}^2\otimes  z_{\ttn_0j}^2.
\end{align*}
Using again   \eqref{qconmutacion tensor} it follows that
\[
q^{-2} \mathtt{a}_0\tilde{\mathtt{a}}_2 =\tilde{\mathtt{a}}_2 \mathtt{a}_0,\qquad\qquad q^{-2}\tilde{\mathtt{a}}_1 \mathtt{a}_0 =\mathtt{a}_0\tilde{\mathtt{a}}_1. 
\]
Then
\begin{align*}
q^{-2} \mathtt{a}_1 \mathtt{a}_2- \mathtt{a}_2\mathtt{a}_1&=q^{-2} (\beta_1\mathtt{a}_0+\tilde{\mathtt{a}}_1)(\beta_2\mathtt{a}_0+\tilde{\mathtt{a}}_2)- (\beta_2\mathtt{a}_0+\tilde{\mathtt{a}}_2)(\beta_1\mathtt{a}_0+\tilde{\mathtt{a}}_1)\\
&=(q^{-2}-1) \beta_1\beta_2\mathtt{a}_0^2 +q^{-2}\tilde{\mathtt{a}}_1 \tilde{\mathtt{a}}_2 - \,  \tilde{\mathtt{a}}_2 \tilde{\mathtt{a}}_1\\
&= \left((q^{-2}-1) \beta_1\beta_2 + \tfrac{(q - q^{-1})}{(1+q)^2}    \right)     z_{i\ttn_0}^2\otimes  z_{\ttn_0j}^2 =0,
\end{align*}
which completes the proof of the claim. 
Thus $\Delta(z_{ij}^N) = \mathtt{a}_1^N+\mathtt{a}_2^N+ \mathtt{a}_3^N$.
Moreover, observe that for each $s$, all the summands in $\mathtt{a}_s$ 
are elements that $q^2$-commute. Consequently
\[
\Delta(z_{ij}^N) = \mathtt{a}_1^N+\mathtt{a}_2^N+ \mathtt{a}_3^N = \sum_{\substack{k=i\\ k\neq \ttn_0}}^j z_{i k}^N\otimes z_{kj}^N + \tfrac{2}{(1+q)^N} z_{i\, \ttn_0}^N \otimes z_{\ttn_0\, j}^N,
\]
which implies that the result holds in case 3. 
The remaining cases are obtained  from the previous ones applying the 
antipode to the coproduct. Precisely, we have
\[
\Delta(z_{j'i'}^N) =\Delta(S(z_{ij}^N)) =\sum_{k=i}^j c^k_{ij} z_{j' {k'}}^N \otimes z_{{k'} i'}^N=\sum_{k=j'}^{i'} c^{{k'}}_{ij} z_{j' k}^N \otimes z_{k i'}^N=\sum_{k=j'}^{i'} c^{k}_{j'i'} z_{j' k}^N \otimes z_{k i'}^N, 
\]
and the result is proved.

\end{proof}

The next theorem gives an explicit formula for the dual quantum Frobenius
map for the positive quantum Borel algebras in type $B_{\theta}$ and 
$D_{\theta}$. In fact, in the $B_{\theta}$-case one may assume
that $q^{N/2}=1$, by taking $q^{2}$ instead of $q$. Hence, for the next $q^{N/2}=1$.

\begin{teor}\label{inclusion-case-bn}
Let $i,j\in \N$ with $i\leq j$.	The map $\iota: \cO(B^+)\to  \cO_q(B^{+})$ given by
		\[\iota(X_{ij})=		\begin{cases}
		\hfil 	\frac{(1+q)^N}{2}z^N_{ij},& \text{if }\, \,\ttn_0\in \tilde{\mathbb{I}}_{i,j},\\[.3em]
		z^N_{ij},& \text{otherwise},
			\end{cases}\] 
is a Hopf algebra monomorphism.
\end{teor}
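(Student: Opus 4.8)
The plan is to show that $\iota$ is an injective bialgebra map; compatibility with the antipode is then automatic, since a bialgebra map between Hopf algebras commutes with the antipode, and the counit is preserved because $\varepsilon(z_{ij})=\delta_{ij}$ while the correction factor equals $1$ on the diagonal. Throughout I would assume $\ttn$ is odd (type $B_\theta$): when $\ttn$ is even (type $D_\theta$) one has $\ttn_0=(\ttn+1)/2\notin\Z$, so $\ttn_0\in\tilde{\mathbb{I}}_{i,j}$ never holds, every correction factor is $1$, and $\iota(X_{ij})=z_{ij}^N$ is the standard dual quantum Frobenius inclusion onto the central Hopf subalgebra furnished by Proposition \ref{prop:sucOB}. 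It is convenient to record the correction factors in telescoping form, writing $\iota(X_{ij})=d_{ij}\,z_{ij}^N$ with
\[
d_{ij}:=\prod_{i<k<j}h(k),\qquad h(k)=\begin{cases}\tfrac{(1+q)^N}{2},& k=\ttn_0,\\ 1,& k\neq\ttn_0,\end{cases}
\]
and to note the antidiagonal symmetry $h(\ttn+1-k)=h(k)$, whence $d_{k'i'}=d_{ik}$ for $k':=\ttn+1-k$, $i':=\ttn+1-i$ and all $i\le k\le j$.

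First I would verify that $\iota$ is a morphism of coalgebras. Using the classical formula $\Delta(X_{ij})=\sum_{k=i}^{j}X_{ik}\otimes X_{kj}$ in $\cO(B^{+})$ together with the coproduct computation of Theorem \ref{teorema: coproducto potencia N}, the identity $\Delta\iota(X_{ij})=(\iota\ot\iota)\Delta(X_{ij})$ collapses to the purely scalar relation $d_{ij}\,c^{k}_{ij}=d_{ik}\,d_{kj}$ for all $i\le k\le j$. This is immediate from the telescoping description: for $i<k<j$ one has $d_{ik}d_{kj}=\prod_{i<l<j,\,l\neq k}h(l)=d_{ij}/h(k)$ and, by the definition of $c^{k}_{ij}$ in Theorem \ref{teorema: coproducto potencia N}, $c^{k}_{ij}=h(k)^{-1}$ (namely $\tfrac{2}{(1+q)^N}$ exactly when $k=\ttn_0\in\tilde{\mathbb{I}}_{i,j}$, and $1$ otherwise), while for $k\in\{i,j\}$ both sides equal $d_{ij}$. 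Thus the coalgebra compatibility is the clean part and is essentially a bookkeeping consequence of the already-established coproduct formula.

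The principal obstacle will be proving that $\iota$ is an algebra map, i.e. that the elements $y_{ij}:=d_{ij}z_{ij}^N$ satisfy the defining relations of $\cO(B^{+})$ inherited from \eqref{alg-so-relations}: the orthogonality relations $\sum_k X_{kj}X_{k'i'}=\delta_{ij}$ and $\det X=1$, restricted to the Borel. The difficulty is genuine, since for odd $\ttn$ the uncorrected assignment $X_{ij}\mapsto z_{ij}^N$ is not even a coalgebra map, and it is precisely the orthogonality relation passing through the middle index $\ttn_0$ that is obstructed. I would verify these relations by taking $N$-th powers of the quantum orthogonality identities, in the same spirit as the proof of Theorem \ref{teorema: coproducto potencia N}: the relevant pairs of summands $q^{\pm 2}$-commute by \eqref{misma fila no hat}--\eqref{columna-mitad}, so the $N$-th power lemma \cite[Corollary 2.3]{BGM} applies and yields $\sum_k y_{kj}y_{k'i'}$ with coefficients $d_{kj}d_{k'i'}=d_{ik}d_{kj}$. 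The contribution of the central term $z_{\ttn_0 j}^N z_{\ttn_0 i'}^N$ is governed by the half-row and half-column relations \eqref{misma fila hasta ell0}--\eqref{misma columna hasta ell0}, which exist only for odd $\ttn$, and it is exactly here that the factor $\tfrac{2}{(1+q)^N}$ is forced: it is the unique scalar making the corrected orthogonality relation hold. The determinant relation is then immediate, since $d_{ii}=1$ gives $\prod_i y_{ii}=(\prod_i z_{ii})^{N}=1$.

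Finally, for injectivity I would argue as follows. Because every $d_{ij}\neq 0$, the image of $\iota$ is the subalgebra generated by the $z_{ij}^N$ with $i\le j$, which by Proposition \ref{prop:sucOB}(a) is a central Hopf subalgebra of $\cO_q(B^{+})$ isomorphic to $\cO(B^{+})$. Hence, once the previous step shows $\iota$ is a bialgebra map, $\iota$ is a surjective homomorphism of commutative Hopf algebras, both of which are coordinate rings of the connected group $B^{+}$ and thus finitely generated domains of the same Krull dimension; a surjection of such domains has trivial kernel, so $\iota$ is an isomorphism onto the central Hopf subalgebra and in particular a monomorphism. An alternative, which bypasses the explicit relation check, is to take the central inclusion of Proposition \ref{prop:sucOB}(a) as given and instead \emph{identify} it with $\iota$ via the coalgebra computation above; I expect this route to be less clean here because the vector representation of $\SO_\ttn$ has a zero weight at the index $\ttn_0$, so the bi-weight spaces in the central subalgebra need not be one-dimensional and the identification of scalars requires extra care.
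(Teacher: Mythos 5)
Your coalgebra step is exactly the paper's: both reduce $\Delta\circ\iota = (\iota\otimes\iota)\circ\Delta$ to the scalar identity $c^{k}_{ij}\,\gamma_{ij}/(\gamma_{ik}\gamma_{kj})=1$ (your $d_{ij}c^{k}_{ij}=d_{ik}d_{kj}$) via Theorem \ref{teorema: coproducto potencia N}, and your Krull-dimension argument for injectivity, though more explicit than anything in the paper's proof, is sound. The gap is in your algebra-map step. You propose to obtain the orthogonality relations for the images by raising the quantum orthogonality identities to the $N$-th power, asserting that ``the relevant pairs of summands $q^{\pm2}$-commute by \eqref{misma fila no hat}--\eqref{columna-mitad}, so the $N$-th power lemma applies.'' That is precisely what fails for conjugate pairs and at the middle index: \eqref{misma fila hat}, \eqref{fila-mitad} and \eqref{misma columna hat} are not plain $q$-commutations but carry correction sums, and it is exactly this failure that forced the $\beta_1,\beta_2$ regrouping and the long Claim in the proof of Theorem \ref{teorema: coproducto potencia N}. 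Your sketch defers the entire computation in which the factor $\tfrac{2}{(1+q)^N}$ is ``forced''; saying it is ``the unique scalar making the corrected orthogonality relation hold'' states what must be proved rather than proving it.

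The paper avoids this computation altogether, and this is the idea you are missing: once Theorem \ref{teorema: coproducto potencia N} is available, the orthogonality relations come for free from the antipode axiom. Applying $m\circ(S\otimes\id)\circ\Delta = \eta\circ\eps$ to $z_{ij}^N$, using $\Delta(z_{ij}^N)=\sum_{k} c^{k}_{ij}\, z_{ik}^N\otimes z_{kj}^N$ and the explicit antipode $S(z_{ij})=q^{\rho_j-\rho_i}z_{j'i'}$ from \eqref{coalg-antipode} (the resulting scalar is $1$ because $q^N=1$ and $\rho_k-\rho_i\in\Z$), yields at once $\sum_{k=i}^{j} c^{k}_{ij}\, z_{k'i'}^N z_{kj}^N=\delta_{ij}$, which by the same scalar identity \eqref{escalar-coprod} is exactly the required relation $\sum_{k}\iota(X_{k'i'})\iota(X_{kj})=\delta_{ij}$ from \eqref{alg-so-relations}; the determinant relation is immediate from $z_{ii}z_{i'i'}=1$ and $z_{\ttn_0\ttn_0}=1$, as you also note. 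To salvage your route you would have to redo, for the products $z_{kj}z_{k'i'}$, the same case analysis and regrouping that occupies the bulk of the proof of Theorem \ref{teorema: coproducto potencia N}; the paper's point is that this duplication is unnecessary.
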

\begin{proof}
Let $\gamma_{ij}=\tfrac{(1+q)^N}{2}$ for $\ttn_0 \in \tilde{\mathbb{I}}_{i,j}$ and $\gamma_{ij}=1$ otherwise. By Theorem \ref{teorema: coproducto potencia N}, we have
\[\Delta (\iota(X_{ij}))=\sum_{k=i}^j c^{k}_{ij}  \gamma_{ij} z_{ik}^N \otimes z_{kj}^N=\sum_{k=i}^j \frac{\gamma_{ij}}{\gamma_{ik}\gamma_{kj}}c^{k}_{ij} \iota(X_{ik}) \otimes\iota(X_{kj}).\] 	
For type $D_\theta$, all the coefficients $\gamma_{ij}, \gamma_{ik}, \gamma_{kj}, c^{k}_{ij}$ that appear in the previous sum are equal to $1$ and it follows that $\iota$ is a coalgebra map. Hence, we consider the case type $B_\theta$. If $ \ttn_0\notin \tilde{\mathbb{I}}_{i,j}$ then for each $i\le k\le j$ we have $c^{k}_{ij}=\gamma_{ij}=\gamma_{ik}=\gamma_{kj}=1$. On the other hand, suppose that $\ttn_0\in \tilde{\mathbb{I}}_{i,j}$ and $i\le k\le j$. If $k\neq \ttn_0$ then $c^{k}_{ij}=1$ and $\gamma_{ij}=\gamma_{ik}\gamma_{kj}$.  If $k=\ttn_0$ one has
$c^{\ttn_0}_{ij}\gamma_{ij}= q^{-N/2}=1$ and $\gamma_{i,\ttn_0}=1=\gamma_{\ttn_0, j}$. 
In any case we have
\begin{equation}\label{escalar-coprod}
c^{k}_{ij}\frac{\gamma_{ij}}{\gamma_{ik}\gamma_{kj}}=1, \qquad i\leq k\leq j,
\end{equation}
which proves that $\iota$ is a coalgebra map. 	Now we want to prove that $\iota$ is an algebra morphism.
Since $z_{ii}z_{i'i'}=1$ and $z_{\ttn_0 \ttn_0}=1$ then it follows the determinant relation 
\[\det \iota(X) = \prod_{k=1}^\ttn \iota(X_{kk})= \prod_{k=1}^{\ttn_0-1} \iota(X_{kk})\iota(X_{k' k'})=\prod_{k=1}^{\ttn_0-1} z_{ii}^Nz_{i'i'}^N =1\]

by \eqref{alg-so-relations}, it remains to show that 
$\sum_{k=i}^{j}\iota (X_{k'i'})\iota(X_{kj})=\delta_{ij}$ 

Since
\begin{align*}
	(S\otimes I)\circ \Delta (z_{ij}^N) &
	=\sum_{k=i}^j c^{k}_{ij} S(z_{ik}^N) \otimes z_{kj}^N
	=\sum_{k=i}^j c^{k}_{ij} (q^{\rho_j- \rho_j})^N\  z_{k', i'}^N\otimes z_{kj}^N
	=\sum_{k=i}^j c^{k}_{ij}   z_{k', i'}^N\otimes z_{kj}^N,
\end{align*}
and $\epsilon( z_{ij})=\delta_{ij}$, it follows from \eqref{escalar-coprod} that
\[
\delta_{ij}=\sum_{k=i}^j c_{ij}^{k}   z_{k', i'}^N z_{kj}^N=\sum_{k=i}^jc^{k}_{ij}\frac{\gamma_{ij}}{\gamma_{ik}\gamma_{kj}}    \iota (X_{k'i'})\iota(X_{kj})=\sum_{k=i}^j \ \iota (X_{k'i'})\iota(X_{kj}).
\]

\end{proof}

\subsection{$B_{\theta}$-type} 

For $1\le j\le 2\theta+1$, we consider $j':= 2\theta+2-j$. Let $\Pi=\{\alpha_{1}, \ldots, \alpha_\theta\}$ be the simple  roots of $B_\theta$. The group $\subg$ is the subgroup of $\Gamma$
generated by the elements $g_{\alpha_{i}}^{N}$ for all $i\in I$, and $\widehat{\subg}$ is the character group of $\subg$.
For each $i\in I$, let $\ell_i:=n_i/N$ and let $\xi_i$ be a primitive $\ell_i$-th root of the unity.
Then we may describe $\widehat{\subg}$ as the finite abelian 
group generated by the characters $\gamma_{j}$ such that 
\begin{align}\label{eq-gamma-bn}
	\gamma_j(g^N_{\alpha_i})=\xi_j^{(2-\delta_{i,\theta})\tfrac{a_{ij}}{2}} ,
	\quad \text{for all}\ i,j\in I.	
\end{align}
For each $i\in I$, consider the diagonal matrix 
$P_i\in B^{+}$ given by
\begin{align}\label{diag-matrices-bn}
	(P_i)_{jj}=\begin{cases}
		\xi_i,&  \text{if }\, \, j\in\{i+1,i'\} \text{ and } j \neq \theta+1,\\
		\xi^{-1}_i,& \text{if }\, \, j\in \{i, i'-1\}\text{ and } j \neq \theta+1,\\
		1,& \text{otherwise}.\\
	\end{cases}
\end{align}
As in \eqref{group-morphism}, but now with the diagonal matrices 
$P_i$ given by \eqref{diag-matrices-bn}, the map
\begin{align*}
	&\widehat{T}\hookrightarrow B^+,& &\gamma\mapsto P_{\gamma}=P_1^{s_1}\cdots P_\theta^{s_\theta},\qquad\text{ for all } \,\gamma=\gamma_{1}^{s_1}\ldots\gamma_{\theta}^{s_{\theta}}\in \widehat{T_M},&	
\end{align*}
is a group monomorphism.
Also,  the isomorphism 
$\psi:U_{q}^{Q}(\lieb^{+}) \to \cO_{q}(B^{+})$ (see \eqref{psi-bn} below) restricted to the commutative
subalgebra $Z^{\geq}:=Z^{\geq}_{Q}$ gives the isomorphism
$$
\psi: Z^{\geq} \to \cO(B^{+}),
\qquad K_{\alpha_{i}}^{\pm N}\mapsto (X_{i'-1,i'-1}X_{i,i})^{\pm},
\quad E_{i}^N \mapsto - (q-q^{-1})^{-N}(X_{i'-1,i'}X^{-1}_{i',i'}).
$$

\begin{obs}\label{rmk:iota-transpose-bn-case}
	The composition of $\psi: Z^{\geq} \to \cO(B^{+})$ and $\iota_{\mu}^{*}$
	gives a Hopf algebra epimorphism 
	$\iota_{\mu}^{*}\psi: Z^{\geq} \to \C^{\widehat{\subg}}$
	which satisfies  
		\[\iota_{\mu}^{*}\psi(K_{\alpha_i}^{N})(\gamma_j) = \xi_j^{\tfrac{s_ia_{ij}}{2}(2-\delta_{i,\theta})}.\] 
Also, by \eqref{eq-gamma-bn}, we have 
\[\varphi(g^N_{\alpha_i})(\gamma)=\prod_{j=1}^\theta \xi_j^{\tfrac{s_ia_{ij}}{2}(2-\delta_{i,\theta})}= t_{i'} t_{i'-1}^{-1} =\iota_{\mu}^{*}\psi(K_{\alpha_i}^{N})(\gamma),\qquad \gamma=\gamma_{1}^{s_{1}}\cdots \gamma_{\theta}^{s_{\theta}} \in
\widehat{\subg}. \]
Consequently $
g_{\alpha_i}^N=\varphi^{-1}\iota_{\mu}^{*}\psi(K_{\alpha_i}^{N})$,  for all $i\in I.$
\end{obs}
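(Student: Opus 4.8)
The plan is to evaluate both sides of the claimed identity on an arbitrary element of $\widehat{\subg}$ from the explicit matrix data and then transport the equality along $\varphi$. First I would dispose of the structural assertion: the isomorphism $\psi\colon U_q^{Q}(\lieb^{+})\to\cO_q(B^{+})$ of Remark \ref{rmk:zeta+} restricts to an isomorphism $Z^{\geq}\xrightarrow{\sim}\cO(B^{+})$, since $\psi^{-1}(\cO(B^{+}))=Z^{\geq}$ was established in the proof of Theorem \ref{teo:quantum-subgrup-DJ}; and $\iota_{\mu}^{*}\colon\cO(B^{+})\twoheadrightarrow\C^{\widehat{\subg}}$ is the Hopf algebra epimorphism dual to the group monomorphism $\iota_{\mu}\colon\widehat{\subg}\hookrightarrow B^{+}$. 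Hence $\iota_{\mu}^{*}\psi$ is automatically a Hopf algebra epimorphism, and the real content is the value of $\iota_{\mu}^{*}\psi$ on the generators $K_{\alpha_{i}}^{N}$.

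For that computation I would start from the displayed restriction $\psi(K_{\alpha_{i}}^{N})=X_{i'-1,i'-1}X_{i,i}$ and use that $\iota_{\mu}^{*}(X_{kk})(\gamma)=(Q_{\mu}^{-1}P_{\gamma}Q_{\mu})_{kk}$ is exactly the $k$-th diagonal entry of $P_{\gamma}$: conjugation by the unipotent upper triangular matrix $Q_{\mu}$ fixes the diagonal, as is clear from the recursion \eqref{eq:formula-i-mu}. Thus evaluating $\iota_{\mu}^{*}\psi(K_{\alpha_{i}}^{N})$ at $\gamma=\gamma_{1}^{s_{1}}\cdots\gamma_{\theta}^{s_{\theta}}$ amounts to reading the relevant diagonal entries off $P_{\gamma}=P_{1}^{s_{1}}\cdots P_{\theta}^{s_{\theta}}$, which I would extract directly from \eqref{diag-matrices-bn}. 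In parallel I would expand $\varphi(g_{\alpha_{i}}^{N})(\gamma)=\gamma(g_{\alpha_{i}}^{N})=\prod_{j}\xi_{j}^{s_{j}(2-\delta_{i,\theta})a_{ij}/2}$ straight from \eqref{eq-gamma-bn}.

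The hard part will be matching these two products, and this is where the orthogonal group demands care. The key ingredients are the orthogonality relations of \eqref{alg-so-relations}, which on the diagonal torus read $t_{k}t_{k'}=1$, together with the index identity $(i'-1)'=i+1$; these let me rewrite the two diagonal entries coming from $\psi(K_{\alpha_{i}}^{N})$ in the symmetric form $t_{i'}t_{i'-1}^{-1}$ and identify the occurring powers of $\xi_{i-1},\xi_{i},\xi_{i+1}$ with the Cartan entries $a_{i,i-1},a_{ii},a_{i,i+1}$. The genuinely delicate case is $i=\theta$, for which $i'-1=\theta+1=\ttn_{0}$ is the self-dual middle index where every $P_{l}$ acts trivially; that vanishing is precisely the combinatorial source of the factor $(2-\delta_{i,\theta})$ and of the short-root entry $a_{\theta,\theta-1}=-2$, so I would split the verification into the cases $i<\theta$ and $i=\theta$ and check in each that $\iota_{\mu}^{*}\psi(K_{\alpha_{i}}^{N})(\gamma)=t_{i'}t_{i'-1}^{-1}=\varphi(g_{\alpha_{i}}^{N})(\gamma)$ for every $\gamma\in\widehat{\subg}$. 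Specializing to $\gamma=\gamma_{j}$ then yields the first displayed formula.

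To finish, since $\iota_{\mu}^{*}\psi(K_{\alpha_{i}}^{N})$ and $\varphi(g_{\alpha_{i}}^{N})$ are elements of $\C^{\widehat{\subg}}$ that agree at every point of $\widehat{\subg}$, they are equal; applying the isomorphism $\varphi^{-1}\colon\C^{\widehat{\subg}}\to\C\subg$ gives $g_{\alpha_{i}}^{N}=\varphi^{-1}\iota_{\mu}^{*}\psi(K_{\alpha_{i}}^{N})$ for all $i\in I$, as desired.
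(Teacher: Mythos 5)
Your plan is in substance the same computation that the paper has in mind for this remark (which it asserts without a separate proof): the structural part is automatic, $\iota_{\mu}^{*}\psi(K_{\alpha_i}^{N})$ is evaluated by reading diagonal entries of $P_{\gamma}$ off \eqref{diag-matrices-bn} since unipotent conjugation fixes the diagonal by \eqref{eq:formula-i-mu}, $\varphi(g_{\alpha_i}^{N})(\gamma)$ is expanded from \eqref{eq-gamma-bn}, and one concludes pointwise and applies $\varphi^{-1}$. The gap sits exactly in the step you flagged as the hard part. The two diagonal entries produced by $\psi(K_{\alpha_i}^{N})=X_{i'-1,i'-1}X_{i,i}$ give $\iota_{\mu}^{*}\psi(K_{\alpha_i}^{N})(\gamma)=t_{i'-1}\,t_{i}$, and the torus orthogonality relations $t_{k}t_{k'}=1$ from \eqref{alg-so-relations}, together with $(i'-1)'=i+1$, turn this into $t_{i}\,t_{i+1}^{-1}$; on the other hand $t_{i'}\,t_{i'-1}^{-1}=t_{i}^{-1}t_{i+1}$. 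These two quantities are mutual \emph{inverses}, not equal, so orthogonality cannot rewrite one as the other unless $\big(t_{i}t_{i+1}^{-1}\big)^{2}=1$. As proposed, your matching step fails and the proof does not close.

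Carrying out your computation honestly makes this concrete and reveals what is really going on. Take $\theta=2$, $i=1$, $\gamma=\gamma_{1}$: from \eqref{diag-matrices-bn}, $P_{1}=\mathrm{diag}\big(\xi_{1}^{-1},\xi_{1},1,\xi_{1}^{-1},\xi_{1}\big)$, so $\iota_{\mu}^{*}\psi(K_{\alpha_1}^{N})(\gamma_{1})=(P_{1})_{44}(P_{1})_{11}=\xi_{1}^{-2}$, whereas \eqref{eq-gamma-bn} gives $\varphi(g_{\alpha_1}^{N})(\gamma_{1})=\xi_{1}^{a_{11}}=\xi_{1}^{2}$. In fact, with \eqref{diag-matrices-bn} and \eqref{eq-gamma-bn} taken literally one gets $\iota_{\mu}^{*}\psi(K_{\alpha_i}^{N})=\varphi(g_{\alpha_i}^{N})^{-1}=\varphi(g_{\alpha_i}^{-N})$ for every $i\in I$, i.e.\ the displayed chain in the remark contains a sign slip: it becomes correct only after inverting one convention, for instance replacing each $P_{i}$ by $P_{i}^{-1}$ (swap $\xi_{i}\leftrightarrow\xi_{i}^{-1}$ in \eqref{diag-matrices-bn}), equivalently embedding $\gamma\mapsto Q_{r}^{-1}P_{\gamma}^{-1}Q_{r}$. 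With that fix the identity $\iota_{\mu}^{*}\psi(K_{\alpha_i}^{N})(\gamma)=t_{i'-1}\,t_{i}=\varphi(g_{\alpha_i}^{N})(\gamma)$ holds on the nose, in both cases $i<\theta$ and $i=\theta$ (where the middle index $\theta+1$ contributes $t_{\theta+1}=1$ and accounts for the factor $2-\delta_{i,\theta}$), and no orthogonality rewriting is needed at all. So a correct proof must either detect and repair this convention, or verify the exponents of $\xi_{i-1},\xi_{i},\xi_{i+1}$ entry by entry; asserting the passage from $t_{i'-1}t_{i}$ to $t_{i'}t_{i'-1}^{-1}$ via orthogonality is precisely where the argument breaks.
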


The  set $\Phi^+=\{\alpha_{i j}\,:\, 1\leq i\le \theta,\,\,    i< j\le i'-1 \}$ of positive roots is given in terms of the simple roots $\Pi=\{\alpha_{i}\}_{i=1}^{\theta}$ by:
\begin{align*}
	\alpha_{ij}&:=\alpha_{i}+\cdots+ \alpha_{j-1}, && \text{ if }\, i<j\le\theta+1,\\
	\alpha_{ij'}&:=\alpha_{i}+\cdots+ \alpha_{j-1}+2\alpha_{j}+\cdots+ 2\alpha_{\theta}, && \text{ if }\, \theta+1<j'\leq i'-1. 
\end{align*}
Given $i\in \mathbb{I}_{1,\theta}$ and $j\in \mathbb{I}_{i+1,i'-1}$, the root vectors $E_{\alpha_{ij}}:=E_{ij}$ are given by the recursive formula
\begin{align*}
	E_{ij}&=\begin{cases}
		E_{\alpha_{i}},& \text{if }\, j=i+1,\\
		E_{j-1}E_{i\,  j-1}-q^{-1} E_{i\, j-1}E_{j-1}, & \text{if }\, i+1<j\le \theta+1.\\ 
		E_{j'}E_{i\,  j-1}-q^{-1+\delta_{j,\theta+2}} E_{i\, j-1}E_{j'}, & \text{if }\, \theta+1<j\le i'-1.\\ 
	\end{cases}
\end{align*}
Assuming the previous notation, for each $1\leq i\le\theta$ and $i< j\leq i'-1$, set $g_i:=g_{\alpha_i}$ and consider
\begin{align*}
 x_{(ij)}&:=p_{\mu}(E_{ij}),& g_{(ij)}&:=g_ig_{i+1}\ldots g_{j-1},   && \text{ if }\, i<j\le\theta+1,  \\
&& g_{(ij')}&:=g_ig_{i+1}\ldots g_{j-1} g_{j}^2\ldots g_{\theta}^2, &&\text{ if }\, \theta+1<j'\le i'-1.\\
\end{align*}

 In this case, an explicit Hopf isomorphism 
 $\psi: U_{q^{1/2}}^{Q}(\lieb^{+})\to \cO_{q}(B_Q^{+})$  is given by:
\begin{align}\label{psi-bn}
	\psi(K_{\alpha_{i}}^{\pm 1})= (z_{i'-1\,  i'-1}\, z_{ii})^{\pm 1}, \qquad \quad \psi(E_{\alpha_i})= q^{2-\delta_{i,\theta}}(q-q^{-1})^{-1}z_{i'-1\, i'}\, z_{ii},
\end{align}
 and, by induction, we obtain that
 \begin{align}\label{iso-psi-bn}
 	\psi(E_{ij})=	\lambda_{ij}z_{j'i'}z_{ii}, 
 \end{align}
 where $\lambda_{ij}$ are the following scalars: 
 \begin{align*}
 	\lambda_{ij}=\begin{cases}
 		q^{j-i+1-\delta_{j,\theta+1}}(q-q^{-1})^{-1}, &\quad 1\leq i < j \leq \theta+1,\\[.2em]
 		(-1)^{\theta+1-j}q^{j-i-1/2}(q-q^{-1})^{-1}, &\quad  \theta+1<j\leq  i'-1.
 	\end{cases}
 \end{align*}

The next result gives the liftings of 
braided vector spaces of type $B_{\theta}$ in the Drinfeld-Jimbo case.
In particular, we recover using our method 
the formula for the liftings in type $B_{2}$ in \cite{BDR}
and in type $B_{3}$ in \cite{BGM}. Remember from \S \ref{subsec:DJ} that $q_1=q^{d_{1}}=q^2$.
 
\begin{teor}\label{lifting-case-bn} Let $\mu=(\mu_{\alpha})_{\alpha\in \Phi^{+}}=
	(\mu_{ij})_{(i,j)\in \mathbb{I}_{1,\theta}\times \mathbb{I}_{i+1,i'-1}}$
	be a family of root vector parameters. 
	For $1\le i<j\le \theta+1$ let 

\begin{equation}\label{escalares-rij-bn}\begin{aligned}
		r_{j'i'}&:=
		(q_1^2-1)^{N} \mu_{ij},\\
		r_{ji'}&:=(-1)^{\theta+1-j}\tfrac{1}{2}(q_1^2-1)^{N}(q_1-1)^{N}\mu_{ij'},  \text{ if $j\neq \theta+1$}\qquad   
\end{aligned}
\end{equation}
and extend this definition to all indices $r_{ij}$ so that the matrix $Q_r$ 
given in \eqref{unipotent} belongs to $\operatorname{SO}_{2\theta+1}$. 
Then, for all $1\le i<j\le \theta+1$, the
relations \eqref{eq-HA-4} are given by
\begin{align}\label{formula root vetor facil bn}
x_{(ij)}^{N}&=	\mu_{ij} \big(g^N_{(ij)}-1\big) -\sum_{s=i+1}^{j-1} r_{j's'} x^N_{(is)}.
\end{align}
If $j\neq \theta+1$ we have 
\begin{equation}\label{formula root vetor dificil bn}
\begin{aligned}
x_{(ij')}^{N}&= \mu_{ij'}\big(g^N_{(ij')}-1\big)-2(-1)^{\theta+1-j}(1+q_1)^{-N} \sum_{s=i+1}^{\theta}  r_{js'} x^N_{(is)}\\
&-2(-1)^{\theta+1-j}{(1+q_1)^{-N}}r_{j\,\theta+1} x^N_{(i\,\theta+1)}-\sum_{s=\theta+2}^{j'-1} (-1)^{s-j}  r_{js'} x^N_{(is)}, 
\end{aligned}
\end{equation}
\end{teor}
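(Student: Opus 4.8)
The plan is to follow the strategy of the proof of Theorem~\ref{lifting-case-an}, computing the lifting of each power root vector relation \eqref{eq-HA-4} through the identity $x_{\alpha}^{N}=\varphi^{-1}\iota_{\mu}^{*}(\psi(E_{\alpha}^{N}))$ of Remark~\ref{rmk:computing-liftings}, but now feeding in the $B_{\theta}$-specific data: the explicit isomorphism $\psi$ of \eqref{iso-psi-bn}, the dual quantum Frobenius inclusion $\iota$ of Theorem~\ref{inclusion-case-bn}, and the coproduct formula of Theorem~\ref{teorema: coproducto potencia N}. Since the two families of positive roots have the different shapes recorded in \eqref{formula root vetor facil bn} and \eqref{formula root vetor dificil bn}, I would organize the argument as a single induction on the height of $\alpha$, treating the short roots $\alpha_{ij}$ (with $i<j\le\theta+1$) and the long roots $\alpha_{ij'}$ simultaneously.

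First I would use \eqref{iso-psi-bn} to write $\psi(E_{ij})=\lambda_{ij}z_{j'i'}z_{ii}$ and pass to $N$-th powers. Because $z_{ii}$ is a diagonal group-like element, it $q$-commutes with $z_{j'i'}$ through the FRT relations of \S\ref{subsec:case Bn}, and since $N$ is odd with $q^{N}=1$ the $N$-th power splits cleanly as $\psi(E_{ij}^{N})=\lambda_{ij}^{N}z_{j'i'}^{N}z_{ii}^{N}$. Invoking the inclusion $\iota$ of Theorem~\ref{inclusion-case-bn}, I would rewrite $z_{ii}^{N}=\iota(X_{ii})$ and $z_{j'i'}^{N}$ as a scalar multiple of $\iota(X_{j'i'})$; the scalar equals $1$ for the short roots (there one always has $\ttn_{0}\notin\tilde{\mathbb{I}}_{j',i'}$ since $j'\ge\theta+1$) and equals $\tfrac{2}{(1+q)^{N}}$ for the long roots (there $j'<\ttn_{0}<i'$, so $\ttn_{0}\in\tilde{\mathbb{I}}_{j',i'}$). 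This realizes $\psi(E_{ij}^{N})$ inside the central Hopf subalgebra $\cO(B^{+})\hookrightarrow\cO_{q}(B^{+})$ and explains the appearance of the factor $\tfrac{2}{(1+q)^{N}}$ in \eqref{formula root vetor dificil bn}.

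Next I would evaluate $\iota_{\mu}^{*}(\psi(E_{ij}^{N}))$ on a character $\gamma=\gamma_{1}^{s_{1}}\cdots\gamma_{\theta}^{s_{\theta}}\in\widehat{\subg}$. Writing $t_{k}=(P_{\gamma})_{kk}$ and using $\iota_{\mu}^{*}(X_{ab})(\gamma)=(Q_{\mu}^{-1}P_{\gamma}Q_{\mu})_{ab}$, the evaluation becomes a scalar multiple of $(Q_{\mu}^{-1}P_{\gamma}Q_{\mu})_{j'i'}\,t_{i}$. I would then expand $(Q_{\mu}^{-1}P_{\gamma}Q_{\mu})_{j'i'}$ by the recursion \eqref{eq:formula-i-mu} and use the orthogonality relations $t_{k}t_{k'}=1$, valid since $P_{\gamma}\in\SO_{2\theta+1}$, to convert products of diagonal entries into group-like elements; Remark~\ref{rmk:iota-transpose-bn-case} supplies $g_{\alpha_{i}}^{N}=\varphi^{-1}\iota_{\mu}^{*}\psi(K_{\alpha_{i}}^{N})$, so that each telescoped product $t_{i}t_{i+1}^{-1}\cdots$ is identified with $g^{N}_{(ij)}$ or $g^{N}_{(ij')}$. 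The remaining summands of the recursion match, by the inductive hypothesis, the lower root vectors $x_{(is)}^{N}$ with the prescribed coefficients $r_{j's'}$ and $r_{js'}$ of \eqref{escalares-rij-bn}.

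The hard part will be the bookkeeping forced by the middle index $\ttn_{0}=\theta+1$ together with the constraint that $Q_{\mu}$ lie in $\SO_{2\theta+1}$. Unlike the $A_{\theta}$ case, the entries $r_{ij}$ are not free: the orthogonality $Q_{\mu}J Q_{\mu}^{t}J^{-1}=\id$ couples the upper entries $r_{j'i'}$ with the reflected ones $r_{ji'}$, and this coupling is exactly what produces the two distinct formulas \eqref{formula root vetor facil bn} and \eqref{formula root vetor dificil bn}, including the signs $(-1)^{\theta+1-j}$ and $(-1)^{s-j}$ inherited from the scalars $\lambda_{ij}$. I expect the most delicate point to be the long-root recursion for $\alpha_{ij'}$, where the expansion sum must be split at $k=\theta+1$: there the middle coefficient $c^{\,\theta+1}_{ij}=\tfrac{2}{(1+q)^{N}}$ from Theorem~\ref{teorema: coproducto potencia N} enters, and one must check that it combines with the inclusion scalar from Theorem~\ref{inclusion-case-bn} and the normalization $q^{N/2}=1$ to reproduce precisely the coefficients in \eqref{formula root vetor dificil bn}.
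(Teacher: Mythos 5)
Your proposal is correct and follows essentially the same route as the paper's proof: both compute $x_{(ij)}^{N}=\varphi^{-1}\iota_{\mu}^{*}\big(\psi(E_{ij}^{N})\big)$ by combining the splitting $\psi(E_{ij}^{N})=\lambda_{ij}^{N}z_{j'i'}^{N}z_{ii}^{N}$ from \eqref{iso-psi-bn}, the Frobenius inclusion of Theorem \ref{inclusion-case-bn} (whose normalization scalar is precisely the paper's $\zeta_{ij}$, accounting for the factor $\tfrac{2}{(1+q_1)^{N}}$ at the middle index), the expansion \eqref{for-forever} together with the orthogonality $t_{k}t_{k'}=1$, and Remark \ref{rmk:iota-transpose-bn-case} to identify the group-like terms, with lower root vectors absorbed with the coefficients \eqref{escalares-rij-bn}. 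The only cosmetic difference is that you organize the recognition of the lower terms $x_{(is)}^{N}$ as an induction on root height, whereas the paper substitutes the same identity for $\alpha_{is}$ directly into the expansion; the two are equivalent.
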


\begin{proof}
Let  $1\leq i\le \theta$ and $i< j$. We start by fixing the scalars
\begin{align}\label{scalar-zeta}
	\begin{aligned}
	\zeta_{ij}=\begin{cases}
		\hfil 	\frac{(1+q_1)^N}{2},& \text{if }\, \,\theta+1\in \tilde{\mathbb{I}}_{i,j},\\[.3em]
		1,& \text{otherwise}.
	\end{cases}
\end{aligned}
\end{align}
From \eqref{iso-psi-bn} follows that $\psi(E_{ij}^N)=\lambda^N_{ij}z_{j'i'}^Nz^{N}_{ii}$.  Note that, for all $\gamma\in \widehat{\subg}$, we have \vspace{.1cm}
	\begin{align*}
	\iota_{\mu}^{*}\big(\psi(E_{ij}^{N})\big)(\gamma) &= \lambda^N_{ij}	\iota_{\mu}^{*}(z^N_{j'i'}z^N_{ii})(\gamma)\\[.7em]
	    &=\lambda^N_{ij}\zeta^{-1}_{j'i'}	\iota_{\mu}^{*}\big(\zeta_{j'i'}z^N_{j'i'}z^N_{ii}\big)(\gamma)\\[.7em]
		&=\lambda^N_{ij}\zeta^{-1}_{j'i'}	\iota_{\mu}^{*}\big(X_{j'i'}X_{ii}\big)(\gamma) \qquad\quad \text{(by Theorem \ref{inclusion-case-bn})}\\
		&=\lambda^N_{ij}\zeta^{-1}_{j'i'}	\Big(  r_{j'i'}(\iota^*_{\mu}(X_{j'j'}X_{ii})-1) -\sum_{k=j'+1}^{i'-1}r_{j'k}\iota^*_{\mu}\big(X_{ki'}X_{ii}\big)\Big)(\gamma)\\
		&=\lambda^N_{ij}\zeta^{-1}_{j'i'}r_{j'i'}\big(g^N_{(ij)}-1\big) -\sum_{s=i+1}^{j-1}  \lambda^N_{ij}\zeta^{-1}_{j'i'} r_{j's'}\iota^*_{\mu}\big(X_{s'i'}X_{ii}\big)    (\gamma)\\
		&=\lambda^N_{ij}\zeta^{-1}_{j'i'}r_{j'i'}\big(g^N_{(ij)}-1\big) -\sum_{s=i+1}^{j-1}  \lambda^N_{ij}\lambda^{-N}_{is}\zeta^{-1}_{j'i'}\zeta_{s'i'} r_{j's'} x^N_{(is)}.
	\end{align*}
Since $x_{(ij)}^N=p_{\mu}(E^N_{ij})=\iota_{\mu}^{*}(\psi(E_{ij}^{N}))(\gamma)$, the result is proved.
\end{proof}

\begin{exemplo}[Case $B_2$]
	We would like to compare the relations obtained in Theorem \ref{lifting-case-bn} for the $B_2$ case  to the relations given in \cite[Thm 2.6]{BDR}. In this case, the set of positive roots is $\Phi^+=\{\alpha_{12},\alpha_{23},\alpha_{13},\alpha_{12'}\}$, where the simple roots are $\alpha_{12},\alpha_{23}$ and the longest root is $\alpha_{12'}$. Given the family of root vector parameters $\{\mu_{12},\mu_{23},\mu_{13},\mu_{12'}\}$, the explicit scalars in \eqref{escalares-rij-bn} are given by
	\begin{align*}
	r_{45}&=(q_1^2-1)^N \mu_{12},&
	r_{35}&=(q_1^2-1)^N \mu_{13},\\
	r_{34}&=(q_1^2-1)^N \mu_{23,}&
	r_{25}&=-\tfrac{1}{2}(q_1^2-1)^N(q_1-1)^N \mu_{12'}.&
	\end{align*}
	Then, from \eqref{formula root vetor facil bn}, it follows
	\begin{align*}
	x_{(12)}^N&= \mu_{12}(g_{(12)}^N-1), && x_{(13)}^N=\mu_{13}(g_{(13)}^N-1)-(q_1^2-1)^N \mu_{23}\, x_{(12)}^N.\\
	x_{(23)}^N&=\mu_{23}(g_{(23)}^N-1),
	\end{align*}
	On the other hand, compare these formulas with the relations for $y_i$ and $v$ in \cite[Thm 2.6]{BDR}. We point out that there is a typo in the relation of $v$ given in \cite[Thm 2.6]{BDR}. In fact, the correct one is 
	$v:=z^n+\mu_2(q^2-1)^nx_n^n-\lambda(g_1^ng_2^n-1)$. 
	By \eqref{formula root vetor dificil bn},
	\begin{align*}
	x_{(12')}^N&=\mu_{12'}(g_{(12')}^N-1)+2(1+q_1)^{-N} r_{24}\, x_{(12)}^N+2  (1+q_1)^{-N}r_{23}\, x_{(13)}^N.
	\end{align*}
	Since $
	Q_{r}=\begin{pmatrix}
	1 & r_{12} & r_{13}&  r_{14}& r_{15}\\
	0& 1 & r_{23}&  r_{24}& r_{25}\\
	0& 0& 1 & r_{34}&  r_{35}\\
	0& 0& 0& 1 & r_{45}&\\
	0& 0& 0& 0& 1\\
	\end{pmatrix}\in SO_{5}
	$ it follows from \eqref{alg-so-relations} that 
	\begin{align*}
	r_{23}&=-r_{34}=-(q_1^2-1)^N \mu_{23},&
	r_{24}&=-\tfrac{1}{2}r_{34}^2=-\tfrac{1}{2}(q_1^2-1)^{2N} \mu_{23}^2.&
	\end{align*}
	Thus
	\begin{align}\label{x12N-b2}
	x_{(12')}^N&=\mu_{12'}(g_{(12')}^N-1)- (q_1^2-1)^{N} (q_1-1)^{N}\mu_{23}^2\, x_{(12)}^N-2  (q_1-1)^N \mu_{23}\, x_{(13)}^N
	\end{align}
	This last relation coincides with the relation $w=0$ of \cite[Thm 2.6]{BDR}.
\end{exemplo}

\begin{exemplo}[Case $B_3$]
	Now we want to compare our results, with the ones obtained in \cite{BGM} for the $B_3$ case. First we have the scalars
	
	\begin{align*}
	r_{67}&=(q_1^2-1)^N \mu_{12},&
	r_{46}&=(q_1^2-1)^N \mu_{24},&
	r_{36}&=-\tfrac{1}{2}(q_1^2-1)^N(q_1-1)^N \mu_{23'},\\
	r_{56}&=(q_1^2-1)^N \mu_{23},&
	r_{57}&=(q_1^2-1)^N \mu_{13},&
	r_{37}&=-\tfrac{1}{2}(q_1^2-1)^N(q_1-1)^N \mu_{13'},\\
	r_{45}&=(q_1^2-1)^N \mu_{34},&
	r_{47}&=(q_1^2-1)^N \mu_{14},&
	r_{27}&=-\tfrac{1}{2}(q_1^2-1)^N(q_1-1)^N \mu_{12'}.
	\end{align*}
	In a similar way  as in the $B_2$ case, computing \eqref{formula root vetor facil bn} we obtain:
	\begin{align*}
	x_{(12)}^N&= \mu_{12}(g_{(12)}^N-1),&   x_{(24)}^N&=\mu_{14}(g_{(14)}^N-1)-(q_1^2-1)^N \mu_{34}\, x_{(23)}^N,\\
	x_{(23)}^N&= \mu_{23}(g_{(23)}^N-1),&    x_{(13)}^N&=\mu_{13}(g_{(13)}^N-1)-(q_1^2-1)^N \mu_{23}\, x_{(12)}^N.\\
	x_{(34)}^N&=\mu_{34}(g_{(34)}^N-1),& &
	\end{align*}
	Also, $x_{(23')}^N$ is computed  using \eqref{formula root vetor dificil bn} in a completely analogous way as in \eqref{x12N-b2}, obtaining
	\begin{align*}
	x_{(23')}^N&=\mu_{23'}(g_{(23')}^N-1)- (q_1^2-1)^{N} (q_1-1)^{N}\mu_{34}^2\, x_{(23)}^N-2  (q_1-1)^N \mu_{34}\, x_{(24)}^N.
	\end{align*}
	It remains to compute $x_{(13')}^N$ and $x_{(12')}^N$. Again, by \eqref{formula root vetor dificil bn},
	\begin{align*}
	x_{(13')}^N&=\mu_{13'}(g_{(13')}^N-1)+ 2(1+q_1)^{-N}\Big(r_{36} x_{(12)}^N+r_{35}x_{(13)}^N\Big)
	+ 2(1+q_1)^{-N} r_{34}x_{(14)}^N,\\
	x_{(12')}^N&=\mu_{12'}(g_{(12')}^N-1)- 2(1+q_1)^{-N}\Big(r_{26} x_{(12)}^N+r_{25}x_{(13)}^N\Big)
	- 2(1+q_1)^{-N} r_{24}x_{(14)}^N+r_{23}x_{(15)}^N.
	\end{align*}
	Since $Q_r=(r_{ij})\in SO_{7}$ we have
	\begin{align*}
	r_{35}&=-\tfrac{1}{2}r_{45}^2,&
	r_{34}&=-r_{45},&
	r_{26}&=-\tfrac{1}{2}r_{46}^2-r_{36}r_{56},\\
	r_{25}&=-r_{36}-r_{45}r_{46}+\tfrac{1}{2}r_{45}^2r_{56},&
	r_{24}&=-r_{46}+r_{45}r_{56}.&
	r_{23}&=-r_{56}.
	\end{align*}
	Using these identities in the relations of $x_{(13')}^N$ and $x_{(12')}^N$ we recover the scalars obtained in \cite[Lemma 3.7.]{BGM} and \cite[Lemma 3.12.]{BGM}.
\end{exemplo}

\subsection{$D_{\theta}$-type}
For $1\le j\le 2\theta$, let $j':= 2\theta+1-j$. 
Let $\Pi=\{\alpha_{1}, \ldots, \alpha_\theta\}$ be the simple  roots of $D_\theta$.
The  set of positive roots $\Phi^+=\{\alpha_{i j}\,:\, 1\leq i < \theta,\,\,    i< j\le i'-1 \}$ 
is given in terms of the simple roots $\Pi=\{\alpha_{i}\}_{i=1}^{\theta}$ by
\begin{align*}
\alpha_{ij}&:=\alpha_{i}+\cdots+ \alpha_{j-1}, && \text{ if }\, i<j\le\theta,\\
\alpha_{i\theta+1}&:=\alpha_{i}+\cdots+ \alpha_{\theta-2}+\alpha_{\theta}, && \text{ if }\, j=\theta+1,\\
\alpha_{ij'}&:=\alpha_{i}+\cdots+ \alpha_{j-1}+2\alpha_{j}+\cdots+ 2\alpha_{\theta-2}+ \alpha_{\theta-1}+ \alpha_{\theta}, && \text{ if }\, i <j \leq \theta-1. 
\end{align*}
Given $i\in \mathbb{I}_{1,\theta-1}$ and $j\in \mathbb{I}_{i+1,i'-1}$, 
the root vectors $E_{\alpha_{ij}}:=E_{ij}$ are given by the recursive formula
\begin{align*}
E_{ij}&=\begin{cases}
E_{\alpha_{i}},& \text{if }\, j=i+1,\\
E_{\alpha_{\theta}},& \text{if }\, (i,j)=(\theta-1,\theta+1),\\
E_{j-1}E_{i\,  j-1}-q^{-1} E_{i\, j-1}E_{j-1}, & \text{if }\, i+1<j \le \theta,\\ 
E_{\theta}E_{i\,  \theta-1}-q^{-1} E_{i\, \theta-1}E_{\theta}, & \text{if }\, i+2<j=\theta+1,\\ 
E_{j'}E_{i\,  j-1}-q^{-1} E_{i\, j-1}E_{j'}, & \text{if }\, \theta+1<j < i'.\\ 
\end{cases}
\end{align*}

With the previous notation, set  $g_i=g_{\alpha_i}$ and consider
\begin{align*}
x_{(ij)}&:=p_{\mu}(E_{ij}), &&i\in \mathbb{I}_{1,\theta-1},\quad j\in \mathbb{I}_{i+1,i'-1}\\
g_{(ij)}&:=g_ig_{i+1}\ldots g_{j-1},   &&\text{if }\, i<j\le\theta,  \\
g_{(i\theta+1)}&:=g_ig_{i+1}\ldots g_{\theta-2}g_{\theta},   &&\text{if }\, i<\theta,  \\
g_{(ij')}&:=g_ig_{i+1}\ldots g_{j-1} g_{j}^2\ldots g_{\theta-2}^2g_{\theta-1}g_{\theta}, &&\text{if }\, i<j\le \theta-1.
\end{align*}

In this case, an explicit Hopf homomorphism 
$\psi: U_{q}^{Q}(\lieb^{+})\to \cO_{q}(B_Q^{+})$  is given by:
\begin{align}\label{psi-dn}
\psi(K_{\alpha_{i}}^{\pm 1})= \begin{cases}
(z_{i'-1\,  i'-1}\, z_{ii})^{\pm 1},& \text{if } i<\theta,\\
(z_{i'-2\,  i'-2}\, z_{ii})^{\pm 1},& \text{if } i=\theta,
\end{cases}
\end{align}
\begin{align}\label{psi-parte2-dn}
	\psi(E_{\alpha_i})= \begin{cases}
		q^{2}(q-q^{-1})^{-1}z_{i'-1\, i'}\, z_{ii},& \text{if } i<\theta,\\
		q^{2}(q-q^{-1})^{-1}z_{i'-2\, i'}\, z_{ii},& \text{if } i=\theta.
	\end{cases}
\end{align}
By induction on $(j-i)$, we obtain that
\begin{align}\label{iso-psi-dn}
\psi(E_{ij})=	\lambda_{ij}z_{j'i'}z_{ii}, 
\end{align}
where $\lambda_{ij}$ are the following scalars: 
\begin{align*}
\lambda_{ij}=\begin{cases}
q^{j-i+1-\delta_{j,\theta+1}}(q-q^{-1})^{-1}, &\quad 1\leq i < j \leq \theta+1,\\[.2em]
(-1)^{\theta+1-j}q^{j-i}(q-q^{-1})^{-1}, &\quad  \theta+1<j\leq  i'-1.
\end{cases}
\end{align*}

The next result describes all the liftings of type $D_{\theta}$ for diagonal
braidings of Dinfeld-Jimbo type.  
The proof follows \emph{mutatis mutandis} from the proof of 
Theorem \ref{lifting-case-bn} with the only exception that 
$\zeta_{ij}$ given in \eqref{scalar-zeta} are equal to $1$, for all $i,j$.

\begin{teor}\label{lifting-case-dn} 
Let $\mu=(\mu_{\alpha})_{\alpha\in \Phi^{+}}=
(\mu_{ij})_{(i,j)\in \mathbb{I}_{1,\theta-1}\times \mathbb{I}_{i+1,i'-1}}$
be a family of root vector parameters. For $1\le i<j\le \theta+1$ let 
\begin{equation}\label{escalares-rij-dn}\begin{aligned}
		r_{j'i'}&:=
		(q^2-1)^{N} \mu_{ij},\\
		r_{ji'}&:=(-1)^{\theta+j}(q^2-1)^{N}\mu_{ij'},  \text{ if $j\neq \theta+1$}\qquad   
\end{aligned}
\end{equation}
and extend this definition to all indices $r_{ij}$ so that the matrix $Q_r$ 
given in \eqref{unipotent} belongs to 
$\operatorname{SO}_{2\theta}(\C)$. 
Then for all $1\le i<j\le \theta+1$  the relations \eqref{eq-HA-4} are given by
\begin{align}\label{formula root vetor facil dn}
x_{(ij)}^{N}&=	\mu_{ij} \big(g^N_{(ij)}-1\big) -\sum_{s=i+1}^{j-1} r_{j's'} x^N_{(is)},
\end{align}
If $j\neq \theta+1$ we have 
\begin{equation}\label{formula root vetor dificil dn}
\begin{aligned}
x_{(ij')}^{N}&= 
\mu_{ij'}\big(g^N_{(ij')}-1\big)-(-1)^{\theta+j} \sum_{s=i+1}^{\theta+1}  r_{js'} x^N_{(is)}-\sum_{s=\theta+2}^{j'-1} (-1)^{s-j}  r_{js'} x^N_{(is)}, 
\end{aligned}
\end{equation} 
\qed
\end{teor}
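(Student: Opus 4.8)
The plan is to specialize, step by step, the computation carried out in the proof of Theorem~\ref{lifting-case-bn}, exploiting that in type $D_{\theta}$ one has $\ttn=2\theta$ \emph{even}. First I would set up the $D_{\theta}$ analogues of the ingredients entering the diagram \eqref{squarem}: the diagonal embedding $\widehat{\subg}\hookrightarrow B^{+}$ sending each generator $\gamma_j$ to a diagonal matrix $P_i$, the obvious variant of \eqref{diag-matrices-bn} adapted to the fork of $D_{\theta}$, where the two tail nodes $\alpha_{\theta-1},\alpha_{\theta}$ must be treated symmetrically and the image of $K_{\alpha_\theta}^{\pm N}$ involves $X_{i'-2,i'-2}$ as in \eqref{psi-dn}; together with the unipotent matrix $Q_{r}\in\SO_{2\theta}$ whose entries encode $\mu$ via \eqref{escalares-rij-dn}. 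With these in place one checks, exactly as in Remark~\ref{rmk:iota-transpose-bn-case}, that $g_{\alpha_i}^{N}=\varphi^{-1}\iota_{\mu}^{*}\psi(K_{\alpha_i}^{N})$ for all $i\in I$, reading off the image of $K_{\alpha_i}^{\pm N}$ under $\psi$ from \eqref{psi-dn}.

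The heart of the argument is then the master formula of Remark~\ref{rmk:computing-liftings}, namely $x_{\alpha}^{N}=\varphi^{-1}\iota_{\mu}^{*}(\psi(E_{\alpha}^{N}))$. Applying \eqref{iso-psi-dn} gives $\psi(E_{ij}^{N})=\lambda_{ij}^{N}\,z_{j'i'}^{N} z_{ii}^{N}$. Here the parity of $\ttn$ enters decisively: since $\ttn=2\theta$ is even, the middle index $\ttn_0=(\ttn+1)/2$ is never an integer, so the exceptional case of Theorem~\ref{teorema: coproducto potencia N} does not arise and all coproduct coefficients satisfy $c^{k}_{ij}=1$; consequently the dual quantum Frobenius inclusion of Theorem~\ref{inclusion-case-bn} reduces to $\iota(X_{ij})=z_{ij}^{N}$ for every $i\le j$, that is, every scalar $\zeta_{ij}$ equals $1$. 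Thus $z_{j'i'}^{N}z_{ii}^{N}=\iota(X_{j'i'}X_{ii})$ and $\iota_{\mu}^{*}(\psi(E_{ij}^{N}))=\lambda_{ij}^{N}\,\iota_{\mu}^{*}(X_{j'i'}X_{ii})$.

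I would then expand $\iota_{\mu}^{*}(X_{j'i'})$ by the recursive entry formula \eqref{for-forever}, multiply through by $\iota_{\mu}^{*}(X_{ii})$, and collapse the diagonal contribution using the orthogonality relation $X_{ii}X_{i'i'}=1$ from \eqref{alg-so-relations}: this turns the $-r_{j'i'}X_{i'i'}$ term into the constant $-r_{j'i'}$ and, after identifying $g_{\alpha_i}^{N}$ as above, produces the summand $\mu_{ij}(g_{(ij)}^{N}-1)$. The remaining terms $-\sum_{k} r_{j'k}\,\iota_{\mu}^{*}(X_{ki'}X_{ii})$ are, up to the scalar ratios $\lambda_{ij}^{N}\lambda_{is}^{-N}$, precisely the lower-length root-vector powers $x_{(is)}^{N}$; reindexing $k=s'$ and substituting the values of $\lambda_{ij}$ from \eqref{iso-psi-dn} yields \eqref{formula root vetor facil dn} and \eqref{formula root vetor dificil dn}. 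The only genuine bookkeeping---and the step I expect to be the main obstacle---is tracking the signs: the two regimes $j\le\theta+1$ and $j>\theta+1$ in the definition of $\lambda_{ij}$ generate the prefactors $(-1)^{\theta+j}$ and $(-1)^{s-j}$, and one must verify that, because every $\zeta_{ij}=1$, the factor $2(1+q_1)^{-N}$ appearing in the $B_{\theta}$ formula \eqref{formula root vetor dificil bn} disappears and the previously isolated $s=\theta+1$ term merges into the single sum $\sum_{s=i+1}^{\theta+1}$. Once these cancellations and sign conventions are checked against the $\lambda_{ij}$ of \eqref{iso-psi-dn}, the two displayed relations follow verbatim from the $B_{\theta}$ computation with $\zeta\equiv 1$.
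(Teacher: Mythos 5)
Your proposal is correct and coincides with the paper's own argument: the paper proves Theorem \ref{lifting-case-dn} precisely by repeating the proof of Theorem \ref{lifting-case-bn} \emph{mutatis mutandis}, observing that since $\ttn=2\theta$ is even all the scalars $\zeta_{ij}$ of \eqref{scalar-zeta} equal $1$. Your additional bookkeeping---the reduction of the dual quantum Frobenius map to $X_{ij}\mapsto z_{ij}^{N}$, the disappearance of the factor $2(1+q_1)^{-N}$, the merging of the isolated $s=\theta+1$ term into the single sum, and the sign tracking through $\lambda_{ij}$---is exactly the content of that specialization.
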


\begin{exemplo}[Case $D_5$]
As in the $B_\theta$ case, we would like to illustrate  how to compute the coefficients $r_{ij}$ for an specific root vector using Theorem \ref{lifting-case-dn}. Consider $G$ of type $D_5$ and the root $\alpha_{13'}=\alpha_1+\alpha_2+2\alpha_3+\alpha_4+\alpha_5$. Then, using \eqref{formula root vetor dificil dn}, we have
\begin{align}\label{defor-d5-root}
x_{(13')}^N&= \mu_{13'}\big(g^N_{(13')}-1\big)- \sum_{s=2}^{6}  r_{3s'} x^N_{(1s)}- r_{34} x^N_{(17)}.
\end{align}
Set  $\eta=(q^2-1)^{N}$. Given a family of vector parameters 
$(\mu_{\alpha})_{\alpha\in \Phi^{+}}$, by \eqref{escalares-rij-dn}, we have 
\begin{align*}
r_{78}&=\eta \mu_{34},&
r_{67}&=\eta\mu_{45},&
r_{68}&=\eta\mu_{35},&
r_{57}&=\eta\mu_{45'},\\
r_{58}&=\eta\mu_{35'}&
r_{48}&=-\eta\mu_{34'},&
r_{39}&=\eta\mu_{23'}.
\end{align*}
Using the fact that $Q_r=(r_{ij})\in \operatorname{SO}_{10}$, it follows that
\begin{align*}
r_{35}&=\eta^2\mu_{34}\mu_{45} -\eta\mu_{35},&
r_{34}&=-\eta \mu_{34},\\
r_{36}&=\eta^2\mu_{34}\mu_{45'} -\eta\mu_{35'},&
r_{39}&=\eta \mu_{23'},\\
r_{38}&=\eta^2\mu_{34'}\mu_{34}-\eta^2\mu_{35'}\mu_{35} \\
r_{37}&=\eta\mu_{34'}-\eta^2\mu_{35}\mu_{45'}-\eta^2\mu_{35'} \mu_{45}+\eta^3\mu_{34}\mu_{45}\mu_{45'}.
\end{align*}
Hence, we have the explicitly description for the relation  \eqref{defor-d5-root}. 
In a similar way, the coefficients $r_{ij}$ involved in the formulas \eqref{formula root vetor facil dn} and \eqref{formula root vetor dificil dn} can be computed
for the others positive roots. 
\end{exemplo}

\vspace{1cm}

\end{document}